\newcommand{\bC}{{\mathbb C}}
\newcommand{\bR}{{\mathbb R}}
\newcommand{\bZ}{{\mathbb Z}}
\newcommand{\cF}{\mathcal F}
\newcommand{\cG}{\mathcal G}
\newcommand{\cA}{\mathcal A}
\newcommand{\cC}{\mathcal C}
\newcommand{\cY}{\mathcal Y}
\newcommand{\cO}{\mathcal O}
\newcommand{\cU}{\mathcal U}
\newcommand{\cL}{\mathcal L}
\newcommand{\cR}{\mathcal R}
\newcommand{\cT}{\mathcal T}
\newcommand{\cH}{\mathcal H}
\DeclareMathOperator{\val}{val}
\newtheorem*{question}{Question}
\newtheorem{theorem}{Theorem}[section]
\newtheorem{proposition}[theorem]{Proposition}
\newtheorem{lemma}[theorem]{Lemma}
\newtheorem{remark}[theorem]{Remark}
\newtheorem{definition}[theorem]{Definition}
\newtheorem{corollary}[theorem]{Corollary}
\newtheorem{conjecture}[theorem]{Conjecture}
\begin{document}

\title{Closed string mirrors of symplectic cluster manifolds}

\author{Yoel Groman 
 and Umut Varolgunes
}

\address{ Yoel Groman,
Hebrew University of Jerusalem,
Mathematics Department}
\address{Umut Varolgunes, School of Mathematics, Bo\u{g}azi\c{c}i University}

\begin{abstract}
For the base $B$ of a Maslov $0$ Lagrangian torus fibration with singularities consider the sheaf assigning to each $P\subset B$ the relative symplectic cohomology in degree $0$ of its pre-image. We compute this sheaf for nodal Lagrangian torus fibrations on four dimensional symplectic cluster manifolds. We show that it is  the pushforward of the structure sheaf of a certain rigid analytic space under a non-archimedean torus fibration. The  rigid analytic space is constructed in a canonical way from the relative SH sheaf and is referred as the \emph{closed string mirror}. The construction relies on computing relative SH for local models by applying general axiomatic properties rather than ad hoc analysis of holomorphic curves. These axiomatic properties include previously established ones such as the Mayer-Vietoris property and locality for complete embeddings; and new ones such as the Hartogs property and the holomorphic volume form preservation property of wall crossing in relative $SH$.  We indicate some higher dimensional settings where the same techniques apply. 

\end{abstract}

\maketitle
\setcounter{tocdepth}{1}
\tableofcontents

\section{Introduction}
Consider a symplectic manifold $M$ which is geometrically bounded \cite[Section 1.1]{groman}. To each compact set $K\subset M$ one can associate a closed string invariant denoted by $SH^*_M(K)$ known as \emph{relative symplectic cohomology}. This assignment is contravariantly functorial with respect to inclusions. We consider the case where $\Lambda^{top}_\mathbb{C}(TM,J)$ for some compatible almost complex structure $J$ is trivial, and we fix a trivialization. Then  the invariant $SH^*_M(K)$ is $\bZ$-graded and is defined over the Novikov ring $$\Lambda_{\geq 0}:=\left\{\sum_{i\geq 0} a_iT^{\alpha_i}\mid a_i\in\mathbb{F}, \alpha_i\in\mathbb{R}_{\geq 0}, \text{ where } \alpha_i\to\infty \text{, as } i\to\infty\right\},$$ where $\mathbb{F}$ is a commutative ring. We briefly review the construction in \S\ref{SecSHreview}.

Suppose now that $M$ is equipped with a proper smooth map $\pi:M\to B,$
for $B$ a smooth manifold of half the dimension of $M$, such that there exists open $U\subset B$ with $\pi^{-1}(U)$ dense in $M$ and the fibers above the points of $U$ are Lagrangian tori. Note that $\pi$ is therefore an involutive map as in \cite[Definition 3]{varolgunes}. For the elementary proof, including the definition, see Lemma \ref{lem-lag-inv}. Then, we may consider the contravariant functor $P\mapsto SH^*_M(\pi^{-1}(P))$ on the compact sets of $B$. It is shown in \cite{varolgunes} that this presheaf satisfies a Mayer Vietoris property, see Definition 1 and Theorem 1.3.4 from \cite{varolgunes}. Then, assuming that $SH^*_M(\pi^{-1}(P))$ are all non-negatively graded, if we restrict attention to degree $0$, that is $P\mapsto SH^0(\pi^{-1}(P))$, this presheaf is in fact a sheaf provided one replaces the usual notion of a topology with that of a Grothendieck topology, or $G$-topology, in which the compact sets are declared as open and only finite covers are considered. A brief review of this notion in the generality relevant for us is given in Section \ref{ss-eigenray}, see in particular Remark \ref{rem-Hausdorff-G-top}. The proof of the stated sheaf property is recalled in Theorem \ref{thm-mv-recall}.



In this paper, barring Section \S\ref{secFutureWork}, we restrict attention to the $4$-dimensional setting of \emph{symplectic cluster manifolds}. That is, $M$ is a symplectic $4$-manifold, $\pi$ is an almost toric fibration with only focus-focus type singularities \cite[Definition 2.2]{leung}, and a base diagram \cite[Section 5.3]{symington} for $\pi$ is given by an \emph{eigenray diagram $\cR$}. A brief review of the latter notion and how it gives rise to a nodal integral affine manifold that we denote by $B_\cR$ is given in \S\ref{s-integral-affine}. Such $\pi$ always has a Lagrangian section whose image is disjoint from the focus-focus singularities. In fact, the eigenray diagram determines \cite[Section 7.1]{locality} an almost toric fibration $\pi_{\cR}:M_{\cR}\to B_{\cR},$ which reconstructs $\pi$ up to an appropriate notion of isomorphism \cite[Proposition 3.5]{leung}, \cite[Proposition 7.8]{locality}. Therefore, we will think of our symplectic cluster manifolds as arising from eigenray diagrams. It is well-known that $\Lambda^{top}_\mathbb{C}TM_\mathcal{R}$ admits a trivialization such that the non-negatively graded property from the previous paragraph holds for $\pi_\cR$ (see \cite[Section 7.6]{locality} and Section 5 of this paper).

Rather then the $G$-topology of all compact sets in $B_\cR$, we will always consider a weaker $G$-topology on $B_\cR$, the $G$-topology of admissible polygons. Admissible opens in this topology are (roughly speaking) finite unions of convex, possibly degenerate, rational polygons such that if the polygon contains a node, then it contains only one node which is an interior point in the standard topology of $B_\cR$. Admissible coverings are finite covers of admissible polygons by admissible polygons. See Section \ref{s-integral-affine} for definitions and more details. 
 
 The aim of this paper is to fully describe the sheaf of algebras over the Novikov field $\Lambda:=\left\{\sum_{i\geq 0} a_iT^{\alpha_i}\mid a_i\in\mathbb{F}, \alpha_i\in\mathbb{R}, \text{ where } \alpha_i\to\infty \text{, as } i\to\infty\right\}$ defined by:$$\mathcal{F}(P):=SH^0_M(\pi^{-1}(P);\Lambda)=SH^0_M(\pi^{-1}(P))\otimes_{\Lambda_{\geq 0}}\Lambda$$ and use it to reconstruct in a canonical way a non-archimedean analytic mirror of $M$ and some of its related structures. We could compute the sheaf over the Novikov ring with our methods but this adds a layer of complexity to the statements. Since we will be working in the framework of rigid analytic spaces for the construction of the mirror (see Theorem \ref{thm-mirror-cons}), we decided to simplify the discussion by passing to the Novikov field already at this point.
\\


In our discussions involving non-archimedean geometry, we will assume for simplicity that $\mathbb{F}$ is a characteristic $0$ algebraically closed field, which implies that $\Lambda$ is also an algebraically closed field \cite[Appendix 1]{FOOO}. Let $\mathcal{Y}$ be a rigid analytic space over $\Lambda$ in the sense of Tate and $B$ a nodal integral affine manifold. We call a map $p:\mathcal{Y}\to B$ \emph{continuous} if  the preimage of each admissible polygon is an admissible open of $\mathcal{Y}$ and the preimages of admissible coverings are admissible coverings in $\mathcal{Y}$. We call such a continuous map \emph{Stein} if around every $b\in B$ there is an admissible convex polygon $P_b$ which contains $b$ in its interior such that for any admissible convex $Q\subset P_b$, the rigid analytic space $p^{-1}(Q)$ is isomorphic to an affinoid domain. The definitions of the \emph{nodal slide} and \emph{branch move} operations on eigenray diagrams can be reviewed from \cite[Section 7.2]{locality}.

\begin{theorem}\label{thm-mirror-cons}
Let $\pi_\cR:M_{\mathcal{R}}\to B_{\mathcal{R}}$ be a Lagrangian fibration associated to an eigenray diagram $\mathcal{R}$. There exists a rigid analytic space $\mathcal{Y}_{\mathcal{R}}$ over $\Lambda$ with a Stein continuous map $p_\mathcal{R}: \mathcal{Y}_{\mathcal{R}}\to B_{\mathcal{R}}$ such that the push-forward under $p_\mathcal{R}$ of the structure sheaf of $\mathcal{Y}_{\mathcal{R}}$ is canonically isomorphic as a sheaf of $\Lambda$-algebras to the sheaf $$\cF_\mathcal{R}(P):=SH^0_{M_\mathcal{R}}(\pi_{\cR}^{-1}(P);\Lambda).$$ Moreover,

\begin{enumerate}
\item $p_\mathcal{R}: \mathcal{Y}_{\mathcal{R}}\to B_{\mathcal{R}}$ is an affinoid torus fibration in the complement of the set of nodal points $N_\cR\subset B_\cR$.
\item The space $\mathcal{Y}_{\mathcal{R}}$ is smooth if and only if the nodes of $\cR$ all have multiplicity $1$, which is equivalent to $\pi_\cR$ having at most one critical point in each fiber.
\item There is a canonical volume form $\Omega_\cR$  on the smooth locus of $\mathcal{Y}_{\mathcal{R}}$. 

\item Let $\cR_1$ and $\cR_2$ be eigenray diagrams related by a branch move. Then, we have an analytic isomorphism $\cY_{\cR_1}\simeq \cY_{\cR_2}$ intertwining the torus fibrations and the volume forms.
\item \label{thm-mirror-cons:it4}Let $\cR_1$ and $\cR_2$ be eigenray diagrams related by a nodal slide that preserve all the multiplicities. Then, we have an analytic isomorphism $\cY_{\cR_1}\simeq \cY_{\cR_2}$ between the associated mirrors. The isomorphism intertwines the volume forms, but not the torus fibrations.

\item The symplectic invariant $SH_{M_{\mathcal{R}}}^0(M_{\mathcal{R}};\Lambda)$ is isomorphic as a $\Lambda$-algebra to the algebra of entire functions on $\mathcal{Y}_{\mathcal{R}}$.
\end{enumerate}

\end{theorem}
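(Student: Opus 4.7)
The plan is to construct $\cY_\cR$ directly from the sheaf $\cF_\cR$ by first computing its sections on admissible polygons via the axiomatic properties of relative $SH$, and then gluing the corresponding affinoid spectra. We establish two local structural results: (a) for an admissible convex polygon $P\subset B_\cR\setminus N_\cR$, the algebra $\cF_\cR(P)$ is affinoid and its spectrum is an affinoid subdomain of $(\Lambda^*)^2$ cut out by the rational polytope combinatorially dual to $P$; and (b) for a polygon $P$ containing a single node $n$ of multiplicity $k$ in its interior, the algebra $\cF_\cR(P)$ is the affinoid algebra of a neighborhood of an $A_{k-1}$-type surface singularity of the form $\{uv = f(z)\}$ with $f$ a polynomial of degree $k$. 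Given these two computations, the Mayer--Vietoris property (Theorem \ref{thm-mv-recall}) ensures that $\mathrm{Sp}\,\cF_\cR(P)$ together with the restriction maps satisfies sheaf-theoretic gluing axioms, so they assemble into a rigid analytic space $\cY_\cR$ whose pushforward under the natural map $p_\cR$ is canonically $\cF_\cR$.

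The identification in (a) reduces, via the locality-for-complete-embeddings axiom, to the computation of $SH^0$ for a neighborhood of the zero section in $T^*T^2$, which yields the expected affinoid torus algebra. The identification in (b) is the main technical obstacle. The strategy is to use Mayer--Vietoris to decompose $P$ as the union of a small neighborhood of $n$ and the complement of an even smaller neighborhood; on the latter one has an affinoid torus by (a) with a wall emanating from $n$, and the Hartogs property allows one to extend sections across the cut. The wall-crossing identification between adjacent smooth charts must then match the classical Kontsevich--Soibelman transformation, and this is pinned down by the holomorphic volume form preservation property of wall crossing highlighted in the abstract; working out normalizations carefully produces the $A_{k-1}$ model on the nose.

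Once the local models and gluing maps are known, the listed items follow. Item (1) is immediate from (a). Item (2) is (b), since an $A_{k-1}$ singularity is smooth exactly when $k=1$, and the equivalence with the ``one critical point per fiber'' condition is standard for almost toric fibrations. For (3), one takes the canonical logarithmic volume form $d\log z_1 \wedge d\log z_2$ on each affinoid torus in the smooth locus and checks compatibility under the wall-crossing gluing using the volume form preservation property. Items (4) and (5) reduce to the intrinsic nature of $\cF_\cR$: branch moves and equi-multiplicity nodal slides produce symplectomorphic total spaces $M_{\cR_1}\cong M_{\cR_2}$ (by the almost toric surgery results cited in the excerpt), so the sheaves and hence the mirrors are canonically identified; the volume forms are intertwined because they depend only on the symplectic datum, while the torus fibrations are intertwined only when the fibration itself is preserved, that is, for branch moves but not for slides. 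Finally, for (6), one exhausts $M_\cR$ by an increasing admissible family $P_n\uparrow B_\cR$ and uses a completeness property of relative $SH$ to identify $SH^0_{M_\cR}(M_\cR;\Lambda) = \varprojlim\cF_\cR(P_n)$, which via the pushforward identification is exactly $\varprojlim\cO_{\cY_\cR}(p_\cR^{-1}(P_n))$, i.e.\ the algebra of entire functions on $\cY_\cR$. The principal obstacle throughout is step (b) and the wall-crossing verification, where the bulk of the axiomatic machinery of the paper is deployed.
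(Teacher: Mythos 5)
Your overall architecture — compute local models on admissible polygons, prove the $A_{k-1}$ identification by reducing to the punctured plane and gluing via wall-crossing, assemble the affinoid spectra into $\cY_\cR$, and read off the six items — is the same as the paper's. But your step (b) has a genuine gap in how the wall-crossing is pinned down.

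You claim that the wall-crossing between the two smooth half-plane charts ``is pinned down by the holomorphic volume form preservation property of wall crossing.'' This is not sufficient. Preservation of the log volume form $\Omega_0=d\log\eta\wedge d\log\xi$ constrains the wall-crossing to the form $\eta\mapsto\eta$, $\xi\mapsto\xi\cdot h(\eta)$ for an arbitrary Laurent series $h$ — \emph{every} such substitution preserves $\Omega_0$. So the volume form argument (Proposition \ref{prop-WC-BV}) only kills the $\eta$-component of the wall-crossing; it says nothing about $h$. Norm preservation and the ``identity in zeroth order'' input (Lemma \ref{lmWallCrossing}) then constrain $h$ to be of the form $1+a_1\eta+\cdots$ in the upper half and $\eta^k(1+b_1\eta^{-1}+\cdots)$ in the lower half, but these are still underdetermined. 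The missing ingredient is the monodromy argument of Proposition \ref{thm-monodromy}: one produces, via Proposition \ref{prop-max-action}, a global section of $\cF_k$ over a polygon $S$ containing the node whose restriction to an arc $X\subset \partial S$ is exactly $\xi$, and then chases its formal expansion around the node through both wall-crossings. Equating the two resulting expressions forces $1+a_1\eta+\cdots=\eta^k(1+b_1\eta^{-1}+\cdots)$; for $k=1$ this uniquely gives $(1+\eta)$, and for $k>1$ one still needs the nodal-slide/resolution trick of Corollary \ref{cor-WC-final}. Without this monodromy step, your identification of $\cF_\cR(P)$ with the $A_{k-1}$ model ``on the nose'' is not justified.

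A secondary issue: you treat $p_\cR$ as ``the natural map'' once the spectra are glued, but constructing a well-defined continuous map $\cY_\cR\to B_\cR$ from the gluing data is not automatic and requires the Independence and Separation properties of Theorem \ref{thm-four-prop} (this is the content of Proposition \ref{propProjInt} and Lemma \ref{lmRecoverXi}); these in turn are proved only after the local computation, not as abstract consequences of Mayer--Vietoris. The paper explicitly poses the question of whether the gluing properties can be proved a priori from the sheaf axioms, and does not claim to know the answer — so appealing to ``sheaf-theoretic gluing axioms'' as you do elides a nontrivial step.
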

\begin{remark}
In Theorem \ref{thm-mirror-cons} item (\ref{thm-mirror-cons:it4}) it is important that we consider only paths where the multiplicities of nodes are not changed. See Remark \ref{rmkMultiplicityResolution} for what happens when multiplicities are modified by nodal sliding.
\end{remark}

\begin{remark}
We do not claim that the preimage under $p_\mathcal{R}$ of every admissible convex polygon $P$ is an affinoid domain, only when it is sufficiently small (see the definition of a \emph{small admissible polygon} in Section \ref{ss-eigenray} and Proposition \ref{prpLocalSmall}). Tony Yue Yu informed us that a straightforward induction based on \cite[Proposition 6.5]{yu} shows that when $P$ does not contain a node, then indeed the preimage is always an affinoid domain.
\end{remark}

\subsection{The construction}

We will abbreviate $MaxSpec$ by $M$ in this paper. For $A$ an affinoid algebra, we will denote by $M(A)$ the corresponding affinoid domain.

The underlying set of the rigid analytic space $\cY_{\cR}$ of Theorem \ref{thm-mirror-cons} has the following simple description. For each admissible convex polygon denote by $M(P):=M(\cF_{\cR}(P))$ the set of maximal ideals of $\cF_{\cR}(P)$. Given a cover $\cU$ of $B_\cR$ by admissible convex polygons let $\cY_{\cU}:=\coprod_{P\in\cU} M(P)/\sim$ where we quotient by the equivalence relation generated by pairs $(\rho_1(z),\rho_2(z))\in M(P)\times M(Q)$  where $\rho_1,\rho_2$ are the inclusion maps $M(P\cap Q)\to M(P)$ and $M(P\cap Q)\to M(Q)$ induced respectively by the restriction maps $\cF_{\cR}(P)\to\cF_{\cR}(P\cap Q)$ and $\cF_{\cR}(Q)\to\cF_{\cR}(P\cap Q)$ and $z\in M(P\cap Q)$. If $\cU'$ is a refinement of $\cU$ there is a natural map $\cY_{\cU'}\to\cY_{\cU}$. We define $\cY:=\varprojlim_{\cU}\cY_{\cU}$.

We refer to this as the \emph{closed string mirror}. Note that it is not a priori clear that this construction gives rise to a rigid analytic space. For this we need to know first of all that the gluing relation is actually an equivalence relation. In addition, we need to know that for $Q\subset P$ small enough $\cF_{\cR}(P)$ is affinoid and the restriction map $\cF_{\cR}(P)\to\cF_{\cR}(Q)$ induces an inclusion of an affinoid subdomain. 


The following theorem gives the necessary properties. In the statement we refer to  the notion of a \emph{small admissible polygon} whose definition appears in Section \ref{ss-sheaf-G}.  Every admissible open in $B_\cR$ is a finite union of small admissible polygons and the intersection of a pair of small admissible polygons is a small admissible polygon.

\begin{theorem}\label{thm-four-prop}
For $\mathcal{R}$ an eigenray diagram, the sheaf $\mathcal{F}_\cR$ satisfies the following properties

\begin{itemize}
\item (Affinoidness) If $P$ is a small admissible polygon, $\mathcal{F}_\cR(P)$ is a reduced affinoid algebra.
\item (Subdomain) If $Q\subset P$ are small admissible polygons, then the morphism of affinoid domains induced from the restriction map $\mathcal{F}_\cR(P)\to \mathcal{F}_\cR(Q)$: $$M(\mathcal{F}_\cR(Q))\to M(\mathcal{F}_\cR(P))$$ has image an affinoid subdomain and it is an isomorphism of affinoid domains onto its image.
\item (Strong cocycle condition) For small admissible polygons $Q,Q'\subset P$, we have $$im(M(\mathcal{F}_\cR(Q)))\cap im(M(\mathcal{F}_\cR(Q')))=im(M(\mathcal{F}_\cR(Q\cap Q')))$$ where $im$ denotes the image in $M(\mathcal{F}_\cR(P))$ induced by the relevant restriction map. \footnote{An important non-trivial special case is that if $Q$ and $Q'$ are disjoint, then so are $im({M}(Q))$ and $ im({M}(Q'))$.} 
\item (Independence) Let $Q_1,\ldots, Q_N\subset P$ be small admissible polygons such that $P=\bigcup Q_i$. Then $$\bigcup  im(M(\mathcal{F}_\cR(Q_i)))=  M(\mathcal{F}_\cR(P)).$$

\item (Separation) Let $Q\subset P$ be an inclusion of small admissible polygons. For any $p\in M(\cF_\cR(P))\setminus im(M(\cF_{\cR}(Q))$ there is a small admissible polygon $Q'\subset P$ which is disjoint of $Q$ such that $p\in im(M(\cF_{\cR}(Q')))$. 

\end{itemize}

\end{theorem}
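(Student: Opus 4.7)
The plan is to derive all five properties from two explicit local computations: one when the small admissible polygon $P$ contains no node, and one when $P$ contains exactly one node in its interior. In both cases, the locality property for complete embeddings (mentioned in the abstract) lets us replace $M_\cR$ by a standard local model of a neighborhood of $\pi_\cR^{-1}(P)$, reducing $\cF_\cR(P)$ to a computable invariant of that model. Mayer--Vietoris and the fact that any small admissible polygon decomposes as an intersection of a node-free piece and at most one node-containing piece then cover all relevant cases.

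In the node-free case, Arnold--Liouville identifies a neighborhood of $\pi_\cR^{-1}(P)$ with a neighborhood of the zero section in $T^*T^2$, with $\pi_\cR$ corresponding to the cotangent projection composed with the integral affine chart on $P$. A by now standard computation then identifies $\cF_\cR(P)$ with the affinoid algebra of analytic functions on $\val^{-1}(P)$ in the non-archimedean torus $(\Lambda^\times)^2$, namely the Tate-type algebra of Novikov Laurent series whose exponents are weighted by the support function of $P$. For this model all five properties are essentially transparent: the algebra is reduced and affinoid; restriction to $Q\subset P$ is the standard inclusion of a Laurent subdomain; $M(\cF_\cR(P))$ is exactly $\val^{-1}(P)$; and Strong cocycle, Independence, and Separation reduce to the corresponding elementary properties of the map $\val$.

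In the one-node case, I would build $\cF_\cR(P)$ from two Mayer--Vietoris pieces: a neighborhood of the monodromy-invariant ray through the node, and an annular piece avoiding the node (handled by the previous paragraph). The gluing is governed by the wall-crossing/volume-form preservation property mentioned in the abstract, which pins down the transition coordinate change and identifies $\cF_\cR(P)$ with the standard local mirror of a focus-focus singularity, appropriately truncated to $P$. Once this description is in hand, Affinoidness and the affinoid subdomain statement in Subdomain follow by inspection, while Strong cocycle, Independence, and Separation are again obtained by tracking an appropriate tropicalization map $M(\cF_\cR(P))\to P$ compatible with restriction, using that on the smooth part the two wall-crossing charts agree with the node-free description.

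The main obstacle, in my view, is the one-node computation and, in particular, verifying Separation and the Strong cocycle at a maximal ideal tropicalizing to the node itself, where the local mirror is singular: one must rule out ``ghost'' maximal ideals that fail to come from a unique geometric point of $B_\cR$, and one must show that the tropicalization is surjective over the node in a manner compatible with restriction to every small polygon through the node. Here I would combine the explicit focus-focus computation with the Hartogs property mentioned in the abstract, transferring the argument from the singular fiber to its complement, where everything reduces to the node-free case already treated.
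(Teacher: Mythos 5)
Your strategy matches the paper's in outline: reduce to the local models $\pi_k: M_k \to B_k$ via locality for complete embeddings (Proposition~\ref{prpLocalSmall} plus Theorem~\ref{thm-local} in the paper), identify the relative-$SH$ sheaf with a concrete B-side model, and read off the five properties. But the route through the one-node local computation differs in a way that matters, and it affects how the ``ghost maximal ideals'' worry you raise at the end gets resolved.

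The Mayer--Vietoris decomposition you propose for the one-node case --- a neighborhood of the monodromy-invariant ray through the node together with an annular piece avoiding the node --- does not actually reduce the problem: the neighborhood of the ray still contains the node, so its relative $SH$ is no easier to compute than what you started with. The paper instead works on $B_k^{\mathrm{reg}} = B_k \setminus \{0\}$ first, decomposing it into the two complements $B_k\setminus l_\pm$ of the eigenrays. On each of these, a complete embedding $M_0 \hookrightarrow M_k$ (obtained by removing Lagrangian tails over one eigenray) together with the locality theorem produces an isomorphism $\cF_k|_{B_k\setminus l_\pm} \simeq \cO_k|_{B_k\setminus l_\pm}$; the wall-crossing computation then shows the two identifications agree on the overlap. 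This gives Theorem~\ref{thm-F_k,reg}. The Hartogs property $\cF_k(P)\simeq\cF_k(\partial P)$ for $P$ containing the node (Proposition~\ref{prop-Hartogs-A}, matched by the B-side Proposition~\ref{prop-Hartogs-B}) then closes the gap to all of $B_k$, yielding the full sheaf isomorphism of Theorem~\ref{thm-local-comp}. So Hartogs is not a fallback but the central mechanism for handling the singular fiber; Mayer--Vietoris never enters the local argument.

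Once $\cF_k \simeq \cO_k = (p_k)_*\cO_{Y_k^{\mathrm{an}}}$ is established as an isomorphism of sheaves, the five properties of Theorem~\ref{thm-four-prop} are purely set-theoretic facts about the map $p_k: Y_k^{\mathrm{an}} \to B_k$: one has $M(\cO_k(Q)) = p_k^{-1}(Q)$ for every admissible $Q$, so Strong cocycle, Independence, and Separation are preimage identities, while Affinoidness and Subdomain were proved en route in Proposition~\ref{prop-Stein-k}. This is Corollary~\ref{cy4PropertiesOk}. In particular there is nothing to rule out at the node: since $Y_k$ is normal (it is an affine surface with isolated singularities) and its analytification is worked with concretely, ``ghost maximal ideals'' do not arise. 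The genuine technical weight sits in the comparison theorem --- specifically the Hartogs property on the $A$-side and the pinning down of the wall-crossing map via the BV structure, $H_1$-grading, norm preservation, and the monodromy argument --- rather than in the deduction of the five properties, which is the reverse of the emphasis in your sketch.
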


The above theorem allows us to endow the closed string mirror with a  $G$-topology and a structure sheaf turning it into a rigid analytic space. This is done in Section \ref{s-cons}. In addition we show that  any rigid analytic space $X$ constructed from a sheaf $\cF$ on $B_{\cR}$ satisfying the above properties is endowed with a  Stein continuous map $p:X\to B_{\cR}$ so that there is a canonical isomorphism of sheaves of $\Lambda$-algebra $\cF\simeq p_*\cO_X$. 

\begin{remark}
The proof of Theorem \ref{thm-four-prop} relies on local computations and results of \cite{locality} concerning locality of relative SH for complete embeddings. More details are given in the next subsection. We pose the following 
\begin{question}
Is it possible to prove the properties in Theorem \ref{thm-four-prop} as a priori properties of the sheaf $\cF_{\cR}$ without first computing?
\end{question}
We expect the answer is positive for singular Lagrangian torus fibrations satisfying some natural hypotheses. See Section \ref{secFutureWork} for one class of examples. This is pursued in other work. 
\end{remark}

\begin{remark}
There is an alternative approach to constructing the closed string mirror. 
Namely, we could define the underlying set of the mirror as 
$$
\cY=\coprod_{b\in B_{\cR}}M(\cF_{\cR}(\{b\})).
$$
We expect this construction is correct for more general Lagrangian torus fibrations provided neighborhoods of singular fibers are modeled on Liouville domains equipped with a radiant Lagrangian torus fibration whose Liouville field lifts the Euler vector field in the base. The construction would have a number of conceptual advantages. Among other things, there is no need to establish the cocycle condition, and the function $p_{\cR}:\cY\to B_{\cR}$ is obvious from the construction. However, in order to endow $\cY$ with the structure of a rigid analytic space we still need to show that for sufficiently small polygons $P$ the natural restriction maps $\cF(P)\to \cF(\{b\})$ for $b\in P$ give rise to a bijection $M(\cF(P))\simeq p_{\cR}^{-1}(P)$. The proof of this statement ends up amounting to proving Theorem \ref{thm-four-prop} and the additional arguments going into the reconstruction of $p_{\cR}$ from the gluing construction.
\end{remark}








\subsection{Local computations}\label{ss-local-comp}

We now review the local computations underlying Theorem \ref{thm-four-prop}. 
For an integer $k\geq 0$ we denote by $B_k$ the nodal integral affine structure on $\bR^2$ which is induced by a Lagrangian torus fibration $M\to \mathbb{R}^2$ with a single possibly non-smooth fiber at the origin with $k$ focus-focus singularities. The complete definition of this nodal integral affine manifold is in Section \ref{ss-integral-affine-local}. We denote by $M_k$ the corresponding symplectic manifold and by $\cF_k$ the sheaf of relative $SH^0$ on $B_k$ with its $G$-topology (see Section \ref{ss-6.1}).

Consider the affine variety $$Y_k=Spec(\Lambda [x,y,u^{\pm}]/(xy-(u+1)^k)$$ for $k\geq 0$ and its rigid analytification $Y_k^{an}$. A review of this notion is given in \S \ref{s-analytification}. Note that $Y_k$ is smooth if and only if $k=0,1$. Since $\Lambda$ is algebraically closed, as a set $$Y_k^{an}=M(\Bbbk[x,y,u^{\pm}]/(xy-(u+1)^k).$$ In Section \ref{Sec-Yk-analyt}, we show that $Y_k^{an}$ admits a Stein continuous map $p_k$ to $\bR^2$. A construction by \cite{koso} associates with such a map an integral affine structure with singularities on the base.  For $k=0$, we have $Y_0^{an}\simeq (\Lambda^*)^2$ as a set and $p_0$ is nothing but the standard tropicalization map. We show that for any $k$ this nodal integral affine structure is isomorphic to $B_k$.  This is a slight generalization of \cite[Section 8]{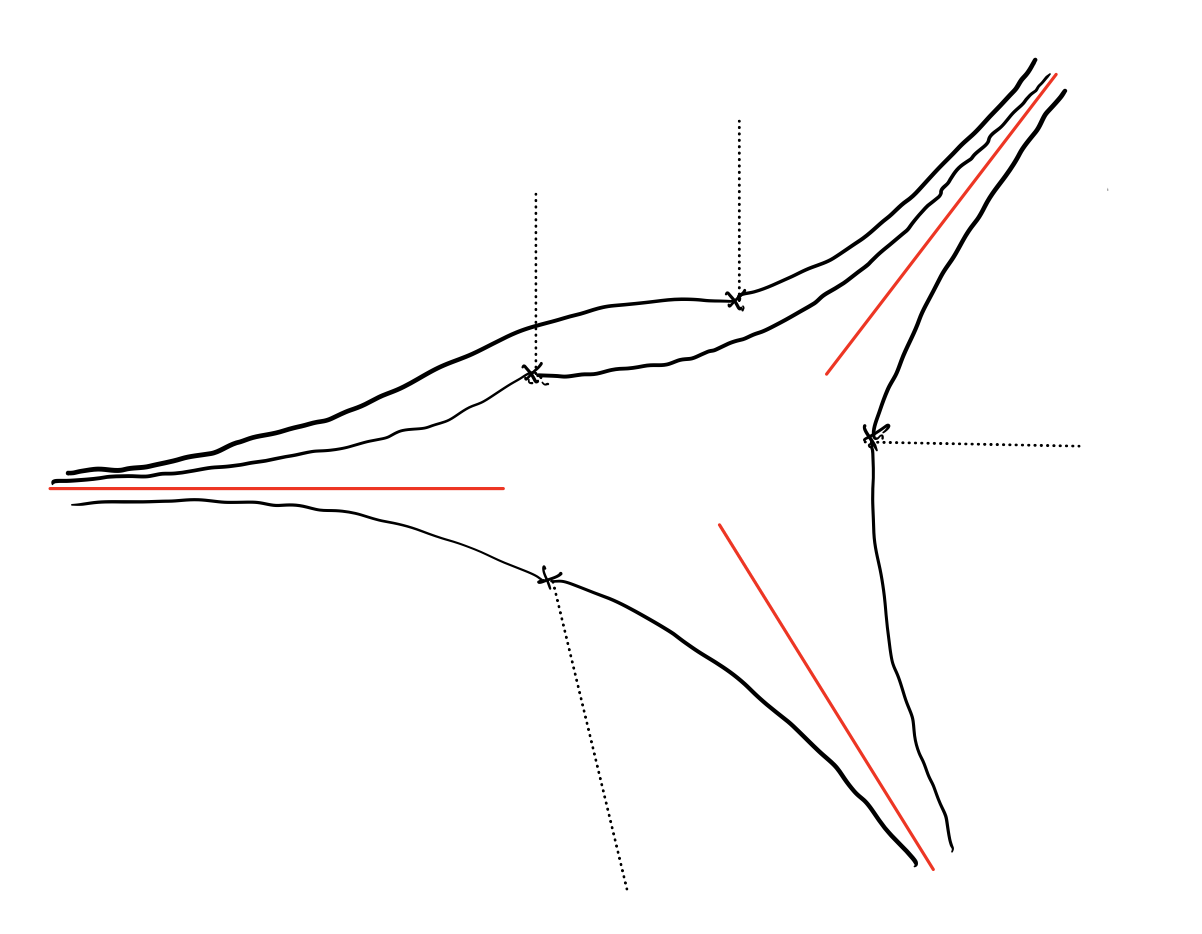} which addresses the case $k=1$. We can now state our main local computation.

\begin{theorem}\label{thm-local-comp}
There is an isomorphism of sheaves of $\Lambda$-algebras on $B_k$
\begin{equation}\label{eqthm-local-comp}
\cF_k\simeq (p_k)_*\cO_{Y_k^{an}}. 
\end{equation}
\end{theorem}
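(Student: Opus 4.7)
The plan is to produce compatible isomorphisms $\cF_k(P) \simeq \mathcal{O}_{Y_k^{an}}(p_k^{-1}(P))$ on a generating family of small admissible polygons $P \subset B_k$, then invoke the sheaf property on both sides (Mayer--Vietoris on the left, the rigid-analytic sheaf axiom on the right) to globalize. The small admissible polygons split naturally into those that avoid the nodal point $0 \in B_k$ and those that contain it, and I would handle these two cases separately.

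For $P$ avoiding the origin, the preimage $\pi_k^{-1}(P)$ sits as a complete subdomain inside a simpler local model --- essentially a neighborhood of the zero section of $T^*T^2$ twisted by the nodal integral affine structure on $B_k \setminus \{0\}$ carved out by the $k$ cuts. The locality of relative SH for complete embeddings established in \cite{locality} identifies $\cF_k(P)$ with the corresponding piece of SH of the local model, and a direct computation for the smooth Lagrangian torus fibration model then identifies it with the ring of analytic functions on the affinoid subdomain of $(\Lambda^*)^2$ over $P$. The cuts of $B_k$ correspond exactly to the walls in the Kontsevich--Soibelman construction of $Y_k^{an}$ presented in Section \ref{Sec-Yk-analyt}, and the holomorphic volume form preservation property of wall crossing in relative SH mentioned in the abstract shows that the transition automorphism across a cut on the $\cF_k$-side coincides with the standard focus-focus wall-crossing automorphism gluing the affinoid charts of $Y_k^{an}$.

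For $P$ containing the node, I would cover $P$ by a small neighborhood $U$ of $0$ together with polygons $P_1, \ldots, P_m \subset P$ that avoid the node and whose pairwise intersections also avoid the node. Mayer--Vietoris reduces the computation of $\cF_k(P)$ to the previously computed pieces $\cF_k(P_j)$ and $\cF_k(P_i \cap P_j)$ together with the section $\cF_k(U)$ over a neighborhood of the node. For the latter, construct explicit generators $x, y, u+1 \in \cF_k(U)$ --- the first two associated to SH classes supported near the two halves of the cut emanating from the node, the third to the class representing the loop around the node --- and verify the relation $xy = (u+1)^k$ from the composition of the $k$ focus-focus wall-crossing automorphisms. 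The Hartogs property reduces the determination of $\cF_k(U)$ to sections over $U \setminus \{0\}$, which are controlled by the node-avoiding case, and affinoidness from Theorem \ref{thm-four-prop} then identifies $\cF_k(U)$ with the ring of analytic functions on the germ of the hypersurface $\{xy = (u+1)^k\}$ at its singular point, matching $(p_k)_*\mathcal{O}_{Y_k^{an}}(U)$.

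The main obstacle is the computation of $\cF_k(U)$ at the node without direct pseudoholomorphic curve counting. The approach is to combine locality from \cite{locality} to ensure that only the germ of the singular fiber contributes; the Hartogs property to extend sections from the complement of the node; and the volume-form preservation of wall crossing to extract the relation $xy = (u+1)^k$ as precisely the obstruction to trivializing the $k$-fold composition of focus-focus transformations. Compatibility of the local isomorphisms under restriction, which is automatic from their canonical construction via the same generators, together with the strong cocycle condition from Theorem \ref{thm-four-prop}, then glues them into the global isomorphism \eqref{eqthm-local-comp}.
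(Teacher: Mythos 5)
Your high-level structure --- locality for node-avoiding polygons, wall-crossing matching, Hartogs to extend across the node --- does track the paper's proof, but you substantially underestimate the hardest step and have a circularity.

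The central gap is in your sentence claiming that ``the holomorphic volume form preservation property of wall crossing in relative SH \ldots shows that the transition automorphism across a cut \ldots coincides with the standard focus-focus wall-crossing automorphism.'' This is false as stated. Volume-form preservation (the BV structure argument, Proposition \ref{prop-WC-BV}) only forces $\eta \mapsto \eta$; combined with the $H_1$-grading (Proposition \ref{prppi1grading}) it leaves the wall crossing of the form $\xi \mapsto \xi \cdot h(\eta)$ for an \emph{arbitrary} formal series $h$. Pinning down $h = (1+\eta)^k$ requires the rest of Sections \ref{s-7}--\ref{s-8}: (i) Lemma \ref{lmWallCrossing}, which shows the wall crossing is norm-close to the identity and has all eigenvalues equal to $1$, forcing the constant term $1$ and constraining the shape of the upper and lower wall crossings separately (Proposition \ref{PrpUpperLowerWC}); (ii) the monodromy argument of Proposition \ref{thm-monodromy}, which relates the upper and lower transition functions by transporting the explicit class constructed in Proposition \ref{prop-max-action} around the node; and (iii) for $k > 1$ a further reduction by nodal slides to the $k=1$ case (Corollary \ref{cor-WC-final}). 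Without the monodromy argument nothing tells you that $h$ is a polynomial, let alone $(1+\eta)^k$.

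You also invoke Theorem \ref{thm-four-prop} (affinoidness) to identify $\cF_k(U)$ with the coordinate ring near the singular point, but in the paper Theorem \ref{thm-four-prop} is proved \emph{using} Theorem \ref{thm-local-comp}, so this is circular. Finally, your formulation of the Hartogs step as a reduction to sections over $U \setminus \{0\}$ is not quite available in the $G$-topology of admissible polygons (that set is not compact); the paper's Proposition \ref{prop-Hartogs-A} reduces $\cF_k(P)$ to $\cF_k(\partial P)$, where $\partial P$ is a finite union of edges, and then compares directly with the analogous statement for $\cO_k$ (Proposition \ref{prop-Hartogs-B}) rather than constructing explicit generators $x,y,u+1$ and verifying the relation $xy = (u+1)^k$ on the symplectic side, which the paper never does and which seems hard to make precise without already knowing the wall-crossing formula.
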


\begin{remark}\label{rmkMultiplicityResolution}
Note that for $k>1$,  using nodal slides, we can find a multiplicity $1$ eigenray diagram $\mathcal{R}$ such that $M_\cR$ is symplectomorphic (fiber preserving outside a compact set) to $M_k$ \cite[Proposition 7.8]{locality}. The  mirror we produce for this $M_{\cR}$ is the analytification of the resolution of the $A_k$ singularity of $Spec(\Lambda[x,y,u^{\pm}]/(xy-(1+u)^k).$ Compare with \cite{pomerleano}. This reflects the fact that a Lagrangian section does not globally generate but it does so locally if the cover is chosen appropriately. For global generation one needs $k$ distinct Lagrangian sections.
\end{remark}

 Theorem \ref{thm-local-comp} for $k=0$ is not too difficult to prove. Let us note that in this case it is possible to compute the relative symplectic cohomology over all admissible convex polygons, see Theorem \ref{thm-full-BV-comp}. 

\begin{remark}
This brings us very close to computing the relative symplectic cohomology over all admissible polygons using the spectral sequence displayed in \eqref{eq-spectral}.
\end{remark}

We briefly explain how Theorem \ref{thm-local-comp} is proven in the case $k=1$, which is more difficult. Let us focus on computing the isomorphism for sections over an admissible convex polygon containing the node. Let us first note  that both sides of \eqref{eqthm-local-comp} satisfy a Hartogs property  (Propositions \ref{prop-Hartogs-A} and \ref{prop-Hartogs-B}) which reduces the problem to comparing the two sheaves on an annular region surrounding the nodal point. Furthermore, using the main results of  \cite {locality},  properties of the map $p_1$, and the $k=0$ case, we know we have an isomorphism of sheaves between the restrictions of both $\cF_1$ and $(p_1)_*\cO_{Y_1^{an}}$ to the complement of each eigenray in $B_1$ and the restrictions of $p_*\cO_{(\Lambda^*)^2}:=(p_0)_*\cO_{Y_0^{an}}$.  

In particular, letting $\cG$ denote ether of the sheaf $\cF_1$ or the sheaf 
$(p_1)_*\cO_{Y_1^{an}}$ and letting $U\subset \bR^2$ denote either the upper or the lower open half plane, we get two different identifications $\cG|_U\simeq p_*\cO_{(\Lambda^*)^2}|_U$. This reduces the claim to a comparison of the transition map between these identifications. We refer to these as the \emph{wall crossing maps}. We then show the wall crossing map on the A-side is uniquely determined by the following considerations 
\begin{itemize}
\item The extra grading by $H_1(M_1;\mathbb{Z})\simeq \mathbb{Z}$.
\item Wall crossing preserves the logarithmic volume form on $(\Lambda^*)^2$. This is a consequence among other things of the fact that the wall crossing map on relative $SH^*$ is a map of BV algebras. See Proposition \ref{prop-WC-BV}.
\item
The wall crossing map preserves norms  and it is identity in the zeroth order. See Lemma \ref{lmWallCrossing}. 
\item 
The image of the restriction map in $\cF_1$ from an admissible convex polygon containing the node to an admissible polygon intersecting at most one of the eigenrays contains sufficiently many elements.  See Proposition \ref{prop-max-action}.

\item
 We can therefore relate upper and lower wall-crossing maps by chasing the images of these elements around the node, see Proposition \ref{thm-monodromy}.
\end{itemize}
We find that this wall crossing map matches the one on the B-side, which is easy to compute, see Proposition \ref{prop-KS-cover}. 
\subsection{Relationship with other work involving mirror symmetry and some explicit conjectures}
\subsubsection{Relation with Kontsevich-Soibelman's work}
First, let us relate our work with Kontsevich-Soibelman's work from \cite{koso} to orient the reader who is familiar with this influential paper. We will not give proofs but they are immediate from the construction. Consider the setup and the construction in Theorem \ref{thm-mirror-cons} and assume that the multiplicities of the nodes are all $1$. The map $p_\cR: \mathcal{Y}_{\mathcal{R}}\to B_{\mathcal{R}}$ is a singular affinoid torus fibration in the sense of \cite{koso}. The set of smooth points is precisely $B_{\mathcal{R}}^{reg}$ and the induced integral affine structure  agrees with the given one. Moreover, $(Y_\cR,p_\cR,\Omega_\cR)$ solves the lifting problem for the $\Lambda$-affine structure on $B^{reg}_\cR$ given by the group homomorphism $\mathbb{R}\to \Lambda^*$, $x\mapsto T^x$.

Assume that in Kontsevich-Soibelman's framework we choose the ``lines" from Section 9.1 of \cite{koso} to be as shown in Figure \ref{fig-koso}. These satisfy the Axioms listed in \cite[Section 9.2]{koso}, notably Axiom 2. In particular, there is no scattering (or composed lines in the language of Kontsevich-Soibelman). That our solution to the lifting problem is the same as theirs follows from our computation of wall-crossing (see \cite{koso}'s second highlighted equation on page 55 and Section 11.4). In fact taking Kontsevich-Soibelman's construction as the definition of $p_\cR: \mathcal{Y}_{\mathcal{R}}\to B_{\mathcal{R}}$, one could think of Theorem \ref{thm-mirror-cons} as a computation, or in other words an actual symplectic meaning for Kontsevich-Soibelman construction (see Seidel's \cite[Section 3]{seidel} for the impetus of this idea). 

\begin{figure}
\includegraphics[width=0.6\textwidth]{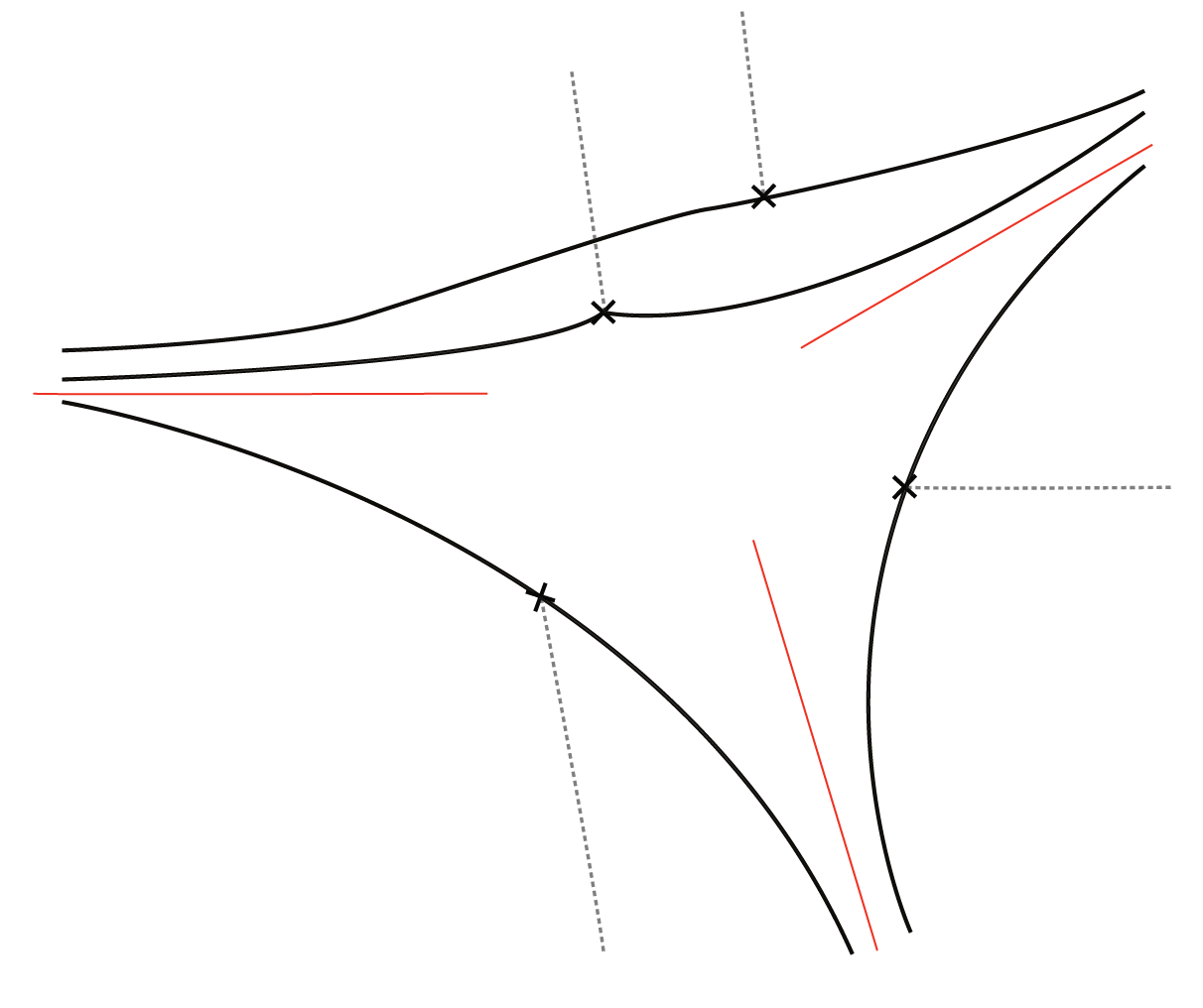}
\caption{The lines are the rays in solid black. The slopes of red rays and the eigenrays are cyclically ordered with respect to the cyclic order at infinity, where the slopes of a consecutive red ray and eigenray are not equal.}
\label{fig-koso}
\end{figure}
\subsubsection{Relation with Family Floer theory}\label{sss-ff}

Let us also make a comparison with Family Floer theory. Starting with $\pi_\cR:M_{\mathcal{R}}\to B_{\mathcal{R}}$, Hang Yuan constructs a rigid analytic space $\mathcal{Z}$ with an analytic torus fibration $\mathcal{Z}\to B_{\mathcal{R}}^{reg}$ in \cite[Theorem 1.3]{yuan}.

\begin{conjecture}
$\mathcal{Z}$ is isomorphic as a rigid analytic space to $p_\mathcal{R}^{-1}(B_{\mathcal{R}}^{reg})$ in a fiber preserving way.
\end{conjecture}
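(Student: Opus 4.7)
The plan is to produce the isomorphism locally over $B_\cR^{reg}$ via a closed-open comparison map and glue using the Stein property, with the wall-crossing characterization explained in \S\ref{ss-local-comp} as the ultimate bookkeeping device. First, note that by Theorem \ref{thm-mirror-cons} item (1) the restriction $p_\cR: p_\cR^{-1}(B_\cR^{reg}) \to B_\cR^{reg}$ is an affinoid torus fibration whose induced $\Lambda$-affine structure on $B_\cR^{reg}$ is the one coming from $\pi_\cR$. Yuan's construction produces $\mathcal{Z} \to B_\cR^{reg}$ as an analytic torus fibration of the same dimension, endowed with the \emph{same} underlying $\Lambda$-affine structure: in both cases the monodromy around each singular fiber is the prescribed focus-focus monodromy and the $T$-adic valuation on fibers matches the base point. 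A Stein reduction then reduces matters to identifying the two fibrations on each small admissible polygon $P \subset B_\cR^{reg}$, compatibly with restriction.

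Next, I would construct a candidate map $\Phi: \mathcal{Z} \to p_\cR^{-1}(B_\cR^{reg})$ using the closed-open map. A point of $\mathcal{Z}$ over $b \in B_\cR^{reg}$ is a pair $(L_b,\mathfrak{b})$ with $L_b=\pi_\cR^{-1}(b)$ and $\mathfrak{b}$ a bounding cochain. For every small admissible polygon $P \subset B_\cR^{reg}$ containing $b$, the closed-open map
$$
\mathcal{CO}^0_{(L_b,\mathfrak{b})} : SH^0_{M_\cR}(\pi_\cR^{-1}(P);\Lambda) \longrightarrow HF^0((L_b,\mathfrak{b}),(L_b,\mathfrak{b})) \simeq \Lambda
$$
is a unital $\Lambda$-algebra homomorphism and hence determines a maximal ideal of $\cF_\cR(P)$. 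These ideals are compatible with restriction and so yield a point of $p_\cR^{-1}(P)\subset \cY_\cR$ via the description of the underlying set given in the construction preceding Theorem \ref{thm-four-prop}. That $\Phi$ assembles into a morphism of rigid analytic spaces compatible with the torus-fibration structure should follow from the fact that on a single regular torus $L_b$ the set of bounding cochains modulo gauge equivalence is canonically the affinoid torus $\mathrm{Hom}(H_1(L_b;\bZ),\Lambda^*)$, which is exactly the fiber of $p_\cR$ over $b$.

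To show $\Phi$ is an isomorphism I would compare wall-crossings. Near a singular fiber, $B_\cR^{reg}$ is cut into two chambers by the relevant eigenray, and each of $\mathcal{Z}$ and $p_\cR^{-1}(B_\cR^{reg})$ restricts to a trivial affinoid torus over each chamber. The Family Floer wall-crossing for $\mathcal{Z}$ counts Maslov index $0$ disks and, as shown by Yuan, matches the Kontsevich-Soibelman formula. For $p_\cR$ the same formula holds by the computation outlined in \S\ref{ss-local-comp} and recorded after Theorem \ref{thm-local-comp}. Thus the gluing data for the two fibrations coincide, so $\Phi$ upgrades to an analytic isomorphism locally and hence globally; a residual $H^1(B_\cR^{reg};\Lambda^*)$-ambiguity is killed by the canonical choice of a Lagrangian section on both sides.

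The hard part will be the chain-level functoriality of the closed-open map under restriction to $\pi_\cR^{-1}(P)$ and its compatibility with Maurer-Cartan deformations uniformly in $b$, including the control of curved $A_\infty$ structures needed to assemble $\Phi$ into an analytic morphism rather than merely a set-theoretic bijection. If this becomes technically prohibitive, a cleaner alternative is to bypass the explicit construction of $\Phi$ and argue via a uniqueness principle: both $\mathcal{Z}$ and $p_\cR^{-1}(B_\cR^{reg})$ are Stein continuous lifts of the same $\Lambda$-affine structure whose wall-crossings coincide with Kontsevich-Soibelman's, and such lifts are unique up to canonical fiber-preserving isomorphism once a zero section is fixed.
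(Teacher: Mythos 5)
The statement you set out to prove is left as an explicit Conjecture in the paper, with no proof supplied; the authors only remark that a conceptual proof ``would involve setting up closed-open string maps that realize the idea that sections of the sheaf $\mathcal{F}_\cR$ give rise to functions on the Family Floer mirror.'' Your proposal is precisely a fleshing-out of that one-line suggestion, so there is no proof in the paper for it to be compared against; you have written a research plan for the strategy the authors themselves envisage.

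As a plan, it identifies the right ingredients: a point of $\mathcal{Z}$ over $b$ should be sent to the maximal ideal cut out by the closed-open map $\cF_\cR(P)\to HF^0((L_b,\mathfrak{b}),(L_b,\mathfrak{b}))\simeq\Lambda$, and the wall-crossing comparison (both sides match the Kontsevich--Soibelman formula, by Yuan for $\mathcal{Z}$ and by Corollary \ref{cor-WC-final} plus Proposition \ref{prop-KS-cover} for $p_\cR$) is the right consistency check. But the gaps you flag yourself are exactly the substance of the conjecture: you need chain-level compatibility of closed-open maps with the restriction maps of $\cF_\cR$, uniform control of the curved $A_\infty$ deformations over all $b$, and an upgrade of the resulting pointwise bijection to an analytic morphism. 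None of this is established in the paper, which is why it is a conjecture. Your fallback ``uniqueness principle'' --- that two Stein continuous lifts of the same $\Lambda$-affine structure, with equal wall-crossing and a common zero section, are canonically isomorphic --- would indeed bypass the chain-level work, but the paper neither states nor proves such a uniqueness theorem in the generality required (the discussion after Theorem \ref{thm-mirror-cons} records that $(\cY_\cR,p_\cR,\Omega_\cR)$ solves a lifting problem, but establishing uniqueness of solutions with prescribed wall-crossing and a section is an additional step one would need to import or prove). So your proposal is a reasonable sketch of the intended approach, not a proof, and the gaps it leaves open are the ones the authors themselves have not yet closed.
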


A conceptual proof of this conjecture would involve setting up closed-open string maps that realize the idea that sections of the sheaf $\mathcal{F}_\cR$ give rise to functions on the Family Floer mirror.

\subsubsection{Further expectations for closed string invariants}

Let's continue with closed string consequences. The first step to this is to extend the local computations to show that there is an isomorphism of algebras \begin{equation}\label{eq-der} SH^*_{M_{\mathcal{R}}}(\pi^{-1}(P);\Lambda)\to \Lambda^{*}Der(\mathcal{F}_\cR(P),\mathcal{F}_\cR(P)),\end{equation} whenever $P$ is a small admissible polygon.  The right hand side means the exterior algebra over $\mathcal{F}_\cR(P)$ of the module of algebra derivations of $\mathcal{F}_\cR(P)$. We proved this when $P$ does not contain a node in this paper. We are also confident that this is true when $P$ does not contain a node with multiplicity more than $1$. Otherwise we do not know the answer.

\begin{remark}
An element $a\in SH^1_M(K)$ defines the derivation $[a,\cdot]$ on $SH^0_M(K)$, so at least in degree $1$ the map can be defined naturally.
\end{remark}

Recall that given compact subsets $P_1,\ldots ,P_n$ of $B_\cR$, we have a convergent spectral sequence:\begin{align}\label{eq-spectral} 
\bigoplus_{0\neq I\subset [n], |I|=p } SH_M^q\left(\bigcap_{i\in I} \pi^{-1}
(P_i)\right) \Rightarrow SH_M^{p+q}\left(\bigcup_{i=1}^n \pi^{-1}(P_i)\right).
\end{align}

The differentials in the first page of the spectral sequence are precisely the Cech differentials. We believe that it is reasonable to conjecture that the spectral sequence degenerates after that page when $P_i$ are small admissible polygons. This would allow us to conclude the first part of the following.

\begin{conjecture}If the multiplicites of nodes in $\cR$ are all $1$, there exists a ring isomorphism $$SH_{M_{\mathcal{R}}}^k(M_{\mathcal{R}},\Lambda)\to\bigoplus_{p+q=k} H^p(\mathcal{Y}_{\mathcal{R}},\Lambda^qT\mathcal{Y}_{\mathcal{R}}),$$ where the right hand side is the Cech cohomology with respect to the defining affinoid cover. 

Using $\Omega_\cR$ we equip the polyvector fields with a differential by transporting the deRham differential. 
This differential is compatible with the BV operator.\end{conjecture}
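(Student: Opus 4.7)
The plan is to upgrade the degree-zero computation of Theorem \ref{thm-local-comp} to a graded sheaf identification
$\cF_{\cR}^*:= SH^*_{M_{\cR}}(\pi_{\cR}^{-1}(\cdot);\Lambda)\simeq (p_{\cR})_*\Lambda^* T_{\cY_{\cR}}$
on small admissible polygons, and then to feed this into the Mayer--Vietoris spectral sequence \eqref{eq-spectral} for an appropriate cover. Establishing the local identification is the goal of extending \eqref{eq-der} to the multiplicity-one nodal case. The non-nodal case reduces by the locality results of \cite{locality} to the standard toric model, where it is essentially the Hochschild--Kostant--Rosenberg calculation on an algebraic torus. The nodal case would follow the same plan as the $k=1$ argument outlined in Section \ref{ss-local-comp}: both sides satisfy a Hartogs property (the analogues of Propositions \ref{prop-Hartogs-A} and \ref{prop-Hartogs-B} for the full graded algebras), reducing the comparison to an annulus around the node where each side splits as two half-plane pieces glued by wall-crossing. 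On the A-side wall-crossing is a morphism of BV algebras (Proposition \ref{prop-WC-BV}), preserves $\Omega_{\cR}$, and is norm-preserving with identity leading term (Lemma \ref{lmWallCrossing}); combining this with the maximal action and monodromy arguments (Propositions \ref{prop-max-action}, \ref{thm-monodromy}) should pin it down as the natural polyvector-field lift of the $SH^0$ wall-crossing and match the B-side computation of Proposition \ref{prop-KS-cover}.

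With the local identification in hand, I would choose an admissible cover $\{P_i\}$ of $B_{\cR}$ by small admissible polygons closed under finite intersection. The preimages $\{p_{\cR}^{-1}(P_i)\}$ are then an affinoid cover of $\cY_{\cR}$ whose finite intersections are again affinoid (Theorem \ref{thm-four-prop}). Under the local identification, the $E_1$-page of \eqref{eq-spectral} becomes the \v{C}ech complex of the coherent sheaf $\Lambda^q T_{\cY_{\cR}}$ with respect to this cover, and by Tate acyclicity the $E_2$-page is $\check{H}^p(\{p_{\cR}^{-1}(P_i)\},\Lambda^q T_{\cY_{\cR}}) = H^p(\cY_{\cR},\Lambda^q T_{\cY_{\cR}})$. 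The ring structure induced by the pair-of-pants product matches the wedge/cup product on polyvector fields since the local identification is multiplicative and wall-crossing is a ring map.

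The remaining points are (a) degeneration of \eqref{eq-spectral} at $E_2$ and (b) compatibility of the transported de Rham differential with the BV operator $\Delta$. Compatibility at the local level is a direct consequence of Proposition \ref{prop-WC-BV} together with the classical statement that $\Delta$ on $SH^*$ of the standard toric chart equals divergence with respect to the logarithmic volume form; since $\Omega_{\cR}$ is well-defined globally precisely because wall-crossing preserves it, this local match upgrades to an identification of differentials on the level of sheaves, which in turn propagates through the \v{C}ech complex to the $E_2$ page. Granted this, degeneration at $E_2$ would be a Hodge-to-de Rham statement for the rigid analytic Calabi--Yau $(\cY_{\cR},\Omega_{\cR})$: transporting the differential through $\iota_{\Omega_{\cR}}$ identifies the abutment-filtered graded pieces with $H^{p}(\cY_{\cR},\Omega^{q})$, and one needs the analogue of the standard degeneration result in this non-archimedean, typically noncompact and non-Stein, setting. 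This Hodge-theoretic degeneration step is in my view the main obstacle, since classical complex-geometric and algebraic degeneration theorems do not apply off the shelf; any proof will likely have to exploit the specific tropical geometry of $p_{\cR}:\cY_{\cR}\to B_{\cR}$ and the Calabi--Yau-with-volume-form structure, perhaps via an explicit quasi-isomorphism of mixed complexes at the \v{C}ech level.
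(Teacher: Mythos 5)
This statement is explicitly a \emph{conjecture} in the paper; there is no proof of it to compare against. What the paper does offer is a sketch of an envisioned strategy in Remark \ref{rem-local-gen-closed}, and that sketch differs substantively from yours. The paper's plan runs through open strings: establish a local generation criterion for a Lagrangian section $L$, produce a chain-level closed--open map (a quasi-isomorphism by local generation), combine with a chain-level lift of HKR, and from this deduce both the $E_2$ degeneration of \eqref{eq-spectral} and the compatibility of the transported de Rham differential with the BV operator (using $BV_\infty$ structures on the closed--open map and HKR). Your plan instead stays purely closed-string: extend the wall-crossing/Hartogs computation of Sections \ref{s-6}--\ref{s-hartogs} to all degrees, feed the resulting sheaf isomorphism $\cF_{\cR}^*\simeq (p_{\cR})_*\Lambda^*T_{\cY_{\cR}}$ into the Mayer--Vietoris spectral sequence, and then treat degeneration as a Hodge-to-de Rham problem for the rigid analytic CY $(\cY_{\cR},\Omega_{\cR})$.

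Two specific points on your version. First, the graded extension of the local identification is less routine than you suggest. The degree-zero Hartogs property (Proposition \ref{prop-Hartogs-A}) is proved via Lemma \ref{lem-deg0-hom}, which is a degree-$0$-only statement (it uses only that $C^0\to D^0$ is an isomorphism and $C^1\to D^1$ is injective); there is no analogous mechanism in the paper for producing isomorphisms of $SH^k$, $k\geq 1$, between a polygon and its boundary, so ``the analogue of Proposition \ref{prop-Hartogs-A} for the full graded algebra'' is itself an open step, not a cited tool. Second, framing the $E_2$ degeneration as ``a Hodge-to-de Rham statement for $(\cY_{\cR},\Omega_{\cR})$'' is imprecise. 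Once the $E_1$ page is identified with the \v{C}ech complex of $\Lambda^qT_{\cY_{\cR}}$ over the affinoid cover, the $E_2$ page \emph{is} already $\bigoplus H^p(\cY_{\cR},\Lambda^qT_{\cY_{\cR}})$ by Tate acyclicity; there is no further Fr\"olicher-type spectral sequence on the B-side. The content of degeneration is that the higher Floer-theoretic differentials $d_r$, $r\geq 2$, of the Mayer--Vietoris spectral sequence vanish, which is a statement about formality of the A-side diagram of algebras, not a classical Hodge-theoretic statement about the sheaf cohomology of $\cY_{\cR}$. The paper's proposed route (chain-level closed--open plus chain-level HKR) is precisely a formality-type argument and is the more promising path for the degeneration; your Hodge-theoretic reading does not engage with where the higher $d_r$ could actually come from.
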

%
%
%
%
%
\begin{remark}\label{rem-local-gen-closed} By construction $\pi_\mathcal{R}$ has a Lagrangian section (see the two paragraphs before Remark 7.5 in \cite{locality}), in fact one that is the fixed point set of an anti-symplectic involution \cite[Proposition 3.3]{Solomon}. Let us call its image $L$. Assuming that all multiplicities of nodes are $1$ and $P\subset B$ is a small admissible polygon, we conjecture that the local generation criterion is satisfied at $K=\pi^{-1}(P),$ namely the open-closed map $$HH_2(CF_{M_\mathcal{R}}^*(K;L;\Lambda))\to SH_{M_\mathcal{R}}^0(K;\Lambda)$$ hits the unit. For a brief introduction to the open string invariant used on the left hand side see \cite[Section 2.3]{tonkonog}.

Another conjecture is that our locality results (based on the existence of complete embeddings) hold for $HF_{M_\mathcal{R}}^*(\pi^{-1}(P);L;\Lambda)$ as well and imply that it is supported in degree $0$. Then, we would obtain that the closed open map is an isomorphism $$SH_{M_\mathcal{R}}^*(\pi^{-1}(P);\Lambda)\to HH^*(HF_{M_\mathcal{R}}^0(\pi^{-1}(P);L;\Lambda)).$$ Using commutativity of $HF_{M_\mathcal{R}}^0(L;\pi^{-1}(P);\Lambda)$ and HKR, we get the isomorphism from \eqref{eq-der}.

We expect that the mentioned degeneration at the $E_2$ page of the spectral sequence in \eqref{eq-spectral} would follow from the construction of a chain level closed-open map (which is a quasi-isomorphism by the local generation criterion) and the chain level lift of HKR. Assuming that the closed open map and HKR respect the $BV_{\infty}$ structures in a suitable way, the comparison between the $BV$-operator and the deRham differential is also a direct consequence of local generation and HKR.\end{remark}
\subsubsection{Homological mirror symmetry}
In terms of open string mirror symmetry we expect

\begin{conjecture}\label{con-open} There exists a canonical $A_\infty$-functor $$Fuk(M_\mathcal{R})\to Coh_{dg}(\mathcal{Y_\cR}).$$ If we assume that multiplicities of nodes are all $1$, it is a quasi-equivalence.\end{conjecture}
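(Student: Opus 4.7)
The plan has two parts: first, construct the $A_\infty$-functor directly from the closed-string data developed in this paper, and then upgrade it to a quasi-equivalence in the multiplicity one case via a local-to-global HKR argument. For the construction, I would combine Family Floer theory over the smooth locus $B_\cR^{reg}$ with a sheaf-theoretic extension across the nodes. Given $L\in Fuk(M_\cR)$, restricting Lagrangian intersection Floer theory to the regular fibers of $\pi_\cR$ produces, by \cite{yuan} and the conjecture of Subsection \ref{sss-ff}, a coherent sheaf on $p_\cR^{-1}(B_\cR^{reg})$. To extend across the nodes, the expected chain-level closed-open map gives $\cF_\cR(P)$ an action on $HF^*_{M_\cR}(L_1,L_2;\Lambda)$ whenever both Lagrangians are supported in $\pi_\cR^{-1}(P)$; via the identification $(p_\cR)_*\cO_{\cY_\cR}\simeq\cF_\cR$ of Theorem \ref{thm-mirror-cons}, the family Floer sheaf canonically extends over every $p_\cR^{-1}(P)$ with $P$ a small admissible polygon. \v{C}ech-compatibility of these extensions follows from the Mayer--Vietoris property of relative $SH$, and promotes the assignment into an $A_\infty$-functor $\mathcal{F}:Fuk(M_\cR)\to Coh_{dg}(\cY_\cR)$.

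For the quasi-equivalence in the multiplicity one case, the strategy is local-to-global. Cover $B_\cR$ by small admissible polygons $\{P_i\}$ and consider the local Fukaya subcategories $Fuk(M_\cR)|_{P_i}$ of Lagrangians contained in $\pi_\cR^{-1}(P_i)$. The local generation criterion of Remark \ref{rem-local-gen-closed}, the expected isomorphism $SH^*_{M_\cR}(\pi_\cR^{-1}(P_i);\Lambda)\simeq HH^*(Fuk(M_\cR)|_{P_i})$ coming from the closed-open map, and the conjectural identification of the left-hand side with $\Lambda^*Der(\cF_\cR(P_i),\cF_\cR(P_i))$, together with smoothness of $\cY_\cR$ when all multiplicities are $1$, supply the hypotheses for a Ganatra--Perutz--Sheridan/Pascaleff-type HKR argument: the local functor $\mathcal{F}|_{P_i}$ is fully faithful on a split-generator and hence is a quasi-equivalence onto $Coh_{dg}(p_\cR^{-1}(P_i))$. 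The sheaf-theoretic nature of both categories then glues these local quasi-equivalences into a global one.

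The main obstacle is the open-string analogue of the Mayer--Vietoris and Hartogs machinery that drives the closed-string side of this paper: one needs the local generation conjecture of Remark \ref{rem-local-gen-closed} on every small admissible polygon, a chain-level closed-open map compatible with the \v{C}ech differentials of \eqref{eq-spectral}, and enough control at infinity to pass from split-generation of each $Fuk(M_\cR)|_{P_i}$ by the Lagrangian section and the local vanishing thimbles to global split-generation of $Fuk(M_\cR)$ by these objects. Each ingredient is substantial independent work, but the closed-string framework developed here, especially the local model computation Theorem \ref{thm-local-comp} and the gluing format of Theorem \ref{thm-four-prop}, pins down the mirror, the correct local model, and the patching data, so that HMS for $M_\cR$ is reduced to open-string analogues of the axiomatic properties already established on the closed-string side.
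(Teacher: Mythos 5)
This is a conjecture, and the paper itself only sketches an approach rather than proving it, but your sketch runs along a genuinely different track from the paper's. The paper's proposal anchors everything on the Lagrangian zero section $L$ of Remark \ref{rem-local-gen-closed}: assuming $\cY_\cR$ can be equivalently built from the relative Lagrangian Floer theory of $L$, one sends $L'$ to the complex of $\cO$-modules $P\mapsto CF^*_{M_\cR}(\pi_\cR^{-1}(P);L,L';\Lambda)$, derives coherence from the local generation property of $L$ (using unitality of restriction maps), gets the higher $A_\infty$-structure from the triangle products $CF^*(K;L,L_1)\otimes CF^*(K;L_1,L_2)\to CF^*(K;L,L_2)$, deduces full-faithfulness from a local-to-global argument, and expects essential surjectivity (explicitly flagged as the least clear part) from Hacking--Keating-type results. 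Your route instead starts from Family Floer theory over $B_\cR^{reg}$ via \cite{yuan} and the conjectural matching of \S\ref{sss-ff}, and then extends the resulting sheaf across the nodal fibers using the $\cF_\cR$-module structure given by closed-open maps together with the identification $(p_\cR)_*\cO_{\cY_\cR}\simeq\cF_\cR$. The trade-off is this: the paper's Yoneda-style construction never has to face an ``extension across the node'' problem separately --- it is absorbed into the local generation property of $L$ over small polygons containing a node, and the $\cO$-module structure is built in from the outset --- whereas your route leverages already-existing Family Floer technology over the regular locus but then has to establish an open-string Hartogs-type extension (which the paper does not address), and must check that the $\cF_\cR$-action via closed-open maps is compatible with wall-crossing and genuinely assembles into an $\cO_{\cY_\cR}$-module structure on the Family Floer complex. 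Both routes ultimately confront the same bottom-line obstacles --- local generation on small polygons, a chain-level closed-open map compatible with the \v{C}ech spectral sequence \eqref{eq-spectral}, and control at infinity for a well-defined global Fukaya category --- and you have identified them correctly.
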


Let us also briefly mention the approach to Conjecture \ref{con-open}. Here we are omitting any discussion of the serious question of what we mean by $Fuk(M_\mathcal{R})$. Let $L$ be as in Remark \ref{rem-local-gen-closed} and assume that the mirror $\mathcal{Y}_\cR$ is equivalently constructed using the relative Lagrangian Floer homology of $L$ as suggested by the discussion in Remark \ref{rem-local-gen-closed}.  To construct the functor we send each Lagrangian $L'$ to the complex of $\mathcal{O}$-modules $CF_{M_\mathcal{R}}^*(\pi_\mathcal{R}^{-1}(P); L,L';\Lambda)$. Coherence will follow from the local generation property of $L$, using the unitality of restriction maps. On morphisms, we use the product $$CF_{M_\mathcal{R}}^*(K;L,L_1;\Lambda)\otimes CF_{M_\mathcal{R}}^*(K;L_1,L_2;\Lambda)\to CF_{M_\mathcal{R}}^*(K;L,L_2;\Lambda)$$and so on for higher terms of the morphisms. Finally, a version of the local-to-global property will allow us to deduce the full and faithfulness of the functor from the local generation of $L$. 

Essential surjectivity is less clear and here we believe that it would follow from specific results in this case inspired by Hacking-Keating homological mirror symmetry \cite{hacking}, which we now turn to -- in general we don't know whether we should expect essential surjectivity.

\subsubsection{Relation with Hacking-Keating mirror symmetry}

It is possible to understand the mirror in Theorem \ref{thm-mirror-cons} concretely in general, not just in the local version. Namely, we strongly believe that it is the (rigid analytification of the) interior of a log CY surface defined over the Novikov field. 

Let us be more precise. We call an eigenray diagram \emph{exact} if the lines containing all of the rays pass through the same point. If $\mathcal{R}$ is exact, then $M_{\mathcal{R}}$ is a complete finite type Liouville manifold \cite[Remark 1.10]{locality}.

One can isotope any eigenray diagram $\cR$ through eigenray diagrams with the same number of nodes into an exact eigenray diagram $\cR'$. To each node $n$ of $\cR'$ we can associate a real number $$x_n:=m_n (\Delta_n \times v_n)$$ where $m_n$ is the multiplicity, $\Delta_n$ is the vector from the new location of the node to the old one and $v_n$ is the primitive integral tangent vector in the direction of the ray emanating from $n$. 

We now think of the rays of $\cR'$ as forming a fan defining a toric variety $V_\cR$ over $\Lambda$. By construction we  can choose an identification of each one dimensional toric orbit with $\Lambda^*$ (uniquely up to an overall action by element of the two dimensional torus). We then do one Kaliman modification for each node on the corresponding irreducible toric divisor of $V_\cR$ at the point $T^{x_n}$. We denote by $Y_\cR$ the resulting non-proper scheme over $\Lambda$. 

\begin{conjecture}
$\mathcal{Y}_\cR$ is the analytification of $Y_\cR$.
\end{conjecture}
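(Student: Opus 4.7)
The plan is to reduce to the exact case and then match local models. By an isotopy through eigenray diagrams with the same nodes and the same multiplicities, $\cR$ is connected to an exact $\cR'$, and the isotopy can be decomposed into multiplicity preserving nodal slides plus branch moves. Items (4) and (5) of Theorem \ref{thm-mirror-cons} then produce a canonical isomorphism $\cY_\cR \simeq \cY_{\cR'}$ of rigid analytic spaces (intertwining the volume forms). Since by definition $Y_\cR = Y_{\cR'}$, it is enough to produce a canonical isomorphism $\cY_{\cR'} \simeq Y_{\cR'}^{an}$. So from now on assume $\cR$ is exact.

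Next I would build a Stein continuous map $q:Y_\cR^{an} \to B_\cR$ locally. Away from the Kaliman modification centers, the toric variety $V_\cR$ restricted to the $(\Lambda^*)^2$ open orbit has the standard tropicalization map, which by our identification matches the affine structure of $B_\cR^{reg}$; the toric affine charts attached to individual rays extend this map in the obvious way so that the preimage of a polygon abutting a single ray (and avoiding the nodes) is an affinoid torus subdomain. Near each Kaliman center $T^{x_n}$ the local picture on $V_\cR$ is an affine $\Lambda^* \times \Lambda$, and the modification produces exactly the affine variety $Y_{m_n}$ of Section \ref{ss-local-comp}; combining this with the analysis of $p_{k}$ in Section \ref{Sec-Yk-analyt} gives a Stein continuous map of a neighborhood onto a neighborhood of the corresponding node in $B_\cR$, with the integral affine structure matching $B_{m_n}$ by the Kontsevich--Soibelman style computation. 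Gluing these local maps along the overlap annuli (which on both sides are identified with the $k{=}0$ tropicalization on a punctured neighborhood) produces the desired $q$.

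Then I would identify the pushforward $q_*\cO_{Y_\cR^{an}}$ with $\cF_\cR$. For every small admissible polygon $P$ the identification is given by Theorem \ref{thm-local-comp}: if $P$ avoids every node the $k{=}0$ case applies, and if $P$ contains a single node of multiplicity $k$ the $k$ case applies. The transition isomorphisms between these local identifications on overlaps agree on both sides because the A-side wall-crossing is uniquely pinned down by the axiomatic considerations in Section \ref{ss-local-comp} (grading by $H_1$, preservation of the holomorphic volume form under wall crossing, and the monodromy constraint in Proposition \ref{thm-monodromy}), and these are precisely the conditions that cut out the gluing formulae on the toric/Kaliman side (essentially the formulae of \cite[Section 11.4]{koso}). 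Finally I would invoke the uniqueness built into the gluing construction of Section \ref{s-cons}: any rigid analytic space endowed with a Stein continuous map to $B_\cR$ whose pushforward structure sheaf is canonically isomorphic to $\cF_\cR$ must agree with $\cY_\cR$, and applying this to $q:Y_\cR^{an}\to B_\cR$ yields the conjectured isomorphism.

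The main obstacle I anticipate is the higher multiplicity case. A Kaliman modification at a smooth point of a smooth toric divisor is a smooth scheme, so the naive construction of $Y_\cR$ is smooth while $\cY_\cR$ is singular whenever some $m_n>1$ (Theorem \ref{thm-mirror-cons}(2)). Consistency with the conjecture thus forces one to interpret the single Kaliman modification at a multiplicity $k$ center as producing the affine model $\{xy=(u+1)^k\}$ rather than the $k$-fold iterated blow-up (which is the mirror of the multiplicity-one resolution discussed in Remark \ref{rmkMultiplicityResolution}); equivalently, one glues the affine $Y_k$ of Section \ref{ss-local-comp} into $V_\cR$ along $\{(u+1)^k \neq 0\} \subset \Lambda^*_u\times \Lambda^*$. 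Fixing this interpretation and then verifying that the resulting algebraic gluing data match the A-side wall-crossing for arbitrary $k$ is the step on which the proof will hinge, and it reduces to a careful global use of Theorem \ref{thm-local-comp}.
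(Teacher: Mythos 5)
The statement you set out to prove is stated in the paper as a \emph{conjecture}, not a theorem: the authors do not give a proof, and only remark that the argument would use the alternative description of $Y_\cR$ as a gluing of affine schemes from \cite[Section 7.7]{locality} together with a ``Zariski analogue'' of the Mayer--Vietoris results of \cite{varolgunes}. So there is no internal proof to compare against; what follows evaluates your sketch on its own terms.

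The reduction to the exact case is off at the very first step. It is not true that ``by definition $Y_\cR = Y_{\cR'}$.'' The construction of $Y_\cR$ uses the isotopy from $\cR$ to the exact $\cR'$ precisely to produce the numbers $x_n$, and then performs the Kaliman modifications at the points $T^{x_n}$ of the toric divisors of $V_\cR$; for the exact diagram itself the displacements $\Delta_n$ all vanish and the modification centers sit at $T^{0}=1$. So $Y_\cR$ and $Y_{\cR'}$ are modifications at different points. An ambient torus translation of $V_\cR$ does identify them abstractly, but that translation has norm $e^{-x_n}$ in the relevant coordinate, is not compatible with the tropicalization, and does not intertwine the candidate Stein maps to $B_\cR$; this is precisely where the non-exactness of $\cR$ is recorded. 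Likewise item (5) of Theorem~\ref{thm-mirror-cons} gives an isomorphism $\cY_\cR\simeq\cY_{\cR'}$ that explicitly does \emph{not} intertwine the torus fibrations. You therefore cannot simply argue over $\cR'$ and transport; you have to track how the translation and the non-fibration-preserving isomorphism conjugate the identifications on both sides.

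The second gap you identify yourself: a single Kaliman modification at a smooth point of a smooth toric divisor is smooth, whereas $\cY_\cR$ is singular whenever $m_n>1$ (Theorem~\ref{thm-mirror-cons}(2)). Your resolution --- read ``multiplicity $k$ modification'' as gluing the affine $\{xy=(u+1)^k\}$ into $V_\cR$ rather than as any blow-up --- is not a lemma but a change of definition of $Y_\cR$, and it is exactly the gluing description the authors point to as the one that would actually be needed. So the heart of any proof is (i) carrying out that algebraic gluing over $\Lambda$ (or over $\mathbb{F}$ with the chosen norm), and (ii) establishing a scheme-theoretic Zariski analogue of the Mayer--Vietoris/cocycle machinery of Theorem~\ref{thm-four-prop} so the gluing can be compared to the rigid-analytic gluing of $\cY_\cR$. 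Your final paragraph names this step, but does not carry it out; moreover the closing appeal to ``the uniqueness built into the gluing construction'' needs the subdomain, strong cocycle, independence, and separation properties from Theorem~\ref{thm-four-prop} for $q_*\cO_{Y_\cR^{an}}$, which must be proved and not merely asserted. As written, the sketch is a reasonable program but not a proof.
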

 In Section 7.7 of \cite{locality} (see Remark 7.38 and the discussion preceeding it), we gave another description of $Y_\cR$ by gluing certain affine schemes, which is the one that would actually be useful in the proof. The proof also requires a Zariski analogue of \cite{varolgunes} to justify this gluing construction, which is actually straightforward.
%
%

In the case where $\mathcal{R}$ is exact, our $M_\cR$ corresponds to an $M$ from \cite[Theorem 1.1]{hacking}. If we in addition use $\mathbb{C}$ as our base field, our $Y_\cR$ is the base change to the Novikov field of the corresponding $Y\setminus D$ from the same statement. We refer the reader to the cited paper's introduction for many interesting special cases.  When $\mathcal{R}$ is not exact, which we rigorously cover in this paper, $M_\cR$ is a non-exact deformation of one of the Liouville manifolds covered in Hacking-Keating's theorem.

\subsection{Outline} In Section \ref{s-integral-affine}, we give a brief account of eigenray diagrams and nodal integral affine manifolds. We then introduce the $G$-topology we will be using and the notion of small admissible polygons. Section \ref{s-local-analytic} is where we discuss the $B$-side local models $Y_k^{an}$ and their extra structures. In Section \ref{s-rel-sh}, we go over some generalities regarding relative symplectic cohomology and its relation to symplectic cohomology of Liouville manifolds. In Section \ref{s-noray}, we compute the relative symplectic cohomology presheaf in the base of $T^*T^n\to \mathbb{R}^n$ in all degrees relying on Viterbo's theorem. We then go on to the proof of Theorem \ref{thm-local-comp}, which takes up the next four sections. In Section \ref{s-6}, we construct   sheaf isomorphisms between $\cO_k$ and $\cF_k$ in the complement of each of the eigenrays. 
In the next section, Section \ref{s-7}, we prove a slight extension of our locality for complete embeddings result showing that the locality isomorphisms for different complete embeddings are the same in the zeroth order. Section \ref{s-8} is where we finish the computation of $A$-side wall-crossing. In the final section of local computations, Section \ref{s-hartogs}, we extend the isomorphism of $\cO_k$ and $\cF_k$ from the punctured plane to the entire plane by proving an $A$-side Hartogs extension theorem for relative symplectic cohomology in degree $0$. In Section \ref{s-cons}, we first prove Theorem \ref{thm-four-prop} relying on the locality for complete embeddings theorem and the local results. Then we go on to prove our main result Theorem \ref{thm-mirror-cons} using the properties listed in Theorem \ref{thm-four-prop}. We end our paper with a short section that discusses some higher dimensional situations to which our methods extend straightforwardly.

\subsection*{Acknowledgements} 
We thank the anonymous referees whose numerous comments have helped us greatly improve the paper. 
Y.G. was supported by the ISF (grant no. 2445/20). 
 U.V. was supported by the T\"{U}B\.{I}TAK 2236 (CoCirc2) programme with a grant numbered 121C034.

\section{Some integral affine geometry}\label{s-integral-affine}
\subsection{Local models}\label{ss-integral-affine-local}
Let us denote by $B_0^n$ the Euclidean space $\mathbb{R}^n$ as an integral affine manifold with its standard integral affine structure. We abbreviate $B_0:=B_0^2$.

For $k> 0$ and a primitive vector $e\in\mathbb{Z}^2$, consider the linear map $A_{k,e}:\mathbb{R}^2\to\mathbb{R}^2$ defined by \begin{equation} v\mapsto v-k\cdot det(e,v)\cdot e.\end{equation} We define the integral affine manifold $B_{k,e}^{reg}$ by gluing $$B_{e}^{+}:= \mathbb{R}^2\setminus \{c\cdot e\mid c\geq 0 \}\subset B_0\text{ and }B_{e}^{-}:= \mathbb{R}^2\setminus \{c\cdot e\mid c\leq 0 \}\subset B_0$$ with the gluing map $\phi: B_{e}^{+}\cap B_{e}^{-}\to B_{e}^{+}\cap B_{e}^{-} $ from $B_{e}^{-}$ to $B_{e}^{+}$ defined by $$\phi(w)=\begin{cases}w,& det(e,w)>0  \\ A_{k,e}^{-1}(w),& det(e,w)<0.\end{cases}$$ Notice that we can replace $B_{e}^{-}$ with an arbitrary neighborhood of $\{c\cdot e\mid c> 0 \}$ inside $\mathbb{R}^2$ disjoint from $\{c\cdot e\mid c\leq 0 \}.$ For convenience, when $e=e_1$ is the first standard basis vector we simply omit the subscript $e$'s and write $B_k^{reg}$. Clearly $B_{k,e}^{reg}$ is integral affine isomorphic to $B_k^{reg}$ but this involves choices and we prefer to not identify them.

Note that $B_{k,e}^{reg}$ can be embedded into $\mathbb{R}^2$ as a topological manifold by the map that sends $B_{e}^{+}$ to $B_{e}^{+}$ with the identity map. We define $B_{k,e}$ as the topological manifold $\mathbb{R}^2$ along with the integral affine structure on $\mathbb{R}^2\setminus \{0\}$ induced from this embedding. Let $B_{0,e}^{reg}:=B_{0,e}:=B_0$. Let us denote the canonical embeddings $B_{e}^{\pm}\to B_{k,e}$ by $E^\pm_k$.  When $e=e_1$, we omit it from notation.

Inside $B_{k,e}$, we define \begin{itemize}
\item a \emph{rational line} to be the image of a complete affine geodesic $\mathbb{R}\to B_{k,e}^{reg}$ with a rational slope,
\item a \emph{rational halfspace} to be the closure of a connected component of $B_{k,e}\setminus l$ with $l$ a rational line,
\item a \emph{model polygon} to be a compact subset that is a finite intersection of rational halfspaces and that contains the node if $k>0.$
\end{itemize} 

We can similarly define model polytopes on $B_0^n$ as well. We omit spelling this out. This is the same notion as $\mathbb{R}$-rational convex polyhedra from \cite[Convention 1.3]{kapranov}. 

\subsection{Eigenray diagrams}\label{ss-eigenray}

A \emph{ray} in $\mathbb{R}^2$ is the image of a map of the form $[0,\infty)\to \mathbb{R}^2$, $t\mapsto x+vt$, where $x,v\in \mathbb{R}^2$. Let us define an \textit{eigenray diagram} $\mathcal{R}$ to consist of the following data:

\begin{enumerate}
    \item A finite set of pairwise disjoint rays $l_i$, $i=1,\ldots, k,$   in $\bR^2$ with rational slopes.
        \item A finite set of points on each ray including the starting point. Let us call the set of these points $N_{\mathcal{R}}$. 
    \item A map $m_{\mathcal{R}}: N_{\mathcal{R}}\to \mathbb{Z}_{\geq 1}$.
\end{enumerate}
For any $n\in N_\cR,$ we denote by $l^n$ the subray of the ray that contains $n$ starting at $n.$

\begin{definition}\label{def-nodal-int-aff}
Let $B$ be a two dimensional topological manifold with a finite number of special points $N\subset B$ and let $B^{reg}:=B-N$ be equipped with an integral affine structure. We call $B$ a \emph{nodal integral affine manifold} if each point $n$ in $N$ admits a neighborhood $U$ such that $U\setminus\{n\}$ is integral affine isomorphic to a punctured neighborhood of the origin in $B_k^{reg}$ with $k\geq 1$. The elements of $N$ are called \emph{nodes}, and the ones of $B^{reg}$  \emph{regular points}. We call the positive integer $k$  the \emph{multiplicity} of $n$. 
\end{definition}

By definition $B_k$ is a nodal integral affine manifold.  An \emph{embedding of a nodal integral affine manifold into another} is defined to be a topological embedding which sends nodes to nodes and is integral affine on the regular locus.

\begin{definition}\label{def-eigenray} Let $B$ be a nodal integral affine manifold and $n\in N$ a node. We call the oriented image of a continuous map $\phi: [0,E)\to B$ for some $E\in (0,\infty]$ a \emph{local eigenray of $n$} if \begin{itemize}
\item $\phi(0)=n$,
\item $\phi$ restricted to each connected component of $\phi^{-1}(B^{reg})$ is a geodesic,
\item For any $a\in [0,E)$ such that $\phi(a)\in N$, there exists an open neighborhood $U$ of $a$ such that for $t\in U\setminus \{a\}$, $\phi(t)\in B^{reg}$ and the tangent vector $\phi'(t)$ is fixed under the linear holonomy $T_{\phi(t)}B\to T_{\phi(t)}B$ around a loop which bounds a disk $D$ with $D\cap N=\phi(a)$ and $\phi(t)\in \partial D$.
\end{itemize}

A local eigenray that is not contained in any other one is called an eigenray. 
\end{definition}

Each eigenray diagram $\cR$ gives rise to a nodal integral affine manifold $B_\cR$ with a preferred homeomorphism $\psi_\cR: \mathbb{R}^2\to B_{\mathcal{R}} $ such that
\begin{itemize}
\item $\psi_\cR(N_{\mathcal{R}})$ is the set of nodes of $B_{\mathcal{R}}$
  \item $\psi_\cR$ restricted to the complement of the rays in $\cR$ is an integral affine isomorphism onto its image.
    \item The multiplicity of a node $\psi_\cR(n)$ is $m_{\mathcal{R}}(n)$.
    \item For any $n\in N_\cR$, $\psi_\cR(l^n)$ is an eigenray of $\psi_\cR(n)$.
\end{itemize}  The construction involves  starting with the standard integral affine $\mathbb{R}^2$ and doing a modification similar to the one in the definition of $B_{k,e}$ for each element $n$ of $N_\cR$ by removing $l_n$ and then re-gluing an arbitrarily small product neighborhood of $l_n$. The order in which these surgeries are made does not change the resulting nodal integral affine manifold. 

We refer the reader to  \cite[\S7.2]{locality} for more details. In particular one finds there a detailed description the two operations of \emph{nodal slide} and \emph{branch move} that are mentioned in Theorem \ref{thm-mirror-cons}. 

\begin{remark}
    We think of the domain of $\psi_\cR$ as the place where the combinatorial data of the rays, nodes and multiplicities of an eigenray diagram lives. This is a special case of the base diagrams of \cite{symington}. The target of $\psi_\cR$ on the other hand is thought of as a geometric object, a nodal integral affine manifold. 
\end{remark}

We will now define a $G$-topology on $B_\cR$. For the purposes of this paper a $G$-topology on a set $X$ is a set
$\mathfrak{U}$ of subsets $U \subset X$ and a set of set-theoretic
coverings $Cov(\mathfrak{U})$ of each $U \in \mathfrak{U}$ by members of $\mathfrak{U}$ contained in $U$ such that 
\begin{enumerate}
\item $\{U\} \in Cov(U)$ for all $U \in \mathfrak{U},$
\item if $U, V\in\mathfrak{U}$, then $U\cap V\in\mathfrak{U}$,
\item if $\{U_i\} \in Cov(U)$ and $V \subset  U$ with $V \in \mathfrak{U}$, then $\{V \cap U_i\}\in Cov(V),$ and
\item if $\{V_{ij}\}_{j\in J_i} \in Cov(U_i)$ for $\{U_i\} \in Cov(U)$, then
$\{V_{ij}\}_{i,j}\in Cov(U)$.
\end{enumerate}We note that this is the same definition as \cite[Definition 1 of Section 9.1]{bgr}. 

\begin{remark}\label{rem-Hausdorff-G-top}
    When $X$ is a Hausdorff topological space, taking compact subsets as the admissible opens and finite covers of compact subsets by compact subsets as the admissible covers, we obtain a $G$-topology. Note that under the Hausdorff assumption intersection of two compact subsets is compact.
\end{remark}

An \emph{admissible convex polygon} inside $B_\cR$ is a subset that is the image of a model polygon $P\subset B_k$ for some $k\geq 0$ under an embedding $U\to B_\cR$ of nodal integral affine manifolds with $U$ an open neighborhood of $P$ inside $B_k$. 

It immediately follows that inside $B_0,$ the class of model polygons and admissible convex polygons coincide.
    In the case $B_\cR=B_k$ with $k\geq 1,$ an admissible convex polygon is a model polygon if and only if it contains the node.


\begin{proposition}\label{prop-convex-closed-under-intersection}
    Let $\mathcal{R}$ be an eigenray diagram.
    The intersection of two admissible convex polygons inside $B_\mathcal{R}$ is a finite (possibly empty) union of admissible convex polygons.
\end{proposition}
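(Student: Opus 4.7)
The plan is to proceed by case analysis on which of $P_1, P_2$ contains a node. Since by assumption each admissible convex polygon contains at most one node, and that node lies in its interior, the cases to consider are: (i) neither $P_i$ contains a node; (ii) exactly one, say $P_1$, contains a node $n$; and (iii) both contain a node, which must then be a common node $n$.

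For case (i), both polygons lie in $B_\cR^{reg}$, and by definition each $P_i$ is the image of a model polygon $Q_i \subset B_0$ under an integral affine embedding $\phi_i \colon U_i \hookrightarrow B_\cR^{reg}$ with $U_i$ an open neighborhood of $Q_i$. For any point $p$ in the compact intersection $P_1 \cap P_2$, I would use $\phi_1^{-1}$ to obtain an integral affine chart around $p$; composing with $\phi_2$ on a contractible neighborhood identifies a neighborhood of $\phi_2^{-1}(p)$ with an open subset of this chart by an integral affine isomorphism. In this common chart both $P_1$ and $P_2$ are locally cut out by finitely many rational halfspaces, so $P_1 \cap P_2$ is locally a model polygon in $B_0$, which via the chart gives rise to an admissible convex polygon in $B_\cR$. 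Finitely many such charts cover the compact set $P_1 \cap P_2$, producing the decomposition.

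Case (ii) reduces to case (i) by carving out a neighborhood of $n$. Since $P_2$ is compact and $n \notin P_2$, I would choose an admissible convex polygon $P_1' \subset P_1$ containing $n$ in its interior and disjoint from $P_2$, so that $P_1 \cap P_2 = (P_1 \setminus \mathrm{int}(P_1')) \cap P_2$. To decompose $P_1 \setminus \mathrm{int}(P_1')$ as a finite union of admissible convex polygons not containing a node, I would work inside the source model polygon $Q_1 \subset B_{k_1}$, dissecting it by finitely many rational rays from the origin (including a cut along the eigenray direction so that each resulting sector sits inside a single integral affine branch around the origin). Removing the pullback of $P_1'$ from each sector yields finitely many nodeless model polygons whose $\phi_1$-images cover $P_1 \setminus \mathrm{int}(P_1')$ by admissible convex polygons. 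Intersecting each of these with $P_2$ reduces to case (i). For case (iii), I would choose a common chart $V \subset B_\cR$ around $n$ identified with a neighborhood of the origin in $B_k$; inside $V$ the sets $P_1 \cap V$ and $P_2 \cap V$ are model polygons in $B_k$, so their intersection is a model polygon, and outside $V$ one argues exactly as in case (ii).

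The main obstacle is the dissection step in case (ii): one must arrange that the annular region $P_1 \setminus \mathrm{int}(P_1')$ is a finite union of admissible convex polygons in spite of the nontrivial monodromy around $n$. The resolution is to perform the dissection upstairs in $B_{k_1}$, where $Q_1$ is literally a convex set and cutting with rational rays is straightforward; because $\phi_1$ is an integral affine embedding of a full neighborhood of $Q_1$, each piece of the dissection is automatically pushed forward to an admissible convex polygon in $B_\cR$. Once this is in place, all remaining steps are local verifications that finite intersections of rational halfspaces in a common integral affine chart are again finite intersections of rational halfspaces.
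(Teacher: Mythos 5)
There is a genuine gap in the case analysis. You assert that if both $P_1$ and $P_2$ contain a node, ``which must then be a common node $n$.'' This does not follow: an eigenray diagram typically has several nodes, each admissible convex polygon contains at most one of them, and two such polygons containing \emph{different} nodes can certainly overlap (far from both nodes). Nothing in the hypotheses forces the nodes to coincide. This missing case is not hard to treat — carve out a neighborhood of $n_1$ in $P_1$ disjoint from $P_2$, decompose the remainder into nodeless pieces, and apply your case (ii) with the roles of the two polygons swapped — but as written the proof does not cover it, and the justification offered for excluding it is a false deduction.

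A second, softer issue is the final step of case (i). Having covered the compact set $P_1 \cap P_2$ by finitely many contractible integral affine charts in each of which the intersection is locally cut out by rational halfspaces, you conclude this ``produces the decomposition.'' It does not by itself: one still needs to cut the intersection (or rather, cut the \emph{known-to-be-polygonal} set $K_1 = \phi_1^{-1}(P_1)$) into pieces small enough for each to lie in a single chart before one can describe each piece as a model polygon. The paper does exactly this via a Lebesgue-number subdivision of $K_1$. Without some such subdivision step, ``locally polygonal'' does not yield a finite union of admissible convex polygons. For comparison, note also that your route differs structurally from the paper's: you proceed by case analysis on node containment and carve out neighborhoods of nodes, whereas the paper first reduces to a single ray by covering $B_\cR$ with the complement of the eigenrays and disjoint neighborhoods of the rays (plus a Lebesgue-number subdivision), then reduces to a single node by induction along the ray, and only then handles the one-node base case. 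Your carving-out idea in case (ii) is reasonable and similar in spirit to the paper's handling of the nodal neighborhood, but the two missing points above should be addressed.
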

\begin{proof}
    For $i=1,2$, let $P_i$ be our admissible convex polygons. We fix $K_i\subset U_i\subset B_{k_i}$ and embedding $\psi_i: U_i\to B_\cR$ such that $\psi_i(K_i)=P_i$ as in the definition above. 
    
    The case where $\cR$ has no rays is trivial. Now assume that $N_\mathcal{R}$ has only one element, i.e. $B_\cR=B_k$. If both $P_1$ and $P_2$ are model polygons in $B_k$, the claim is again obvious. Assume that $P_1$ is not a model polygon in $B_k$, which implies that it does not contain the node. We can find a finite open cover of $B_k$ by an open neighborhood of the node that is disjoint from $P_1$ and interiors of rational halfspaces that do not contain the node. Then, by subdividing $K_1$ and using a Lebesgue number argument, we can represent $P_1$ as a finite union of admissible convex polygons $P_1=\bigcup Q_j$ each of which is contained in one of the open rational halfspaces $H_j$ inside $B_k$. Since $\bar{H_j}\cap P_2$ is also an admissible convex polygon, by the previous case, $Q_j\cap P_2=Q_j\cap \bar{H_j}\cap P_2$ is an admissible convex polygon. This finishes the proof when $|N_\mathcal{R}|=1$, since  $P_1\cap P_2=\bigcup (Q_j\cap P_2).$

    Next, we consider the case where $\cR$ has one ray. We do an induction on the number of nodes. The base of the induction was just covered. For the induction step, we find a rational line that transversely intersects the open line segment between the starting point of the defining eigenray and and the next node on it. Then, we chop up both $P_1$ and $P_2$ by this line and prove the induction step.
    
    Finally, we are in the general case. Consider the open cover of $B_\cR$ by the complement of the defining eigenrays $W$ and pairwise disjoint neigborhoods of the rays $W_j.$ By subdividing the admissible convex polygons (using the Lebesgue number argument), we can reduce to the case where each admissible convex polygon is contained in one of the members of this open cover. Since $W_j$'s are disjoint, we reduce in fact to the previous case with only one ray. This finishes the proof.
\end{proof}

A finite (possibly empty) union of admissible convex polygons is called an \emph{admissible polygon}. An \emph{admissible covering} is any finite covering of admissible polygons by admissible polygons. We define a $G$-topology on $B_\cR$ whose opens are the admissible polygons and whose allowed covers are the admissible covers. The property (2) for being a $G$-topology follows from Proposition \ref{prop-convex-closed-under-intersection} and the other properties follow easily.

\subsection{Sheaves over $B_\cR$}\label{ss-sheaf-G}
To prove our main theorem turns out to be easier to deal with a more restricted class of polygons. Namely, we fix an eigenray presentation of $B_{\cR}$ and consider only certain polygons which intersect at most one ray. We will prove the following proposition.

\begin{proposition}\label{prpSmall}
    Let $l_1,\ldots,l_n$ be the rays of $\mathcal{R}$. There exists a class of admissible convex polygons, called \emph{small admissible polygons} satisfying the following conditions.
    \begin{enumerate}
    \item\textbf{(Smallness)} a small admissible polygon can intersect $\psi_\cR(l_i)$ for at most one ray $l_i$. 
    \item\textbf{(Intersection)} intersection of two small polygons is a small polygon.
    \item\textbf{(Covering)} every admissible open has a finite cover by small admissible polygons. 
    \end{enumerate}
\end{proposition}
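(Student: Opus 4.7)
The plan is to define a \emph{small admissible polygon} to be an admissible convex polygon contained in a single member of a carefully chosen open cover of $B_\cR$, so that convexity is always measured in one chart. Enumerate the nodes of $B_\cR$ as $n_1,\ldots,n_m$. For each $n_j$ fix an open neighborhood $V_j\subset B_\cR$ which is the image under $\psi_\cR$ of a bounded open neighborhood of the origin in $B_{k_j,v_j}$, with $v_j$ a primitive integer tangent of the ray through $n_j$, chosen small enough to contain no other node and to meet no ray other than the one through $n_j$. For each maximal connected open segment $s$ of $\psi_\cR(l_i)\setminus \psi_\cR(N_\cR)$ fix an open flat tube $T_s\subset B_\cR^{reg}$ around $s$. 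Shrinking if necessary, arrange that all the $V_j$ and all the $T_s$ are pairwise disjoint. Finally let $W_0:=B_\cR\setminus\bigcup_i \psi_\cR(l_i)$. The family $\mathcal{W}:=\{W_0\}\cup\{V_j\}\cup\{T_s\}$ is then an open cover of $B_\cR$ in which every member carries a single chart (to $B_0$ for $W_0$ and each $T_s$, to $B_{k_j,v_j}$ for each $V_j$). I declare $P$ to be small if $P\subset U$ for some $U\in\mathcal{W}$.

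Smallness is immediate since each $U\in\mathcal{W}$ meets at most one ray of $\cR$. For intersection closure, let $P_1\subset U_1$ and $P_2\subset U_2$ be small admissible polygons. If $U_1=U_2$, then within the chart of $U_1$ the intersection of two convex polygons is again convex, so $P_1\cap P_2$ is a small admissible polygon contained in $U_1$. If $U_1\neq U_2$ are both drawn from $\{V_j\}\cup\{T_s\}$, they are disjoint by construction and $P_1\cap P_2=\emptyset$. The remaining case is $U_1=W_0$ and $U_2\in\{V_j,T_s\}$: then $P_1\cap P_2\subset W_0\cap U_2=U_2\setminus \psi_\cR(l_i)$, which is a disjoint union of simply connected flat half-neighborhoods on either side of the eigenray, on each of which the chart of $U_2$ restricts to the flat chart inherited from $W_0$. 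Since $P_1$ is disjoint from the ray, $P_1\cap P_2$ lies entirely in one such half, where both $P_1$ and $P_2$ are convex in the common flat chart; hence $P_1\cap P_2$ is a convex polygon contained in $W_0$, and is small.

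For the covering property, any admissible polygon is a finite union of admissible convex polygons, so it suffices to exhibit a small cover of a single admissible convex polygon $P=\phi(P_0)$, where $\phi:U\to B_\cR$ is the defining embedding and $P_0\subset B_k$ is a model polygon. Since $P$ is compact and $\mathcal{W}$ is an open cover, the Lebesgue number lemma produces $\delta>0$ such that every subset of $P$ of diameter less than $\delta$ lies in some member of $\mathcal{W}$. I would then subdivide $P_0$ inside its chart by a sufficiently fine family of rational lines (a rescaled integer grid in the $B_k$ chart) into admissible convex subpolygons each of diameter less than $\delta$; pushing forward by $\phi$ yields the required finite small cover of $P$.

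The most delicate point is the intersection case where one polygon contains a node and the other does not: one must verify that cutting a nodal model polygon in $B_{k_j,v_j}$ by a convex polygon sitting on a single side of the eigenray yields a convex set in the flat chart of that side, and that the result still qualifies as an admissible convex polygon of $B_\cR$. This reduces to the explicit gluing description of $B_{k,e}$, in which on each half-plane $B_e^{\pm}$ the nodal chart coincides with the flat chart inherited from $B_0$, so the two notions of affine convexity agree on each side of the eigenray, and the three properties follow as outlined.
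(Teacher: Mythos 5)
The construction of the cover $\mathcal{W}$ is inconsistent, and this is a genuine gap. The open segment $s$ between consecutive nodes accumulates at the node $n_j$; any open tube $T_s$ containing $s$ must therefore meet every open neighborhood $V_j$ of $n_j$, so $V_j\cap T_s\neq\emptyset$ is forced. If instead you shrink $T_s$ so that $T_s\cap V_j=\emptyset$, then $T_s$ no longer contains the points of $s$ close to $n_j$, and those ray points belong to no member of $\mathcal{W}$ (they are excluded from $W_0$ by definition). Then $\mathcal{W}$ is not a cover: the Lebesgue number argument does not apply, and the covering property fails outright, since no small polygon can contain a point outside $\bigcup\mathcal{W}$. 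Nor is this merely a bookkeeping issue, because allowing $V_j\cap T_s\neq\emptyset$ destroys the case analysis you give for intersections: once they overlap you must handle $P_1\subset V_j$ containing a node against $P_2\subset T_s$, which is precisely the delicate case your disjointness requirement was designed to rule out by fiat.

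The paper avoids this tension by not insisting on disjointness. It takes a single half-infinite strip $S_i$ containing the entire ray $l_i$, and within $S_i$ it controls which node (if any) a small polygon is allowed to contain by means of piecewise-affine separators $\sigma_{ij}$ through each node: a small polygon in $S_i$ must either contain exactly one $p_{ij}$ and avoid the other $\sigma_{ij'}$, or avoid all the $\sigma_{ij}$. Polygons of the two kinds overlap freely, which is what makes the covering argument work, while the separators guarantee at most one node per polygon. The convexity of intersections involving a nodal polygon is then delegated to the dedicated Lemma \ref{lem-bk-intersection} (enclose the non-nodal polygon in a rational halfspace missing the node and reduce to a single flat chart). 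Your closing paragraph gestures at this but does not carry it out; note also that when $n_j$ is the starting point of $l_i$ the set $W_0\cap V_j$ is a slit disk rather than two half-neighborhoods, so the "both convex in the common flat chart" step genuinely needs the argument of that lemma rather than a simple reduction to one side of the ray.
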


Before proving Proposition \ref{prpSmall} we say how we use it. 
 
\begin{lemma}\label{lmSheafGluing}
Let $\mathcal{G}$ and $\mathcal{G}'$ be two sheaves of $\Lambda$-algebras on the $G$-topology of $B_\cR$. Assume that we are given isomorphisms of algebras $$\mathcal{G}(P)\to\mathcal{G}'(P)$$for every small admissible polygon $P$ and that they are compatible with restriction maps. Then there is a unique isomorphism of sheaves of $\Lambda$-algebras $\mathcal{G}\to \mathcal{G}'$ extending the given isomorphisms. 
\end{lemma}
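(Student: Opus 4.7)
The plan is to use Proposition \ref{prpSmall} to treat the small admissible polygons as a basis for the $G$-topology and then extend the given isomorphisms via the sheaf condition. Let me sketch the argument.

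First, for an arbitrary admissible polygon $Q$, invoke the Covering part of Proposition \ref{prpSmall} to pick a finite cover $\{P_i\}_{i=1}^n$ of $Q$ by small admissible polygons, all contained in $Q$ (using that intersections of admissible polygons are admissible, intersect each $P_i$ with $Q$ if needed and then re-subdivide). By the Intersection part of Proposition \ref{prpSmall}, every intersection $P_i \cap P_j$ is again a small admissible polygon, so the given isomorphisms $\varphi_P \colon \mathcal{G}(P) \to \mathcal{G}'(P)$ are defined on $P_i$ and on $P_i \cap P_j$. Since $\mathcal{G}$ and $\mathcal{G}'$ are sheaves, we have equalizer diagrams
\[
\mathcal{G}(Q) \longrightarrow \prod_i \mathcal{G}(P_i) \rightrightarrows \prod_{i,j} \mathcal{G}(P_i \cap P_j),
\]
and analogously for $\mathcal{G}'$. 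Compatibility of the $\varphi_P$'s with restrictions makes the $\varphi_{P_i}$ and $\varphi_{P_i \cap P_j}$ assemble into a map between the two parallel pairs, inducing an isomorphism $\varphi_Q \colon \mathcal{G}(Q) \to \mathcal{G}'(Q)$ (it is an isomorphism because each component map is).

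Next, I would show $\varphi_Q$ is independent of the chosen cover. Given two such covers $\{P_i\}$ and $\{P'_j\}$, the common refinement $\{P_i \cap P'_j\}$ is again a cover by small admissible polygons (Intersection), and refinement maps between the equalizer diagrams intertwine the two induced maps by naturality, so they produce the same $\varphi_Q$. For $P$ itself small, one can take the trivial cover $\{P\}$, which recovers the original $\varphi_P$; so the construction genuinely extends the given data.

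Finally, to check the collection $\{\varphi_Q\}$ forms a morphism of sheaves one verifies compatibility with an arbitrary restriction $Q' \subset Q$: choose a cover of $Q'$ by small polygons inside the restriction of a cover of $Q$ (using Intersection and Covering), and chase the equalizer definition, where compatibility of the $\varphi_{P}$ with restrictions on small polygons does the job. Uniqueness is forced by the sheaf property: any sheaf morphism extending the given isomorphisms on small polygons must equal $\varphi_Q$ on the equalizer for any chosen cover. The main conceptual obstacle is really bookkeeping rather than any hard step; the key input is the Intersection property in Proposition \ref{prpSmall}, which ensures the equalizer computation never leaves the subclass where the isomorphisms are already specified.
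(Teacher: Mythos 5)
Your proof is correct, but it takes a genuinely different route from the paper. The paper does not construct the extension by hand; instead it verifies that the $G$-topology of admissible polygons on $B_\cR$ is \emph{slightly finer} (in the sense of \cite[Section 9.1.2]{bgr}) than the $G$-topology of small admissible polygons, using precisely the three properties of Proposition \ref{prpSmall}, and then invokes the general categorical result \cite[Section 9.2.3, Proposition 1]{bgr} which says that for a slightly finer $G$-topology the restriction functor on sheaves is an equivalence of categories. Your argument is in effect a self-contained, hands-on proof of the special case of that BGR statement needed here: you build $\varphi_Q$ for an arbitrary admissible polygon $Q$ by choosing a cover by small polygons (Covering), noting pairwise intersections stay small (Intersection), forcing the value via the sheaf equalizer, and then checking well-definedness by common refinement, compatibility with restrictions, and uniqueness. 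What the paper's approach buys is brevity and the conceptual clarity of a known equivalence of sheaf categories; what yours buys is self-containedness and no dependence on the precise BGR formulation. One small bookkeeping point worth streamlining: the Covering property of Proposition \ref{prpSmall}, read as producing a $G$-topology cover, already yields small polygons contained in $Q$, so the extra ``intersect with $Q$ and re-subdivide'' step you hedge with is unnecessary. Also, in the well-definedness and uniqueness steps, the cleanest formulation is to use the injectivity of $\mathcal{G}'(Q)\to\prod \mathcal{G}'(P_i\cap P'_j)$ (part of the sheaf axiom) and compatibility of the given $\varphi_P$'s on the doubly-indexed intersections, rather than speaking loosely of ``naturality of refinement maps,'' but the idea is the same.
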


\begin{proof}
     Consider the $G$-topology on $B_\cR$ that has its admissible opens the small polygons and its covers their finite unions. In the language of Definition 1 of \cite[Section 9.1.2]{bgr} the conditions in Proposition \ref{prpSmall} guarantee that the $G$-topology of admissible polygons in $B_\cR$ is \emph{slightly finer} than the $G$-topology of small polygons.   Namely, for a pair of G-topologies $\mathcal{T}, \mathcal{T'}$ we say $\mathcal{T'}$ is slightly finer than $\cT$ if 
    \begin{itemize}
        \item Every $\cT$-open is $\cT'$-open and each $\cT$-cover is also a $\cT'$-cover.
        \item The $\cT$-opens form a basis for $\cT'$. This means every $\cT'$ open has a $\cT'$-admissible cover by $\cT$-opens. 
        \item Each $\cT'$-covering of a $\cT$-open has a $\cT$-covering which refines it. 
    \end{itemize}
     The claim is then Proposition 1 of \cite[Section 9.2.3]{bgr}.
\end{proof}

\begin{remark}
The notion of a small admissible polygon is an ad-hoc notion that depends on the eigenray diagram presentation of the nodal integral manifold $B_\cR$. Note that a given nodal integral affine manifold can have many different eigenray diagram presentations.
\end{remark}

To prove Proposition \ref{prpSmall}, we turn to specify the notion of small polygons. 
For each $i$ fix a half infinite strip $S_i$ containing $l_i$ and assume the $S_i$ are pairwise disjoint. Let $\tilde{l}_i\subset S_i$ be the ray containing $l_i$ and disconnecting it. Finally, for each node $p_{ij}$ of $l_i$ consider a piecewise affine segment $\sigma_{ij}$ through $p_{ij}$ disconnecting $S_i$. We take the $\sigma_{ij}$ to be pairwise disjoint. See  Figure \ref{fig-slits}.
\begin{figure}
\includegraphics[width=\textwidth, trim = 0 450 0 0 ]{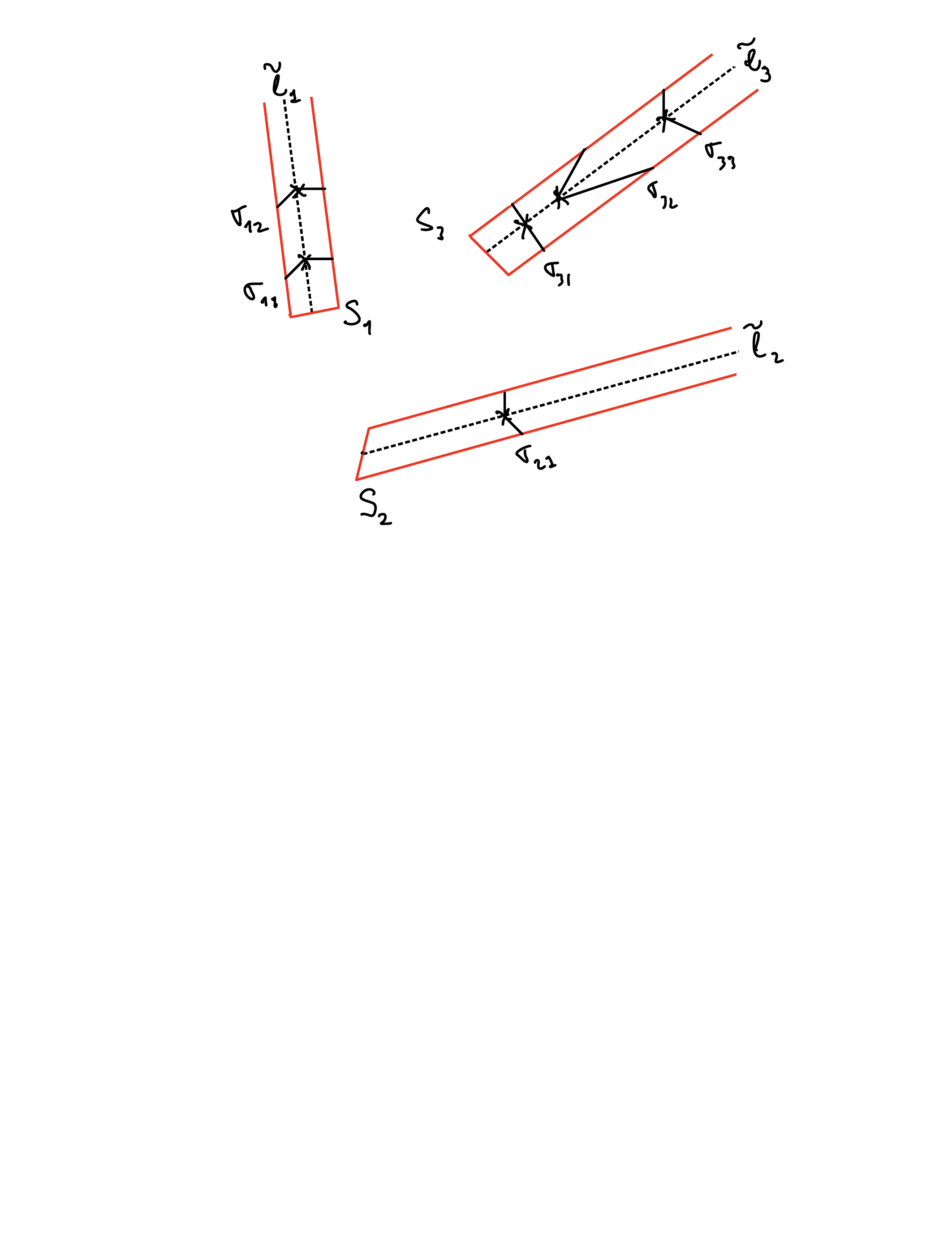}
\caption{An eigeray diagram sith a choice of strips $S_i$ and piecewise affine geodesics $\sigma_{ij}.$}
\label{fig-slits}
\end{figure}
A \emph{small admissible polygon} is an admissible convex polygon which satisfies \emph{one} of the following conditions:
\begin{enumerate}[(A)]
    \item It does not intersect $\tilde{l}_i$ for any $i$, or,
    \item it is contained in $S_i$ for some $i$ and contains a node $p_{ij}$ and does not intersect $\sigma_{ij'}$ for $j'\neq j$, or,
    \item it is contained in $S_i$ for some $i$ and does not intersect $\sigma_{ij}$ for any $j$.
\end{enumerate}

We will need the following lemma. 
\begin{lemma}\label{lem-bk-intersection}
Let $P_1$ and $P_2$ be admissible convex polygons inside $B_k$ with $k>0.$ Assume that $P_1$ contains the node. Then $P_1\cap P_2$ is also an admissible convex polygon.    
\end{lemma}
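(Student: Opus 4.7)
The plan is to split into two cases according to whether $P_2$ contains the node.

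In the first case, where $P_2$ contains the node, I would argue both $P_1$ and $P_2$ are forced to be realised as model polygons in $B_k$ itself. Since the multiplicity $k$ is encoded in the local integral affine structure at the node, any embedding of nodal integral affine manifolds carrying a node to a node must preserve multiplicity, and consequently an admissible convex polygon containing the node of $B_k$ must come from a model polygon in $B_k$ via such an embedding. Then $P_1 \cap P_2$ is a finite intersection of rational halfspaces of $B_k$ each containing the node, and hence is a model polygon and an admissible convex polygon.

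For the second case, where $P_2$ does not contain the node, I would write $P_2 = \psi(Q)$ with $Q$ a model polygon in $B_0 = \mathbb{R}^2$ and $\psi: U \to B_k^{reg}$ an integral affine embedding of a simply connected neighborhood $U$ of $Q$. Writing $P_1 = \bigcap_j H_j$ with each $H_j$ a rational halfspace bounded by a rational line $\ell_j$, the goal is to show that $\psi^{-1}(P_1 \cap P_2)$ is a model polygon in $B_0$; applying $\psi$ then exhibits $P_1 \cap P_2$ as an admissible convex polygon. Since $\psi$ is integral affine and each $\ell_j$ is a geodesic in $B_k^{reg}$, the preimage $\psi^{-1}(\ell_j \cap \psi(U))$ is a finite union of straight line segments in $U$. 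If $U$ can be chosen small enough that this preimage consists of a single connected segment for every $j$, then each $\psi^{-1}(H_j \cap \psi(U))$ is a genuine rational halfspace of $B_0$ restricted to $U$, and intersecting with $Q$ produces a model polygon.

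The hard step will be the connectedness of $\psi^{-1}(\ell_j \cap \psi(U))$ after shrinking. The subtlety is that the monodromy $A_k$ around the node shears non-horizontal tangent directions, so different components of $\psi^{-1}(\ell_j \cap \psi(U))$ arising from distinct developments of $\ell_j$ around the node may lie on non-parallel lines in $U$, and in principle $Q$ might meet several of them. My plan to overcome this is to exploit the fact that $P_2$ is a topological disk: it is simply connected and does not encircle the node, so only the single development of $\ell_j$ that actually meets $P_2$ can contribute to the intersection. Shrinking $U$ to a tight tubular neighborhood of $Q$, which is valid by compactness of $Q$ and finiteness of $\{\ell_j\}$, excludes the other developments; a careful uniform compactness argument then makes this work simultaneously for all the $\ell_j$, yielding the desired description of $\psi^{-1}(P_1 \cap P_2)$ as a model polygon in $B_0$.
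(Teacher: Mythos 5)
Your Case 1 agrees with the paper. In Case 2 there is a genuine gap: the claim that shrinking $U$ makes only one development of $\ell_j$ meet $Q$ is false, and the failure is already in $Q$ itself, so no choice of $U$ helps. A rational line $\ell_j$ of $B_k$ crosses the eigenline at most once, since the monodromy shear fixes the eigendirection. If $\ell_j$ crosses $l_-$ at a point $p$ and $P_2\subset E^-(B^-)$, then $(E^-)^{-1}(\ell_j)$ is a \emph{broken} curve: two rays emanating from $p$ whose directions differ by the shear $A_k$. A convex rational polygon $Q=(E^-)^{-1}(P_2)$ avoiding the deleted non-positive $x$-axis can perfectly well meet both rays. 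For instance in $B_1$ take $\ell_j=\{x=-1\}$ in the $E^+$ chart, so that $(E^-)^{-1}(\ell_j)=\{x=-1,\,y>0\}\cup\{x+y=-1,\,y<0\}$; the triangle $Q$ with vertices $(-1,10)$, $(-1,5)$, $(3,-4.5)$ in $B^-$ avoids the non-positive $x$-axis and meets both pieces. Simple connectedness of $P_2$ does not preclude this: the loop built from a path inside $P_2$ joining the two intersection components and a segment of $\ell_j$ does wind around the node, even though neither $P_2$ nor $\ell_j$ alone does.

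The fact that makes the lemma true, and which your argument misses, is that $H_j$ (the halfspace \emph{containing} the node) pulls back to a convex set in any chart even though its boundary $\ell_j$ does not pull back to a line: the monodromy shears the two boundary rays of $(E^\mp)^{-1}(\ell_j)$ \emph{toward each other on the node's side}, producing a wedge of angle $<\pi$ at $p$ (the directions $(a,b)$ and $(kb-a,-b)$ satisfy $\det=-kb^2<0$). Hence $\bigcap_j\psi^{-1}(H_j)\cap Q$ is a model polygon even though each $\psi^{-1}(\ell_j)$ is broken. The paper organizes this differently: it chooses a rational halfspace $H$ in the chart $E^+$ that strictly separates $(E^+)^{-1}(P_2)$ from the whole non-negative $x$-axis; then $P_1\cap H$ is admissible, and $P_1\cap P_2=(P_1\cap H)\cap P_2$ takes place inside a single affine chart where the $B_0$ case applies.
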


\begin{proof}
    First recall that if an admissible convex polygon in $B_k$ contains the node, then it must be a model convex polygon. Therefore, if $P_2$ also contains the node, the result immediately follows. If $P_2$ does not contain the node, then it can intersect only one of the eigenrays. It follows that there is a rational half space not containing the node that contains $P_2$ in its interior since inside standard $\mathbb{R}^2$ there is a rational line separating any convex polygon and a point not on the polygon. The intersection of this rational half space with $P_1$ is also an admissible convex polygon and clearly so is its intersection with $P_2.$
\end{proof}

\begin{proof}[Proof of Proposition \ref{prpSmall}]
Smallness is immediate from the construction.

We now verify the intersection property. For any pair $P_1,P_2$ of small admissible polygons, it is clear that $P_1\cap P_2$ satisfies the set theoretic condition defining one of the types. What remains is to verify that $P_1\cap P_2$  is an admissible  convex polygon. This is not automatic in $B_{\cR}$. We spell out how this does follow for the more restricted type of polygons we are considering.  

The intersection of two polygons of type A occurs in a simply connected region of $B^{reg}_{\cR}$ containing them both and is thus convex since the integral affine structure is isomorphic to that of $\bR^2$. The same applies to the intersection of two polygons of type C since the intersection necessarily occurs inside a region bordered by $\sigma_{i,j}$, $\sigma_{i,j+1}$ inside $S_i$ for some $i,j$. The intersection of any polygon $P_1$ with a polygon $P_2$ of type B can be presented, by making $P_1$ smaller without changing the intersection $P_1\cap P_2$, as an intersection of two polygons inside a region integral affine isomorphic to a subset of $B_k$. The convexity now follows from  Lemma \ref{lem-bk-intersection}. Finally, if $P_1$ is of type A and $P_2$ of type C meeting $\tilde{l}_i$ then $P_1$ meets only one component of  $P_2\setminus\tilde{l}_i$, which is then convex in the usual sense.  This intersection again occurs in a region integral affine isomorphic to $\bR^2$. 


To verify the covering property note that given any admissible convex polygon we can cut it into small admissible polygons by considering a subdivision by the boundaries of the $S_i$, and further partition of polygons contained in $S_i$ into polygons of type B and C. 

\end{proof}

\section{Local models from non-archimedean geometry}\label{s-local-analytic}

 We will try to keep the required background in non-archimedean geometry to a minimum. We refer the reader to Sections 2 and 3 of Bosch's \cite{bosch} for this minimum background, making further references when necessary. Let us note here for convenience that a subset $S\subset M(A)$ of the affinoid domain corresponding to an affinoid algebra $A$ is called a \emph{Weierstrass subdomain} if it is given by inequalities $$|f_1(x)|\leq 1, \ldots |f_m(x)|\leq 1$$ for some $f_1,\ldots, f_m\in A$ and a \emph{Laurent subdomain} if it is given by inequalities $$|f_1(x)|\leq 1, \ldots |f_m(x)|\leq 1, |g_1(x)|\geq 1, \ldots |g_l(x)|\geq 1$$ for some $f_1,\ldots, f_m, g_1,\ldots ,g_l\in A.$ 

\subsection{Analytifications of affine varieties}\label{s-analytification}
Let $\Bbbk$ be an algebraically closed non-archimedean field whose valuation map surjects onto $\mathbb{R}\cup\{\infty\}$. For $\alpha\in \mathbb{Z}_{\geq 0}^n$, we denote by $|\alpha|$ the sum of its components. For any $r\geq 0$, let us define the non-archimedean Banach $\Bbbk$-algebra $$T_n^{(r)}:=\left\{\sum_{\alpha\in \mathbb{Z}_{\geq 0}^n}a_\alpha x^\alpha\mid a_\alpha\in\Bbbk \text{ such that } val(a_\alpha)-r|\alpha|\to \infty \text{ as } |\alpha|\to \infty\right\}.$$  Picking an arbitrary  $c\in \Bbbk$ with valuation $-r$ and writing down formally the definition of the Tate algebra (synonymous to restricted power series) in the variables $c^{-1}x_1, \ldots, c^{-1}x_n$ suggests the notation $$T_n^{(r)}=\Bbbk\langle c^{-1}x_1, \ldots, c^{-1}x_n\rangle.$$ 

We usually think of the affinoid domain $M(T_n^{(r)})$ as the $n$-dimensional non-archimedean ball of radius $e^r.$ The points of $M(T_n^{(r)})$ are identified with $n$-tuples $(a_1,\ldots,a_n)\in\Bbbk^n$ such that $|a_i|\leq e^r.$ For any $r<r',$ we have the canonical inclusion of a Weierstrass subdomain $M(T_n^{(r)})\subset M(T_n^{(r')})$.

There is an important functorial operation in non-archimedean geometry called \emph{rigid analytification} that takes as input an algebraic variety over $\Bbbk$ and outputs a rigid analytic space over $\Bbbk$ (see \cite[Section 5.4]{bosch}). In this paper, we will only use the local version where the input is an affine variety. For example the rigid analytification of the affine space $\mathbb{A}^n_\Bbbk$ is defined by taking the union of all $n$-dimensional non-archimedean balls using their canonical inclusions. This union is canonically identified as a set with $\Bbbk^n.$ Using the formalism of $G$-ringed spaces, we equip it with a $G$-topology and a structure sheaf so that it is a rigid analytic space. 

More generally, let $Y:=Spec(A)$ with $A:=\Bbbk[x_1,\ldots,x_n]/(f_1,\ldots, f_m)$ be our affine variety. Its rigid analytification $Y^{an}$ is a rigid analytic space constructed so that there is a canonical bijection of sets $M(A)\to Y^{an}$ and there is an exhaustion by nested inclusions of affinoid domains $$\ldots\subset M\left(\Bbbk\langle c^{-1}x_1, \ldots, c^{-1}x_n\rangle/(f_1,\ldots, f_m)\right)\subset\ldots \subset Y^{an}$$ with $val(c)=-r$ and $r\to \infty.$ We note that the $r$-sublevels of this exhaustion correspond precisely to the intersections of $M(A)\subset \Bbbk^n$ with the non-archimedean ball of radius $e^r.$ 

\begin{remark}
    Even though the construction of $Y^{an}$ that we alluded to in our incomplete summary depends on the embedding $Y\subset \mathbb{A}^n_\Bbbk$ one can prove that different affine embeddings lead to isomorphic results as shown in Definition and Proposition 3 of \cite[Section 5.4]{bosch}.
\end{remark}

\subsection{Analytification of $(\Bbbk^*)^n$ and the tropicalization map}\label{ss-ana-no-ray}

Let us define $\mathcal{Y}_0^n$ to be the rigid analytification of the affine variety $Spec(\Bbbk[(\mathbb{Z}^n)^{\vee}]) $. As a set $\mathcal{Y}_0^n$ is canonically identified with the closed points of $Spec(\Bbbk[(\mathbb{Z}^n)^{\vee}]) $ (see \cite[Proposition 4 of Section 5.4]{bosch}), which we can in turn canonically identify with $(\Bbbk^*)^n$ using the standard basis of $\mathbb{Z}^n$ (and its dual).  Define the \emph{tropicalization} map $p_0:\mathcal{Y}_0^n\to B_0^n=\mathbb{R}^n$ via $$(y_1,\ldots ,y_n)\mapsto (val(y_1),\ldots,val(y_n)).$$ Let us denote by $\cO_{\mathcal{Y}_0^n}$ the structure sheaf of $\mathcal{Y}_0^n.$

\begin{proposition}\label{prpTropTorus}Let $P\subset B_0^n$ be an admissible convex polytope. Then
\begin{enumerate}
\item $p_0^{-1}(P)\subset \mathcal{Y}_0^n$ is an affinoid domain\footnote{This is short for $p_0^{-1}(P)\subset \mathcal{Y}_0^n$ is an admissible open, $\mathcal{O}_{\mathcal{Y}_0^n}(p_0^{-1}(P))$ is an affinoid algebra and with the induced rigid analytic structure $p_0^{-1}(P)$ is isomorphic to the affinoid domain $M(\mathcal{O}_{\mathcal{Y}_0^n}(p_0^{-1}(P)))$.}.
\item $\mathcal{O}_{\mathcal{Y}_0^n}(p_0^{-1}(P))$ is the completion of $\Bbbk[(\mathbb{Z}^n)^{\vee}]$ with respect to the valuation \begin{align}val_P(f= \sum a_jx^{j}):= \inf_{m\in P}\inf_j \left(val(a_j)+ j(m)\right).\end{align} 
\item If $Q\subset P$ is also an admissible convex polytope, then $$p_0^{-1}(Q)\subset p_0^{-1}(P)$$ is a Weierstrass (and in particular affinoid) subdomain.
\end{enumerate}
\end{proposition}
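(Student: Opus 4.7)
The plan is to exhibit $p_0^{-1}(P)$ explicitly as a Weierstrass subdomain inside a suitable ambient affinoid and then read off the affinoid algebra from the supremum seminorm.

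First I would use compactness of the admissible convex polytope $P$ to choose $R > 0$ with $P \subset [-R,R]^n$. Picking $\mu \in \Bbbk$ with $\mathrm{val}(\mu) = R$ (possible since the valuation is surjective), the preimage $A_R := p_0^{-1}([-R,R]^n)$ is the set of $(y_1,\dots,y_n) \in (\Bbbk^*)^n$ with $|\mu|^{-1} \leq |y_i| \leq |\mu|$. This is a Laurent subdomain of the polydisk $M(\Bbbk\langle \mu^{-1} x_1,\dots,\mu^{-1} x_n\rangle)$, hence itself an affinoid, with algebra $\Bbbk\langle \mu^{-1} x_1^{\pm},\dots,\mu^{-1} x_n^{\pm}\rangle$. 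Next, I would write $P = \bigcap_{k=1}^N H_k$ as a finite intersection of rational half-spaces $H_k = \{m : j_k(m) \geq c_k\}$ with $j_k \in (\mathbb{Z}^n)^\vee$ and $c_k \in \mathbb{R}$, and choose $\lambda_k \in \Bbbk$ with $\mathrm{val}(\lambda_k) = c_k$. The preimage $p_0^{-1}(H_k)$ is cut out by the inequality $|\lambda_k^{-1} y^{j_k}| \leq 1$, and intersecting with $A_R$ gives a Weierstrass subdomain of $A_R$. Thus $p_0^{-1}(P) = A_R \cap \bigcap_k p_0^{-1}(H_k)$ is a Weierstrass subdomain of $A_R$, establishing (1).

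For (2), I would compute the supremum seminorm on this affinoid. The key point is that for $f = \sum_j a_j x^j \in \Bbbk[(\mathbb{Z}^n)^{\vee}]$ and a point $y \in (\Bbbk^*)^n$ with $p_0(y) = m$, one has $|f(y)| \leq \max_j |a_j| \cdot e^{-j(m)}$, with equality for generic $y$ in the fiber (because the tropical minimum is generically attained by a single term, since $\Bbbk$ is algebraically closed and has dense value group). Taking supremum over the compact $P$ gives $|f|_{\sup} = \exp(-\mathrm{val}_P(f))$. Since $p_0^{-1}(P)$ is affinoid and $\Bbbk[(\mathbb{Z}^n)^{\vee}]$ has dense image in its affinoid algebra (it is a subdomain of $\mathcal{Y}_0^n$, which is itself an exhaustion by such rational domains), the affinoid algebra is the completion of $\Bbbk[(\mathbb{Z}^n)^{\vee}]$ with respect to $\mathrm{val}_P$.

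Finally, for (3), given $Q \subset P$ with $Q = P \cap \bigcap_{l} \{m : j'_l(m) \geq c'_l\}$, I would again pick $\lambda'_l$ with $\mathrm{val}(\lambda'_l) = c'_l$ and express $p_0^{-1}(Q) \subset p_0^{-1}(P)$ as the locus $|\lambda_l'^{-1} y^{j'_l}| \leq 1$ for all $l$, which is by definition a Weierstrass subdomain. The main care-requiring step is the computation of the supremum seminorm in (2): one must verify that the tropical minimum $\min_j(\mathrm{val}(a_j) + j(m))$ is genuinely attained by some $y$ in the fiber over a generic $m \in P$, which relies on $\Bbbk$ being algebraically closed with surjective valuation so that nontrivial roots exist in each fiber, and on noting that the infimum of a piecewise-linear function on the compact polytope $P$ is attained.
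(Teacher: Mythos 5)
The paper does not actually prove Proposition~\ref{prpTropTorus}: the stated proof is a one-line citation to Kapranov and Gubler, treating it as well-known. Your proposal is therefore supplying a self-contained argument where the paper supplies none, and the argument you give is in substance the standard one those references contain: realize $p_0^{-1}(P)$ as a rational (Laurent, then Weierstrass) subdomain of an explicit affinoid annulus, compute the spectral seminorm by tropicalization, and invoke density of the Laurent polynomials. The structure is sound and matches the footnote's unpacking of what ``affinoid domain'' means here.

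Two things to tighten. First, a sign slip: with $\mathrm{val}(\mu)=R>0$ one has $|\mu|=e^{-R}<1$, so $p_0^{-1}([-R,R]^n)$ is $\{|\mu|\le |y_i|\le |\mu|^{-1}\}$, the reverse of what you wrote; the algebra should then be written in the form $\Bbbk\langle \mu y_1,\mu y_1^{-1},\dots,\mu y_n,\mu y_n^{-1}\rangle$ (or equivalently, take $\mathrm{val}(\mu)=-R$ as in the paper's convention $\Bbbk\langle c^{-1}x_i\rangle$ with $\mathrm{val}(c)=-r$). Second, the justification that $|f|_{\sup}=e^{-\mathrm{val}_P(f)}$ is phrased loosely. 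The right statement is not that the tropical minimum is ``generically attained by a single term,'' but the following: fix $m_0\in P$ attaining $\inf_{m\in P}\min_j(\mathrm{val}(a_j)+j(m))$ (which exists by compactness and continuity of a piecewise-affine function); over the fiber $p_0^{-1}(m_0)$, the finitely many monomials $a_jy^j$ realizing this minimum all have the same norm, and because the residue field of $\Bbbk$ is infinite (a consequence of algebraic closedness) one can choose $y$ in that fiber so their reductions do not cancel, forcing equality in the ultrametric inequality. You do gesture at this, but the phrase ``generically attained by a single term'' is a different and not-quite-correct mechanism when $m_0$ lies on a tropical wall or when $P$ is degenerate. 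For (3) you should also note explicitly that any admissible convex $Q\subset P$ can be written as $P$ intersected with finitely many rational half-spaces, which follows since a compact rational polytope is a finite intersection of rational half-spaces. With those repairs the proof is complete and gives a genuinely self-contained alternative to the paper's citation-only treatment.
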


\begin{proof}
These statements are well-known, see e.g., \cite[Section 3]{kapranov} and \cite[Section 4]{gubler}. 
\end{proof}

Let us explain part (2) a little more concretely. We introduce the vector space of formal Laurent power series $$\Bbbk[[(\mathbb{Z}^n)^{\vee}]].$$ To be explicit, these are collections of coefficients $a_j$ indexed by $j\in(\mathbb{Z}^n)^\vee$ which we write as $\sum a_jx^{j}$. Note that we do not have a well defined multiplication operation here. We say that a formal sum $\sum a_jx^{j}$ \emph{converges} at a point $m\in B_0^n$, if for every real number $R$, there are only finitely many $j\in(\mathbb{Z}^n)^\vee$ such that $val(a_j)+j(m))<R$. 

To any admissible convex polytope $P\subset B_0^n$ and $f\in \mathcal{O}_{\mathcal{Y}_0^n}(p_0^{-1}(P))$, we can canonically\footnote{This is canonical only because we have defined $\mathcal{Y}_0^n$ as the analytification of $Spec(\Bbbk[N])$ and specified a basis for the lattice $N$. As a rigid analytic space $\mathcal{Y}_0^n$ can be expressed in this form in different ways which lead to different formal expressions. One should think of these formal expressions as analogous to Taylor expansions with respect to fixed set of coordinate functions.} associate a formal Laurent power series, in other words we have a canonical injection $$\mathcal{O}_{\mathcal{Y}_0^n}(p_0^{-1}(P))\to \Bbbk[[(\mathbb{Z}^n)^{\vee}]].$$ The image of this map is precisely the formal Laurent power series which converge on every point $m\in P$ in the sense spelled out in the previous paragraph. From now on we call this image the \emph{formal expression of an element} of $\mathcal{O}(p_0^{-1}(P))$

Let us define a sheaf $\mathcal{O}_0$ on the $G$-topology of $B_0^n$ by first setting $$\mathcal{O}_0(P):=\mathcal{O}_{\mathcal{Y}_0^n}(p_0^{-1}(P))$$ on an admissible convex polytope $P$ and then defining $\mathcal{O}_0(\cup P_i)$ as the equalizer

\begin{equation}  \label{sites-equation-sheaf-condition} \xymatrix{ \mathcal{O}_0(\cup P_i) \ar[r] &  \prod^{k}_{i=1} \mathcal{O}_0(P_i) \ar@<1ex>[r] \ar@<-1ex>[r] &  \prod^{k}_{i,j=1} \mathcal{O}_0(P_i\cap P_j) } \end{equation} for admissible convex polytopes $P_1,\ldots ,P_k$. Tate's acyclicity theorem guarantees that this indeed defines the desired sheaf $\mathcal{O}_0$. We of course have $(p_0)_*\mathcal{O}=\mathcal{O}_0$ but we chose to be more concrete for once.

\begin{proposition}\label{prop-basic-hartogs}
Let $P_1,\ldots ,P_k$ be admissible convex polytopes in $B_0^n$. 
Assume that $\cup P_i$ is connected and the convex hull $P$ of $\cup P_i$ is an admissible convex polytope, then the canonical map $$ \mathcal{O}_0(P)\to \mathcal{O}_0(\cup P_i)$$ is an isomorphism.
\end{proposition}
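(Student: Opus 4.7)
The plan is to work via the canonical formal-expression map $\mathcal{O}_0(Q)\hookrightarrow \Bbbk[[(\mathbb{Z}^n)^{\vee}]]$ available on each admissible convex polytope $Q$. By definition \eqref{sites-equation-sheaf-condition}, an element of $\mathcal{O}_0(\bigcup P_i)$ is a compatible tuple $(f_i)\in \prod_i\mathcal{O}_0(P_i)$; likewise restriction sends any $f\in \mathcal{O}_0(P)$ to the compatible tuple of its restrictions. Injectivity of $\mathcal{O}_0(P)\to\mathcal{O}_0(\bigcup P_i)$ is immediate from the injectivity of the formal expression map on $P$, since restriction is compatible with passing to formal expressions. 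The real work is surjectivity.

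First I would show that the hypothesis that $\bigcup P_i$ is connected forces the ``intersection graph'' (with vertices $P_i$ and an edge between $P_i,P_j$ when $P_i\cap P_j\neq\emptyset$) to be connected: if it split into two components $A,B$, then $\bigcup_{i\in A}P_i$ and $\bigcup_{i\in B}P_i$ would be disjoint closed subsets of $\mathbb{R}^n$ whose union is $\bigcup P_i$, contradicting connectedness. Given a compatible tuple $(f_i)$, pick any two indices $i,j$ and a chain of indices with consecutive pairwise intersection; the agreement of $f_i$ and $f_j$ on the intervening convex polytopes forces (by the injectivity of the formal expression map on each of those intersection polytopes, which is admissible convex by Proposition \ref{prpTropTorus} when nonempty) the formal expressions of all $f_i$ to coincide. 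Call the common formal expression $\sum_{j\in(\mathbb{Z}^n)^\vee} a_j x^j$.

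Next I would check that this formal series actually lies in $\mathcal{O}_0(P)$, i.e.\ that for every $R\in\mathbb{R}$ only finitely many multi-indices $j$ satisfy $val(a_j)+\inf_{m\in P} j(m)<R$. The point is that the extreme points of the convex hull $P$ of $\bigcup P_i$ lie in $\bigcup P_i$, and hence for any linear functional $j$ we have
\begin{equation*}
\inf_{m\in P} j(m)=\min_{i}\inf_{m\in P_i}j(m).
\end{equation*}
Therefore
\begin{equation*}
val(a_j)+\inf_{m\in P}j(m)=\min_i\bigl(val(a_j)+\inf_{m\in P_i}j(m)\bigr),
\end{equation*}
and since for each fixed $i$ convergence of $f_i$ on $P_i$ means $val(a_j)+\inf_{m\in P_i}j(m)\to \infty$ as $|j|\to\infty$, the minimum over the finite index set $\{1,\dots,k\}$ also tends to infinity. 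Hence $\sum a_j x^j$ defines an element $f\in\mathcal{O}_0(P)$ by the description in Proposition \ref{prpTropTorus}(2), and by construction its restriction to each $P_i$ has formal expression equal to that of $f_i$, so restricts to $f_i$. This gives the required preimage and completes surjectivity.

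I don't anticipate any serious obstacle; the only mild subtlety is the connectedness-of-intersection-graph step and, correspondingly, the possibility that some $P_i\cap P_j$ is empty (in which case it contributes no condition to the equalizer and is simply ignored in the chain argument).
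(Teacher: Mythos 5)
Your proof is correct and takes essentially the same approach as the paper's: both rest on the convexity observation that a linear functional $j$ attains, over the convex hull $P$, the same infimum as over the generating set $\bigcup P_i$ (in the paper this appears as the two-point segment inequality $val(a_j)+j(tm_1+(1-t)m_2)\geq\min\bigl(val(a_j)+j(m_1),\,val(a_j)+j(m_2)\bigr)$, in yours as the identity $\inf_{m\in P}j(m)=\min_i\inf_{m\in P_i}j(m)$). You are slightly more explicit than the paper about the connectedness of the intersection graph, but this is just filling in a detail the paper leaves implicit.
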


\begin{proof}
We identify each function with its formal expression. Because of the connectedness assumption an element of $\mathcal{O}_0(\cup P_i)$ is simply a single element of $\Bbbk[[(\mathbb{Z}^n)^{\vee}]]$ which converges at all points of $\cup P_i$. Now suppose a formal expression $f=\sum a_jx^j$ converges at a pair of points $m_0,m_1\in B_0^n$. For any  $t\in[0,1]$ and for any real $R$ the set of $j$ so that $val(a_j)+j(tm_1+(1-t)m_2)<R$ is contained in the union of the corresponding sets for  $val(a_j)+j(m_1)$ and $ val(a_j)+j(m_2)$. In particular, convergence of $f$ at $m_1,m_2$ implies convergence of $f$ at $tm_1+(1-t)m_2$ for all $t\in [0,1]$. This finishes the proof.
\end{proof}
\begin{remark}
This proposition is related to results from the theory of several complex variables which relate the holomorphic convex hulls with usual convex hulls. Since this is a much simpler result, explaining the connection seems pointless.
\end{remark}

Let $H$ be a co-oriented rational hyperplane in $B_0^n$. Denote by $l_H\in (\mathbb{Z}^n)^\vee$ the positive (with respect to the co-orientation) primitive generator of the annihilator of the vectors tangent to $H$. Then 
\begin{equation}\label{eq-hyper-fun} x^{l_H}\in \Bbbk[(\mathbb{Z}^n)^\vee] \end{equation} 
defines an element of each $\mathcal{O}_0(P)$, where $P$ is an admissible polytope. The formal expression of $x^{l_H}$ is independent of $P$ and is nothing but $x^{l_H}\in \Bbbk[[(\mathbb{Z}^n)^\vee]].$

Given a hyperplane $H$ and an  $S\subset B_0^n$ intersecting exactly one of the components of $B_0^n\setminus H$ we refer to the co-orientation for which $l_H$ increases as we go towards the side containing $S$, the \emph{$S$-co-orientation}.

To orient the reader let us work out an example. Let $P$ be the triangle with vertices in $(0,0),(1,0),(0,1)$. Let $H$ be the line containing the hypotenuse of $P$ with its $P$-co-orientation. Let $e_1^\vee$ and $e_2^\vee$ be the standard basis of $(\mathbb{Z}^2)^\vee$ and let $z_i:=x^{e_i^\vee}$ for $i=1,2$. Then $$x^{l_H}=z_1^{-1}z_2^{-1}.$$
\\

 The following is a version of the uniqueness of analytic continuation.

\begin{proposition}\label{prpUnique}
Assume that $P\subset Q\subset B_0^n$ are admissible polytopes and let $Q$ be connected. Then, the restriction map $\mathcal{O}_0(Q)\to \mathcal{O}_0(P)$ is injective.
\end{proposition}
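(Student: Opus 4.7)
The plan is to reduce the problem to the case of admissible convex polytopes, where one can use the canonical injection $\mathcal{O}_0(R)\hookrightarrow \Bbbk[[(\mathbb{Z}^n)^\vee]]$ discussed just before the proposition. For $R'\subset R$ both admissible convex, the restriction map is compatible with this injection (it simply reinterprets the domain of convergence), so restriction between admissible convex polytopes is injective. The task is to upgrade this observation to possibly non-convex $P$ and $Q$ using the sheaf axiom together with the connectedness of $Q$.

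First I would fix presentations $Q=\bigcup_i Q_i$ and $P=\bigcup_k P_k$ with $Q_i,P_k$ admissible convex polytopes. Given $f\in \mathcal{O}_0(Q)$, the sheaf condition exhibits $f$ as a compatible family of elements $f_i\in \mathcal{O}_0(Q_i)$, each carrying a unique formal Laurent expansion $F_i$. Since intersections of admissible convex polytopes in $B_0^n$ are admissible convex polytopes, restriction preserves these formal expressions, so the compatibility condition $f_i|_{Q_i\cap Q_j}=f_j|_{Q_i\cap Q_j}$ forces $F_i=F_j$ whenever $Q_i\cap Q_j\neq\emptyset$. The nerve graph of $\{Q_i\}$ is connected: otherwise its connected components would partition the indices into classes with pairwise disjoint unions, contradicting the connectedness of $Q$. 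Propagating the equality along the nerve, I obtain a single formal Laurent series $F\in \Bbbk[[(\mathbb{Z}^n)^\vee]]$ that converges on every $Q_i$ and hence on all of $Q$.

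To conclude, suppose $f|_P=0$, which means $f|_{P_k}=0$ for every $k$. Assuming $P$ non-empty, pick indices $k,i$ with $P_k\cap Q_i\neq\emptyset$. The set $P_k\cap Q_i$ is an admissible convex polytope; restriction of $f$ to it, computed through $Q_i$, has formal expansion $F$, while restriction through $P_k$ yields $0$. Injectivity of the formal expansion map on admissible convex polytopes then forces $F=0$, so $f=0$. The only non-trivial step is the topological fact that the nerve of the cover $\{Q_i\}$ is connected, but this is a routine observation and presents no real obstacle; the rest is essentially bookkeeping driven by the sheaf condition.
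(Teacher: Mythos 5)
Your proof is correct. The paper's proof is a two-line reduction: it first observes that it suffices to treat the case where $P$ is a singleton $\{p\}$, and then invokes Proposition~\ref{prop-basic-hartogs} (the convex-hull Hartogs statement, which replaces $Q$ by its convex hull) together with Proposition~\ref{prpTropTorus}(2). Your argument bypasses the convex hull: you instead chase a single formal Laurent expansion around the nerve of a cover of $Q$ by admissible convex polytopes, using the connectedness of the nerve to propagate equality of formal expansions, and then pick a nonempty overlap $P_k\cap Q_i$ to force the expansion to vanish. Both arguments rest on the same core fact — that sections of $\mathcal{O}_0$ over a connected admissible polytope are represented by a single formal Laurent series, and that the formal-expansion map is injective on nonempty admissible convex polytopes — so the two routes are essentially equivalent; yours re-derives the connectedness input of Proposition~\ref{prop-basic-hartogs} explicitly via the nerve, while the paper's is terser because it can cite that proposition directly. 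One small remark: as you note yourself, the statement requires $P\neq\emptyset$ for the conclusion to be meaningful, which is implicit in the paper.
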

\begin{proof}
    It suffices to prove the case where $P$ is a singleton $\{p\}$. This is immediate from Proposition \ref{prop-basic-hartogs} and Proposition \ref{prpTropTorus}(2).
\end{proof}

Let  $\cT^*_{\mathcal{Y}^n_0}$ (resp. $\Omega^*_{\mathcal{Y}^n_0}$) be the sheaf of polyvector fields (resp. differential forms) on $\mathcal{Y}^n_0$. An easy computation shows that\footnote{The reader should feel free to take these as the definition.} for $P$ an admissible convex polygon we have an isomorphism of graded normed algebras \begin{equation}\label{eq-BV-comp-B-side} \cT^*_{\mathcal{Y}^n_0}(p_0^{-1}(P))\simeq \mathcal{O}_0(P)[\partial_1,\ldots ,\partial_n]\end{equation} where the degree $1$ super-commutative variables $\partial_i$ correspond to the valuation zero derivations $y_i\frac{\partial}{\partial y_i};$ and an isomorphism of differential graded algebras \begin{equation}\label{eq-BV-diff}\Omega^*_{\mathcal{Y}^n_0}(p_0^{-1}(P))\simeq \mathcal{O}_0(P)[dy_1,\ldots ,dy_n],\end{equation} where $dy_i$ are degree $1$ super-commutative variables and the differential on the right hand side is given by the usual exterior derivative $d$. Let us also note that given a a map of algebras $g: \mathcal{O}_0(P)\to \mathcal{O}_0(P')$, we obtain a unique dga map $$g_*: \Omega^*_{\mathcal{Y}^n_0}(p_0^{-1}(P))\to \Omega^*_{\mathcal{Y}^n_0}(p_0^{-1}(P'))$$ whose degree $0$ part is equal to $g.$

Consider the nowhere vanishing analytic volume form 
$$\Omega_0:=\frac{dy_1}{y_1}\wedge\ldots\wedge\frac{dy_n}{y_n}$$ 
on $\mathcal{Y}^n_0$. 

Observe that $\Omega_0$ gives rise to a degree reversing isomorphism, $i_{\Omega_0}:\Omega^*_{\mathcal{Y}^n_0}\to \cT^{n-*}_{\mathcal{Y}^n_0}$, between the poly-vector fields and the differential forms. We denote by $div_{\Omega_0}:\cT^*_{\mathcal{Y}^n_0}\to \cT^{*-1}_{\mathcal{Y}^n_0}$ the degree $-1$ operator on  given by 
\begin{equation}
div_{\Omega_0}\omega:=(-1)^{|\omega|}\iota_{\Omega_0}\circ d\circ\iota_{\Omega_0}^{-1}\omega
\end{equation}
for a homogeneous $\omega$, where $d$ is the exterior derivative. A basic observation is that $(\cT^*_{\mathcal{Y}^n_0},div_{\Omega_0})$ is a sheaf of BV algebras (see \cite[Section 2.1]{barannikov} with the sign correction from \cite[Section 5]{mandel}). We thus obtain a sheaf of BV algebras on the $G$-topology of $B_0^n$ given by
$$\cA^*(P):=\left(\cT^*_{\mathcal{Y}^n_0}(p_0^{-1}(P)),div_{\Omega_0}\right).$$


\begin{proposition}\label{prpCYBV}

Consider an isomorphism of unital BV algebras $f^*:\cA^*(P)\to \cA^*(P')$ with $P$ and $P'$ convex admissible polytopes in $B_0^n$. Then, the map $$(f^0)_*:\Omega^*_{\mathcal{Y}^n_0}(p_0^{-1}(P))\to \Omega^*_{\mathcal{Y}^n_0}(p_0^{-1}(P'))$$ sends $\Omega_0$ to $c\Omega_0$, for some $c\in \Bbbk^*.$
\end{proposition}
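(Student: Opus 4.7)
The plan is to use the fact that the BV operator $div_{\Omega_0}$ on polyvector fields characterizes $\Omega_0$ up to a nonzero scalar, so that any isomorphism of BV algebras must transport $\Omega_0$ to a scalar multiple of itself. Set $\tilde\Omega := (f^0)_*\Omega_0$; since $(f^0)_*$ is an isomorphism of the de Rham dga of \eqref{eq-BV-diff} and $\Omega_0$ is nowhere vanishing, we may write $\tilde\Omega = g\,\Omega_0$ for some $g \in \mathcal{O}_0(P')^\times$. The goal reduces to showing $g \in \Bbbk^*$.

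First I would verify, directly from the formula $div_\Omega = (-1)^{|\cdot|}\iota_\Omega\circ d\circ \iota_\Omega^{-1}$, that $f^*$ intertwines $div_{\Omega_0}$ on the source with $div_{\tilde\Omega}$ on the target. This rests on two facts: (a) $(f^0)_*$ commutes with $d$ on forms since it is a dga isomorphism, and (b) $(f^0)_*\circ \iota_{\Omega_0} = \iota_{\tilde\Omega}\circ f^*$. Statement (b) in turn reduces to the assertion that the degree $1$ component of $f^*$ coincides with the pushforward of derivations induced by $f^0$; this follows because any isomorphism of BV algebras preserves the induced Gerstenhaber bracket, so $f^*[X,a]=[f^*X,f^0a]$ for $X \in \cT^1$ and $a\in\cT^0$, pinning down the action of $f^*$ on vector fields. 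Combining with the hypothesis $f^*\circ div_{\Omega_0} = div_{\Omega_0}\circ f^*$ yields $div_{\tilde\Omega} = div_{\Omega_0}$ on $\cT^*(p_0^{-1}(P'))$.

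Finally, a direct computation shows that on a vector field $X$, the operators $div_{g\Omega_0}$ and $div_{\Omega_0}$ differ by a nonzero scalar multiple of $X(g)/g$. The identity from the previous paragraph therefore forces $X(g)=0$ for every $X \in \cT^1(p_0^{-1}(P'))$. Testing against the basic derivations $\partial_i = y_i\,\partial/\partial y_i$ from \eqref{eq-BV-comp-B-side} and using the formal Laurent expansion $g = \sum_{j\in(\mathbb{Z}^n)^\vee} a_j x^j$ described after Proposition \ref{prpTropTorus}, each equation $\partial_i g = 0$ becomes $j_i a_j = 0$ for all $j$; hence $a_j=0$ for $j\neq 0$, so $g = a_0 \in \Bbbk$, and $a_0\in\Bbbk^*$ because $g$ is a unit.

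The main technical step is (b), namely identifying the degree one part of $f^*$ with the derivation pushforward under $f^0$; the rest is a direct calculation together with the formal-series consequence (using the convexity and hence connectedness of $P'$) that a unit in $\mathcal{O}_0(P')$ annihilated by every $\partial_i$ is a scalar.
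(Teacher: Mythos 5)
Your proof is correct, but its structure is genuinely different from the paper's. The paper's argument is shorter and stays entirely on the form side: it conjugates $f^*$ by $i_{\Omega_0}$ to obtain a second map $g:=i_{\Omega_0}^{-1}\circ f^*\circ i_{\Omega_0}$ on $\Omega^*$, observes that both $g$ and $(f^0)_*$ are $\cA^0(P)$-semilinear chain maps for $d$, and then, because $\Omega^k$ is generated over $\cA^0(P)$ by the exact forms $dy^I$, concludes inductively that $g=c(f^0)_*$ with $c=g(1)\in\Bbbk^*$; evaluating at $\Omega_0$ and using $i_{\Omega_0}(\Omega_0)=1$ finishes. Your proof instead establishes the stronger conceptual statement that a BV-algebra isomorphism of polyvector-field algebras transports $div_{\Omega_0}$ to $div_{\tilde\Omega}$, and then exploits the explicit formula $div_{g\Omega_0}X-div_{\Omega_0}X=\pm X(g)/g$ to force $g$ to be killed by all vector fields, hence constant. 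The payoff of your route is that it makes transparent \emph{why} a scalar appears (the divergence operator characterizes the volume form up to scale), but at the cost of the auxiliary step of pinning down the degree-one component of $f^*$ via the Gerstenhaber bracket identity $f^*[X,a]=[f^*X,f^0 a]$, which the paper's direct comparison of $g$ and $(f^0)_*$ sidesteps. Two small cautions: in (b) the map you call $\iota_{\Omega_0}$ should be the contraction $\cT^*\to\Omega^{n-*}$, i.e.\ $i_{\Omega_0}^{-1}$ in the paper's notation, or the composition does not typecheck; and the final step that a unit $g$ annihilated by every $\partial_i$ is a scalar does indeed require appealing to the concrete Laurent description of $\cO_0(P')$ as you do (or equivalently Proposition \ref{prpTropTorus}(2)), since one should not invoke a vague ``closed degree-zero elements are constants'' in the affinoid setting without justification.
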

\begin{proof}
We have two maps
\begin{equation}
(f^0)_* \text{ and }g=i_{\Omega_0}^{-1}\circ f\circ i_{\Omega_0}:\Omega^*_{\mathcal{Y}^n_0}(p_0^{-1}(P))\to \Omega^*_{\mathcal{Y}^n_0}(p_0^{-1}(P')).
\end{equation} 
Considering $\Omega^*_{\mathcal{Y}^n_0}(p_0^{-1}(P))$ as an $\cA^0(P)=\mathcal{O}_0(P)$ module where $a$ acts by multiplication with $f^0(a)$, we see that both of these are maps of $\cA^0(P)$ modules. In addition, both maps are chain maps with respect to the exterior derivative.

Note that $g(1)$ has to be a non-zero closed element, i.e., an element $c\in \Bbbk^*$. Then we immediately see that $g$ restricted to $\cA^0(P)$ equals $cf^0$ for $c=g(1)$. 
Observe that $\Omega^k\cA^0(P)$ for $k\geq 1$ is generated as an $\cA^0(P)$ module by the forms $dy^I$ for $|I|=k$. Each of these generators is exact. From this we deduce inductively that $g=c(f^0)_*$ in all degrees. 

Finally, note that  $g\Omega_0=\Omega_0$ since $i_{\Omega_0}(\Omega_0)=1$. The claim follows

 \end{proof}

\subsection{Analytification of $Y_k$}\label{Sec-Yk-analyt}

Let us now consider the affine variety $$Y_k=Spec(\Bbbk[x,y,u^{\pm}]/(xy-(u+1)^k),$$ for $k\geq 1$ and its rigid analytification $Y_k^{an}$. Note that $Y_k$ is smooth if and only if $k=1$. On the other hand $Y_k$ is normal for all $k\geq 1$ as it is an open subset of a surface $\{f=0\}\subset \mathbb{\Bbbk}^3$ with only isolated singularities. As a set $$Y_k^{an}=M(\Bbbk[x,y,u^{\pm}]/(xy-(u+1)^k).$$

In this section we prove that $Y_k^{an}$ admits a Stein continuous map $p_k$ to $B_k$, which induces the integral affine structure of Section \ref{ss-integral-affine-local} on $B_k^{reg}$. In particular all points other than the origin are regular. The notion of an integral affine structure induced by a continuous map as above is explained in \cite[Theorem 1]{koso}. This section is based on Section 8 of \cite{koso}.

\begin{remark}
Suitably interpreted the constructions extend to the $k=0$ case and recover the previous section in dimension $2$. 
\end{remark}

The rigid analytic space $Y_k^{an}$ is embedded inside the analytification of $\mathbb{A}^2_{\Bbbk}\times \Bbbk^*$ as a set by functoriality of analytification. We consider the map \begin{equation}\label{eq-KS-proj} ((x,y,u)\mapsto (\min{(0,val(x))},\min{(0,val(y))},val(u)).\end{equation} The restriction of this map to $Y_k^{an}$ defines a map $$p_k:Y_k^{an}\to \mathbb{R}^3.$$

\begin{remark}
Note that we have a map $\mathbb{A}_{\Bbbk}^1\to \mathbb{R}\cup\{\infty\}$ which sends each element to its valuation. To orient ourselves, this map sends $1$ to $0$ and $0$ to $\infty$. The maps in  Equation \eqref{eq-KS-proj} are obtained by composing this map with the map that collapses $\mathbb{R}\cup\{\infty\}$ to $\mathbb{R}_{\leq 0}$ in the non-negative side. Now the entire ball of radius $1$ (and nothing else), including $0$ and $1$, maps to $0$.  Here the ball is defined with respect to the norm given by $e^{-\val}$.
\end{remark}

Let us consider a copy of $(\Bbbk^*)^2$ with coordinates $\xi^+,\eta^+$. We have the tropicalization map $p_0:(\Bbbk^*)^{2}\to \mathbb{R}^2$ given by $(\xi^+,\eta^+)\mapsto (v^+,u^+)=(val(\xi^+),val(\eta^+))$. Let $B^+$ be the complement of the ray $\{(v^+,0)\mid v^+\geq 0\}$ in $\mathbb{R}^2$, and let $T^+$ be the preimage of $B^+$ under the tropicalization map.

Define an embedding $$g^+:(\Bbbk^*)^{2}\to Y_k^{an}$$  by $$x\mapsto \xi^+, y\mapsto\frac{1}{\xi^+}\left(1+\eta^+\right)^k\text{ and } u\mapsto \eta^+.$$ We can restrict this embedding to $T^+$. There then exists a map $f^+:B^+\to \mathbb{R}^3$ fitting into the diagram
\begin{align}\label{eqT+diagram}
\xymatrix
   { T^+ \ar[d]_{p_0} \ar[r]^{g^+}& Y_k^{an} \ar[d]^{p_k} \\
     B^+ \ar[r]^{f^+} & \mathbb{R}^3
   }.
\end{align}
$f^+$ can be computed explicitly:
\begin{proposition}\label{prop-B-plus-intertwine}
$$
f^+(v^+,u^+)= 
\begin{cases}
\left(v^+, \min{(0,ku^+-v^+)},u^+\right),& \qquad v^+\leq 0\\
\left(0, -v^++\min{(ku^+,0)},u^+\right), &\qquad v^+\geq 0.
\end{cases}
$$
\end{proposition}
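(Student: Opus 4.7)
The strategy is a direct valuation computation. Write $p_k\circ g^+(\xi^+,\eta^+)$ component by component and show that it only depends on $(v^+,u^+)=(\val(\xi^+),\val(\eta^+))$ on the stated domain $B^+$, thereby producing the claimed $f^+$.

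The first and third components are immediate from the formula for $g^+$: since $x=\xi^+$ and $u=\eta^+$, we get $\min(0,\val(x))=\min(0,v^+)$, which equals $v^+$ for $v^+\leq 0$ and $0$ for $v^+\geq 0$, and $\val(u)=u^+$. All the content is in the middle component $\min(0,\val(y))$ with $y=(1+\eta^+)^k/\xi^+$, so $\val(y)=-v^++k\val(1+\eta^+)$. The main step is thus to determine $\val(1+\eta^+)$ from $u^+$.

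The key case distinction is on the sign of $u^+$. If $u^+>0$, then $\val(1+\eta^+)=0$ by the ultrametric inequality (since $1$ dominates), giving $\val(y)=-v^+$. If $u^+<0$, then $\val(1+\eta^+)=u^+$ for the same reason, giving $\val(y)=-v^++ku^+$. In both cases $\val(y)$ depends only on $(v^+,u^+)$, and taking the minimum with $0$ and splitting on the sign of $v^+$ reproduces exactly the two branches in the statement: for $v^+\leq 0$ one finds $\min(0,\val(y))=\min(0,ku^+-v^+)$, and for $v^+\geq 0$ one finds $\min(0,\val(y))=-v^++\min(ku^+,0)$.

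The one case that requires a remark is $u^+=0$. Here $\val(1+\eta^+)$ is not determined by $u^+$ alone; it is an arbitrary element of $[0,\infty]$. However, on $B^+$ we have $u^+=0$ only when $v^+<0$, and then $\val(y)=-v^++k\val(1+\eta^+)\geq -v^+>0$, so $\min(0,\val(y))=0$, which agrees with the formula since $\min(0,ku^+-v^+)=\min(0,-v^+)=0$ for $v^+\leq 0$. Conversely, on the excluded ray $\{v^+\geq 0,u^+=0\}$ the value of $\min(0,\val(y))$ would genuinely depend on $\val(1+\eta^+)$, which explains why this ray must be removed for $f^+$ to be well-defined. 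No step is a real obstacle; the only subtlety is this boundary analysis at $u^+=0$, which justifies both the form of $B^+$ and the consistency of the two branches at $v^+=0$ (where both expressions yield $(0,\min(0,ku^+),u^+)$).
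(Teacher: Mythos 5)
Your proof is correct and follows the same direct valuation computation as the paper's proof, with the useful addition of the boundary analysis at $u^+=0$. Note that the paper's intermediate displayed formula for $\val(y)$ contains a sign typo (it reads $v^+$ where your $-v^+$ is the correct value for $u^+\geq 0$), which your computation silently corrects.
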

\begin{proof}
To see this substitute $\val(y)=\begin{cases}ku^+-v^+,&u^+\leq 0\\v^+,&u^+\geq 0\end{cases}$ into \eqref{eq-KS-proj} and verify separately for the two cases of $v^+$. 
\end{proof}

Analogously consider a $(\Bbbk^*)^2$ with coordinates $\xi^-,\eta^-$ with the same tropicalization map. Let $B^-$ be the complement of the ray $\{(v^+,0)\mid v^+\leq 0\}$ in $\mathbb{R}^2$, and let $T^-$ be the preimage of $B^-$ under the tropicalization map.

Define an embedding $$g^-:(\Bbbk^*)^{2}\to Y_k^{an}$$
by $$x\mapsto  \xi^-\left(1+\eta^-\right)^k , y\mapsto\frac{1}{\xi^-}\text{ and } u\mapsto \eta^-.$$ We can restrict this embedding to $T^-$.

As before, define a map $f^-:B^-\to \mathbb{R}^3$
$$
f^-(v^-,u^-)= 
\begin{cases}
(v^-+\min{(0,ku^-)},0,u^-), v^-\leq 0\\
(\min{(v^-+ku^-,0)},-v^-,u^-), v^-\geq 0.
\end{cases}
$$
Then we have a commutative diagram 
\begin{align}\label{eqT-diagram}
\xymatrix
   { T^- \ar[d]_{p_0} \ar[r]^{g^-}& Y_k^{an} \ar[d]^{p_k} \\
     B^- \ar[r]^{f^-} & \mathbb{R}^3.
   }
\end{align}


\begin{remark}
One can also analyze the image of $p_0^{-1}(\mathbb{R}^2\setminus B^+)$ under $g^+$. The set $$Z:=\{\eta^+=-1, val(\xi^+)\geq 0\}$$ all maps to the fiber above the origin of $p_1$. One can also check that if we use the continuous extension of $B^+\to \mathbb{R}^3$ to $\mathbb{R}^2\to \mathbb{R}^3,$ the diagram \begin{align}\xymatrix
   { (\Bbbk^*)^{2}\setminus Z \ar[d]_{p_0} \ar[r]^{g^+}& Y_k^{an} \ar[d]^{p_k} \\
     \mathbb{R}^2\ar[r] & \mathbb{R}^3
   }
\end{align} commutes. One could imagine $Z$ being lifted to be above the origin.
\end{remark}

\begin{proposition}\label{prop-KS-cover}The images of $g^-$ and $g^+$ cover $Y_k^{an}\setminus \{(0,0, -1)\}.$The intersection of the images is $Y_k^{an}\setminus \{xy=0\}.$ The corresponding transition map from the $-$ chart to the $+$ chart is given by the analytification of the map $(\Bbbk^{*})^2\setminus \{\eta^-=-1\}\to (\Bbbk^{*})^2\setminus \{\eta^+=-1\}$ given by $$(\xi^-,\eta^-)\mapsto (\xi^-(1+{\eta^-})^k,\eta^-).$$
\end{proposition}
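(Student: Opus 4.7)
The plan is to reduce everything to explicit coordinate computations on the affine variety $Y_k=\{xy=(u+1)^k\}\subset \mathbb{A}^2_{\Bbbk}\times \Bbbk^*$ and then invoke functoriality of analytification. The key observation is that $g^{\pm}$ are the analytifications of algebraic open immersions, so it suffices to work out the three claims on the algebraic side and then apply $(-)^{an}$.

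First I would identify the set-theoretic images. The assignment $g^+\colon(\xi^+,\eta^+)\mapsto(\xi^+,(1+\eta^+)^k/\xi^+,\eta^+)$ defines a morphism of $\Bbbk$-varieties (the relation $xy=(1+u)^k$ is satisfied on the nose), and the candidate inverse $(x,y,u)\mapsto(x,u)$ is well defined precisely when $x\neq 0$, noting $u\neq 0$ holds automatically in $Y_k$. Hence $g^+$ is an open immersion of algebraic varieties with image the affine open $\{x\neq 0\}\subset Y_k$; analytifying yields an open immersion of rigid analytic spaces onto $\{x\neq 0\}\subset Y_k^{an}$. The symmetric computation identifies the image of $g^-$ with $\{y\neq 0\}\subset Y_k^{an}$.

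Next I would use the defining equation $xy=(u+1)^k$ to extract the union and intersection of these images. A point of $Y_k^{an}$ with $x=0$ forces $(u+1)^k=0$, hence $u=-1$, so $\{x=0\}\cap Y_k^{an}$ is the line $\{x=0,\ u=-1,\ y\in\Bbbk\}$, and symmetrically $\{y=0\}\cap Y_k^{an}=\{y=0,\ u=-1,\ x\in\Bbbk\}$. The complement $Y_k^{an}\setminus(\mathrm{im}\,g^+\cup\mathrm{im}\,g^-)=\{x=0\}\cap\{y=0\}\cap Y_k^{an}$ is thus the single point $(0,0,-1)$, while $\mathrm{im}\,g^+\cap\mathrm{im}\,g^-=\{x\neq 0,\ y\neq 0\}=Y_k^{an}\setminus\{xy=0\}$, yielding the first two assertions.

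For the transition map, on the overlap both $x\neq 0$ and $y\neq 0$. Using the explicit inverses $(\xi^+,\eta^+)=(x,u)$ and $(\xi^-,\eta^-)=(1/y,u)$, together with $x=(1+u)^k/y=\xi^-(1+\eta^-)^k$, I read off directly $(\xi^+,\eta^+)=(\xi^-(1+\eta^-)^k,\eta^-)$. The overlap in the $-$ chart is $\{\xi^-\neq 0,\ \eta^-\neq 0,\ 1+\eta^-\neq 0\}=(\Bbbk^*)^2\setminus\{\eta^-=-1\}$, and the condition $\xi^+\neq 0$ in the $+$ chart is equivalent to $1+\eta^-\neq 0$, giving $(\Bbbk^*)^2\setminus\{\eta^+=-1\}$ as the target. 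Since this transition is the analytification of an algebraic isomorphism between these open subvarieties of $(\Bbbk^*)^2$, the third claim follows. There is no genuinely hard step; the only piece meriting care is the description of $\{xy=0\}\cap Y_k^{an}$, which essentially encodes the $A_{k-1}$ picture coming from $(1+u)^k=0\Rightarrow u=-1$ together with $u\in\Bbbk^*$.
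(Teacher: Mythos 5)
Your proof is correct and is exactly the direct coordinate computation that the paper's proof (which simply says ``Direct computation'') has in mind: identifying $\operatorname{im} g^+=\{x\neq 0\}$, $\operatorname{im} g^-=\{y\neq 0\}$, using $xy=(1+u)^k$ to pin down the complement and overlap, and reading off the transition from the explicit inverses. The extra remark that $g^\pm$ are analytifications of algebraic open immersions is a clean way to justify passing to $Y_k^{an}$, though it is implicit in the paper's treatment.
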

\begin{proof}
Direct computation.
\end{proof}

Note also that the images of $g^-|_{T^-}$ and $g^+|_{T^+}$ cover $Y_k^{an}-p_k^{-1}(\{(0,0)\})=p_k^{-1}(B_k^{reg}).$ The intersection of the images is $Y_k^{an}-p_k^{-1}(\{u=0\}).$
\begin{proposition}\label{PrpBsideMonodromy} The restriction of the transition map from Proposition \ref{prop-KS-cover} $$(\Bbbk^{*})^2\setminus \{val(\eta^-)=0\}\to (\Bbbk^{*})^2\setminus \{val(\eta^+)=0\}$$
covers the map $B_0\setminus \{u^-=0\}\to B_0\setminus \{u^+=0\}$ :
$$
(v^-,u^-)\mapsto 
\begin{cases}
(v^-,u^-), u^-> 0\\
(v^-+ku^-,u^-), u^-< 0.
\end{cases}
$$
\end{proposition}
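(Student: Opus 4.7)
The plan is to compute directly, using the ultrametric property of the valuation on $\Bbbk$. Given Proposition \ref{prop-KS-cover}, the transition map in coordinates reads
\begin{equation*}
\xi^+ = \xi^-(1+\eta^-)^k, \qquad \eta^+ = \eta^-.
\end{equation*}
Taking valuations, we immediately get $u^+ = \val(\eta^+) = \val(\eta^-) = u^-$ and
\begin{equation*}
v^+ = \val(\xi^+) = \val(\xi^-) + k\,\val(1+\eta^-) = v^- + k\,\val(1+\eta^-),
\end{equation*}
so everything reduces to evaluating $\val(1+\eta^-)$ as a function of $u^-$.

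The ultrametric inequality gives $\val(1+\eta^-) \geq \min(0, u^-)$, with equality whenever $0 \neq u^-$. Concretely, on the locus $u^- > 0$ we have $\val(\eta^-) > \val(1) = 0$, hence $\val(1+\eta^-) = 0$ and $v^+ = v^-$. On the locus $u^- < 0$ we have $\val(\eta^-) < \val(1) = 0$, hence $\val(1+\eta^-) = u^-$ and $v^+ = v^- + ku^-$. Combined with $u^+ = u^-$, this reproduces the two cases in the statement. Note that the map is not even defined on $\{u^- = 0\}$ (where we could have $1 + \eta^- = 0$), which is exactly why the piecewise formula only covers $u^- \neq 0$ in accordance with Proposition \ref{prop-KS-cover}.

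There is no real obstacle here; the only substantive point is the case analysis on $\val(1+\eta^-)$ supplied by the ultrametric inequality, and the result is already packaged in the preceding diagrams \eqref{eqT+diagram} and \eqref{eqT-diagram} which ensure that $g^\pm$ intertwine $p_0$ with $p_k$ on $T^\pm$, so the computation on the total spaces descends unambiguously to a map between the bases.
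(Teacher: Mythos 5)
Your computation is correct and matches the paper's implicit approach: the paper states this proposition without proof as a direct consequence of Proposition~\ref{prop-KS-cover}, and the ultrametric case analysis on $\val(1+\eta^-)$ is exactly the intended argument. One small imprecision: the transition map from Proposition~\ref{prop-KS-cover} \emph{is} defined on most of $\{\val(\eta^-)=0\}$ (it only excludes $\eta^-=-1$); the reason for restricting to $\{u^-\neq 0\}$ is rather that on $\{\val(\eta^-)=0\}$ the quantity $\val(1+\eta^-)$ is not determined by $u^-$ alone, so the induced map on the base would not be well-defined there.
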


Consider the piecewise affine subspace $P_k(Y_k^{an})\subset \bR^3$ as a nodal integral affine manifold with charts the maps $f^{\pm}$ from the diagrams \eqref{eqT+diagram} and \eqref{eqT-diagram}. Proposition \ref{PrpBsideMonodromy}  guarantees there is a nodal integral affine isomorphism $i_k:p_k({Y_k^{an}})\to B_k$ fitting into the diagram
\begin{align}\label{EqIdentification}\xymatrix
   { B^{\pm} \ar[rd]_{E^{\pm}_k} \ar[r]^{f^{\pm}}& p_k({Y_k^{an}}) \ar[d]^{i_k} \\
      & B_k
   }
\end{align} 
where $E^\pm_k$ defined in Section \ref{ss-integral-affine-local}. From now on we will omit $i_k$ from the notation and will not care to distinguish between $B_k$ and $p_k(Y_k^{an})$.

Before we move further let us also concretely analyze how the structure sheaf on $Y_k^{an}$ is constructed. It is more helpful to think of $Y_k$ as embedded inside $\mathbb{A}^4_{\Bbbk}$ with the equations $$xy-(u+1)^k=0\text{, }uu'=1.$$ 

\begin{remark}\label{rmkPktoR4} We get a more symmetric picture of the map $\mathbb{A}^2_{\Bbbk}\times \Bbbk^*\to \mathbb{R}^3$ from Equation \eqref{eq-KS-proj} using the inclusion into $\mathbb{A}^4_{\Bbbk}$. Note that the image is in fact contained in $\mathbb{R}^2_{\leq 0}\times\mathbb{R}$. We have the commutative diagram:
\begin{align}\label{eqDiagEmb}\xymatrix
   { \mathbb{A}^2_{\Bbbk}\times \Bbbk^* \ar[d]_{p_0} \ar[r]& \mathbb{A}^4_{\Bbbk} \ar[d] \\
     \mathbb{R}^2_{\leq 0}\times\mathbb{R} \ar[r]^j & \mathbb{R}^4_{\leq 0}
   }
\end{align} where the right vertical maps simply applies $\min{(0,val(\cdot))}$ to each coordinate and the  map $j$ is $(a,b,c)\mapsto (a,b,\min{(0,c)},\min{(0,-c)}).$
\end{remark}

We then consider the defining exhaustion of $\mathbb{A}^4_{\Bbbk}$ by the balls of radius $e^{r}>0$. $$\mathbb{B}^4(r):=M(\Bbbk\langle c^{-1}x,c^{-1}y, c^{-1}u, c^{-1}u'\rangle),\text{for  some }val(c)=-r.$$ By construction $\mathbb{B}^4(r)\cap Y_k^{an}$ is identified with $$M(\Bbbk\langle c^{-1}x,c^{-1}y, c^{-1}u, c^{-1}u'\rangle/(uu'-1, xy-(u+1)^k) ).$$ We also have induced inclusions of affinoid subdomains $$\mathbb{B}^4(r)\cap Y_k^{an}\subset \mathbb{B}^4(r')\cap Y_k^{an},$$ for any $r<r'$, which gives by definition an exhaustion of $Y_k^{an}$ by taking $r\to \infty$.

\begin{definition} Recall the identification $i_k$ of $B_k$ with $p_k(Y_k^{an})$ in \eqref{EqIdentification}. For $a>0,$ let $P(a) \subset B_k$ be the set mapping under the map $j$ of \eqref{eqDiagEmb} to the intersection of the interval $[-a,0]^4$ with the image $j(p_k(Y_k^{an}))=j(B_k)$. 
\end{definition}

Unwinding definitions we have 
$$p_k^{-1}(P(a))=\mathbb{B}^4(a)\cap Y_k^{an}.$$ Observe $P(a)$ 
is an admissible polygon. To see this note the embedding of $p_k(Y_k^{an})$ into $\bR^4$ is piecewise affine. Indeed, it is given by the composition of $j$ with the maps $f^{\pm}$ from \eqref{eqT+diagram}, \eqref{eqT-diagram}. The latter are piecewise affine by Proposition \eqref{prop-B-plus-intertwine}. Thus the intersection with a cube in $\bR^4$ is polygonal. 
We refer the reader to Figure \ref{fig-P} for what the polygons $P(a)$ look like.

\begin{figure}
\includegraphics[width=\textwidth]{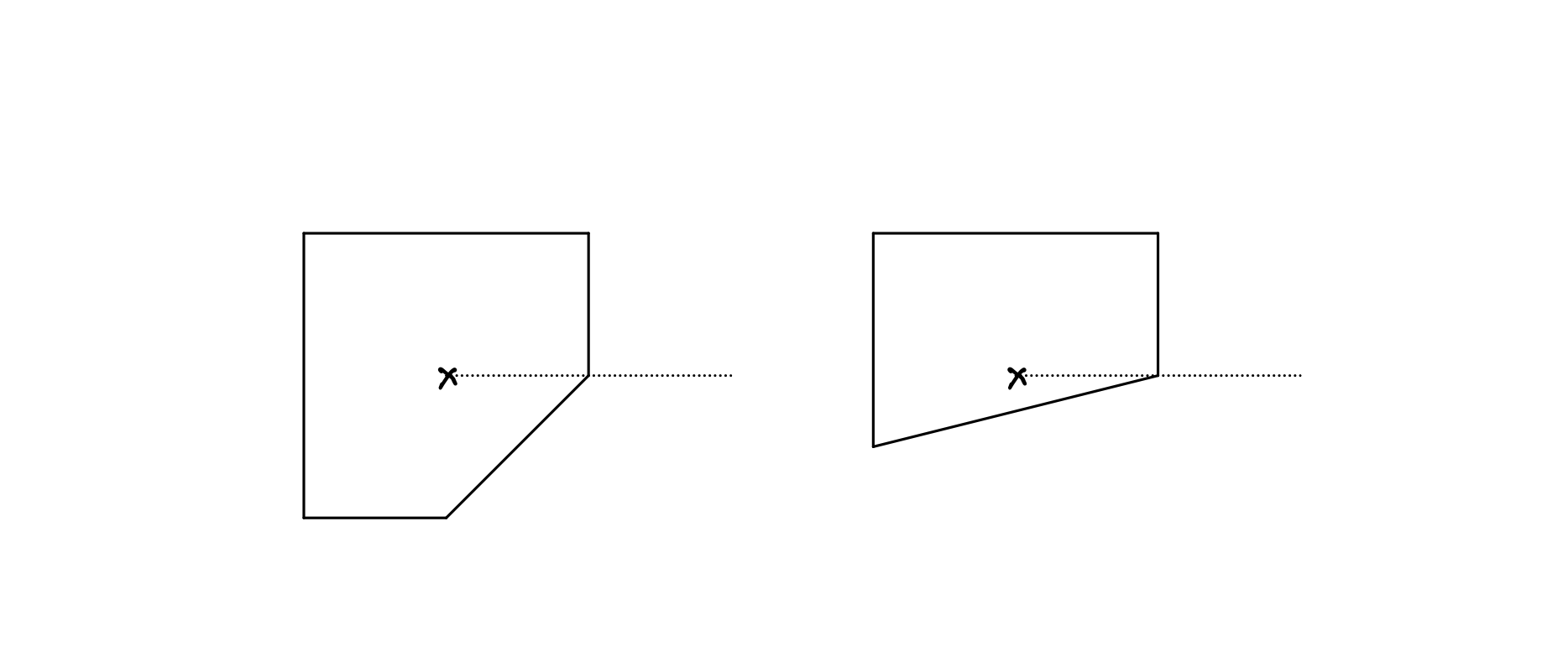}
\caption{Drawn inside $B_k$ with the dashed ray representing the defining monodromy invariant ray. The left figure is the $k=1$, and the right one is $k=4$.}
\label{fig-P}
\end{figure}

Recall that for $\mathcal{Y}$ a rigid analytic space a map $\mathcal{Y}\to B$ is continuous if  the preimage of each admissible polygon is an admissible open of $\mathcal{Y}$ and the preimages of admissible coverings are admissible coverings in $\mathcal{Y}$. We call such a map strongly Stein if the preimage of an admissible convex polygon is isomorphic to an affinoid domain as a rigid analytic space. 

\begin{proposition}\label{prop-Stein-k}
The map $p_k: Y_k^{an}\to B_k$ is a strongly Stein continuous map.
\end{proposition}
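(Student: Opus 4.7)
My plan is to establish both continuity and the strongly Stein property simultaneously by showing that for each admissible convex polygon $P \subset B_k$, the preimage $p_k^{-1}(P)$ is an affinoid subdomain of $Y_k^{an}$. Since any admissible polygon is a finite union of admissible convex polygons and any admissible covering is finite, continuity follows by Tate's acyclicity theorem once each piece is identified with an affinoid subdomain of a common ambient affinoid. I will split into two cases depending on whether $P$ contains the node.

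\emph{Case 1: $P$ does not contain the node.} Since $P$ is a compact convex polygon avoiding the origin in $B_k=\mathbb{R}^2$, its intersection with the $v$-axis lies entirely on one side of the origin by convexity. Thus $P$ is topologically contained in either $B^+$ or $B^-$; say $P \subset E^+(B^+)$, i.e., $P$ avoids the positive eigenray. Let $\tilde P := (f^+)^{-1}(P) \subset B^+$. Since $f^+|_{B^+}$ is an integral affine isomorphism onto its image, $\tilde P$ is an admissible convex polytope in $\mathbb{R}^2$. Because $P$ avoids the positive eigenray, its preimage $p_k^{-1}(P)$ is disjoint from $\{x=0\}\cup\{(0,0,-1)\}$ (which maps entirely into the eigenray together with the node), hence $p_k^{-1}(P) \subset \mathrm{Im}(g^+)$. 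The commutative diagram \eqref{eqT+diagram} then gives $p_k^{-1}(P) = g^+(p_0^{-1}(\tilde P))$, which by Proposition \ref{prpTropTorus}(1) is an affinoid subdomain of $(\Bbbk^*)^2$. Since $\tilde P$ is bounded, this set lies inside the preimage under $g^+$ of $\mathbb{B}^4(a)\cap Y_k^{an}$ for $a$ large, where it is cut out by Laurent-type conditions inherited from those defining $\tilde P$, so it is a rational subdomain of $Y_k^{an}$.

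\emph{Case 2: $P$ contains the node.} Then $P$ is a model polygon, the intersection of finitely many rational halfspaces $\{\ell_i \geq c_i\}$ each containing the origin (so $c_i \leq 0$). Choose $a$ large enough that $P \subset P(a)$, and recall $p_k^{-1}(P(a)) = \mathbb{B}^4(a) \cap Y_k^{an}$ is affinoid. The plan is to exhibit each defining halfspace as a Weierstrass condition on this ambient affinoid. By Remark \ref{rmkPktoR4}, $B_k$ is realized as a piecewise linear subset of $\mathbb{R}^4_{\leq 0}$ via $j\circ i_k$, and $p_k$ is the restriction of the coordinate tropicalization $\mathbb{A}^4_\Bbbk \to \mathbb{R}^4_{\leq 0}$. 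Each rational functional $\ell_i$ on $B_k$ extends to a rational linear functional on $\mathbb{R}^4$, which corresponds to a monomial $f_i$ in $(x,y,u,u')$ with $\val(f_i) = \ell_i$ on $p_k^{-1}(P(a))$. The halfspace $\{\ell_i \geq c_i\}$ then pulls back to the Weierstrass condition $|f_i| \leq e^{-c_i}$. Intersecting these finitely many conditions with $\mathbb{B}^4(a)\cap Y_k^{an}$ yields a Weierstrass subdomain, namely $p_k^{-1}(P)$.

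\emph{Main obstacle and covers.} The delicate step is Case 2, since intrinsic halfspaces of $B_k$ are chart-dependent due to the monodromy of Proposition \ref{PrpBsideMonodromy}: a single monomial valued as $\ell_i$ in one chart $B^\pm$ generally fails to compute $\ell_i$ in the other. The symmetric embedding of Remark \ref{rmkPktoR4} is exactly what resolves this, bypassing the chart-dependence by reducing the question to preimages of rational polyhedra under the standard tropicalization of $\mathbb{A}^4_\Bbbk$, where functionals are realized globally by the coordinate functions $x,y,u,u'$. Once the preimage of each convex polygon has been realized as an affinoid subdomain of a common ambient affinoid $\mathbb{B}^4(a)\cap Y_k^{an}$, admissibility of finite covers by convex polygons is inherited from Tate's acyclicity on the ambient affinoid, completing the proof.
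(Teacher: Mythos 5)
Your Case 1 is essentially the paper's argument and is correct: an admissible convex polygon $P$ avoiding the node misses one of the two eigenrays (by convexity in a chart), so $p_k^{-1}(P)$ lies in one of the two torus charts and the result reduces to Proposition \ref{prpTropTorus}.

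Case 2 contains a genuine gap. The pivotal claim, ``Each rational functional $\ell_i$ on $B_k$ extends to a rational linear functional on $\mathbb{R}^4$, which corresponds to a monomial $f_i$ with $\val(f_i)=\ell_i$ on $p_k^{-1}(P(a))$,'' is false for $k\geq 1$. Take, for instance, $\ell=v^+$ in the $B^+$-chart (cutting off a vertical slab). Writing $L=\alpha a_1+\beta a_2+\gamma a_3+\delta a_4$ for the putative extension and substituting the explicit parametrization $j\circ f^+(v^+,u^+)=\bigl(v^+,\min(0,ku^+-v^+),\min(0,u^+),\min(0,-u^+)\bigr)$ on $\{v^+\leq 0\}$, letting $u^+\to\pm\infty$ forces $\alpha=1$ and $\beta=\gamma=\delta=0$; but then on $\{v^+>0\}$ one finds $L\circ j\circ f^+\equiv 0\neq v^+$. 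So $v^+$ is \emph{not} the restriction of any linear functional on $\mathbb{R}^4$, and hence the symmetric embedding does not ``bypass the chart-dependence'' as you assert — the restrictions of linear functionals on $\mathbb{R}^4$ are piecewise affine and their linear span on the image of $B_k$ does not contain all the halfspace functionals you need. Moreover, even if one had a monomial $f_i$, the asserted identity $\val(f_i)=\ell_i$ on $p_k^{-1}(P(a))$ cannot hold in general: $\val(x)$ is not a function of $p_k$ on the fiber over the positive eigenray (there $\min(0,\val(x))=0$ while $\val(x)$ varies freely), so the valuation of a monomial involving $x$ is not determined by the base point.

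The paper's proof is genuinely different and avoids this. For each halfspace $U$ with boundary $H$, it uses that $H$ misses the node, hence lies in the complement of one eigenray, say $E^+(B^+)$; then the monomial $F=x^{l_H}$ is computed in that chart to be $x^a u^b$ with $a\geq 0$ (a sign argument requiring the $U$-co-orientation and the fact that $H$ misses the positive eigenray). Crucially the paper does \emph{not} claim $\val(F)=\ell$ on all of $p_k^{-1}(P)$; rather, $\val(F)=\ell$ away from the $+$-eigenray, while over the $+$-eigenray $\val(F)=a\cdot\val(x)\geq 0>c$, and since that eigenray lies entirely inside $U$ (its boundary $H$ never meets it and its endpoint, the node, is in $\operatorname{int}U$), the condition $\val(F)\geq c$ still carves out exactly $p_k^{-1}(P\cap U)$. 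This is the sign- and co-orientation-dependent argument your proposal would need but does not supply.
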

\begin{proof}
For admissible convex polygons not containing the origin Propositions \ref{prop-B-plus-intertwine} and \ref{prop-KS-cover} allow us to  intertwine $p$ over $P$ with the tropicalization map $p_0:\mathcal{Y}_0^2\to B_0^n=\mathbb{R}^2$.  In particular, $p^{-1}(P)$ is affinoid by Proposition \ref{prpTropTorus}. The Stein property similarly holds for $P(a)$ with $a>0$ since it is an affinoid domain by construction. 

Suppose we have established the Stein property for some admissible polygon $P$ containing the node, i.e., that $p_k^{-1}(P)$ is an affinoid domain. We will show it follows for any other such admissible polygon $Q$  obtained from $P$ by intersecting $P$ with a rational halfspace containing the node in its interior. This will finish the proof of the Stein property since we can obtain each admissible polytope containg a node by successively chopping off some $P(a)$ by rational halfspaces. 

Let $U$ be a rational halfspace containing the node in its interior. Now we claim that $$p_k^{-1}(P\cap U)\subset p_k^{-1}(P)$$ is a Weierstrass subdomain (cf. \cite[Proposition 3.3.2]{kapranov}). Let $B^{\pm}\subset B_0$ be the complement of the rays generated by $(\pm1,0)$.   Here we are specializing the notation $B^{\pm}_e$ introduced in Section \ref{ss-integral-affine-local} to the case $e=(1,0)$. Recall we have integral affine embeddings $E^{\pm}:B^{\pm}\to B_k$ mapping whose image is the complement of the corresponding eigenray. Consider the rational line $H$ that is the boundary of $U$ and use its $U$-co-orientation. Note $H$ does not pass through the node. Thus it is contained in the complement of an eigenray. Assume  $H$ is contained in the complement $E^+(B^+)$ of the $+$ eigenray in $B_k$ (if it is contained in $E^-(B^-)$ the same argument works). Then as in Equation \eqref{eq-hyper-fun} and using Proposition \ref{prop-B-plus-intertwine} we obtain a function $F\in \mathcal{O}(p_k^{-1}(Q))$ for any admissible polytope $Q\subset E^+(B^+)$. Under the embedding $g^+$ the function $F$ is equal to $(\xi^+)^a(\eta^+)^b$ with $a\in\mathbb{Z}_{\geq 0}$ and $b\in\mathbb{Z}.$ Since $\xi$ and $(\eta^+)^{\pm 1}$ are global functions, we obtain that $F$ in fact extends to functions in $\mathcal{O}(p_k^{-1}(Q))$ for any admissible polytope $Q\subset B_k, $ particularly for $Q=P.$\footnote{This can also be seen as a result of the higher dimensional version of the removable singularity theorem. We omit further discussion here} To finish we notice that $$p_k^{-1}(P\cap U)=\{x\in p_k^{-1}(P)\mid val(F(x))\geq b\}$$ for some $b\in \mathbb{R}$.

The Stein property is now established. To finish, it suffices to show that if $Q\subset P\subset B_k$ are admissible polygons such that $P$ contains the node but $Q$ does not, then $p_k^{-1}(Q)\subset p_k^{-1}(P)$ is an affinoid subdomain. 

It suffices to deal with the case where $Q=P\cap U$ with $U$ a rational halfspace which does not contain the node. The same argument as above works, but this time we obtain $$p_k^{-1}(P\cap U)=\{x\in p_k^{-1}(P)\mid val(F(x))\leq b\}$$ for some $F\in \mathcal{O}(p_k^{-1}(P))$. This means that we indeed have an affinoid subdomain, but this time a Laurent subdomain.
\end{proof}

%
%

\begin{definition} Let us define a sheaf $\mathcal{O}_k$ on the $G$-topology of $B_k$ by $$\mathcal{O}_k(P):=\mathcal{O}(p_k^{-1}(P)),$$ for an admissible polygon $P$. \end{definition}

\begin{corollary}\label{cy4PropertiesOk}
For $k\geq 0$, the sheaf $\mathcal{O}_k$ on the $G$-topology of $B_k$ satisfies the following properties

\begin{itemize}
\item (Affinoidness) If $P$ is an admissible convex polygon, $\mathcal{O}_k(P)$ is a reduced affinoid algebra.
\item (Subdomain) If $Q\subset P$ are admissible convex polygons, then the morphism of affinoid domains induced from the restriction map $\mathcal{O}_k(P)\to \mathcal{O}_k(Q)$: $$M(\mathcal{O}_k(Q))\to M(\mathcal{O}_k(P))$$ has image an affinoid subdomain and it is an isomorphism of affinoid domains onto its image.
\item (Strong cocycle condition) For admissible convex polygons $Q,Q'\subset P$, we have $$im(M(\mathcal{O}_k(Q)))\cap im(M(\mathcal{O}_k(Q')))=im(M(\mathcal{O}_k(Q\cap Q')))$$ inside $M(\mathcal{O}_k(P))$
\item (Independence) Let $Q_1,\ldots, Q_N\subset P$ be admissible convex polygons such that $P=\bigcup Q_i$. Then $$\bigcup  im(M(\mathcal{O}_k(Q_i)))=  M(\mathcal{O}_k(P)).$$
\item (Separation) Let $Q\subset P$ be an inclusion of admissible convex polygons. For any $p\in M(\cO_k(P))\setminus im(M(\cO_k(Q))$ there is a small admissible polygon $Q'\subset P$ which is disjoint of $Q$ such that $p\in im(M(\cO_k(Q')))$. 
\end{itemize}

\end{corollary}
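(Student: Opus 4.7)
The plan is to reduce all five properties to statements about preimages under $p_k$, with Proposition \ref{prop-Stein-k} doing the bulk of the geometric work. The key observation is that since $p_k^{-1}(P)$ is realized as the affinoid domain $M(\mathcal{O}_k(P))$ by definition, the underlying set of $M(\mathcal{O}_k(P))$ is canonically $p_k^{-1}(P) \subset Y_k^{an}$. Under this identification, the morphism induced by the restriction map $\mathcal{O}_k(P) \to \mathcal{O}_k(Q)$ is simply the set-theoretic inclusion $p_k^{-1}(Q) \hookrightarrow p_k^{-1}(P)$, so that $im(M(\mathcal{O}_k(Q))) = p_k^{-1}(Q)$ inside $M(\mathcal{O}_k(P))$. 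This identification is the input to all the remaining verifications.

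I would verify \emph{Affinoidness} by invoking Proposition \ref{prop-Stein-k} for the affinoidness of $\mathcal{O}_k(P)$, and observing that $Y_k$ is normal (as an open subset of a surface in $\mathbb{A}^3_{\Bbbk}$ with only isolated singularities), hence $Y_k^{an}$ is reduced, and therefore every affinoid subdomain of it is reduced. For \emph{Subdomain}, I would refer to the case analysis already carried out in the proof of Proposition \ref{prop-Stein-k}: in each case (both polygons avoiding the node; $P$ containing the node and $Q = P \cap U$ for $U$ a rational halfspace whose interior contains the node; or $P$ containing the node while $Q$ does not) the inclusion $p_k^{-1}(Q) \hookrightarrow p_k^{-1}(P)$ is exhibited as a Weierstrass or Laurent subdomain inclusion via a globally defined monomial function $F$ obtained from a co-oriented rational hyperplane as in \eqref{eq-hyper-fun}. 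This is precisely the required affinoid subdomain statement.

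The remaining three properties then follow from basic set theory together with the identification above. \emph{Strong cocycle} reduces to $p_k^{-1}(Q) \cap p_k^{-1}(Q') = p_k^{-1}(Q \cap Q')$, interpreted via the gluing description of $\mathcal{O}_k$ on $Q \cap Q'$ when this intersection is a finite union of admissible convex polygons. \emph{Independence} reduces to $\bigcup p_k^{-1}(Q_i) = p_k^{-1}(\bigcup Q_i)$. For \emph{Separation}, given $p \in M(\mathcal{O}_k(P)) \setminus im(M(\mathcal{O}_k(Q)))$ the image $p_k(p) \in P \setminus Q$ lies outside the closed set $Q$, so I would choose a small admissible convex polygon $Q' \subset P$, cut out by rational halfspaces, that contains $p_k(p)$ in its interior and is disjoint from $Q$; then $p \in p_k^{-1}(Q') = im(M(\mathcal{O}_k(Q')))$.

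The only substantive step is the Subdomain property, but since the necessary Weierstrass/Laurent subdomain construction has already been carried out in the proof of Proposition \ref{prop-Stein-k}, the entire corollary amounts to unwinding definitions and invoking that proposition.
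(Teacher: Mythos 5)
Your proposal is correct and follows the same strategy as the paper: identify $M(\mathcal{O}_k(P))$ with $p_k^{-1}(P)$ so that the restriction maps become set-theoretic inclusions, whence Strong cocycle, Independence, and Separation reduce to elementary set theory, while Affinoidness and Subdomain were already established in the proof of Proposition~\ref{prop-Stein-k}. You spell out the reducedness argument (via normality of $Y_k^{an}$) and the Separation argument in more detail than the paper, but the underlying reasoning is identical.
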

\begin{proof}
We have already proved the first two properties in the proof of Proposition \ref{prop-Stein-k}. The last three are also automatic because for $Q\subset P$ admissible polygons, we have $$im(M(\mathcal{O}_k(Q))=p_k^{-1}(Q)\subset M(\mathcal{O}_k(P))$$
under the canonical identification of $M(\mathcal{O}_k(Q))$ with $p_k^{-1}(Q).$
\end{proof}


We will also need a version of the  Hartogs property. For this we quote a general extension property in rigid analytic geometry from \cite{muenster}. First, recall that for any reduced affinoid algebra $A$, we have a canonical supremum (or spectral) norm \cite[\S3.1]{bosch}. We define the reduction of a non-archimedean normed field or algebra to be the quotient of norm $1$ elements by the norm less than $1$ elements. We denote reductions by a tilde. With this in mind, we know that $\tilde{A}$ is a finite type algebra over $\tilde{\Bbbk}.$

For the affinoid domain $X=M(A)$, we say that a subset $U\subset X$ is \emph{$n$-small}\footnote{We deviate here from the terminology of \cite{muenster}. There, the complement $X\setminus U$ is referred to as a ball figure of dimension $n$.}  if there is a collection $f_1,\dots f_k\in A$ that we are thinking of as functions $X\to\Bbbk $ such that 
    \begin{itemize}
        \item $|f_i(x)|\leq 1$, for all $i=1,\ldots ,k$ and $x\in X$ (or equivalently $|f_i|_{sup}\leq 1$)
        \item the vanishing locus $V(\tilde{f}_1,\ldots,\tilde{f}_k)\subset Spec(\tilde{A})$ is $n$-dimensional,
    \end{itemize}
such that $$U=\cap_{j=1}^k\{x\in X:|f_j(x)|<1\}.$$ 

Then \cite[Proposition 2.5]{muenster} states the following Hartogs property. 
\begin{proposition}\label{prpHartogsGeneral}
   Suppose $X$ is a normal affinoid domain of dimension $n+2$ and $U\subset X$ is $n$-small . Then the restriction map 
   $$\cO(X)\to\cO(X\setminus U)$$ is an isomorphism.
\end{proposition}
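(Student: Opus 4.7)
The plan is to reduce to a classical algebraic Hartogs extension on the special fiber via the specialization map of a formal model, and then control convergence back on the rigid generic fiber.

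First I would set up the reduction carefully. Write $X=M(A)$ with $A$ a reduced normal affinoid algebra, let $A^{\circ}\subset A$ be the subring of power bounded elements with reduction $\tilde{A}=A^{\circ}/A^{\circ\circ}$, and form the specialization map $\mathrm{sp}\colon X\to \mathrm{Spec}(\tilde{A})$. The hypothesis $|f_j|_{\sup}\leq 1$ guarantees $f_j\in A^{\circ}$, and by the standard compatibility $\mathrm{sp}^{-1}(V(\tilde{f}_j))=\{x\in X:|f_j(x)|<1\}$, so that $U=\mathrm{sp}^{-1}(\tilde{U})$ where $\tilde{U}:=V(\tilde{f}_1,\ldots,\tilde{f}_k)$ has dimension $n$ and therefore codimension at least $2$ in the $(n{+}2)$-dimensional scheme $\mathrm{Spec}(\tilde{A})$.

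Next, injectivity of the restriction map follows from a density argument: because $\tilde{U}\subsetneq \mathrm{Spec}(\tilde{A})$ is Zariski closed and $\mathrm{sp}$ is surjective with dense image on complements, $X\setminus U$ is Zariski dense in $X$; since $A$ is reduced, any function vanishing on a Zariski dense set vanishes. For surjectivity, I would cover $X\setminus U$ by the increasing family of rational Laurent subdomains
\begin{equation*}
X_{j,\varepsilon}:=\{x\in X:|f_j(x)|\geq\varepsilon\},\qquad j=1,\ldots,k,\ \varepsilon\in|\Bbbk^{\times}|,\ \varepsilon\to 0,
\end{equation*}
and view a section $g\in \mathcal{O}(X\setminus U)$ as a compatible family on these. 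After refining the cover I may assume $U$ is locally cut out by exactly two of the functions, reducing the problem to the codimension-$2$ case $k=2$.

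The main obstacle is the $k=2$ case, which is where normality enters. Here I would combine two ingredients. Algebraically: normality of $A$ gives Serre's condition $S_2$, so the local cohomology $H^0_{V(f_1,f_2)}(A)$ and $H^1_{V(f_1,f_2)}(A)$ vanish whenever $(f_1,f_2)$ is a regular sequence — and the dimension hypothesis on $\tilde{U}$ is precisely what promotes $(\tilde{f}_1,\tilde{f}_2)$, hence $(f_1,f_2)$, to a regular sequence. This gives the extension on the level of algebraic sections. Analytically: I must show that the candidate extension, obtained as a Čech cocycle from $g|_{X_{1,\varepsilon}}$ and $g|_{X_{2,\varepsilon}}$, has bounded supremum norm uniformly in $\varepsilon$, so that passing to $\varepsilon\to 0$ yields an element of $A$ rather than merely a formal expansion. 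The uniform norm bound is the delicate point; it is controlled by writing $g$ as a two-sided Laurent expansion in $f_1,f_2$ with coefficients in $A$ and using the $n$-smallness hypothesis to exclude the negative-index terms, precisely as in the classical Hartogs figure extension theorem. With this in hand, the extension $\tilde{g}\in A$ agrees with $g$ on the Zariski dense set $X\setminus U$, and injectivity in Step 2 makes it the unique such extension.
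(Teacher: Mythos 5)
The paper does not prove this proposition itself; the entire proof is the citation ``This is a rephrasing of part of \cite[Proposition 2.5]{muenster}.'' So you are comparing a from-scratch attempt against a black-box reference. Your overall instinct --- that normality should enter through Serre's $S_2$ condition and a local cohomology / depth argument, and that the dimension hypothesis is what controls the codimension --- is indeed in the right circle of ideas for rigid-analytic Riemann--Hartogs extension theorems. But as written the argument has several real gaps, and the most serious one is precisely the step you flag as ``delicate'' without resolving.

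Concretely: (i) the covering step is confused. Since $|f_j|_{\sup}\le 1$, one has $X\setminus U=\bigcup_j\{|f_j|=1\}$; your sets $X_{j,\varepsilon}=\{|f_j|\ge\varepsilon\}$ for $\varepsilon<1$ strictly contain pieces of $U$, so a section defined only on $X\setminus U$ does not come with restrictions to the $X_{j,\varepsilon}$, and letting $\varepsilon\to 0$ exhausts $X\setminus V(f_1,\dots,f_k)$ rather than $X$. (ii) The reduction to $k=2$ is asserted but not justified; a codimension-$2$ algebraic set need not be set-theoretically cut out by two of the given generators near every point, and replacing the $f_j$ by combinations changes $U$, so the starting data changes. (iii) The ``two-sided Laurent expansion of $g$ in $f_1,f_2$ with coefficients in $A$'' is the crux and simply does not exist in general: $f_1,f_2$ are arbitrary power-bounded elements, not Tate coordinates, and there is no Laurent decomposition of $A\langle f_1^{-1}\rangle$ or of the overlap algebra in terms of them. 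This is exactly the point where the proof needs to earn its keep --- either by a Mittag--Leffler / local-cohomology computation on a suitable formal model, or by a genuine norm estimate --- and gesturing at ``the classical Hartogs figure'' does not supply it. (iv) Finally, the inference from $\dim\tilde U=n$ to ``$(\tilde f_1,\tilde f_2)$, hence $(f_1,f_2)$, is a regular sequence'' skips over the fact that $\tilde A$ need not inherit $S_2$ (or even reducedness) from $A$, so codimension~$2$ of $\tilde U$ alone does not give regularity of the sequence in $\tilde A$, and the lift to a regular sequence in $A$ requires a separate (e.g., flatness over the valuation ring) argument. In short: good geometric intuition, but the analytic extension-with-bounds step and the reduction to a regular sequence are both genuine missing links, which is presumably why the paper outsources the statement to \cite{muenster} rather than reproving it.
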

\begin{proof}
    This is a rephrasing of part of \cite[Proposition 2.5]{muenster}. 
\end{proof}

\begin{proposition}\label{prop-Hartogs-B} Let $P\subset B_k$ be an admissible convex polygon containing the node. Then, restriction map $$\mathcal{O}_k(P)\to \mathcal{O}_k(\partial P)$$ is an isomorphism.

\end{proposition}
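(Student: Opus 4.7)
The plan is to realize the statement as an instance of the rigid-analytic Hartogs extension of Proposition~\ref{prpHartogsGeneral}. Set $X := p_k^{-1}(P)$ and $U := p_k^{-1}(\operatorname{int}_{B_k}(P))$, so that $X \setminus U = p_k^{-1}(\partial P)$. Since $\mathcal{O}_k = (p_k)_\ast \mathcal{O}_{Y_k^{an}}$, the sheaf property identifies $\mathcal{O}_k(\partial P) = \mathcal{O}(X \setminus U)$, and the restriction map in the statement becomes the restriction $\mathcal{O}(X) \to \mathcal{O}(X \setminus U)$. The preparatory conditions are easy: $X$ is an affinoid domain by Proposition~\ref{prop-Stein-k}, has dimension $2$ as an affinoid subdomain of the surface $Y_k^{an}$, and is normal because $Y_k$ is a normal affine variety (a hypersurface in $\mathbb{A}^3_\Bbbk$ with only isolated singularities), and both analytification and passage to affinoid subdomains preserve normality.

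To present $U$ in the form required by the definition of $n$-smallness, write $P = \bigcap_i H_i$ as the intersection of its defining rational halfspaces; each $H_i$ contains the node in its interior. Arguing as in the proof of Proposition~\ref{prop-Stein-k}, each $H_i$ supplies a function $F_i \in A := \mathcal{O}_k(P)$ with $|F_i|_{\sup} \leq 1$, equal to $1$ on the edge $p_k^{-1}(E_i)$ (where $E_i = \partial H_i \cap P$), and strictly less than $1$ on $p_k^{-1}(H_i^\circ \cap P)$. Consequently
$$U = \bigcap_i \{x \in X : |F_i(x)| < 1\},$$
and it remains only to verify that $U$ is $0$-small, i.e.\ that the common vanishing locus $V(\tilde F_1,\dots,\tilde F_m) \subset \operatorname{Spec}(\tilde A)$ has dimension $0$.

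This last dimension check is the heart of the argument. Geometrically, $\operatorname{Spec}(\tilde A)$ is the special fibre of a formal model of $X$ and is stratified by the polyhedral data of $P$; each $\tilde F_i$ cuts out the codimension-one stratum corresponding to its edge, and the simultaneous vanishing picks out the ``deepest'' stratum, which sits over the node and is expected to be zero-dimensional. I would verify this first for the canonical polygons $P(a)$ of Section~\ref{Sec-Yk-analyt}, where the explicit presentation of $A$ as a quotient of the Tate algebra in the variables $x,y,u,u'$ from Remark~\ref{rmkPktoR4} reduces the claim to a direct computation of the special fibre of the corresponding formal model. The case of an arbitrary admissible convex polygon $P$ containing the node (necessarily contained in some $P(a)$) is then obtained by observing that the additional rational halfspaces carving $P$ out of $P(a)$ produce Weierstrass or Laurent subdomains (again by the argument in the proof of Proposition~\ref{prop-Stein-k}), and their effect on the reduction and on the relevant vanishing locus can be tracked explicitly. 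The main technical obstacle is precisely this dimension count, which is particularly delicate for $k \geq 2$ where $Y_k$ is singular at $(0,0,-1)$; once it is in hand, Proposition~\ref{prpHartogsGeneral} immediately yields the claimed isomorphism $\mathcal{O}_k(P) \to \mathcal{O}_k(\partial P)$.
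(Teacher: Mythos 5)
Your plan is essentially the one the paper follows: reduce to the rigid-analytic Hartogs extension of Proposition~\ref{prpHartogsGeneral}, establish normality, and verify $0$-smallness of $U=p_k^{-1}(P\setminus\partial P)$, treating $P=P(a)$ first and a general admissible convex $P$ by noting it sits inside some $P(a)$. The gap is that you stop at what you yourself call ``the heart of the argument'' and ``the main technical obstacle,'' describing the reduction computation as something you ``would verify'' rather than actually doing it. That computation is short: for $P=P(a)$ take $f_1,\dots,f_4 = c'x,\, c'y,\, c'u,\, c'u'$ with $\val(c')=a$, i.e.\ the unit-ball variables from Remark~\ref{rmkPktoR4}. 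The reduction $\tilde A$ is $\tilde{\Bbbk}[x,y,u,u']/(xy,\,uu')$ --- both defining relations $uu'=1$ and $xy=(u+1)^k$ degenerate in the reduction, and the exponent $k$ drops out --- so $V(\tilde f_1,\dots,\tilde f_4)$ is the origin, which is $0$-dimensional. For general $P$, the further constraints cutting $P$ out of $P(a)$ are of Weierstrass or Laurent type as in the proof of Proposition~\ref{prop-Stein-k}, and they can only shrink the relevant vanishing locus. Finally, your worry that the dimension count is ``particularly delicate for $k \geq 2$'' because $Y_k$ is singular at $(0,0,-1)$ is misplaced: Proposition~\ref{prpHartogsGeneral} requires only the normality that you already (correctly) established, not smoothness, and the reduction computation above is manifestly uniform in $k$. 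So the route is right, but the proof is incomplete until the $0$-smallness is actually checked.
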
 

\begin{proof}
We first show the set $p_k^{-1}(P\setminus\partial P)$ is $0$-small for $P:=P(a)$ with $a>0.$ We take $c'x, c'y, c'u, c'u'$ with $val(c')=a$ as the functions $f_i$ in the definition. The reduction of the affinoid domain $p_k^{-1}(P)$ is the affine variety over the ground ring defined by the equations $xy=0,uu'=0$ in $\tilde{\Bbbk}[x,y,u,u']$. The vanishing locus of the reductions is given by the equations $x=y=u=u'=0$, and so is indeed $0$ dimensional. For more general $P$ the same follows as in  the proof of Proposition \ref{prop-Stein-k} by the fact they are all obtained by imposing additional constraints. $Y_k^{an}$ is normal as analytification preserves normality \cite[Proposition 1.3.5]{schoutens}. We now rely on Proposition \ref{prpHartogsGeneral}.

\end{proof}

The Hartogs principle from Proposition \ref{prop-Hartogs-B} allows us to prove another uniqueness of analytic continuation result. 

\begin{proposition}\label{prop-analytic-cont-B} Let $P\subset B_k$ be an admissible convex polygon containing the node and $p\in \partial P$. Then the restriction map $$\mathcal{O}_k(P)\to \mathcal{O}_k(\{p\})$$ is injective.
\end{proposition}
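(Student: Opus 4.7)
The plan is to reduce the statement to Proposition \ref{prpUnique} in two steps. First, since $P$ is an admissible convex polygon containing the node, the node lies in the interior of $P$, so $\partial P \subset B_k^{reg}$. The Hartogs principle, Proposition \ref{prop-Hartogs-B}, gives an isomorphism $\mathcal{O}_k(P) \cong \mathcal{O}_k(\partial P)$, so it suffices to show that the restriction $\mathcal{O}_k(\partial P) \to \mathcal{O}_k(\{p\})$ is injective. The remaining task is analytic continuation along the boundary loop $\partial P$.

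Given $f \in \mathcal{O}_k(\partial P)$ whose image in $\mathcal{O}_k(\{p\})$ vanishes, consider the subset $V \subset \partial P$ of points $q$ admitting some small admissible polygon $Q \subset \partial P$ with $q \in Q$ and $f|_Q = 0$; this $V$ is open in the Euclidean topology on $\partial P$. For each $q \in \partial P$ one can arrange that $Q$ lies in the image of one of the canonical integral affine charts $E^\pm \colon B^\pm \to B_k$ of Section \ref{ss-integral-affine-local}, because $\partial P$ is compact and disjoint from the node, so locally covered by the two charts. Through such a chart, $Q$ corresponds to an admissible convex polygon $\tilde Q \subset B_0$ and $\mathcal{O}_k|_Q$ is identified with $\mathcal{O}_0|_{\tilde Q}$. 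Applied to $Q$ containing $p$, Proposition \ref{prpUnique} (injectivity of $\mathcal{O}_0$-restriction from a connected polygon to a subpolygon, here the singleton $\{p\}$) shows $f|_Q = 0$, hence $p \in V$. The same proposition applied at any limit point $q$ of $V$---using that $V \cap Q \neq \emptyset$, so $f|_Q$ vanishes on a nonempty admissible subset of the connected $\tilde Q$---shows $V$ is also closed.

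Connectedness of $\partial P$ then forces $V = \partial P$, and the sheaf property of $\mathcal{O}_k$ applied to a finite subcover of $\partial P$ by such polygons $Q$ (obtained from compactness of $\partial P$) yields $f = 0$ in $\mathcal{O}_k(\partial P)$, as required. The main delicate point is verifying that the chart argument works uniformly along $\partial P$, including where $\partial P$ crosses the eigenray: one uses that the images of $E^+$ and $E^-$ together cover $B_k^{reg}$, so every point of $\partial P$ is in at least one chart's image, and the propagation of vanishing across the eigenray requires only overlapping consecutive chart polygons, which is already handled by the closedness argument for $V$.
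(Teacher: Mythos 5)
Your proof is correct in outline and follows essentially the same route as the paper: reduce to $\partial P$ via the Hartogs property (Proposition \ref{prop-Hartogs-B}), then propagate vanishing around $\partial P$ using Proposition \ref{prpUnique} in the charts $E^\pm$ together with connectedness of $\partial P$. The paper runs the propagation directly over the edge/vertex structure of $\partial P$ (via the sheaf property: vanishing at a vertex forces vanishing on each edge meeting that vertex), whereas you package it as an open-closed argument for a set $V$; this is a harmless reformulation of the same continuation idea.

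The one genuine gap is the unjustified assertion that $V$ is open. Under your definition a point $q\in V$ merely lies in some $Q$ with $f|_Q=0$, but $q$ need not be in the relative interior of $Q$ inside $\partial P$ (for instance $q$ a vertex of $P$ and $Q$ a sub-segment of one adjacent edge with endpoint $q$), so $Q\subset V$ does not by itself produce a relative neighborhood of $q$ in $V$. The fix is exactly the argument you already give for closedness: choose a connected admissible polygon $Q'\subset\partial P$ contained in a single chart with $q$ in its relative interior (non-convex at a vertex, but Proposition \ref{prpUnique} only requires connectedness), observe that $Q\cap Q'$ is a nonempty admissible subset of $Q'$ on which $f$ vanishes, and apply Proposition \ref{prpUnique} in the chart to conclude $f|_{Q'}=0$, hence $Q'\subset V$. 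With that repair the argument is complete and agrees with the paper's.
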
 

\begin{proof}
    By Proposition \ref{prop-Hartogs-B} and functoriality of restriction maps it suffices to prove the injectivity of the restriction map $\mathcal{O}_k(\partial P)\to \mathcal{O}_k(\{p\})$.   Let $E\subset\partial P$ be one of the edges of $P$. By Proposition \ref{prpUnique} and the Diagrams \eqref{eqT+diagram} and \eqref{eqT-diagram}, the map $\mathcal{O}_k(E)\to \mathcal{O}_k(\{p\})$ is injective. So it remains to prove the map $\mathcal{O}_k(\partial P)\to \mathcal{O}_k(E)$ is injective. Now observe that by the sheaf property, $\mathcal{O}_k(\partial P)$ consists of sections over each of the edges of $\partial P$ which agree on overlaps which are the vertices of $\partial P$.  Using Proposition \ref{prpUnique} and Diagrams \eqref{eqT+diagram} and \eqref{eqT-diagram} again, vanishing of the restriction to a vertex implies vanishing of the restriction to any edge containing that vertex. By connectedness of $\partial P$ we deduce vanishing over all of $\partial P.$



\end{proof}

\begin{remark}
    The uniqueness of analytic continuation property is much more general. In a MathOverflow thread \cite{Mathoverflow} it is shown that given an affinoid domain $X$ and a non-empty affinoid subdomain $Y\subset X$, the map $\mathcal{O}_X(X)\to \mathcal{O}_X(Y)=\mathcal{O}_Y(Y)$ is injective as long as $X$ is irreducible. With the stronger assumption that $X$ is smooth and connected, this result is proved in \cite[Proposition 4.2]{ardakov}.
\end{remark}


Recall that in the previous section we defined on $Y_0$ (resp. $Y_0^{an}$) the algebraic (resp. analytic) volume form $\Omega_0$. 

\begin{proposition}\label{prop-hol-vol-form}
By taking the residue of the meromorphic form $$Res_{Y_k\subset \mathbb{A}^3_\Bbbk} \left(-\frac{dxdydu}{u(xy-(u+1)^k)}\right),$$ we can define an algebraic (resp. analytic) volume form $\Omega_k$ on the smooth points of $Y_k$ (resp. $Y_k^{an}$). The pullback of $\Omega_k$ under the embeddings $g^{\pm}$ is $\Omega_0$.
\end{proposition}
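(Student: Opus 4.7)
The plan is to realise $\Omega_k$ concretely via the Poincar\'e residue and then verify by direct coordinate computation that the pullbacks $(g^{\pm})^*\Omega_k$ both equal $\Omega_0$. Set $f := xy-(u+1)^k$, viewed as a regular function on $\mathbb{A}^2_{\Bbbk}\times\Bbbk^*$, so that $Y_k=V(f)$. The $3$-form
\begin{equation*}
\omega := -\frac{dx\wedge dy\wedge du}{u\cdot f}
\end{equation*}
has a simple pole along $V(f)$ on the smooth locus of $f$, so the Poincar\'e residue produces an algebraic $2$-form $\Omega_k$ on the smooth locus of $Y_k$ (recall that for $k\geq 2$ the only singular point of $Y_k$ is $(0,0,-1)$, which lies in the image of neither $g^{+}$ nor $g^{-}$). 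Analytification then yields the corresponding analytic form on the smooth locus of $Y_k^{an}$, by functoriality of analytification and its compatibility with pullbacks of differential forms.

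To compute the residue I would use the standard recipe: locally write $\omega=(df/f)\wedge\alpha$ and restrict $\alpha$ to $Y_k$. On the affine open $\{x\neq 0\}\cap Y_k$, solving $df=y\,dx+x\,dy-k(u+1)^{k-1}du$ for $dy$ and substituting collapses two of the three terms (they involve $dx\wedge dx$ or $du\wedge du$), yielding $dx\wedge dy\wedge du = x^{-1}\,dx\wedge df\wedge du$. Hence
\begin{equation*}
\Omega_k\big|_{\{x\neq 0\}} \;=\; \frac{dx\wedge du}{xu}.
\end{equation*}
A symmetric calculation, solving for $dx$ on $\{y\neq 0\}\cap Y_k$, gives
\begin{equation*}
\Omega_k\big|_{\{y\neq 0\}} \;=\; -\frac{dy\wedge du}{yu}.
\end{equation*}

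The two pullbacks are then immediate. The image of $g^+$ lies in $\{x\neq 0\}$ since $x=\xi^+\in\Bbbk^*$, and substituting $x=\xi^+$, $u=\eta^+$ gives
\begin{equation*}
(g^+)^*\Omega_k \;=\; \frac{d\xi^+}{\xi^+}\wedge\frac{d\eta^+}{\eta^+} \;=\; \Omega_0.
\end{equation*}
The image of $g^-$ lies in $\{y\neq 0\}$ since $y=1/\xi^-\in\Bbbk^*$; using $dy=-d\xi^-/(\xi^-)^2$ one obtains
\begin{equation*}
(g^-)^*\Omega_k \;=\; -\frac{(-d\xi^-/(\xi^-)^2)\wedge d\eta^-}{(1/\xi^-)\,\eta^-} \;=\; \frac{d\xi^-}{\xi^-}\wedge\frac{d\eta^-}{\eta^-} \;=\; \Omega_0.
\end{equation*}

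There is no serious obstacle here: the only point requiring care is sign-tracking in the Poincar\'e residue and confirming that the overall scalar $-1$ together with the factor $1/u$ in the definition of $\omega$ are precisely what is needed so that \emph{both} coordinate patches yield $+\Omega_0$ rather than some scalar multiple. Equivalently, these two normalising factors are forced by the requirement that $\Omega_k$ restrict to the canonical logarithmic form $\Omega_0$ under both $g^+$ and $g^-$, which is a useful sanity check on the formula in the statement.
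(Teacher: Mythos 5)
Your proposal is correct and follows essentially the same route as the paper: define $\Omega_k$ via the Poincar\'e residue, obtain the explicit local formulas $\Omega_k=\frac{dx\wedge du}{xu}$ on $\{x\neq 0\}$ and $\Omega_k=-\frac{dy\wedge du}{yu}$ on $\{y\neq 0\}$, and pull back directly under $g^{\pm}$. The only difference is expository: the paper states the defining residue identity and the formula on $\{x\neq 0\}$ without detail, whereas you carry out the sign-tracking and both coordinate-patch computations explicitly, which is a welcome expansion of the paper's terse ``This implies the desired properties.''
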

\begin{proof}
Note that $\Omega_k$ is defined by the equality: $$-\frac{dxdydu}{u(xy-(u+1)^k)}=\frac{uydx+uxdy-(xy-(u+1)^{k-1})du}{u(xy-(u+1)^k)}\wedge\Omega_k.$$ This implies the desired properties. For example when $x\neq 0$, we have $\Omega_k=\frac{dxdu}{xu}.$
\end{proof}

\section{Symplectic cohomology type invariants}\label{s-rel-sh}
In this section, we deal with the relation between various notions of symplectic cohomology that appear in the literature. In particular, we study the relation between 
\begin{itemize}
\item relative symplectic cohomology of a compact set $K$ as typically defined over the Novikov ring,
\item a version of relative symplectic cohomology defined over the ground ring when the ambient manifold is exact,
\item symplectic cohomology a l\`{a} Viterbo associated with Liouville manifolds of finite type.
\end{itemize}
The material of Sections \ref{ss-filt-map} and  \ref{SecSHreview} is a review. The content of Sections \ref{ss-exact-review} and \ref{ss-liouville-review} is mostly known to experts, but to our knowledge some of the statements have not explicitly appeared in the literature. We alert the reader that Propositions \ref{prpBoundaryDepthComparison} and \ref{tmSHComparison} are central to the argument in the next section. 
\subsection{Filtrations, torsion and boundary depth}\label{ss-filt-map}

A \emph{filtration map} on an Abelian group $A$ is a map $\rho: A\to \mathbb{R}\cup\{\infty\}$ satisfying the inequality $$\rho(x+y)\geq \min{(\rho(x),\rho(y))},$$ the equality $\rho(x)=\rho(-x)$, and sending $0$ to $\infty$.  If $\rho^{-1}(\infty)=\{0\}$ the filtration map is called Hausdorff. A \emph{non-archimedean valuation} on a $\Lambda$-vector space satisfies these assumptions along with multiplicativity for scalar multiplication. 

 Note that if $(V_{i}, \rho_{i})$ are Abelian groups equipped with filtration maps indexed by a set $i\in I$, then $\bigoplus_{\alpha\in I} V_{\alpha}$ is equipped with a filtration map given by $$\rho\left(\sum v_i\right):=\min_{i\in I}{\left(\rho_i(v_i)\right)}.$$ Let us call this the $min$ construction.
 
 Let us call an Abelian group with a filtration map a \emph{filtered Abelian group}. Let us call the value of an element under the filtration map the \emph{filtration value}. We call a map from one filtered Abelian group to another \emph{bounded} if the map decreases filtration values at most by a uniform constant real number.

 We can define a pseudometric on an abelian group $A$ with a filtration map $\rho$ by $d(a,a'):=e^{-\rho(a-a')}$. This defines a topology on $A$ as well. The completion $\widehat{A}$ of $A$ is defined by taking the abelian group of Cauchy sequences in $A$ and modding out by the subgroup of sequences which converge to $0$. $\widehat{A}$ is equipped with a canonical filtration map: $$\rho((a_i)_{i\in \mathbb{N}})=\varinjlim \rho(a_i).$$ We call $A$ complete,  if the natural map $A\to \widehat{A}$ is bijective. If $A$ is complete, it is automatically Hausdorff. Note that bounded maps are automatically continuous but the converse is not necessarily true.

 A filtration map defines an exhaustive filtration by the subgroups $$F_{\geq \rho_0}A:=\{ a\in A\mid \rho(a)\geq \rho_0\}.$$ We can equivalently describe the completion of $A$ as the inverse limit $$\varprojlim_{\rho_0\in \mathbb{Z}} A/F_{\geq \rho_0}A.$$

We call a graded Abelian group with a filtration map in each degree and a differential that does not decrease the filtration values a \emph{filtered chain complex}. The homology of a filtered chain complex is naturally a filtered graded Abelian group  by taking the supremum of the filtration values of all representatives for each homology class.

For $(C,d,\rho)$ a filtered chain complex and real numbers $a<b$, we define the subquotient complexes $$C_{(a,b]}:=\frac{\{\rho > a\}}{\{\rho > b\}} .$$

\begin{lemma}\label{lem-filt-qis}
Let $(C,d,\rho)$, $(C',d',\rho')$ be filtered chain complexes whose underlying graded Abelian groups are degreewise complete. Let $f:C\to C'$ be a chain map which does not decrease the filtration values. Assume that for every $a<b$ real numbers, the induced map  $$C_{(a,b]}\to C'_{(a,b]}$$ is a quasi-isomorphism, then $f$ is a quasi-isomorphism.
\end{lemma}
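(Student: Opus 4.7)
The plan is to deduce the statement from the Milnor $\varprojlim{}^{1}$-exact sequence applied to the tower of quotients $\{C/F_{>b}C\}$. The strategy is two-step: first upgrade the quasi-isomorphism hypothesis from the subquotients $C_{(a,b]}$ to the quotients $C/F_{>b}C$, then pass to the inverse limit $b\to\infty$ using completeness.

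For the first step I would observe that for each fixed $b$, the quotient $C/F_{>b}C$ is the filtered colimit $\varinjlim_{a\to -\infty} C_{(a,b]}$, since every element of $C$ has filtration value strictly greater than some real $a$ and hence lies in $F_{>a}C$ for sufficiently negative $a$. As homology commutes with filtered colimits of chain complexes, the hypothesis that $f:C_{(a,b]}\to C'_{(a,b]}$ is a quasi-isomorphism for every $a<b$ passes to the colimit and yields that $f:C/F_{>b}C\to C'/F_{>b}C'$ is a quasi-isomorphism for every $b\in\bR$.

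For the second step, I would fix any unbounded increasing sequence $b_n\to\infty$. Degreewise completeness of $C$ identifies it with $\varprojlim_n C/F_{>b_n}C$ (this cofinal countable subsystem computes the same inverse limit as the full one indexed by all $b\in\bR$), and similarly for $C'$. The transition maps in these towers are the evident surjective quotient maps, so the Milnor short exact sequence applies and yields
\begin{equation*}
0 \to \varprojlim{}^{1} H^{*-1}(C/F_{>b_n}C) \to H^{*}(C) \to \varprojlim H^{*}(C/F_{>b_n}C) \to 0,
\end{equation*}
together with the analogue for $C'$. The chain map $f$ intertwines these two sequences, and by the first step it induces isomorphisms on each $H^{*}(C/F_{>b_n}C)$, hence on both the $\varprojlim$ and the $\varprojlim{}^{1}$ columns. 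The five lemma then gives that $H^{*}(f)$ is an isomorphism. The main thing to verify is the applicability of the Milnor sequence, but this only requires the tower to have degreewise surjective transition maps, which is automatic here since each is induced from a further quotient by a subcomplex; everything else is bookkeeping about how the completeness and the subquotient hypothesis each play their role.
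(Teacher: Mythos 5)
Your argument is correct but takes a genuinely different route from the paper's. The paper's proof is a one-liner by citation: it considers the integer-indexed filtration $F_nC=\{\rho>n\}$, notes that the hypothesis at $a=n,b=n+1$ gives an isomorphism on $E_1$ pages of the two spectral sequences, and invokes the Eilenberg--Moore Comparison Theorem (Weibel~5.5.11), which is designed precisely to upgrade an $E_r$-page isomorphism to a quasi-isomorphism under complete exhaustive filtrations. You avoid spectral sequences altogether: you first run the filtered colimit $a\to-\infty$ (legitimate because the filtration is exhaustive, $\rho$ taking values in $\bR\cup\{\infty\}$, and because homology commutes with filtered colimits) to upgrade the hypothesis to quasi-isomorphisms on the honest quotients $C/F_{>b}C$; you then run the inverse limit $b\to\infty$ via the Milnor $\varprojlim^1$ sequence, where the degreewise surjectivity of the quotient tower is what makes the sequence applicable, and completeness identifies $\varprojlim_n C/F_{>b_n}C$ with $C$ (the $F_{>b}$- and $F_{\geq b}$-indexed towers being mutually cofinal, so this matches the paper's description of completion). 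The five lemma on the map of Milnor sequences then finishes. This is in effect a by-hand reproof of the special case of Eilenberg--Moore that the paper invokes; it is more elementary, makes explicit where completeness and the $\varprojlim^1$ term enter, and is correct. One minor difference: your colimit step consumes the hypothesis for a continuum of subquotients $C_{(a,b]}$ as $a\to-\infty$, whereas the spectral-sequence argument only consumes the case $a=n,b=n+1$; since the lemma assumes the quasi-isomorphism for all $a<b$, both usages are fine.
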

\begin{proof}Consider the filtrations by $F_nC:={\{\rho > n\}}$ and $F_nC':={\{\rho' > n\}}$ for $n\in \mathbb{Z}$. The map $f$ induces a map on the corresponding spectral sequences. The condition that we are given implies that the map on the first pages is an isomorphism.
Now we apply the Eilenberg--Moore Comparison Theorem \cite[Theorem 5.5.11]{Weibel} relying on completeness.
\end{proof}

A Novikov ring $\Lambda_{\geq 0}$-module $A$ can be equipped with a filtration map defined by $$\rho(a):=\sup \{r\in\mathbb{R}\mid \text{ there exists }a'\in A \text{ with }a=T^ra'\}.$$ Moreover, $A\otimes \Lambda$ is naturally equipped with a non-archimedean valuation as follows. Consider the natural map $\iota: A\to A\otimes \Lambda$. For $a\in A\otimes \Lambda$, we define $$val_A(a):=-\inf\{r\in \mathbb{R}\mid T^ra\in \iota(A)\}.$$

 In fact, we can describe $A\otimes \Lambda$ more concretely as $A\times \mathbb{R}$ modulo the equivalence relation $(a,s)\sim (T^ra, s+r).$ Here $(a,s)$ is thought of as $\frac{a}{T^s},$ which makes the $\Lambda$-module structure transparent. We can construct a $\Lambda_{\geq 0}$-bilinear map $$A\times \Lambda\to A\times \mathbb{R}$$ by sending $(a,c)$ to $(c'a, -val(c))$ for $c=T^{val(c))} c'.$ This induces a map $A\otimes \Lambda\to A\times \mathbb{R}$ sending $a\otimes c$ to $(c'a, -val(c))$ for $c=T^{val(c))} c'$. We also have a map $A\times \mathbb{R}\to A\otimes \Lambda $ sending $(a,s)$ to $a\otimes T^{-s}.$ These are easily seen to be inverses of each other.

\begin{lemma}\label{lem-triv-comp}
Let $A$ be a free Novikov ring module, and equip $A\otimes \Lambda$ with the induced non-archimedean valuation. We can complete $A$ as a Novikov ring module $\hat{A}$ and complete $A\otimes \Lambda$ as a normed space $\widehat{A\otimes \Lambda}$. Then, the natural map $$\hat{A}\otimes \Lambda\to \widehat{A\otimes \Lambda}$$ is a valuation preserving functorial isomorphism.
\end{lemma}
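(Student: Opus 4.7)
The plan is to choose a $\Lambda_{\geq 0}$-basis $\{e_i\}_{i \in I}$ of $A$, which reduces the statement to an explicit comparison of both sides as spaces of formal infinite sums. First I will describe $\hat{A}$: unwinding the Cauchy-sequence definition of completion applied to $A = \bigoplus_{i} \Lambda_{\geq 0} e_i$ equipped with the min filtration, $\hat{A}$ is identified with the set of formal sums $\sum_i c_i e_i$, $c_i \in \Lambda_{\geq 0}$, such that for each $R \in \mathbb{R}$, $\{i : \mathrm{val}(c_i) < R\}$ is finite; the filtration transports to $\rho(\sum_i c_i e_i) = \min_i \mathrm{val}(c_i)$, the minimum being attained thanks to the convergence condition.

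Next, using $\Lambda = \bigcup_n T^{-n} \Lambda_{\geq 0}$ and hence $\hat{A} \otimes_{\Lambda_{\geq 0}} \Lambda = \bigcup_n T^{-n}\hat{A}$, I will rewrite $\hat{A} \otimes \Lambda$ as the set of formal sums $\sum_i d_i e_i$ with $d_i \in \Lambda$ satisfying the same finiteness condition $\#\{i : \mathrm{val}(d_i) < R\} < \infty$ for every $R$ (the apparent extra uniform lower bound on $\mathrm{val}(d_i)$ coming from fixing $n$ is automatic from the finiteness condition applied at $R = 0$). Applying the identical completion recipe to $A \otimes \Lambda = \bigoplus_i \Lambda e_i$, equipped with the induced min valuation $\mathrm{val}_A(\sum d_i e_i) = \min_i \mathrm{val}(d_i)$, produces exactly the same space of formal sums with the same valuation. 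Tracing the natural map $\hat{A} \otimes \Lambda \to \widehat{A \otimes \Lambda}$ through these identifications exhibits it as the identity on formal sums, hence as a valuation-preserving bijection.

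Functoriality in $A$ is then automatic: a bounded $\Lambda_{\geq 0}$-linear map between free modules is recorded by a matrix that acts termwise on formal sums on both sides via the same formula, and this is manifestly compatible with the identification of the two completions. The only subtle point—and the one place where care is required—is the equivalence of the two descriptions of $\hat{A} \otimes \Lambda$ in the previous paragraph. The expression $\bigcup_n T^{-n}\hat{A}$ appears strictly smaller than $\widehat{A \otimes \Lambda}$ because it seems to demand a global lower bound on the valuations of the coefficients, while Cauchy sequences in $A \otimes \Lambda$ at first glance only produce valuations tending to $\infty$. The observation that ``finitely many $i$ with $\mathrm{val}(d_i) < 0$'' already forces such a global lower bound is what reconciles the two conditions, and ensures the natural map is an isomorphism on the nose rather than merely a dense embedding with completion.
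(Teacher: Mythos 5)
Your proof is correct and follows essentially the same route as the paper's: fix a $\Lambda_{\geq 0}$-basis, identify both $\hat{A}\otimes\Lambda$ and $\widehat{A\otimes\Lambda}$ with the space of formal $\Lambda$-linear combinations satisfying the coefficient-valuation finiteness condition, and observe the natural map becomes the identity. The one thing you do more explicitly than the paper is spell out why $\bigcup_n T^{-n}\hat{A}$ already captures all such formal sums — namely that the finiteness condition at a single level $R=0$ forces a uniform lower bound on coefficient valuations — whereas the paper compresses this into ``we easily see.''
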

\begin{proof}
We choose a basis $\{\gamma_i\}_{i\in I}$ of $A.$ Since $\{\gamma_i\otimes 1\}_{i\in I}$ forms a basis of $A\otimes \Lambda,$ it follows immediately that $val_A$ is nothing but the min construction with respect to this basis. We can also easily show that $\hat{A}$ is isomorphic to the module of infinite $\Lambda_{\geq 0}$-linear combinations $\sum a_i \gamma_i$ with the property that for every $r\geq 0,$ there are only finitely many $i\in I$ with $val(a_i)\leq r.$ From the concrete description of $\hat{A}\otimes \Lambda,$ we easily see that it is isomorphic to the module of infinite $\Lambda$-linear combinations $\sum a_i \gamma_i$ with the property that for every $r\in\mathbb{R},$ there are only finitely many $i\in I$ with $val(a_i)\leq r$ with the obvious inclusion of $\hat{A}$ under these identifications. This immediately shows the second statement.
\end{proof}

\begin{definition}\label{def-tor} Let $A$ be a module  over the Novikov ring $\Lambda_{\geq 0}$. For any element $v\in A$ we define the torsion $\tau(v)$ to be the infimum over $\lambda$ so that $T^{\lambda}v=0$. We take $\tau(v)=\infty$ if $v$ is non-torsion. We define the maximal torsion
\begin{equation}
\tau(A):=\sup_{v:\tau(v)<\infty}\tau(v).
\end{equation}
If there are no torsion elements we take $\tau(A)=-\infty$.
\end{definition}

\begin{lemma}
The natural map $\iota: A\to A\otimes \Lambda$ is injective if and only if $A$ has no torsion elements.
\end{lemma}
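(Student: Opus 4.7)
The plan is to identify $A\otimes_{\Lambda_{\geq 0}}\Lambda$ with a localization of $A$ and then apply the standard description of the kernel of the localization map. First, I would observe that every nonzero element of $\Lambda$ has the form $T^{-\alpha}\lambda$ with $\alpha\geq 0$ and $\lambda\in\Lambda_{\geq 0}$: given $\sum a_i T^{\alpha_i}\in\Lambda$, the exponents $\alpha_i$ have a minimum $\alpha_{\min}$, and factoring out $T^{\alpha_{\min}}$ displays the element in the desired form. Hence $\Lambda$ is the localization of $\Lambda_{\geq 0}$ at the multiplicative set $S:=\{T^\alpha : \alpha\geq 0\}$, and consequently $A\otimes_{\Lambda_{\geq 0}}\Lambda\cong S^{-1}A$ as $\Lambda$-modules, with $\iota$ identified with the canonical localization map $a\mapsto a/1$.

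Now I would invoke the standard characterization of the kernel of a localization: $a/1 = 0$ in $S^{-1}A$ iff there exists $s\in S$ with $sa = 0$. In our setting, this reads $\iota(v)=0$ iff $T^\alpha v = 0$ for some $\alpha\geq 0$. The case $\alpha=0$ forces $v=0$ directly, so for $v\neq 0$ the condition $\iota(v)=0$ is equivalent to the existence of some $\alpha>0$ with $T^\alpha v = 0$, which by Definition \ref{def-tor} is precisely the assertion that $v$ is a nonzero torsion element. Therefore $\ker\iota=\{0\}$ if and only if $A$ has no nonzero torsion elements, which is the claim.

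The only point of mild care is verifying that $A\otimes_{\Lambda_{\geq 0}}\Lambda$ really is the localization $S^{-1}A$; this can be checked directly using the explicit model $A\times\mathbb{R}/\sim$ recorded just before the lemma, where $(a,s)\sim(T^ra,s+r)$ for $r\geq 0$. Under this model $\iota(v)=[(v,0)]$ vanishes iff $(v,0)$ can be connected to some $(0,s)$ by a zigzag of such moves, and a short bookkeeping argument, standard for multiplicative localization, reduces this condition to the existence of $\alpha\geq 0$ with $T^\alpha v = 0$. I do not expect any of this to be genuinely difficult; the lemma is essentially a restatement of the fact that elements in the kernel of a localization map are precisely those killed by some element of the multiplicative set.
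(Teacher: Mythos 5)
Your proof is correct, and it is morally the same argument as the paper's, just packaged in the language of localization. The paper works directly with the explicit model $A\times\mathbb{R}/\sim$ and observes that $a\otimes 1 = 0$ iff $(a,0)\sim(0,0)$ iff $T^\lambda a=0$ for some $\lambda$; you instead identify $\Lambda = S^{-1}\Lambda_{\geq 0}$ for $S=\{T^\alpha:\alpha\geq 0\}$ (using that the exponent set of any nonzero element of $\Lambda$ has a minimum, so one can factor out $T^{\alpha_{\min}}$), deduce $A\otimes_{\Lambda_{\geq 0}}\Lambda\cong S^{-1}A$ from flat base change, and then quote the standard description $\ker(A\to S^{-1}A)=\{a:\exists s\in S,\,sa=0\}$. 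Since the explicit model in the paper is precisely the quotient construction of $S^{-1}A$, the two proofs are computing the same thing; your version buys a slightly cleaner statement at the cost of invoking the localization kernel lemma, whereas the paper's re-derives that lemma in this special case. One small remark: in the step that needs care, you do not actually need to verify $A\otimes\Lambda\cong S^{-1}A$ against the explicit model as your last paragraph suggests — $M\otimes_R S^{-1}R\cong S^{-1}M$ is a general fact and suffices.
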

\begin{proof}
 From the concrete description of $A\otimes \Lambda$ above it follows that the element $a\otimes 1\in A\otimes \Lambda$ is equal to zero if and only if $(a,0)\sim (0,0)$, which is in turn equivalent to $T^\lambda a=0$ for some $\lambda\in\mathbb{R}$ as desired.
\end{proof} 

The importance of the following definition was explained thoroughly in \cite{locality}.

\begin{definition}For any $C$ chain complex over $\Lambda_{\geq 0}$, we define the \emph{$i$- homological torsion of $C$} as the maximal torsion of $H^i(C)$. If the $i$-homological torsion of $C$ is less than $\infty$, we say that it has \emph{homologically finite torsion at degree $i$}.\end{definition}

Now we relate it to the notion of finite boundary depth which to the best of our knowledge was introduced to symplectic topology by Usher \cite{usher}.

\begin{definition}
Let $(C,d,\rho)$ be a Hausdorff filtered chain complex. The boundary depth in degree $i\in \mathbb{Z}$ is defined as the infimum of $\beta\geq 0$ satisfying  $$\text{ for every }x\in im(d)\cap C^i, \text{there exists y}\in C^{i-1}\text{ s.t.  }dy=x\text{ and }\rho(x)-\rho(y)\leq \beta.$$ If this is not $+\infty$ we say that the boundary depth is finite in degree $i$. If the boundary depth is finite in all degrees we say that $C$ has finite boundary depth.
\end{definition}

\begin{proposition}\label{prop-tor-eq-bdry}
Assume that $(C,d,\rho)$ is obtained from a chain complex over the Novikov ring $C'$, i.e., $C=C'\otimes_{\Lambda_{\geq 0}}\Lambda$, where the underlying module of $C'$ is torsion free. Then the boundary depth of $C$ in degree $i$ is equal to the maximal torsion of $H^i(C')$. 
\end{proposition}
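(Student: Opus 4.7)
The plan is to show that the boundary depth $\beta_i$ of $C = C' \otimes \Lambda$ in degree $i$ equals the maximal torsion $\tau := \tau(H^i(C'))$ by proving both inequalities, relying on the basic dictionary: a class $[x_0] \in H^i(C')$ has torsion $\leq \lambda$ precisely when $T^\lambda x_0 = dy_0$ in $C'^{i-1}$, which upon applying $\iota$ and dividing by $T^\lambda$ becomes $\iota(x_0) = d(T^{-\lambda}\iota(y_0))$ in $C$, so torsion of $[x_0]$ translates into a filtration loss of at most $\lambda$ when bounding $\iota(x_0)$. Throughout, torsion-freeness of $C'$ makes $\iota:C'\hookrightarrow C$ injective, which is the mechanism that allows moving boundary relations back and forth between $C$ and $C'$.

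For $\beta_i \leq \tau$: let $x = dy \in C^i$ with $\rho_C(x) = s$. Using $\rho_C = val_{C'}$, for any $\epsilon > 0$ there exists $r \leq -s + \epsilon$ such that $T^r x = \iota(x_0)$ for some $x_0 \in C'^i$. Since $x$ is a boundary in $C$, so is $\iota(x_0)$; scaling any of its primitives in $C$ by a sufficiently large $T^\mu$ lands in $\iota(C'^{i-1})$, so $T^\mu \iota(x_0) = \iota(dz_0)$, and injectivity of $\iota$ yields $T^\mu x_0 = dz_0$ in $C'$. Thus $[x_0]$ is torsion, with $\tau([x_0]) \leq \tau$. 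For any $\epsilon' > 0$ this produces $z_0' \in C'^{i-1}$ with $T^{\tau + \epsilon'} x_0 = dz_0'$, and pushing forward gives the primitive $y' := T^{-r - \tau - \epsilon'}\iota(z_0')$ of $x$ in $C$ with $\rho_C(y') \geq -r - \tau - \epsilon'$. The loss is $\rho_C(x) - \rho_C(y') \leq \tau + \epsilon + \epsilon'$, and letting $\epsilon,\epsilon' \to 0$ establishes $\beta_i \leq \tau$.

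For $\tau \leq \beta_i$: fix a torsion class $[x] \in H^i(C')$ with torsion $\tau_x$. Replacing $x$ by $x/T^{\rho_{C'}(x)}$ (well-defined in $C'$ by torsion-freeness, at least when the supremum is attained, as holds for free Novikov modules; otherwise one argues up to an additional $\epsilon$), we may assume $\rho_{C'}(x) = 0$; this only increases the torsion, so it suffices to prove $\tau_x \leq \beta_i$ in this normalized case. Then $\iota(x)$ is a boundary in $C$ with $\rho_C(\iota(x)) = 0$, so for any $\beta > \beta_i$ there is a primitive $y$ with $\rho_C(y) \geq -\beta$. Hence for any $\delta > 0$, $T^{\beta + \delta} y = \iota(y_0)$ for some $y_0 \in C'^{i-1}$. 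Applying $d$, using $T^{\beta+\delta}\iota(x) = \iota(T^{\beta+\delta}x)$ (valid since $\beta + \delta \geq 0$ and $\rho_{C'}(x) = 0$), and invoking injectivity of $\iota$ yields $T^{\beta+\delta} x = dy_0$ in $C'$, so $\tau_x \leq \beta + \delta$. Taking $\delta \to 0$, $\beta \to \beta_i$, and the supremum over torsion classes gives $\tau \leq \beta_i$.

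The main obstacle is bookkeeping rather than conceptual: one must carefully juggle the two filtrations $\rho_{C'}$ on $C'$ and $\rho_C = val_{C'}$ on $C$, handling the possibility that their suprema/infima are not attained by sprinkling $\epsilon$-perturbations through the argument. Once this is organized, the core mechanism—torsion-freeness of $C'$ making $\iota$ injective, so that boundaries in $C$ can be pulled back to torsion relations in $C'$ after multiplying by large enough powers of $T$, and vice versa—powers both directions cleanly.
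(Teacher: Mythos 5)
Your proof is correct and follows essentially the same strategy as the paper's: both directions split into an inequality $\beta_i \leq \tau$ and $\tau \leq \beta_i$, both hinge on the injectivity of $\iota: C' \to C$ (from torsion-freeness) to transport boundary relations between the two complexes, and both manage the non-attainment of filtration suprema via small $\epsilon$/$\delta$ corrections; the paper packages the rescaling into its ``$\alpha_\delta$'' notation where you instead normalize $x$ by $T^{\rho_{C'}(x)}$, but these are cosmetic variants of the same bookkeeping.
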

\begin{proof}

For an element $\alpha\in C$ and $\delta>0$, there is a unique element $\alpha_{\delta}\in C'$ such that $$\alpha=T^{\rho(\alpha)-\delta}\alpha_{\delta}\otimes 1.$$ 

Let us first show that $b_i$, the boundary depth in degree $i$, is at least $t_i$, the maximal torsion in degree $i$. Let $[z]\in H^i(C')$ be an arbitrary torsion element. Then, it follows that $z\otimes 1\in C^i$ is exact. For every $\epsilon >0$, there exists $$y\in C^{i-1}\text{ s. t. }dy=z\otimes 1\text{ and }-\rho(y)\leq b_i+\epsilon-\rho(z\otimes 1)\leq b_i+\epsilon.$$ Since $d$ does not decrease $\rho$ values, $T^{-\rho(y)+\epsilon}z$ defines an element in $C'$. It immediately follows that $T^{-\rho(y)+\epsilon}z= dy_{\epsilon}$ and that $T^{b_i+2\epsilon}[z]=0$.  We proved that $b_i+2\epsilon\geq t_i$, which implies the claim.

Conversely, let us consider an exact element $x\in C^i$. Then, it follows that $x_{\delta}$ is torsion for all $\delta>0$. Therefore, there exists $y_\delta$ such that $dy_\delta=T^{t_i+\delta}x_\delta$. But, then $$T^{\rho(x)-t_i-2\delta}d(y_\delta\otimes 1)=T^{\rho(x)-2\delta-t_i}T^{t_i+\delta}x_\delta\otimes 1=x.$$ Note that $$\rho(x)-\rho(T^{\rho(x)-t_i-2\delta}y_\delta\otimes 1)\leq t_i+2\delta$$This shows that $b_i\leq t_i+2\delta$, finishing the proof.

\end{proof}

\begin{corollary}\label{cor-fin-bd-tor}
Using the notation and assumptions of Proposition \ref{prop-tor-eq-bdry}, the complex $C$ has finite boundary depth in degree $i$ if and only if the complex $C'$ has homologically finite torsion in degree $i$. 
\end{corollary}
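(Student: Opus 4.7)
The corollary is a direct consequence of Proposition \ref{prop-tor-eq-bdry}, so the proof should be short: Proposition \ref{prop-tor-eq-bdry} asserts that the boundary depth $b_i$ of $C$ in degree $i$ is equal (as an element of $\mathbb{R}\cup\{-\infty,+\infty\}$) to the maximal torsion $t_i=\tau(H^i(C'))$. Since ``finite boundary depth in degree $i$'' means by definition $b_i<+\infty$ and ``homologically finite torsion in degree $i$'' means $t_i<+\infty$, the equivalence is immediate from the equality $b_i=t_i$.

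The only minor point that warrants attention is that the definitions of $b_i$ and $t_i$ admit slightly different conventions at the ``trivial'' ends of their ranges (boundary depth is defined as an infimum of non-negative reals, while $\tau(H^i(C'))$ is set to $-\infty$ when $H^i(C')$ has no torsion). So one should briefly check that Proposition \ref{prop-tor-eq-bdry} still yields the desired equivalence in the edge cases: if $H^i(C')$ is torsion-free, then every exact element $x\in C^i$ of the form $z\otimes 1$ with $z$ a cycle is exact via a $\Lambda_{\geq 0}$-level primitive, and the proof of Proposition \ref{prop-tor-eq-bdry} shows $b_i=0$; conversely, $b_i<+\infty$ forces every exact element to be torsion at the level of $C'$ with bounded torsion, giving $t_i<+\infty$. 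In either case the biconditional holds.

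I do not foresee any obstacle — the work has been done in Proposition \ref{prop-tor-eq-bdry}, and this corollary is essentially a restatement suited to the applications in Section \ref{s-rel-sh}.
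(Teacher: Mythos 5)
Your proposal is correct and follows the same route the paper intends: the corollary is an immediate consequence of Proposition \ref{prop-tor-eq-bdry}, since the equality $b_i=\tau(H^i(C'))$ from that proposition directly converts ``$b_i<\infty$'' into ``$\tau(H^i(C'))<\infty$'', and the paper itself states the corollary without a separate proof. Your attention to the convention mismatch at the torsion-free end ($\tau=-\infty$ versus $b_i\geq 0$) is a reasonable and correct observation, but it does not affect the finiteness biconditional, which is all the corollary asserts.
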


We will also need the following lemma.


\begin{lemma}\label{lem-completion-quotient}
Let $X$ be a Hausdorff filtered abelian group and $Y$ a subgroup. Then the completion of $X/Y$ is isomorphic to $\widehat{X}/\bar{Y},$ where $\bar{Y}$ is the closure of $Y\subset \widehat{X}$.
\end{lemma}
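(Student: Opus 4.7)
The plan is to exhibit $\widehat{X}/\bar Y$ as a complete Hausdorff filtered abelian group that canonically contains $X/Y$ as a dense, filtration-preserving subgroup. By the universal property of completion, this forces $\widehat{X}/\bar Y \cong \widehat{X/Y}$ canonically.

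Equip both $X/Y$ and $\widehat X/\bar Y$ with their quotient filtrations, namely $\rho_{X/Y}([x]) := \sup_{y\in Y}\rho(x-y)$ and similarly on $\widehat X/\bar Y$. The inclusion $X\hookrightarrow\widehat X$ sends $Y$ into $\bar Y$ and hence descends to a natural homomorphism $\varphi: X/Y\to\widehat X/\bar Y$. To check that $\varphi$ preserves filtration values, I would fix $x\in X$ and show that for any $\bar y\in\bar Y$ one can approximate $\rho(x-\bar y)$ by values $\rho(x-y_n)$ with $y_n\in Y$: pick $y_n\in Y$ with $\rho(\bar y-y_n)\to\infty$, and use the non-archimedean inequality to conclude that $\rho(x-y_n)=\rho(x-\bar y)$ once $\rho(\bar y-y_n)>\rho(x-\bar y)$. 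This shows the two suprema defining $\rho_{X/Y}([x])$ and $\rho_{\widehat X/\bar Y}(\varphi([x]))$ coincide.

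Density of $\varphi(X/Y)$ in $\widehat X/\bar Y$ is immediate from density of $X$ in $\widehat X$, since if $\rho(\hat x-x)>N$ then $\rho_{\widehat X/\bar Y}([\hat x]-\varphi([x]))\geq N$. Hausdorffness of $\widehat X/\bar Y$ is automatic because $\bar Y$ is closed in $\widehat X$ by construction. The one nontrivial step is completeness of $\widehat X/\bar Y$: given a Cauchy sequence of cosets $([\hat x_n])$ with $\rho_{\widehat X/\bar Y}([\hat x_{n+1}]-[\hat x_n])\to\infty$, I would build a Cauchy lift in $\widehat X$ by recursion. Start from any $\hat x_1$; given $\hat x_n$, use the sup definition of the quotient filtration to pick $\bar y_n\in\bar Y$ with $\rho(\hat x_{n+1}-\hat x_n-\bar y_n)\geq\rho_{\widehat X/\bar Y}([\hat x_{n+1}]-[\hat x_n])-1$, and replace $\hat x_{n+1}$ by $\hat x_{n+1}-\bar y_n$. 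The modified sequence represents the same cosets but is Cauchy in $\widehat X$; by completeness of $\widehat X$ it converges to some $\hat x$, whose class in $\widehat X/\bar Y$ is the required limit of $([\hat x_n])$.

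Combining these ingredients, $\widehat X/\bar Y$ is a complete Hausdorff filtered abelian group containing $X/Y$ as a dense filtration-preserving subgroup via $\varphi$, so the universal property of the completion gives the canonical isomorphism $\widehat{X/Y}\xrightarrow{\sim}\widehat X/\bar Y$. The main technical obstacle I expect is the lifting step: because the supremum in the quotient filtration need not be attained, one has to choose the approximants $\bar y_n$ carefully to ensure the lift is genuinely Cauchy in $\widehat X$; everything else is formal manipulation of quotient filtrations and the non-archimedean inequality.
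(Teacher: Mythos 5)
Your proof is correct and works. The overall strategy is the same as the paper's — both arguments pass through the canonical map $X/Y\to\widehat X/\bar Y$ and invoke the universal property of completion — but you package it differently. The paper verifies the universal mapping property directly: given a bounded map $X/Y\to Z$ with $Z$ complete, precompose with $X\to X/Y$, factor through $\widehat X$, observe that $\bar Y$ maps to $0$ by continuity, and deduce the factorization through $\widehat X/\bar Y$. Crucially, the paper simply \emph{asserts} that $\widehat X/\bar Y$ is complete. You instead verify the equivalent "concrete" characterization of the completion — complete, Hausdorff, with a filtration-preserving map from $X/Y$ having dense image — and you actually \emph{prove} the completeness of $\widehat X/\bar Y$ via the Cauchy-lifting argument, which is the one real piece of content the paper leaves implicit. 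Your filtration-preservation check using the ultrametric equality principle (if $\rho(\bar y-y_n)>\rho(x-\bar y)$ then $\rho(x-y_n)=\rho(x-\bar y)$) is correct and exactly what is needed to compare the two suprema.

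One small imprecision worth flagging: you say $\widehat X/\bar Y$ "contains $X/Y$ as a dense ... subgroup," but the map $\varphi:X/Y\to\widehat X/\bar Y$ need not be injective. Its kernel is $(\bar Y\cap X)/Y$, which is the closure of $\{0\}$ in $X/Y$ and is nontrivial precisely when $Y$ is not closed in $X$ (equivalently, when $X/Y$ is not Hausdorff). This does not damage your argument — the characterization you use requires only a filtration-preserving map with dense image, not an injection — but the word "contains" should be softened, since the completion of a non-Hausdorff filtered group does collapse the indiscrete part, matching the fact that $\bar Y\cap X$ is sent to $0$.
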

\begin{proof}
First, note that $\widehat{X}/\bar{Y}$ is complete. We also have a canonical map $$q:  X/Y\to \widehat{X}/\bar{Y},$$ because $X\to\widehat{X}\to \widehat{X}/\bar{Y}$ factors through $X/Y$. We claim that $q$ satisfies the universal mapping property (UMP) of the completion of $X/Y$ in the category of filtered abelian groups with bounded group homomorphisms, which finishes the proof.

Let $X/Y\to Z$ be any map in this category with $Z$ complete. By pre-composing with the quotient map we obtain a map $X\to Z$, which factors through $X\to \widehat{X}$ to give $\widehat{X}\to Z$ by the UMP of completion. This last map sends $Y$ to zero, which implies that it sends $\bar{Y}$ to zero by continuity. Therefore, we get the desired factorization through $q$. The uniqueness also follows because otherwise we contradict the uniqueness part of the UMP for $X\to\widehat{X}$ noting that $\widehat{X}\to \widehat{X}/\bar{Y}$ is surjective.
\end{proof}

\subsection{Relative symplectic cohomology}\label{SecSHreview}
A symplectic manifold $M$ is said to be \emph{geometrically bounded} if there exists an $\omega$-compatible almost complex structure $J$ such that the associated metric $g_J$ is equivalent to a complete Riemannian metric $h$ whose sectional curvature is bounded from above and below and whose radius of injectivity is bounded away from $0$.  Equivalence between $g_J$ and $h$ means there is a constant  $C>1$ so that
$$
\frac1{C}g_J(v,v)<h(v,v)<Cg_J(v,v),
$$
for all tangent vectors $v$. An $\omega$-compatible $J$ is said to be geometrically bounded if it is a witness for $\omega$ being geometrically bounded. Note that a given $\omega$ may have geometrically bounded almost complex structures whose associated Riemannian metrics are mutually inequivalent. Nevertheless, the structures we construct below are independent up to quasi-isomorphism (which itself is specified up to contractible choice) of the choice of $J$ as well as any other choices.

It is often useful to consider a more restricted notion from \cite[Section 3]{locality}. A function $f$ on a geometrically bounded symplectic manifold is called \emph{admissible} if it is proper, bounded below, and there is a constant $C$ such that with respect to a geometrically bounded almost complex structure $J$ we have $\|X_f\|_{g_J}<C$ and $\|\nabla X_f\|_{g_J}<C.$ We say $M$ is \emph{geometrically of finite type if} it admits an admissible function all of whose critical points are contained in a compact set. The symplectic manifolds manifold $M_{\cR}$ we consider in this paper are all geometrically of finite type.

Let $M$ be a geometrically bounded symplectic manifold such that $c_1(M)=0$. Let us also fix a homotopy class of trivializations of $\Lambda^{top}_\mathbb{C}(TM,J)$ for some compatible almost complex structure $J$. We refer to this choice as the grading of $M$ in the future. To any compact $K\subset M$ we associate  a $\Lambda_{\geq 0}$ module $SH^*_K(M)$, the \emph{symplectic cohomology of $K$ relative to $M$} over the Novikov ring $\Lambda_{\geq 0}$. Before briefly discussing its construction we describe its properties.
\begin{itemize}
    \item $SH^*_K(M)$ carries the structure of a $\mathbb{Z}$-graded $BV$-algebra over $\Lambda_{\geq 0}$.
    \item For any inclusion $K_1\subset K_2$ there a restriction map $SH^*_M(K_2)\to SH^*_M(K_1). $
    \item The restriction maps are contravariantly functorial with respect to nested inclusions and are morphisms of  BV algebras over $\Lambda_{\geq 0}$.
\end{itemize}

We now briefly review the construction of $K\mapsto SH^*_K(M)$ first as a presheaf of $\Lambda_{\geq 0}$-modules and then of BV algbras. For this we mention a class of Floer data $(H,J)$ and paths between them called \emph{regular dissipative Floer data} introduced in \cite{groman} for whom Floer's equation satisfies the necessary $C^0$-estimates for the definition of the Floer complex.  Describing these in detail will take us too far afield. Suffice it to spell out the properties we need and refer to \cite{groman} for details.

\begin{itemize}
    \item There is a sufficient supply of regular dissipative Floer data in the sense that a lower semi-continuous function $H$ on $M$ can be approximated by a monotone sequence of regular dissipative Floer data $(H_i,J_i)$ so that $H_i$ converges to $H$ on compact subsets. 
    \item Any regular dissipative Floer datum $(H,J)$ gives rise to a Floer complex $CF^*(H,J)$.
    \item Given any pair of regular dissipative Floer data $(H_1,J_1)$, $(H_2,J_2)$ with $H_1\leq H_2$ there exists a continuation map $CF^*(H_1,J_1)\to CF^*(H_2,J_2)$. 
    \item The continuation map itself depends on a choice of a regular dissipative path, but any two such choices are connected by a dissipative homotopy which gives rise to a homotopy of the corresponding chain maps. In particular, the homology level continuation map is independent of any choices.
    \item When $H_1=H_2$ the continuation map is an isomorphism on homology. When also $J_1=J_2$, it is the identity.
    
\end{itemize}

To construct $SH^*_M(K)$ in a manifestly choice free manner takes a bit of preparation. Consider sequences $(H_i,J_i)$ of dissipative Floer data such that $$\lim_{i\to\infty }H_{i}(x)=\begin{cases}0,&\quad x\in K\\
\infty,&\quad x\not\in K\end{cases}.$$
We refer to such a sequence, together with choices of dissipative data for continuation maps $CF^*(H_i,J_i)\to CF^*(H_{i+1},J_{i+1}),$ as an \emph{ acceleration datum}. 

From an acceleration datum we obtain a 1-ray of chain complexes
$$
\mathcal{C}:=CF^*(H_1)\to CF^*(H_2)\dots
$$
over $\Lambda_{\geq 0}$ where we drop $J$ from the notation when there is no need to specify it.

We construct a model for the completed homotopy colimit of $\mathcal{C}$ called the \emph{completed telescope} $\widehat{tel}(\cC)$  as follows. Abbreviate $C_i:=CF^*(H,J)$ and let $$tel(\cC(A))=\left(\bigoplus_{i=1}^\infty C_i[q]\right)$$ with $q$ a degree $-1$ variable satisfying $q^2=0$.  The differential is as depicted below \begin{align}\label{teles}
\xymatrix{
C_1\ar@{>}@(ul,ur)^{d }  &C_2\ar@{>}@(ul,ur)^{d} &C_3\ar@{>}@(ul,ur)^{d}\\
C_1[1]\ar@{>}@(dl,dr)_{-d} \ar[u]^{\text{id}}\ar[ur]^{\kappa_1} &C_2[1]\ar@{>}@(dl,dr)_{-d} \ar[u]^{\text{id}}\ar[ur]^{\kappa_2}&\ldots\ar[u]^{\text{id}}_{\ldots} }.
\end{align}
As motivation for the construction, note there is a projection map from $tel(\cC)$ to the ordinary colimit of $\cC$ inducing an isomorphism on homology. $tel(\cC)$ should be seen as a better behaved model for the colimit. The chain complex $tel(\mathcal{C})$ carries the $T$-adic norm, and we define $\widehat{tel}(\cC)$ as the Cauchy completion of $ {tel}(\cC)$ with respect to this norm. 



Given an inclusion $K'\subset K$ and pairs $A, A'$ of dissipative acceleration data for $K,K'$, we say $A\leq A'$  if  $H_i\leq H'_i$ for all $i$. For $A\leq A'$ one can construct a continuation map $\widehat{tel}(\cC)\to \widehat{tel}(\cC')$ by making auxiliary choices of regular monotone dissipative homotopies from $H_i\to H'_i$ and additional homotopies between the concatenations $H_i\to H'_i\to H'_{i+1}$ and  $H_i\to H_{i+1}\to H'_{i+1}$. Let us refer to such a choice as a \emph{continuation datum} from $A$ to $A'$. A continuation datum $h$ from $A$ to $A'$ gives rise to a chain map $\kappa_h:\widehat{tel}(\cC)\to \widehat{tel}(\cC')$ which we refer to the \emph{continuation map}. Details for the construction of this continuation map are in  \cite[\S8.3]{groman} and in \cite[Section 3.3.2]{varolgunes}.

\begin{proposition}\label{prpFloerFunctor}
    \begin{enumerate}
        \item For a pair $A\leq A'$ of acceleration data, and any choice $h_1,h_2$ of continuation data from $A$ to $A'$, the corresponding chain maps $\kappa_{h_1},\kappa_{h_2}$ are homotopic. 
        \item Given a triple $A\leq A'\leq A''$ and continuation data $h_{12},h_{23}, h_{13}$ respectively from $A\to A',A'\to A'',A\to A''$ the chain maps $\kappa_{h_{23}}\circ\kappa_{h_{12}}$ and $\kappa_{h_{13}}$ are chain homotopic.
        \item If $A\leq A'$ are acceleration data for $K$, any continuation datum $h$ from $A$ to $A'$ induces an isomorphism on homology. 
        \item Given any pair $A,A'$ of acceleration data from $K'\subset K$ there is a third datum $A''$ for $K$ such that $A''\leq A$ and $A''\leq A'$
    \end{enumerate}
\end{proposition}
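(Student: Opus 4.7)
The plan is to adapt the standard TQFT-style arguments for Floer continuation maps to the dissipative setting, following the framework set up in \cite{groman} and further developed in \cite{varolgunes}. The core tool throughout is that moduli spaces of parametrized Floer trajectories remain compact modulo breaking as long as the Floer data are uniformly dissipative in an appropriate sense, which gives all the usual relations on chain level up to explicit homotopies.

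For (1), I would choose a generic dissipative homotopy of homotopies interpolating between $h_1$ and $h_2$. Counting rigid elements in the resulting $1$-parameter family of parametrized Floer moduli spaces produces degree $-1$ operators $K_i\colon CF^*(H_i)\to CF^*(H'_i)$ whose boundary relation with $d$ exhibits the desired chain homotopy piece by piece. Assembling these into a homotopy on $\widehat{tel}(\cC)\to\widehat{tel}(\cC')$ requires additional higher operators that track the telescope edges (the $\kappa_i$ maps of \eqref{teles}); these come from $2$-parameter families whose dissipativity must be arranged. One then checks convergence of the result under $T$-adic completion, which is automatic from uniform action bounds.

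For (2), the strategy is to concatenate $h_{12}$ and $h_{23}$ into a single continuation datum $h_{12}*h_{23}$ from $A$ to $A''$. A standard gluing/broken trajectory argument shows that $\kappa_{h_{12}*h_{23}}$ is chain homotopic to $\kappa_{h_{23}}\circ\kappa_{h_{12}}$, after which applying (1) to the pair $h_{12}*h_{23}$ and $h_{13}$ finishes the argument. For (3), when $A\leq A'$ are both acceleration data for the same compact set $K$, the Hamiltonians $H_i$ and $H'_i$ both converge to the same characteristic-type function of $K$. Using (4) one can interleave $A$ and $A'$ into a common lower datum, and then the telescope model together with the fact that continuation with $H=H'$ and $J=J'$ is the identity (combined with (2)) exhibits $\kappa_h$ as admitting a homotopy inverse, hence inducing an isomorphism on homology.

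The main obstacle is (4), constructing a common lower bound $A''\leq A,A'$ inside the poset of regular dissipative acceleration data. The natural candidate is a sequence $H''_i$ with $H''_i\leq \min(H_i,H'_i)$ that still converges to the characteristic function of $K$ from below; however, the pointwise minimum is neither smooth nor automatically dissipative or regular. The remedy is perturbative: fix smooth $H''_i$ uniformly close to and bounded above by $\min(H_i,H'_i)$, and then perturb inside the space of dissipative Floer data using the density results from \cite[Section 8]{groman} for regular dissipative data. A parallel argument produces regular dissipative continuation data for the inclusions $A''\leq A$ and $A''\leq A'$, completing the construction.
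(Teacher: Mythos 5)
The paper does not give the details itself; it points to \cite[\S 8.3]{groman} and \cite[Section 3.3.2]{varolgunes}, and your reconstruction of parts (1), (2), and (4) matches what those references do: homotopies of homotopies assembled over the telescope edges, concatenation/gluing followed by an application of (1), and smoothing the pointwise minimum $\min(H_i,H'_i)$ and then perturbing back into the space of regular dissipative data, with the correct caveat that monotonicity and the $T$-adic convergence of the resulting telescope maps have to be tracked.

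Your argument for (3) does not close as stated. In the filtered, $\Lambda_{\geq 0}$-linear setting, continuation maps are defined only from smaller to larger Floer data — it is the monotonicity of the homotopy that controls the topological energy and hence the action filtration — so there is no chain map $\widehat{tel}(\cC')\to\widehat{tel}(\cC)$ available to serve as a homotopy inverse, and ``interleaving into a common lower datum'' via (4) only produces further maps in the same direction. The mechanism used in the references is different: one first observes that at each truncation level $\lambda$, the homology of $\widehat{tel}(\cC)/T^\lambda\widehat{tel}(\cC)$ computes the (ordinary) direct limit $\varinjlim_{H}HF^*_\lambda(H)$ over the codirected set $\cH_K$ of admissible Floer data, in which any acceleration datum for $K$ is cofinal (this is the content behind Corollary~\ref{CorNormCompatibility} and \cite[Theorem 6.10]{groman}). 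Consequently the continuation map is a quasi-isomorphism between each pair of truncated telescopes. One then passes from truncated quasi-isomorphisms to an isomorphism on $H^*(\widehat{tel}(\cdot))$ by a completeness argument — precisely the role of Lemma~\ref{lem-filt-qis} in this paper. Replacing the homotopy-inverse step in your (3) by this truncated-cofinality-plus-completeness argument would repair the proof.
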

\begin{proof}
  Details for all these statements can be found in \cite[\S8.3]{groman} and \cite[Section 3.3.2]{varolgunes}. 
  \end{proof}
  

A particular consequence is that given ordered acceleration data $A\leq A'$ for $K'\subset K$ \emph{the induced map on homology $H^*(\widehat{tel}(\cC))\to H^*(\widehat{tel}(\cC'))$ is uniquely determined independently of all the auxiliary choices for the continuation datum}. In general, given an inclusion $K\subset K'$ and acceleration data $A,A'$ we obtain a map $H^*(\widehat{tel}(\cC))\to H^*(\widehat{tel}(\cC'))$ by picking a third acceleration datum $A''$ for $K$ and composing the map
$H^*(\widehat{tel}(\cC''))\to H^*(\widehat{tel}(\cC'))$ with the inverse of the isomorphism $H^*(\widehat{tel}(\cC''))\to H^*(\widehat{tel}(\cC))$. \emph{The induced map is again independent of all choices} as an immediate consequence of Proposition \ref{prpFloerFunctor}.

To define  $SH^*_M(K)$ recall that a \emph{contractible groupoid} is a category with a unique morphism between any two objects. Necessarily, this morphism is an isomorphism. For a compact set $K$ let $\mathcal{G}_K$ the contractible groupoid whose object set is the set of all admissible acceleration data for $K$.  We then have a commutative diagram of shape $\mathcal{G}_K$ assigning to an object $A$ the homology of the completed telescope $H^*(\widehat{tel}(\cC(A)))$. The unique arrow from $A$ to $A'$ is the unique 
homology level continuation isomorphism defined in the previous paragraph.

We thus define the $\Lambda$-module $SH^*_M(K)$ for each compact set \emph{uniquely up to unique isomorphism} as 
\begin{equation}\label{EqDefSH}
        SH^*_M(K)=\varinjlim_{A\in \mathcal{G}_K} H^*(\widehat{tel}(\cC(A))).
\end{equation}

Given an inclusion of compact sets $K_1\subset K_2$ we obtain a restriction map $SH^*_M(K_2)\to SH^*_M(K_1)$ by picking acceleration data $A_1,A_2$ for $K_1,K_2$, and considering the corresponding map $H^*(\widehat{tel}(\cC_2))\to H^*(\widehat{tel}(\cC_1))$. This induces a map $SH^*_M(K_2)\to SH^*_M(K_1)$ which is again independent of the auxiliary choice. Let us call this uniquely defined map the \emph{restriction map}. The upshot of the above discussion is the following proposition.
\begin{proposition}
The assignment $\mathcal{SH}$ which assigns to each $K$ the $\Lambda_{\geq 0}$-module $SH^*_M(K)$ and to any inclusion $K_1\subset K_2$ the canonical restriction map $SH^*_M(K_2)\to SH^*_M(K_1)$, is a presheaf over the category of compact sets whose morphisms are the inclusions. 
\end{proposition}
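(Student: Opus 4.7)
The plan is to verify the two axioms required of a presheaf: (i) that for every compact $K$ the restriction along the identity inclusion $K\subset K$ is the identity on $SH^*_M(K)$, and (ii) that for a chain of inclusions $K_1\subset K_2\subset K_3$ the composition of restrictions equals the restriction $SH^*_M(K_3)\to SH^*_M(K_1)$. Well-definedness of the restriction maps (independence from the auxiliary acceleration and continuation data) has already been established in the discussion preceding the proposition, so there is nothing further to check there.

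For (i), I would pick any acceleration datum $A$ for $K$ and use the pair $(A,A)$ together with the intermediate choice $A''=A$ to compute the restriction. The map $H^*(\widehat{tel}(\mathcal{C}(A)))\to H^*(\widehat{tel}(\mathcal{C}(A)))$ induced by the identity continuation datum is the identity on chains, and more generally by Proposition \ref{prpFloerFunctor}(1) and (3) any other continuation datum from $A$ to itself induces the same map on homology, namely the identity. Hence the edge in the diagram over $\mathcal{G}_K\times\mathcal{G}_K$ labelled by the identity inclusion is the identity, and so is the induced map on the colimit.

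For (ii), fix acceleration data $A_1,A_2,A_3$ for $K_1,K_2,K_3$ respectively. Two applications of Proposition \ref{prpFloerFunctor}(4) produce an acceleration datum $A$ for $K_3$ with $A\leq A_3$, $A\leq A_2$, and $A\leq A_1$; by a third application we may in addition arrange $A$ to lie below any intermediate acceleration data used to define the three restriction maps $SH^*_M(K_3)\to SH^*_M(K_2)$, $SH^*_M(K_2)\to SH^*_M(K_1)$, and $SH^*_M(K_3)\to SH^*_M(K_1)$. Choose continuation data $h_{ij}$ realizing each comparison. Proposition \ref{prpFloerFunctor}(2) says that any composition of continuation maps is chain homotopic to the continuation map associated to a concatenated continuation datum, and Proposition \ref{prpFloerFunctor}(1) says the homotopy class of such a map is intrinsic. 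Thus on homology both the composed restriction and the direct restriction factor through $H^*(\widehat{tel}(\mathcal{C}(A)))$ via the same continuation-induced maps, hence agree.

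The only real subtlety is that the existence statement of Proposition \ref{prpFloerFunctor}(4) is phrased for pairs; I would first note it extends to any finite collection by induction, since the third datum produced is again an acceleration datum for the largest set and may be compared with the next member of the collection. Given this, the argument is essentially a bookkeeping exercise: all the real content — the homotopy-canonicity of continuation maps and their behaviour under composition — is packaged in Proposition \ref{prpFloerFunctor}, and passing from the chain level to the colimit diagram over contractible groupoids turns chain homotopies into genuine equalities. I do not anticipate a substantive obstacle; the hardest point is to organise the iterated appeal to part (4) cleanly enough to keep the diagram chase readable.
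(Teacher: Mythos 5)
Your proposal is correct and matches the paper's own argument, which simply observes that after the preceding discussion (which already gives well-definedness and the unit axiom up to unique isomorphism) contravariant functoriality is immediate from Proposition~\ref{prpFloerFunctor}(2) and unwinding definitions. You have carried out that unwinding explicitly — including the iterated appeal to Proposition~\ref{prpFloerFunctor}(4) to obtain a common acceleration datum dominating all the auxiliary choices — but the underlying mechanism and the ingredients used are the same.
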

\begin{proof}
    In light of the discussion above, all that remains is to establish contravariant functoriality, which follows immediately from the second item of Proposition \ref{prpFloerFunctor} and unwinding definitions. 
\end{proof}

\begin{remark}

    The discussion so far specifies the presheaf $\mathcal{SH}$ uniquely up to unique isomorphism of pre-sheaves. We typically do not bother going through this procedure of specifying the presheaf $\mathcal{SH}$ up to  \emph{unique} isomorphism. An alternative approach is to make a choice of acceleration datum for each compact set. Proposition \ref{prpFloerFunctor} still guarantees we obtain a pre-sheaf $\cF$ of $\Lambda$-modules by considering the homology level maps between the corresponding homology groups since they are uniquely defined and functorial. \emph{Remarkably, there is no consistency to worry about concerning any choices.} Moreover, by the same token, the sheaf $\cF$ produced in this way is canonically isomorphic to the presheaf $\mathcal{SH}$. We are typically satisfied with specifying the presheaf $\mathcal{SH}$ only up to \emph{canonical} isomorphism. This makes it appear like the construction depends on choices when in fact it does not. This is quite common in Floer theory. The idea of defining an invariant by specifying a diagram parameterized by a contractible groupoid and then making a choice goes back to Conley \cite{Conley1978}. 
\end{remark}
\begin{remark}

     An alternative approach to specifying $\mathcal{SH}$ uniquely up to unique isomorphism and  \emph{which works at the chain level} is taken in \cite[Section 1.5]{AGV}. Namely, we construct for each $K$ a chain complex which realizes the homotopy colimit over all Floer complexes $CF^*(H,J)$ for $H\leq H_K$ via a bar construction. The restriction maps are then just inclusions of chain complexes. 
    
\end{remark}

So far we have discussed the structure of $\Lambda_{\geq 0}$ modules. The BV algebra structure involves  further choices of monotone Floer data  on punctured Riemann surfaces equipped with cylindrical ends and
is constructed in  \cite{tonkonog, AGV}. The resulting structure is again independent of any choices, and, moreover, all the continuation maps preserve the BV algebra structure at the homology level. For more details see \cite[Section 6.2]{locality}.

Passing to the Novikov \emph{field}
$$SH^*_M(K,\Lambda):=SH^*_M(K)\otimes \Lambda,$$ a unit which is respected by the restriction maps is constructed in  \cite{tonkonog}.
From now on we will only consider unital and $\bZ$-graded BV algebras, and we will omit mentioning this. Note that using 
\begin{equation}\label{eq-ray-nov-ring}
\mathcal{C}:=CF^*(H_1;\Lambda_{\geq 0})\to CF^*(H_2;\Lambda_{\geq 0})\ldots 
\end{equation}as the defining Floer one ray, we can equip $tel(\mathcal{C})\otimes \Lambda$ with its natural filtration map. We then have by Lemma \ref{lem-triv-comp} that $$H^*(\widehat{tel(\mathcal{C})\otimes \Lambda})$$ is canonically isomorphic as filtered BV algebras with $SH^*_M(K,\Lambda)$.

We can also define an invariant for open sets. Let $U$ be an open subset of $M$. Let $K_1\subset K_2\subset \ldots $ be an exhaustion of $U$ by compact subsets. We define $$SH^*_M(U,\Lambda):=\varprojlim SH^*_M(K_i,\Lambda).$$ It is easy to see by functoriality of $SH^*_M(\cdot)$ with respect to inclusions that  $SH^*_M(U)$ is independent of the choice of exhaustion in the sense that different exhaustions of $U$ give rise to canonically isomorphic $SH^*_M(U).$ 
\begin{remark}
In the present work we will only mention the case $U=M$. In the case considered here, $SH^0_M(M)$ will be the algebra of entire functions of the mirror constructed in Theorem \ref{thm-mirror-cons} (see part (6) there).
\end{remark}

\subsubsection{The Mayer Vietoris and sheaf property}\label{SecSheafProperty}
Let us start with an elementary geometric lemma.
\begin{lemma}\label{lem-lag-inv}
    Let $M$ be a symplectic manifold, $B$ a smooth manifold and $\pi: M\to B$ a smooth map such that there is an open subset $U\subset B$ with the property that $\pi^{-1}(U)\subset M$ is dense and for every $p\in U,$ $\pi^{-1}(p)$ is a Lagrangian submanifold of $M$. Then, $\pi$ is an involutive map in the sense that for every $f,g\in C^\infty(B),$ the Poisson bracket $\{\pi^*f,\pi^*g\}$ of $\omega$ vanishes identically.
\end{lemma}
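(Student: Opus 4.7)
The plan is to show that the Poisson bracket $\{\pi^*f,\pi^*g\}$ vanishes first on the open dense set $\pi^{-1}(U)$ by a pointwise linear-algebra argument using the Lagrangian condition, and then extend to all of $M$ by continuity and density.

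For the first step, fix $x\in\pi^{-1}(U)$ and let $F:=\pi^{-1}(\pi(x))$. Since $\pi^*f$ is constant along $F$, for every $v\in T_xF$ we have $d(\pi^*f)(v)=df(d\pi(v))=0$. By the definition of the Hamiltonian vector field (via $\iota_{X_{\pi^*f}}\omega = d(\pi^*f)$, or the opposite sign convention, either is fine), this means $\omega(X_{\pi^*f}(x),v)=0$ for every $v\in T_xF$, i.e.\ $X_{\pi^*f}(x)\in (T_xF)^{\omega}$. Because $F$ is Lagrangian, $(T_xF)^{\omega}=T_xF$, so $X_{\pi^*f}(x)$ is tangent to the fiber. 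Now $\{\pi^*f,\pi^*g\}(x) = X_{\pi^*f}(\pi^*g)(x)$, which is the derivative of the fiber-constant function $\pi^*g$ along a vector tangent to $F$, and hence vanishes.

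For the second step, $\{\pi^*f,\pi^*g\}$ is a smooth (hence continuous) function on $M$ that vanishes on the dense subset $\pi^{-1}(U)$. Continuity forces it to vanish on all of $M$, which is the desired involutivity.

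The argument is essentially pointwise linear algebra on the symplectic vector space $T_xM$ plus a density argument, so there is no real obstacle; the only subtlety worth flagging is the appeal to the equality $(T_xF)^{\omega}=T_xF$, which is exactly the Lagrangian hypothesis, and the remark that $\pi$ need not be a submersion globally---density of $\pi^{-1}(U)$ is precisely what allows us to bypass any degeneracy of $d\pi$ off of $U$.
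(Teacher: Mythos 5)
Your proof is correct and follows essentially the same approach as the paper: reduce to the dense set $\pi^{-1}(U)$ by continuity, use the Lagrangian condition $(T_xF)^{\omega}=T_xF$ to show the Hamiltonian vector field of a pulled-back function is fiber-tangent, and conclude. The only cosmetic difference is that the paper also notes $X_{\pi^*g}$ is fiber-tangent and evaluates $\omega(X_{\pi^*f},X_{\pi^*g})=0$ on the Lagrangian, whereas you observe directly that $X_{\pi^*f}(\pi^*g)$ is a fiber derivative of a fiber-constant function; these are interchangeable.
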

\begin{proof}
Since $\{\pi^*f,\pi^*g\}\in C^\infty(M)$, by the density assumption, it suffices to show that $\{\pi^*f,\pi^*g\}(x)=0$ for $\pi(x)\in U.$ Let us fix such an $x$ and let $p:=\pi(x).$ First, we notice that due to the Lagrangian condition, the Hamiltonian vector fields of $\pi^*f$ and $\pi^*g$ at $x$, denoted by $X_{\pi^*f}(x)$ and $X_{\pi^*g}(x)$ are both tangent to $\pi^{-1}(p)$. This is because for all $v\in T_x(\pi^{-1}(p)),$ we have $$\omega(X_{\pi^*f}(x), v)=df_x(v)=v(f)=0,$$ and similarly for $g.$ Again using the Lagrangian condition, we see that $$\omega(X_{\pi^*f}(x),X_{\pi^*g}(x))=0,$$ as desired.
\end{proof}

Here is the main result that we will use.
\begin{theorem}\label{thm-mv-recall}
    Let $(M,\omega)$ be a geometrically bounded symplectic manifold with grading datum, $B$ a smooth manifold, and $\pi: M\to B$ be an involutive, proper and smooth map. Assume that for every compact $P\subset B$, $SH_M^*(\pi^{-1}(P))$ is non-negatively graded. Then, the presheaf $P\mapsto SH_M^0(\pi^{-1}(P))$ on the $G$-topology of compact subsets of $B$ (as in Remark \ref{rem-Hausdorff-G-top}) is in fact  a sheaf.
\end{theorem}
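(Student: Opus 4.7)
The strategy is to reduce the general sheaf axiom to the Mayer--Vietoris property for binary covers established in \cite{varolgunes}, and then extend to arbitrary finite covers by induction, with the non-negative grading hypothesis used exactly once in order to truncate the long exact sequence to a short one.

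The first step is to observe that the Mayer--Vietoris long exact sequence
\begin{equation*}
\cdots \to SH^{k-1}_M(\pi^{-1}(P_1 \cap P_2)) \to SH^k_M(\pi^{-1}(P_1 \cup P_2)) \to SH^k_M(\pi^{-1}(P_1)) \oplus SH^k_M(\pi^{-1}(P_2)) \to SH^k_M(\pi^{-1}(P_1 \cap P_2)) \to \cdots
\end{equation*}
from \cite[Theorem 1.3.4]{varolgunes} applies to an arbitrary pair of compact sets $P_1, P_2 \subset B$. The hypothesis needed for that theorem is Poisson commutativity of suitable Hamiltonians approximating the indicator functions of $\pi^{-1}(P_1)$ and $\pi^{-1}(P_2)$. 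Since $\pi$ is involutive by Lemma \ref{lem-lag-inv}, pullbacks of smooth functions from $B$ to $M$ Poisson commute with one another, and since $\pi$ is proper the pullback of a proper function on $B$ is proper on $M$. Thus we can always realize the necessary approximations as pullbacks under $\pi$, and Mayer--Vietoris applies unconditionally on $B$. Specializing the sequence to $k=0$, the non-negativity hypothesis gives $SH^{-1}_M(\pi^{-1}(P_1 \cap P_2))=0$, so the sequence becomes the three-term exact sequence
\begin{equation*}
0 \to \mathcal{F}(P_1 \cup P_2) \to \mathcal{F}(P_1) \oplus \mathcal{F}(P_2) \to \mathcal{F}(P_1 \cap P_2),
\end{equation*}
where $\mathcal{F}(P) := SH^0_M(\pi^{-1}(P))$. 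This is exactly the sheaf axiom for the binary cover $\{P_1, P_2\}$.

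The second step is induction on the size $n$ of the cover. Given a finite cover $\{P_1, \ldots, P_n\}$ of $P$, write $P = P' \cup P_n$ with $P' := P_1 \cup \cdots \cup P_{n-1}$, which is a finite union of compact sets and hence compact. The inductive hypothesis applied to the covers $\{P_1, \ldots, P_{n-1}\}$ of $P'$ and $\{P_1 \cap P_n, \ldots, P_{n-1} \cap P_n\}$ of $P' \cap P_n$ identifies $\mathcal{F}(P')$ and $\mathcal{F}(P' \cap P_n)$ as the equalizers in their respective \v{C}ech data, while the binary case applied to $\{P', P_n\}$ relates $\mathcal{F}(P)$ to $\mathcal{F}(P') \oplus \mathcal{F}(P_n)$ and $\mathcal{F}(P' \cap P_n)$. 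Separation and gluing for the cover $\{P_1, \ldots, P_n\}$ then follow from an elementary diagram chase: a compatible family $(s_i)$ induces by induction an $s' \in \mathcal{F}(P')$ and the overlap conditions with $P_n$ show $s'$ and $s_n$ agree after restriction to $P' \cap P_n$, so binary gluing produces the desired global section, and injectivity runs along the same pattern.

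The main obstacle I expect is checking the naturality of the Mayer--Vietoris connecting maps with respect to further restrictions, so that the binary sequence for $\{P', P_n\}$ fits compatibly with the \v{C}ech complex for $\{P_1, \ldots, P_n\}$ in the diagram chase. Concretely, one must verify that the restriction $\mathcal{F}(P) \to \prod_i \mathcal{F}(P_i)$ factors through $\mathcal{F}(P') \oplus \mathcal{F}(P_n)$ in a way compatible with the \v{C}ech differential into $\prod_{i<j} \mathcal{F}(P_i \cap P_j)$. This is a formal matter given that the restriction maps on relative $SH$ are contravariantly functorial with respect to inclusions, but it is the only nontrivial bookkeeping in the proof; the non-negative grading assumption enters solely to strip the long exact sequence down to its three-term form and plays no further role.
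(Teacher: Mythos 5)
Your plan has the same skeleton as the paper's proof: reduce to two-element covers by an elementary induction, invoke the Mayer--Vietoris long exact sequence from \cite{varolgunes}, and use the non-negative grading hypothesis to truncate it to the three-term sequence that is the sheaf axiom. The induction step you sketch (glue $s_1,\dots,s_{n-1}$ over $P'$, then match $s'$ against $s_n$ over $P'\cap P_n$ by applying the inductive hypothesis to the cover $\{P_i\cap P_n\}$) is correct, and your worry about naturality of the connecting maps is handled in the paper simply by observing that the restriction maps are contravariantly functorial, so the \v{C}ech diagram commutes by construction --- this is bookkeeping rather than a genuine obstacle.

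The gap is that you treat \cite[Theorem 1.3.4]{varolgunes} as directly applicable, but that theorem is stated and proved for \emph{closed} symplectic manifolds, while the statement here is for a geometrically bounded (in particular, typically non-compact) $M$. Involutivity of $\pi$ and properness ensure the combinatorics of the approximating Hamiltonians work out, but they do not by themselves give the analytic input the Mayer--Vietoris argument needs on a non-compact target: one must construct acceleration data on all of $M$ satisfying the dissipativity requirements of \cite{groman}, arranged so that, inside a large compact set $D$ containing all the relevant compacta, the data agree with the Varolgunes-style construction and agree near $\partial D$ for all the sets involved, and then check that the key step --- the analysis of topological-energy-zero solutions of the continuation-map Floer equations --- still goes through. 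The paper addresses exactly this in the second half of its proof; your write-up should flag this extension rather than assert that the Mayer--Vietoris sequence applies ``unconditionally.''
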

\begin{proof}First of all, by an elementary induction (recall that we are only considering finite covers) we can reduce to showing the sheaf property for two element covers $P_1\cup P_2.$ When $M$ is closed, combining Lemma 2.4.3 and Theorem 1.3.4 of \cite{varolgunes} shows that there is an associated Mayer-Vietoris sequence for relative symplectic cohomology. Using the non-negatively graded assumption, we see that the following is an exact sequence by looking at the first three non-zero terms of the Mayer-Vietoris sequence: 

\begin{align*}
0\to SH_M^0(\pi^{-1}(P_1\cup P_2))\to SH_M^0(\pi^{-1}(P_1))&\oplus SH_M^0(\pi^{-1}(P_2))\\&\to SH_M^0(\pi^{-1}(P_1\cap P_2))
\end{align*} This is precisely the sheaf property. 

The extension of the results of \cite{varolgunes} to the geometrically bounded case is straightforward. Let $D$ be a compact set containing all our compact subsets in its interior. For the proof of the cited Mayer-Vietoris property one constructs the part of the  acceleration datum that lies inside $D$ exactly as in \cite{varolgunes}. This is done so that the data is the same in some neighborhood of the boundary of $D$ for all compact subsets.
Using the results of \cite[Section 7]{groman}, also see \cite[Appendix C]{AGV}, we can extend the data from $D$ to the entire manifold so as to satisfy the dissipativity requirements, and so the extension agrees for all compact subsets. The main point in the proof of the Mayer-Vietoris sequence is an analysis of the topological energy zero (in particular constant) solutions to the continuation map Floer equations. The same analysis goes through in this setup. 
\end{proof}


\subsection{Exact manifolds}\label{ss-exact-review}

Let $(M,\theta)$ be an exact graded symplectic manifold that is geometrically of finite type and assume that $K\subset M$ is compact. Then we can work over the base commutative ring $\mathbb{F}$ (coefficient ring for the elements of $\Lambda$), e.g., $\mathbb{F}=\mathbb{Z}$. We consider $\mathbb{F}$ as a trivially valued ring.  

We choose a dissipative acceleration datum for $K$ whose underlying Hamiltonians $H_i$ each have finitely many $1$-periodic orbits. Define a Floer $1$-ray over $\mathbb{F}$ by signed counts without weights: $$\mathcal{C}_{\mathbb{F}}:=CF^*(H_1;\mathbb{F})\to CF^*(H_2;\mathbb{F})\ldots$$ and obtain $$SH^*_{M,\theta}(K;\mathbb{F}):=H^*(\widehat{tel}(\mathcal{C})),$$which is a BV algebra over $\mathbb{F}$. Here the filtration map on the telescope comes from taking the actions of generators $$A(\gamma)=\int \gamma^*\theta+\int H(\gamma(t)) dt,$$ and equipping the telescope with the min-filtration (by taking the minimum of the filtration values of the basis elements in the linear combination). The differential does not decrease the filtration values, making $\widehat{tel}(\mathcal{C})$ a degreewise complete filtered chain complex. We obtain a filtration map on $SH^*_{M,\theta}(K;\mathbb{F})$ by taking supremum of chain representatives. The operations do not decrease the filtration map in the appropriate sense. 

\begin{theorem}\label{SH-exact}
$SH^*_{M,\theta}(K;\mathbb{F})$ is well-defined as a filtered graded Abelian group in the sense that another choice of dissipative acceleration data gives rise to a filtered graded Abelian group which is canonically isomorphic to it in a way that preserves filtration maps.
\end{theorem}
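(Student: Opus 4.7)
The plan is to repeat the construction of Section \ref{SecSHreview} verbatim, but in the category of filtered graded Abelian groups with the action filtration playing the role of the $T$-adic filtration. Concretely, I would work with the contractible groupoid $\mathcal{G}_K^{\mathrm{ex}}$ whose objects are dissipative acceleration data for $K$ whose underlying Hamiltonians have only finitely many $1$-periodic orbits, assign to each object $A$ the filtered graded Abelian group $H^*(\widehat{tel}(\mathcal{C}_\mathbb{F}(A)))$ equipped with the action filtration obtained by taking the supremum over chain-level representatives, and reduce the theorem to producing, for any pair of objects, a canonical filtered isomorphism between the associated homologies that is functorial with respect to the groupoid.

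The key technical input, replacing the tautological $T$-adic monotonicity used in the Novikov construction, is the following claim: for ordered acceleration data $A\leq A'$ and any choice of continuation datum $h$ between them, the resulting chain map $\kappa_h:\widehat{tel}(\mathcal{C}_\mathbb{F}(A))\to \widehat{tel}(\mathcal{C}_\mathbb{F}(A'))$ does not decrease the action filtration, and similarly for the chain homotopies produced from homotopies of continuation data. In the exact setting this is a direct consequence of the standard action--energy identity for solutions of the monotone parameterized Floer equation, combined with the dissipativity estimates of \cite{groman} that prevent escape of orbits to infinity. With this in hand, the comparison apparatus of \cite[\S8.3]{groman} and \cite[Section 3.3.2]{varolgunes} lifts verbatim to the filtered setting, so that the filtered analogues of Proposition \ref{prpFloerFunctor}(1)--(4) all hold.

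To upgrade $\kappa_h$ from a quasi-isomorphism to a filtered quasi-isomorphism, I would invoke Lemma \ref{lem-filt-qis}. For each $a<b$ the action subquotient $\widehat{tel}(\mathcal{C}_\mathbb{F}(A))_{(a,b]}$ is a finite iterated mapping cone built from the action-window truncations $CF^*(H_i;\mathbb{F})_{(a,b]}$, since only finitely many $1$-periodic orbits contribute to the window, and the continuation map on these truncated pieces is a quasi-isomorphism by the standard cofinality argument. Together with the filtered analogue of Proposition \ref{prpFloerFunctor}(4), this produces canonical filtered isomorphisms between all pairs of acceleration data; functoriality of the identification follows from the filtered analogues of Proposition \ref{prpFloerFunctor}(1)--(2) applied to the contractible groupoid.

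The main obstacle I anticipate is the careful book-keeping required to verify the filtration-preservation claim for the $2$-parameter Floer equations that witness the homotopy-uniqueness of continuation maps and their functoriality under composition: each such equation introduces additional homotopy parameters, and one must confirm that the corresponding action--energy identity still implies non-decrease of the action filtration once the $T$-weights are discarded. This is ultimately a matter of tracing the monotonicity hypotheses through the definitions of \cite{groman, varolgunes}; once it is in place, the remainder of the argument is formal.
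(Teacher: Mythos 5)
Your proposal takes essentially the same route as the paper: produce a third acceleration datum dominated by both, observe that monotone continuation maps preserve the action filtration and induce isomorphisms on every action-window truncation, and then upgrade to a filtered isomorphism via Lemma~\ref{lem-filt-qis}; the paper simply cites \cite[\S 8.3]{groman} for the existence of the common dominated datum and for the truncated quasi-isomorphism statement rather than re-deriving them through the contractible-groupoid machinery. One small correction: the subquotient $\widehat{tel}(\mathcal{C}_\mathbb{F}(A))_{(a,b]}$ is in general \emph{not} a finite iterated mapping cone, because orbits lying near $K$ have actions inside a fixed compact range and therefore contribute to $CF^*(H_i;\mathbb{F})_{(a,b]}$ for every $i$; fortunately Lemma~\ref{lem-filt-qis} does not require finiteness, only that the truncated continuation maps are quasi-isomorphisms, which is the content of the reference.
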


\begin{proof}
Given a pair of a acceleration data $A_1,A_2$ it is shown in \cite[\S8.3]{groman} there is a third acceleration datum $A_0$ which is dominated by both and such that the monotone continuation map, which is filtration preserving and is defined up to contractible choice, induces an isomorphism on all the truncated homologies. The claim now follows from Lemma \ref{lem-filt-qis}.
\end{proof}

\begin{remark}
This statement would not be true if we did not complete the telescope. This can be seen by comparing $S$-shaped and $J$-shaped acceleration data for Liouville manifolds.

\end{remark}

In many situations relevant to this paper (see Proposition \ref{prop-action}), we observe that the acceleration data can be chosen to satisfy the extra property that the actions of the $1$-periodic orbits are uniformly bounded above (e.g., non-positive). The completion to the telescope of $\mathcal{C}_{\mathbb{F}}$ does nothing in this case! Let us call this the \emph{bounded action} property. The following is immediate.

\begin{lemma}
Under the bounded action property, the induced filtration map on $SH^*_{M,\theta}(K;\mathbb{F})$ is complete and Hausdorff.
\end{lemma}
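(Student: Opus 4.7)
The plan is to show that under the bounded action hypothesis, the completion step in the definition of $\widehat{tel}(\cC_{\mathbb{F}})$ is vacuous, and then transfer completeness and Hausdorffness through to cohomology. The key observation is that because $\mathbb{F}$ carries the trivial valuation, the min-filtration on $tel(\cC_{\mathbb{F}})$ only takes the values that appear as actions of generators (together with $+\infty$ on the zero element). Concretely, the bounded action property provides a constant $C$ such that every generator $\gamma$ (a $1$-periodic orbit of some $H_i$, optionally multiplied by $q$) satisfies $\rho(\gamma) = A(\gamma) \leq C$, and hence every nonzero finite $\mathbb{F}$-linear combination of generators has filtration value $\leq C$.

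Granted this, Hausdorffness on the chain level is immediate: $\rho(x) = \infty$ would contradict $\rho(x) \leq C$, so such an $x$ must be zero. Completeness on the chain level follows from the same observation applied to differences: if $(x_n)$ is a Cauchy sequence in $tel(\cC_{\mathbb{F}})$ then $\rho(x_n - x_m) \to \infty$, but the difference $x_n - x_m$ is itself an element of $tel(\cC_{\mathbb{F}})$, whose filtration is either $\leq C$ or $+\infty$. Hence $x_n - x_m = 0$ for all sufficiently large $n, m$, i.e.\ the sequence is eventually constant. Consequently $tel(\cC_{\mathbb{F}}) = \widehat{tel}(\cC_{\mathbb{F}})$ as filtered chain complexes, matching the remark preceding the statement.

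The final step is to transfer these properties to cohomology. Any nonzero class $[\alpha] \in SH^*_{M,\theta}(K;\mathbb{F})$ has only nonzero chain representatives, each with filtration $\leq C$, so taking the supremum over representatives gives $\rho([\alpha]) \leq C$ when $[\alpha] \neq 0$ and $\rho(0) = \infty$; this is exactly Hausdorffness at the level of cohomology. For completeness, a Cauchy sequence $([\alpha_n])$ in $SH^*$ satisfies $\rho([\alpha_n] - [\alpha_m]) \to \infty$, and by the previous sentence this forces $[\alpha_n] = [\alpha_m]$ for all sufficiently large $n, m$, so the sequence is eventually constant and trivially converges.

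I do not expect a genuine obstacle; the argument is essentially a consequence of the observation that a min-filtration over a trivially valued base, with a uniformly action-bounded basis, cannot reach arbitrarily large values. The only subtle point to flag is that this really does use the triviality of the valuation on $\mathbb{F}$, in contrast to the Novikov-ring version where completion typically adds new elements and the induced filtration on $SH^*$ need not be Hausdorff.
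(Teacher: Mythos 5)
Your proof is correct and takes essentially the same route as the paper: both arguments observe that the bounded action property caps the filtration value of any nonzero element (chain or class) by a uniform constant $C$, from which Hausdorffness is immediate and completeness follows because Cauchy sequences must be eventually constant. The extra paragraph you include about the chain-level completion being vacuous is also stated in the paper in the remark immediately preceding the lemma, so nothing here is genuinely new.
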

\begin{proof}
    Since the filtration values of all possible representatives of a homology class are bounded above, the Hausdorff property follows. By the same token, the distance between two different homology classes is bounded below by a positive uniform constant. Therefore,  the only Cauchy sequences are the ones that are eventually constant, which automatically converge.
\end{proof}

Here is the main application of boundary depth considerations for our purposes. Define $\mathcal{C}:=CF^*(H_1;\Lambda_{\geq 0})\to CF^*(H_2;\Lambda_{\geq 0})\ldots$ as in Equation \eqref{eq-ray-nov-ring}. We can equip $tel(\mathcal{C}_{\mathbb{F}})\otimes \Lambda$ with the tensor product filtration map, which means that the filtration value of an arbitrary element $\sum \alpha_n\otimes {\gamma_n}$ is the minimum of the sum of the valuations of $\alpha_n$ and ${\gamma_n}.$ We have a filtered isomorphism of chain complexes 
$$tel(\mathcal{C}_{\mathbb{F}})\otimes \Lambda\to tel(\mathcal{C})\otimes_{\Lambda_{\geq 0}} \Lambda,$$ given by sending $$\gamma\otimes 1\mapsto \gamma \otimes T^{A(\gamma)}.$$ We call this the \emph{rescaling map}. Observe the rescaling map commutes on the nose with all the continuation maps. It thus induces a homology level map 
\begin{equation}\label{eqRescaling}
 SH^i_{M,\theta}(K;\mathbb{F})\hat{\otimes} \Lambda\to SH_M^i(K;\Lambda)
 \end{equation}
which moreover commutes with all restriction maps. That is, given an inclusion $K_1\subset K_2$ we have a commutative diagram
$$
\xymatrix{
  SH^*_{M,\theta}(K_2;\mathbb{F})\hat{\otimes}\Lambda\ar[d]\ar[r]& SH_M^*(K_2,\Lambda)\ar[d]\\
 SH^*_{M,\theta}(K_1;\mathbb{F})\hat{\otimes}\Lambda\ar[r]&SH_M^*(K_1,\Lambda).
}
$$

\begin{proposition}\label{prpBoundaryDepthComparison}
Let us assume that $tel(\mathcal{C}_{\mathbb{F}})$ has finite boundary depth at degree $i$.
 Then the the rescaling map is a filtered isomorphism.
\end{proposition}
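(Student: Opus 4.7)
The plan is to produce the isomorphism at the chain level via the rescaling map, pass through completions using Lemma \ref{lem-triv-comp}, and then use the finite boundary depth hypothesis to commute taking $H^i$ with completion.

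\textbf{Step 1 (chain-level rescaling).} As noted before the statement, the rescaling map $\gamma \otimes 1 \mapsto \gamma \otimes T^{A(\gamma)}$ is a filtration-preserving isomorphism of chain complexes
$$\mathrm{tel}(\mathcal{C}_{\mathbb{F}})\otimes\Lambda \xrightarrow{\sim} \mathrm{tel}(\mathcal{C})\otimes_{\Lambda_{\geq 0}}\Lambda.$$
Because it is an isometry (not merely non-decreasing), it extends uniquely to a filtered isomorphism of Cauchy completions
$$\widehat{\mathrm{tel}(\mathcal{C}_{\mathbb{F}})\otimes\Lambda} \xrightarrow{\sim} \widehat{\mathrm{tel}(\mathcal{C})\otimes_{\Lambda_{\geq 0}}\Lambda}.$$

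\textbf{Step 2 (identifying the Novikov side).} Applying Lemma \ref{lem-triv-comp} to the free $\Lambda_{\geq 0}$-module $\mathrm{tel}(\mathcal{C})$ identifies $\widehat{\mathrm{tel}(\mathcal{C})\otimes_{\Lambda_{\geq 0}}\Lambda}$ with $\widehat{\mathrm{tel}(\mathcal{C})}\otimes_{\Lambda_{\geq 0}}\Lambda$ as filtered complexes. Since $\Lambda_{\geq 0}\to\Lambda$ is a flat localization, taking homology commutes with $-\otimes_{\Lambda_{\geq 0}}\Lambda$, so
$$H^i\big(\widehat{\mathrm{tel}(\mathcal{C})\otimes_{\Lambda_{\geq 0}}\Lambda}\big) \;\simeq\; H^i(\widehat{\mathrm{tel}(\mathcal{C})})\otimes_{\Lambda_{\geq 0}}\Lambda \;=\; SH_M^i(K;\Lambda).$$
Combined with Step~1, this already yields a canonical filtered isomorphism $H^i\big(\widehat{\mathrm{tel}(\mathcal{C}_{\mathbb{F}})\otimes\Lambda}\big)\simeq SH_M^i(K;\Lambda)$, independently of the finite boundary depth hypothesis. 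The whole point is therefore to identify the left-hand target with $SH^i_{M,\theta}(K;\mathbb{F})\hat{\otimes}\Lambda = H^i(\widehat{\mathrm{tel}(\mathcal{C}_{\mathbb{F}})})\hat{\otimes}\Lambda$.

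\textbf{Step 3 (the one place finite boundary depth enters).} The expected main obstacle is the interchange of $H^i$ with the valuation-completion $\hat\otimes\Lambda$. Consider the natural map
$$H^i(\widehat{\mathrm{tel}(\mathcal{C}_{\mathbb{F}})})\hat{\otimes}\Lambda \;\longrightarrow\; H^i\big(\widehat{\mathrm{tel}(\mathcal{C}_{\mathbb{F}})\otimes\Lambda}\big).$$
By the argument of Lemma \ref{lem-triv-comp} (which goes through verbatim with $\mathbb{F}$ in place of $\Lambda_{\geq 0}$, since $\mathrm{tel}(\mathcal{C}_{\mathbb{F}})$ is a free $\mathbb{F}$-module on the basis of generators and their shifted copies), the target is the homology of $\widehat{\mathrm{tel}(\mathcal{C}_{\mathbb{F}})}\hat{\otimes}\Lambda$. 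To check this map is a filtered isomorphism, I will compare both sides with their $(a,b]$-action window truncations. For each pair $a<b$ the truncations agree, because the truncated complex is already complete and tensoring with $\Lambda$ is a filtered isomorphism at that level. Then: finite boundary depth $\beta$ of $\mathrm{tel}(\mathcal{C}_{\mathbb{F}})$ in degree $i$ guarantees, via Lemma \ref{lem-filt-qis}, that the windowed quasi-isomorphisms assemble into a quasi-isomorphism on completed complexes, and moreover that the valuation on $H^i(\widehat{\mathrm{tel}(\mathcal{C}_{\mathbb{F}})})$ is computed by representatives (no ``phantom'' classes of infinite filtration arise in the $\Lambda$-extension). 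This gives the required isomorphism, and chasing through Steps 1--3 shows it is filtration preserving.

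\textbf{The main obstacle} is precisely Step 3: without the finite boundary depth hypothesis, the $\Lambda$-completion can produce Cauchy sequences of cycles whose boundary primitives force lower and lower filtration, creating cohomology classes in $\widehat{\mathrm{tel}(\mathcal{C}_{\mathbb{F}})\otimes\Lambda}$ that are not in the image of $H^i(\widehat{\mathrm{tel}(\mathcal{C}_{\mathbb{F}})})\hat\otimes\Lambda$. Finite boundary depth is exactly what rules this out, by uniformly controlling primitives in terms of the filtration of their images, and so produces the desired filtered isomorphism.
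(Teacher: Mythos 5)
Your Steps 1 and 2 are correct and set up the problem exactly as the paper does: the chain-level rescaling extends to the completions, and via Lemma \ref{lem-triv-comp} plus flatness the Novikov side is identified with $SH_M^i(K;\Lambda)$. You also correctly isolate the crux as commuting $H^i$ with the outer completion $\hat\otimes\Lambda$, and your closing paragraph shows you have the right picture of why boundary depth matters (it controls primitives so that completion cannot create ``phantom'' exact classes).

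However, Step 3 as written does not prove the claim. Lemma \ref{lem-filt-qis} compares two \emph{degreewise complete} filtered complexes connected by a filtration-non-decreasing chain map, and concludes quasi-isomorphism from quasi-isomorphism of all windowed truncations. You never identify which two chain complexes you are feeding it. The objects you actually want to compare, $H^i(\widehat{\mathrm{tel}}(\mathcal{C}_{\mathbb{F}}))\hat{\otimes}\Lambda$ and $H^i\bigl(\widehat{\mathrm{tel}(\mathcal{C}_{\mathbb{F}})\otimes\Lambda}\bigr)$, are not a source and target of a chain map between complexes. If instead you meant to compare $\widehat{\mathrm{tel}}(\mathcal{C}_{\mathbb{F}})\otimes\Lambda$ with its completion, that inclusion has the source \emph{not} complete, so \ref{lem-filt-qis} does not apply -- and in any case these two always have matching truncations, so the hypothesis of the lemma would hold regardless, yet the conclusion of the proposition certainly fails without finite boundary depth. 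So \ref{lem-filt-qis} cannot be the mechanism by which boundary depth enters; the argument would ``prove'' the proposition unconditionally. A secondary issue: you claim Lemma \ref{lem-triv-comp} goes through ``verbatim'' with $\mathbb{F}$ in place of $\Lambda_{\geq 0}$, but $\mathrm{tel}(\mathcal{C}_{\mathbb{F}})$ is filtered by action, not by a module filtration over a filtered ring, so the proof of \ref{lem-triv-comp} does not literally apply; one needs the concrete description of elements of $\widehat{C}^i$ as sums $\sum_n\alpha_n\otimes T^{a_n}$ with $A(\alpha_n)+a_n\to\infty$.

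The paper's proof supplies exactly the missing mechanism: writing $C=\widehat{\mathrm{tel}}(\mathcal{C}_{\mathbb{F}})\otimes\Lambda$ with completion $\widehat{C}$, one has $SH_M^i(K;\Lambda)=\ker\widehat{d}_i/\operatorname{im}\widehat{d}_{i-1}$ while $SH^i_{M,\theta}(K;\mathbb{F})\hat\otimes\Lambda = \widehat{\ker d_i/\operatorname{im}d_{i-1}}$, and the latter is $\widehat{\ker d_i}/\overline{\operatorname{im}d_{i-1}}$ by Lemma \ref{lem-completion-quotient}. Kernels commute with completion unconditionally. The content is that $\operatorname{im}\widehat{d}_{i-1}=\overline{\operatorname{im}d_{i-1}}$: one inclusion is continuity, and the other is precisely where finite boundary depth is used, to produce a primitive for a limit $\sum_n\alpha_n\otimes T^{a_n}$ of exact elements by choosing primitives $\beta_n$ of $\alpha_n$ with $A(\beta_n)\geq A(\alpha_n)-\beta$ so that $\sum_n\beta_n\otimes T^{a_n}$ converges. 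You should replace your invocation of Lemma \ref{lem-filt-qis} with this argument (or an equivalent one genuinely using the boundary-depth bound).
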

\begin{proof}

Let us define $(C^*,d^*)$ to be the $\Lambda$-chain complex $\widehat{tel}(\mathcal{C}_{\mathbb{F}})\otimes \Lambda$ with the tensor product filtration map and $(\widehat{C}^*,\widehat{d}^*)$ be the degree-wise completion.  More explicitly, we can write any homogeneous element $x\in \widehat{C}^i $ as an infinite sum $\sum_{n=0}^\infty \alpha_n\otimes T^{a_n}$ with $\alpha_n\in\widehat{tel}(\mathcal{C}_{\mathbb{F}})^i$ and $a_n\in\mathbb{R}$ pairwise distinct such that $A(\alpha_n)+a_n\to \infty$. The valuation of such an element is the minimum of $A(\alpha_n)+a_n$. 

We have that $SH_M^i(K;\Lambda)$ is isomorphic to $$ker (\widehat{d}_i)/im (\widehat{d}_{i-1}),$$ and $SH^i_{M,\theta}(K;\mathbb{F})\hat{\otimes} \Lambda$ is nothing but $$\widehat{ker (d_i)/im (d_{i-1})}.$$ Our goal is to use Lemma \ref{lem-completion-quotient} to finish the proof.

Note that $ker (\widehat{d}_i)$ and $\widehat{ker (d_i)}$ are both canonically identified with the subset of $\widehat{C}^i$ consisting of elements $\sum_{n=0}^\infty \alpha_n\otimes T^{a_n}$ with $\alpha_n$ closed in $\widehat{tel}(\mathcal{C}_{\mathbb{F}})^i$. Hence all that is left to show is that  $im (\widehat{d}_{i-1})$ is the closure of $im ({d}_{i-1})$ inside $\widehat{C}^i $ (and hence inside $\widehat{ker (d_i)}$).

Continuity immediately implies that $im (\widehat{d}_{i-1})$ is in the closure. Conversely, we can write every limit point as $\sum_{n=0}^\infty \alpha_n\otimes T^{a_n}$ with each  $\alpha_n$ exact. The finite boundary depth assumption finishes the proof as it lets us construct a primitive with respect to $\widehat{d}_i$.
\end{proof}

\subsection{Liouville manifolds}\label{ss-liouville-review}

We now discuss the relation between relative $SH$ and the symplectic cohomology introduced by Viterbo mainly for Liouville domains and their completions. Here we refer in particular to the non-quantitative approach emphasised in Section 3e) of \cite{seidelbiased}\footnote{conventions in this reference are slightly different but we believe this will not cause confusion}. 

Let $(M,\theta)$ be a finite type complete Liouville manifold and denote by $V$ the Liouville vector field. Let $W\subset M$ be a compact domain with smooth boundary such  that the Liouville vector field $V$ is outward pointing on $\partial W$ and $V$ is non-zero outside of $W$. Call such a domain \emph{admissible}. Denoting by $Sk_\theta$ the skeleton of $M$ with respect to $\theta$, $\partial W$ and $V$ give rise to an exponentiated Liouville coordinate $$\rho: M\setminus Sk_\theta\to \mathbb{R}_{>0},$$ which is equal to $1$ on $\partial W$ and satisfies $V\cdot \rho=\rho.$   Let us denote by $\cL(W)$ the class of Hamiltonians on $M$ which  outside of a compact set are linear functions of $\rho$. Define a pre-order on $\cL(W)$ by $H_i\preceq H_{i+1}$ if there is a constant $C$ for which $H_1\leq H_2+C$.  

We then get an invariant $SH^*(M;\mathbb{F})$ which we refer to as \emph{Viterbo $SH$}. It is defined by considering  the \emph{non-completed} colimit of the Floer complexes for any  sequence of Hamiltonians $H_i\in \cL(W)$ with the slope going to infinity. Since the sequence $H_i$ is only required to satisfy that $H_{i+1}-H_i$ is bounded from below, the Viterbo $SH$ contains no quantitative information about the domain $W$. 

At first sight there is still some dependence on the domain $W$ because of the involvement of $\cL(W)$. However, as pointed out  in \cite{seidelbiased}, the Viterbo symplectic cohomologies for different admissible subdomains $W$ are canonically isomorphic. The reason is that we can squeeze a sequence in $\cL(W_1)$ into any sequence in $\cL(W_2)$ and vice versa. Moreover, given functions $H_1,H_2$ which at infinity are linear functions of  $\rho_1,\rho_2$ respectively, and satisfying $H_1\leq H_2+C$ there are well defined continuation maps between them \footnote{The well definedness relies on a maximum principle developed in \cite{seidelbiased}. Alternatively, one can rely on \cite{groman} that $H_1,H_2$ are dissipative. There are thus well defined continuation maps which agree with the ones constructed relying on maximum principles.}.

It is convenient to push this discussion somewhat further. Denote by  $\cL_a(W)$ the set of dissipative functions $H$ on $M$  for which there exists a $c$ so that $\frac1{c}\rho<H<c\rho$ outside of a compact set. The subscript $a$ here stands for asymptotic linearity.  Since for $W_1,W_2$ we have constants  $\frac1{c}\rho_1<\rho_2<c\rho_1$ there is actually an equality $\cL_a(W_1)=\cL_a(W_2)$. We thus drop the dependence on $W$ from the notation and write $\cL=\cL_a(W)$.  

Then $\cL(W)\subset \cL$ is $\preceq$-cofinal.  That is, for any element in $H\in \cL$ there is an element $H'\in \cL(W)$ such that $H\preceq H'$. It follows that \emph{the Viterbo $SH$ can be computed with any $\preceq$-cofinal sequence in $\cL$. }

\begin{remark}
The Viterbo $SH$  is a global invariant of $M$, but it is not naturally endowed with a norm. The interpretation of the Viterbo $SH$ for exact symplectic cluster manifolds is as the functions of the algebraic mirror defined over $\mathbb{F}$.

As observed in \cite{groman} the Viterbo $SH$ can also be defined for
$M$ which is not necessarily the completion of a Liouville domain (not even exact) provided one specifies an appropriate growth
condition at infinity akin to the set $\cL$. In the case we are considering, the integral affine structure
can be used to specify such a condition, namely, piece-wise linearity in integral affine coordinates. In particular this specifies a set of global algebraic functions on the mirror over the Novikov field.
\end{remark}

We now compare Viterbo $SH$ to relative $SH$ of an arbitrary compact domain $W\subset M$.
The universal mapping property induces a map
\begin{equation}\label{eqSHComparison}
SH^*(M;\mathbb{F})\to SH^*_{M,\theta}(W;\mathbb{F})   
\end{equation}
by  considering for the left hand side any cofinal sequence of Floer data $(H_i,J_i)$ that is both linear at infinity and $<0$ on $W$.  The choice of cofinal sequence has no affect on the map by standard arguments. 
\begin{lemma}\label{lmComparisonNaturality}
    For any inclusion $W'\subset W$ of compact sets the corresponding maps from \eqref{eqSHComparison} fit into a commutative triangle
$$
\xymatrix{
SH^*(M;\mathbb{F}) \ar[r] \ar[rd] &  SH^*_{M,\theta}(W;\mathbb{F})\ar[d] \\
& SH^*_{M,\theta}(W';\mathbb{F}) 
}
$$
\end{lemma}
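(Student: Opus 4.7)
The strategy is to compute all three maps in the triangle using a single compatible system of Floer data, thereby reducing commutativity to the naturality of completion of telescopes.

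First, I would choose two cofinal sequences of dissipative Floer data $(H_i^W, J_i^W)$ and $(H_i^{W'}, J_i^{W'})$, each cofinal in $\cL$, and simultaneously an acceleration datum for $W$ and $W'$ respectively (e.g.\ $H_i^W$ vanishing on $W$, linear at infinity with slope $\to \infty$, and analogously for $H_i^{W'}$), arranged so that $H_i^W \leq H_i^{W'}$ for every $i$. Choosing regular dissipative continuation data interpolating $H_i^W \to H_i^{W'}$ which are compatible (up to the standard homotopies) with the continuation data along each one-ray, one obtains a chain map $\widehat{tel}(\cC^W) \to \widehat{tel}(\cC^{W'})$ that on homology represents the restriction map $SH^*_{M,\theta}(W;\mathbb{F}) \to SH^*_{M,\theta}(W';\mathbb{F})$.

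The heart of the argument is then the tautological commutative square
$$\xymatrix{
tel(\cC^W) \ar[r] \ar[d] & \widehat{tel}(\cC^W) \ar[d] \\
tel(\cC^{W'}) \ar[r] & \widehat{tel}(\cC^{W'})
}$$
in which the horizontal arrows are the canonical maps to the completion and the vertical arrows are assembled from the continuation maps $\kappa_i : CF^*(H_i^W) \to CF^*(H_i^{W'})$. This square commutes on the nose (or up to canonical chain homotopy) because completion is functorial and both vertical maps are built from the same chain-level continuation data. Passing to homology and using that both $tel(\cC^W)$ and $tel(\cC^{W'})$ are quasi-isomorphic to the Viterbo colimit $\varinjlim CF^*(H_i)$ since both sequences are cofinal in $\cL$, and that the induced map between these colimits is the canonical identity on $SH^*(M;\mathbb{F})$, yields exactly the triangle of the lemma.

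The main obstacle is the bookkeeping required to ensure that all the continuation data interpolating between the various Hamiltonians can be chosen compatibly so that the relevant diagrams commute up to chain homotopy. This is a routine application of the contractible-choice arguments underlying Proposition \ref{prpFloerFunctor}, applied simultaneously to the data for $W$, the data for $W'$, and the interpolation between them. Independence of all resulting homology-level maps on auxiliary choices then guarantees that the maps computed above coincide with the canonical ones from \eqref{eqSHComparison}, completing the proof.
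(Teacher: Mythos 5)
Your proof is correct, but it works harder than the paper's argument, and the extra machinery introduces additional compatibilities that must be checked. The paper's proof uses a \emph{single} cofinal sequence $(H_i,J_i)$ in $\cL$ with $H_i<0$ on $W$ (hence automatically $<0$ on $W'\subset W$). Since each $HF^*(H_i;\mathbb{F})$ maps to both $SH^*_{M,\theta}(W;\mathbb{F})$ and $SH^*_{M,\theta}(W';\mathbb{F})$ via the structural maps of the defining colimits, and the restriction map is by definition the one commuting with these structural maps, the triangle
$$
\xymatrix{
HF^*(H_i,J_i;\mathbb{F}) \ar[r] \ar[rd] &  SH^*_{M,\theta}(W;\mathbb{F})\ar[d] \\
& SH^*_{M,\theta}(W';\mathbb{F})
}
$$
commutes for each $i$ essentially by definition of the presheaf $\mathcal{SH}$; passing to the colimit over $i$ gives the result immediately. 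Your version instead picks \emph{two} different sequences, one serving as an acceleration datum for each of $W$ and $W'$, and invokes functoriality of the completion $tel\to\widehat{tel}$ to get a tautologically commutative square. This is valid, but it then requires the separate observation that the interpolation chain map $tel(\cC^W)\to tel(\cC^{W'})$ induces the identity on Viterbo $SH$ after the identifications $H^*(tel(\cC^W))\cong H^*(tel(\cC^{W'}))\cong SH^*(M;\mathbb{F})$ --- another application of the colimit universal property, which the paper's single-sequence setup avoids. In short: same underlying mechanism (naturality of the universal mapping property), but the paper localizes the commutativity at each $HF^*(H_i)$, whereas you diffuse it over the entire telescope comparison, at the cost of the ``bookkeeping'' you yourself flag as the main obstacle.
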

\begin{proof}
    This is immediate from the commutativity of 
    $$
\xymatrix{
HF^*(H_i,J_i;\mathbb{F}) \ar[r] \ar[rd] &  SH^*_{M,\theta}(W;\mathbb{F})\ar[d] \\
& SH^*_{M,\theta}(W';\mathbb{F}) 
}
$$
\end{proof}

\begin{theorem}\label{tmSHComparison}

Suppose $W$ is the intersection of a descending sequence of admissible subdomains $W_i$. Then the map of \eqref{eqSHComparison} is an isomorphism of (non-normed) $BV$ algebras. 
 In particular, forgetting norms, we have, for any inclusion $W'\subset W$ of such domains, an isomorphism $SH^*_{M,\theta}(W;\mathbb{F})\to SH^*_{M,\theta}(W';\mathbb{F})$ as BV algebras.

\end{theorem}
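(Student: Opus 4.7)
The plan is to compute both $SH^*(M;\mathbb{F})$ and $SH^*_{M,\theta}(W;\mathbb{F})$ using a single cofinal sequence of Hamiltonians that is simultaneously adapted to both tasks. The key mechanism is that the former uses the uncompleted colimit while the latter uses the completed telescope, and we must arrange a bounded action property so that completion has no effect. More precisely, I will construct dissipative acceleration data $(H_i,J_i)$ for $W$ such that each $H_i$ also lies in $\mathcal{L}$ and the sequence is $\preceq$-cofinal in $\mathcal{L}$, and such that the actions of all $1$-periodic orbits of $H_i$ are uniformly bounded above by a constant $C$ independent of $i$.

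To construct such a sequence, I will use the nested admissibility hypothesis. Each $W_i$ has its own Liouville coordinate $\rho_i$ with $\rho_i \equiv 1$ on $\partial W_i$, and these coordinates are comparable on overlaps. I choose $H_i$ to be a fixed $C^2$-small, non-positive Morse function $f$ on a small neighborhood of $W$, to grow sharply but monotonically to a large positive value on $W_i\setminus W$ (depending only on a reference $\rho$), and then to be linear in $\rho$ with slope $s_i\to\infty$ outside $W_i$. Shrinking the neighborhood of $W$ along with $i$ ensures the resulting sequence is cofinal for $W$ in the sense used to define $SH^*_{M,\theta}(W;\mathbb{F})$; the requirement that $s_i\to\infty$ ensures cofinality inside $\mathcal{L}$ and hence the sequence also computes Viterbo's $SH^*(M;\mathbb{F})$. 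For the action bound, the critical orbits inside $W$ contribute $A(\gamma)=f(x)\le 0$, while the non-constant orbits arising from the asymptotically linear behavior can be arranged (by a standard Legendre-type calculation $A=h(\rho)-\rho h'(\rho)$, adjusted for the sign convention of $A$ here) to have action bounded above by a uniform constant independent of $i$ by choosing the profile of $H_i$ so that its graph passes through a common fixed point of $(\rho,H)$-space.

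With such $H_i$ in hand, the Floer one-ray $\mathcal{C}_\mathbb{F}:CF^*(H_1;\mathbb{F})\to CF^*(H_2;\mathbb{F})\to\cdots$ computes both sides, and the comparison map \eqref{eqSHComparison} is induced by the canonical inclusion $tel(\mathcal{C}_\mathbb{F})\hookrightarrow\widehat{tel}(\mathcal{C}_\mathbb{F})$. Since every generator has action at most $C$, any formal infinite sum $\sum_k\alpha_k$ with $\rho(\alpha_k)\to\infty$ must have only finitely many nonzero terms; hence this inclusion is in fact an equality of chain complexes. It follows that the map \eqref{eqSHComparison} is an isomorphism. The BV algebra structure is preserved because the continuation maps and BV operations used to define both invariants are the same chain level operations on this shared complex, so no additional argument is needed. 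The main obstacle is making the action bound argument genuinely uniform in $i$ while keeping the sequence dissipative; the freedom in shaping $H_i$ between $W$ and the asymptotic region, combined with the results of \cite{groman} on dissipative extensions, is what makes this possible.

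Finally, for the "in particular" statement, let $W'\subset W$ both be intersections of descending sequences of admissible subdomains. Applying the main statement to $W$ and to $W'$ gives two isomorphisms $SH^*(M;\mathbb{F})\xrightarrow{\sim} SH^*_{M,\theta}(W;\mathbb{F})$ and $SH^*(M;\mathbb{F})\xrightarrow{\sim}SH^*_{M,\theta}(W';\mathbb{F})$. By Lemma \ref{lmComparisonNaturality} these fit into a commutative triangle with the restriction map $SH^*_{M,\theta}(W;\mathbb{F})\to SH^*_{M,\theta}(W';\mathbb{F})$, which is therefore an isomorphism of BV algebras.
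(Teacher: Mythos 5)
Your proposal follows essentially the same approach as the paper's proof: choose a $\preceq$-cofinal Viterbo-type acceleration datum, establish a uniform upper bound on actions so that the telescope completion has no effect, and conclude that the comparison map of \eqref{eqSHComparison} lifts to an isomorphism of chain complexes, from which the $BV$-algebra statement is immediate. The treatment of the ``in particular'' clause via Lemma \ref{lmComparisonNaturality} is correct. However, your construction of the Hamiltonians, and the action estimate that goes with it, is more complicated than necessary and not entirely precise as stated. The paper's $i$th Hamiltonian is $C^2$-small on \emph{all} of $W_i$ (not on a shrinking neighbourhood of $W$), convex in the Liouville coordinate $\rho_i$ of $W_i$ (not a single reference $\rho$) on $M\setminus W_i$, and linear near infinity; since $W_i\to W$ and the slopes tend to $\infty$, this is simultaneously an acceleration datum for $W$ and $\preceq$-cofinal in $\mathcal{L}$. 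You instead let $H_i$ grow to a large positive value on $W_i\setminus W$, which creates an extra stratum of non-constant orbits whose actions must be controlled, and your ``graph passes through a common fixed point'' argument is too vague to do this. The clean mechanism, implicit in the paper's choice, is convexity plus $H_i\le 0$ at $\rho_i=1$: by the $y$-intercept formula of Proposition \ref{prop-action}, for a convex increasing $h$ with $h(\rho_0)\le 0$ one has $h(\rho_1)-\rho_1 h'(\rho_1)\le h(\rho_0)-\rho_0 h'(\rho_1)\le h(\rho_0)\le 0$ for all $\rho_1>\rho_0>0$, so all actions are actually non-positive, not merely bounded above. If you insist on your variant you would need to impose convexity of the growth profile; but the simpler and better choice is to not introduce that growth at all.
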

\begin{proof}
 In computing relative $SH$ of $W$  we can use acceleration data of Viterbo type. Namely, if $W=\cap_{i=1}^{\infty}W_i$ the underlying $i$th Hamiltonian is a linear function of $\rho_i$ near infinity, a convex function of $\rho_i$ on $M\setminus W_i$ and $C^2$ small on $W_i$. Note that the sequence of Hamiltonians of this acceleration datum is $\preceq$ cofinal in $\cL$.

 For such acceleration data one can immediately see from the Viterbo $y$-intercept trick that the action functional really only takes negative values.  This means that the filtration map can only take non-positive values. Thus completion has no effect and the map \eqref{eqSHComparison} can be lifted to an isomorphism of complexes for a particular choice of acceleration data.  
\end{proof}

\begin{remark} These isomorphisms are a special feature of the exact case and even then only hold over a trivially valued field. For example the isomorphism in the second part does not have to be bounded with respect to the filtration.  Thus if we base change to a non-trivially valued field the completions will be different and the canonical restriction map will no longer be an isomorphism.    
\end{remark}
There are many techniques to compute the Viterbo symplectic cohomology as a BV algebra. Most important among them is Viterbo's isomorphism between symplectic cohomology and string homology in case of the cotangent bundle of a smooth manifold. The latter is fully computable for $T^n$, which we will use below (see Theorem \ref{thm-BV-torus}). On the other hand what is relevant for us is the completed version (assuming finite boundary depth as above) $$SH^*_{M,\theta}(W;\mathbb{F})\hat{\otimes} \Lambda= SH_M^*(W;\Lambda)$$ as we are planning to use our locality theorem in more global situations. Hence, it is important to be able to explicitly describe the norm on $SH^*(M;\mathbb{F})$ given by $W$ as above.

\section{Analysis of the local model for the regular fibers}\label{s-noray}

We denote the coordinate functions of $\bR^n$ by $q_1,\ldots q_n$ and the corresponding dual linear coordinates on $(\bR^n)^{\vee}$ by $p_1,\ldots p_n$. $\mathbb{Z}^n$ and $(\mathbb{Z}^n)^{\vee}$ are the  standard integer lattices in $\mathbb{R}^n$ and $(\mathbb{R}^n)^{\vee}.$  Let us denote the smooth manifold underlying $\bR^n$ by $B_0$ for clarity. Let $v$ be the vector field $\sum q_i\frac{\partial}{\partial q_i}$ in $B_0$. Note that this vector field is invariant under the action of linear isomorphisms of $
\bR^n$.

Let $M:=(\bR^n)^{\vee}/(\mathbb{Z}^n)^{\vee}\times \bR^n$ be equipped with the Liouville structure $\theta:=\sum -q_i dp_i$ and let $\omega:=d\theta$. Let $\pi:M\to B_0$ be the canonical projection and equip $\pi$ with the induced (trivial) horizontal subbundle. The horizontal lift $V$ of $v$ is the Liouville vector field. 

Let us also trivialize the canonical bundle $TM_\bC^{\wedge n}$ defined through the compatible almost complex structure $J\frac{\partial}{\partial q_i}=-\frac{\partial}{\partial p_i}$ with the trivialization $$\frac{\partial}{\partial q_1}\wedge\ldots\wedge \frac{\partial}{\partial q_n}.$$

We will now state a special case of Viterbo's theorem, Theorem 1.1 of \cite[Chapter 12]{latschev}. Note that $M$ is symplectomorphic to the cotangent bundle of $T^n$ $$\left(T^*((\bR^n)^{\vee}/(\mathbb{Z}^n)^{\vee})=(\bR^n)^{\vee}/(\mathbb{Z}^n)^{\vee}\times \bR^n, \sum dp_idq_i=-\omega\right) $$ via the map that negates the $q$ coordinates. Using the computation of the Chas-Sullivan string homology BV-algebra from \cite[Section 6.2]{tonkonogstring}, we know that the symplectic cohomology of $T^*T^n$ is isomorphic to 
\begin{equation}\label{eqTorusSH}
\mathbb{F}[H_1(T^n, \mathbb{Z})]\otimes \Lambda^*(H^1(T^n, \mathbb{Z}))
\end{equation}
with the BV operator given by taking the interior product of an element of $\Lambda^*(H^1(T^n, \mathbb{Z}))$ with an element of $H_1(T^n, \mathbb{Z}).$ Here $\mathbb{F}[\cdot]$ refers to taking the group algebra and $\Lambda^*(\cdot)$ denotes the exterior algebra functor.

 Note that Hamiltonian Floer chain complexes carry a grading by the first homology of the symplectic manifold obtained by defining the summand corresponding to $\alpha\in H_1(M;\mathbb{Z})$ to be the span of all the $1$-periodic orbits whose homology class is $\alpha.$  This induces a $H_1(M;\mathbb{Z})$ grading on symplectic cohomology and all other invariants derived from Hamiltonian Floer theory as well.

\begin{theorem}\label{thm-BV-torus}
The Viterbo symplectic cohomology $BV$-algebra $SH^*(M;\mathbb{F})$ is isomorphic to $$\mathcal{A}^*:=\mathbb{F}[(\mathbb{Z}^n)^{\vee}]\otimes \Lambda^*(\mathbb{Z}^n),$$ with its tensor product graded algebra structure and the $BV$ operator: $$\Delta(z^{\alpha}\otimes \beta)=  z^{\alpha}\otimes\iota_{\alpha}\beta,$$ for any  $\alpha\in (\mathbb{Z}^n)^{\vee}$ and $\beta\in \Lambda^*(\mathbb{Z}^n)$.

Moreover, if we identify $SH^*(M;\mathbb{F})$ with $\mathcal{A}^*$ with this isomorphism,  using the identification $H_1(M;\mathbb{Z})=(\mathbb{Z}^n)^\vee$ and the extra $H_1(M;\mathbb{Z})$-grading of $SH^*(M;\mathbb{F})$ obtained using the homology classes of orbits, we have the following properties: \begin{itemize} \item the homogeneous summand of $SH^0(M;\mathbb{F})$ corresponding to the homology class $\alpha\in (\mathbb{Z}^n)^\vee$ is generated by $z^{\alpha}\otimes 1$ as an $\mathbb{F}$-module.
\item  the homogeneous summand $SH^*_0(M;\mathbb{F})$ of $SH^*(M;\mathbb{F})$ corresponding to the homology class $0\in (\mathbb{Z}^n)^\vee$ is generated by $1\otimes e_i, i=1,\ldots,n$ as an $\mathbb{F}$-algebra.
\item The homogeneous summand of $SH^*(M;\mathbb{F})$ corresponding to the homology class $\alpha\in (\mathbb{Z}^n)^\vee$ is generated by $z^{\alpha}\otimes 1$ as an $SH^*_0(M;\mathbb{F})$-module.

\end{itemize}
\end{theorem}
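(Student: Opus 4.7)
The plan is to combine Viterbo's isomorphism with Tonkonog's computation of the Chas--Sullivan BV-algebra of $T^n$, both of which are cited in the preceding paragraphs. First I would observe that the symplectomorphism $M \to T^*T^n$ negating the $q$ coordinates is orientation preserving and intertwines the given trivialization of $\Lambda^{\mathrm{top}}_{\mathbb{C}} TM$ with the canonical one on $T^*T^n$ up to contractible choice. Hence $SH^*(M;\mathbb{F})$ is canonically isomorphic as a graded BV-algebra to $SH^*(T^*T^n;\mathbb{F})$, and it suffices to compute the latter.

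Next I would invoke Viterbo's BV-algebra isomorphism \cite[Ch.\ 12]{latschev} with the Chas--Sullivan loop homology of $T^n$, followed by \cite[Section 6.2]{tonkonogstring} giving a BV-algebra isomorphism of the latter with $\mathbb{F}[H_1(T^n;\mathbb{Z})] \otimes \Lambda^*(H^1(T^n;\mathbb{Z}))$ carrying the stated BV operator. Using $T^n = (\mathbb{R}^n)^{\vee}/(\mathbb{Z}^n)^{\vee}$ I would identify $H_1(T^n;\mathbb{Z}) \cong (\mathbb{Z}^n)^{\vee}$ and dually $H^1(T^n;\mathbb{Z}) \cong \mathbb{Z}^n$ via the standard bases, yielding the claimed isomorphism $SH^*(M;\mathbb{F}) \cong \mathcal{A}^*$.

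The three bulleted properties then follow by unwinding the $H_1$-grading on both sides. On the Floer side the extra grading records the free homotopy class of the generating orbit, while on the string-topology side the splitting $\mathcal{A}^* = \bigoplus_\alpha z^\alpha \otimes \Lambda^*(\mathbb{Z}^n)$ records the path-component of the loop. Viterbo's isomorphism intertwines the two gradings: every non-constant $1$-periodic orbit of an asymptotically linear Hamiltonian in $\mathcal{L}$ is contained in a single fiber of $\pi$, so its class in $H_1(M;\mathbb{Z})$ coincides with its free homotopy class in $\pi_1(T^n) = H_1(T^n;\mathbb{Z}) \cong (\mathbb{Z}^n)^\vee$. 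Each component $\mathcal{L}_\alpha T^n$ is homotopy equivalent to $T^n$, hence contributes exactly the summand $z^\alpha \otimes \Lambda^*(\mathbb{Z}^n)$, whose degree-$0$ piece is the one-dimensional $\mathbb{F}$-module spanned by $z^\alpha \otimes 1$, whose $\alpha = 0$ piece is the full exterior algebra generated as an $\mathbb{F}$-algebra by $1 \otimes e_i$, and on which multiplication by $z^\alpha$ realizes the module isomorphism with the $\alpha = 0$ summand.

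The main task, though conceptually immediate, is the bookkeeping verification that the Floer-theoretic grading by orbit class matches the string-topology grading by path-component under Viterbo's isomorphism. This falls out of inspecting the PSS-type moduli spaces underlying Viterbo's construction, which interpolate between a Hamiltonian $1$-orbit and a geodesic loop in the zero section lying manifestly in the same free homotopy class of $M$. Tracking the sign and grading conventions through this identification --- in particular the compatibility of the trivialization of $\Lambda^{\mathrm{top}}_{\mathbb{C}} TM$ with the grading on loop-space homology --- is the only point requiring care.
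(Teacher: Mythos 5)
Your proposal is correct and follows essentially the same route as the paper: the paper presents Theorem~\ref{thm-BV-torus} as a direct consequence of Viterbo's isomorphism \cite[Chapter~12]{latschev} and the Chas--Sullivan BV-algebra computation for $T^n$ from \cite[Section~6.2]{tonkonogstring}, with no separate proof environment, exactly as you propose. The additional care you take regarding the compatibility of the $H_1$-grading by orbit class on the Floer side with the grading by path-component on the loop-space side is left implicit in the paper but is indeed the only point requiring verification, and your observation that the PSS-type interpolating moduli spaces manifestly preserve the free homotopy class is the standard way to see it.
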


%

Recall that at the end of Section \ref{ss-ana-no-ray}, for an admissible convex polytope $P\subset B_0$, we defined the $BV$-algebra $\mathcal{A}^*(P)$. 

\begin{proposition}\label{prop-BV-torus-B} The natural map of $\Lambda$-algebras $$\mathcal{A}^*\otimes \Lambda\to \mathcal{A}^*(P)$$ given by $$z^{e_i^\vee}\otimes 1\otimes 1\to y_i^{-1}\text{ and } 1\otimes e_j\otimes 1\to y_j\frac{\partial}{\partial y_j}$$ respects the BV operator. Moreover, it is injective and has dense image. \end{proposition}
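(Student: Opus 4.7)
The plan is to verify the three claims — that the map is a well-defined $\Lambda$-algebra homomorphism respecting the BV operator, that it is injective, and that it has dense image — by direct calculation on a convenient basis and by reducing higher-degree statements to degree zero.

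First, I would extend the given data multiplicatively and $\Lambda$-linearly so that a basis element $z^{\alpha}\otimes e_I \otimes \lambda$, with $I=\{j_1<\cdots<j_k\}$, is sent to $\lambda\, y^{-\alpha}\, \partial_{j_1}\wedge\cdots\wedge\partial_{j_k}$ via \eqref{eq-BV-comp-B-side}. Well-definedness as a graded $\Lambda$-algebra map is immediate: the $\partial_i$ are super-commuting degree-$1$ generators of $\mathcal{A}^*(P)$ over $\mathcal{O}_0(P)$, matching the relations of $e_j\in\Lambda^*(\mathbb{Z}^n)$, while $y^{-\alpha}\cdot y^{-\beta}=y^{-(\alpha+\beta)}$ matches the group-algebra law. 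For BV-compatibility, by $\Lambda$-linearity it suffices to check on basis elements. The A-side gives $\Delta(z^{\alpha}\otimes e_I)=\sum_{l=1}^k (-1)^{l-1}\alpha_{j_l}\, z^{\alpha}\otimes e_{I\setminus\{j_l\}}$ by Theorem \ref{thm-BV-torus}. On the B-side I would compute $div_{\Omega_0}(y^{-\alpha}\, \partial_{j_1}\wedge\cdots\wedge\partial_{j_k})$ directly: with $\omega_i:=dy_i/y_i$ so that $\Omega_0=\omega_1\wedge\cdots\wedge\omega_n$, contraction gives $i_{\Omega_0}^{-1}(y^{-\alpha}\partial_{j_1}\wedge\cdots\wedge\partial_{j_k})=\pm y^{-\alpha}\omega_{I^c}$; exterior differentiation produces $-y^{-\alpha}\sum_{i\in I}\alpha_i\,\omega_i\wedge\omega_{I^c}$ (terms with $i\in I^c$ vanish); applying $i_{\Omega_0}$ and the prefactor $(-1)^k$ from the definition of $div_{\Omega_0}$ gives exactly the A-side sum after sign bookkeeping.

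For injectivity and density, I use that both $\mathcal{A}^*\otimes\Lambda$ and $\mathcal{A}^*(P)$ split as free modules, on $\{e_I\}$ over $\Lambda[(\mathbb{Z}^n)^\vee]$ and on $\{\partial_{j_1}\wedge\cdots\wedge\partial_{j_k}\}$ over $\mathcal{O}_0(P)$ respectively, and that the map respects these decompositions. Both statements therefore reduce to degree $0$, where the map sends $z^{\alpha}$ to $y^{-\alpha}$; this identifies $\mathcal{A}^0\otimes\Lambda=\Lambda[(\mathbb{Z}^n)^\vee]$ with the $\Lambda$-algebra of Laurent polynomials in the $y_i$. Density then follows from Proposition \ref{prpTropTorus}(2), which identifies $\mathcal{O}_0(P)$ as the completion of this Laurent polynomial algebra with respect to $\val_P$. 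Injectivity follows because $\mathcal{O}_0(P)$ canonically injects into the formal Laurent module $\Lambda[[(\mathbb{Z}^n)^\vee]]$, where the monomials $y^{-\alpha}$ are $\Lambda$-linearly independent. I expect the main obstacle to be the sign bookkeeping in the BV computation; this becomes routine once the cases $|I|=0,1$ are done and multilinearity in $\alpha$ and in the wedge factors is invoked.
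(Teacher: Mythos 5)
Your argument is correct and follows essentially the same route as the paper: verify BV-compatibility on the monomial basis by explicitly unwinding $div_{\Omega_0}=(-1)^{|\omega|}\iota_{\Omega_0}\circ d\circ\iota_{\Omega_0}^{-1}$, then deduce injectivity and density from the identification of $\mathcal{A}^0\otimes\Lambda$ with Laurent polynomials and of $\mathcal{O}_0(P)$ with their completion with respect to $\val_P$, using that $\mathcal{A}^*(P)$ is free over $\mathcal{O}_0(P)$ on the $\partial_{j_1}\wedge\cdots\wedge\partial_{j_k}$. The paper's proof carries out the sign bookkeeping in full rather than gesturing at it, but the underlying computation is the same.
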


\begin{proof}
We need to check what happens to \begin{equation}\label{eq-BV-elt} z^{\alpha}\otimes e_{j(1)}\wedge\ldots \wedge\ldots  e_{j(k)}\otimes 1,\end{equation} for $\alpha\in (\mathbb{Z}^n)^\vee$ and $j:[k]\to [n]$ strictly order preserving. It's image under BV operator is $$z^{\alpha}\otimes\sum_{l=1}^k(-1)^{l-1}\alpha(e_{j(l)})\cdot e_{j(1)}\wedge\ldots  \widehat{e_{j(l)}}\wedge\ldots  e_{j(k)}\otimes 1.$$ Applying the natural map in turn, we get 
$$y^{-\alpha}\sum_{l=1}^k(-1)^{l+1}\alpha(e_{j(l)})\cdot y^{e_{j(1)}^\vee+\ldots+\widehat{e_{j(l)}^\vee}+\ldots +e_{j(k)}^\vee} \frac{\partial}{\partial y_{j(1)}}\ldots\widehat{\frac{\partial}{\partial y_{j(l)}}}\ldots \frac{\partial}{\partial y_{j(k)}},$$

Under the natural map \eqref{eq-BV-elt} is sent to $$y^{-\alpha+e_{j(1)}^\vee+\ldots +e_{j(k)}^\vee}\frac{\partial}{\partial y_{j(1)}}\ldots \frac{\partial}{\partial y_{j(k)}}.$$ The inner product with $\Omega_0$ returns $$(-1)^{s(j)}y^{-\alpha-e_{j'(1)}^\vee-\ldots -e_{j'(n-k)}^\vee}dy_{j'(1)}\ldots dy_{j'(n-k)},$$ where $$s(j):=\sum_{l=1}^kj(l)-l,$$ and $j':[n-k]\to [n]$ is the order preserving isomorphism to the complement of the image of $j$. 

Exterior differential of this is $$(-1)^{s(j)}y^{-\alpha-e_{j'(1)}^\vee-\ldots -e_{j'(n-k)}^\vee}\sum_{l=1}^k(-1)^{j(l)-l}(-\alpha(e_{j(l)}))\cdot y^{-e_{j(l)}^\vee}dy_{j'(1)}\ldots dy_{j(l)}\ldots dy_{j'(n-k)}.$$ This then goes back to $$\sum_{l=1}^k(-1)^{s(j)+s(j\setminus l)+j(l)+l+1}y^{-\alpha+e_{j(1)}^\vee+\ldots+\widehat{e_{j(l)}^\vee}+\ldots +e_{j(k)}^\vee}\alpha(e_{j(l)})\cdot \frac{\partial}{\partial y_{j(1)}}\ldots\widehat{\frac{\partial}{\partial y_{j(l)}}}\ldots \frac{\partial}{\partial y_{j(k)}},$$ where $j\setminus l$ denotes the map $[k-1]\to [k]\to [n]$ with the first map isomorphism on to the complement of $\{l\}$.

It is easy to see that $$s(j)-s(j\setminus l)=j(l)-l+(k-l).$$ Applying the final sign $(-1)^k$ we obtain the correct result.

For the last sentence note the definition of $\cA^0(P)$ is the completion of $\cA\otimes \Lambda$ with respect to an appropriate norm, and that $\cA^*(P)$, the algebra of continuous polyderivations of $\cA^0(P)$ is generated as an algebra over $\cA^0(P)$, by the vector fields $y_j\frac{\partial}{\partial y_j}$. The injectivity and density now follow.
\end{proof}
%
%

Let us recall some generalities on integrable Hamiltonian flows on $M$. Note that the vertical tangent space of $\pi$ at any $x\in M$, is identified with $T^*_{\pi(x)}B_0=(\mathbb{R}^n)^{\vee}$ by using $\omega$. The Hamiltonian vector field of any $H=h\circ \pi$ at $x$ (which is vertical) is equal to $dh_{\pi(x)}$ under this identification. Hence the Hamiltonian flow of such an $H$ preserves the fibers of $\pi$, and inside each torus the time $t$ map of the flow is given by translating the torus $\pi^{-1}(b)= (\mathbb{R}^n)^{\vee}/(\mathbb{Z}^n)^{\vee}$ by the vector $tdh_{\pi_b}.$ In particular, the time $1$ periodic orbits of the Hamiltonian flow of $H$ correspond to the $b\in B_0$ such that $dh_{b}$ is integral.

Let us denote the Hamiltonian flow of $H=h\circ \pi$ by $\Phi^t$. The flow $\Phi^t$ does not preserve the horizontal subspaces. The image of a horizontal vector $v\in T_xM$ under $d\Phi^t$ is given by the sum of the horizontal lift of $\pi_*v$ to $\Phi_t(x)$ and the vertical vector corresponding to $$t\nabla_{\pi_*v}dh\in T_{\pi(x)}^*B,$$ where the bilinear form $\nabla dh$ on $T_bB$ has matrix $$\left(\frac{\partial^2h}{\partial q_i\partial q_j}(b)\right)$$ with respect to basis $\frac{\partial}{\partial q_1},\ldots, \frac{\partial}{\partial q_n}.$

Hence the Morse-Bott non-degeneracy of a time-$1$ orbit of $\Phi^t$ corresponding to a point $b\in B$ with integral $dh_b$ is equivalent to the non-degeneracy of the symmetric bilinear form $(\nabla dh)_b$. The Maslov index \cite{robbin} of this path is equal to $-s/2$, where $s$ is  the number of positive eigenvalues minus the number of negative eigenvalues of $(\nabla dh)_b$ using the Normalization property listed in page 17 of \cite{robbin}. Our convention is to add $n$ to the Maslov index to define the degree in our Floer complexes. After controlled perturbations to $H$ that make it non-degenerate, such a $b$ therefore contributes generators to the Hamiltonian Floer complex whose degrees are in the interval $[n-\frac{s}{2}, 3n-\frac{s}{2}]$. 

The following generalization of Viterbo's $y$-intercept formula for actions will be useful. We state it more generally than needed here using the same notation.

\begin{proposition}\label{prop-action}
Let $M$ be an exact symplectic manifold and $\rho: M\to \mathbb{R}^k$ be an  smooth map whose components pairwise Poisson commmute $\{\rho_i,\rho_j\}=0,$ for $i,j=1,\ldots k,$ i.e., $\rho$ is an involutive map as defined in Lemma \ref{lem-lag-inv}. Assume that the Liouville vector field $V$ on $M$ and the Euler vector field $v$ in the base $\mathbb{R}^k$ are $\rho$-related. Let $h:\mathbb{R}^k\to \mathbb{R}$ be smooth and $H:=h\circ \rho$. Then the action of a $1$-periodic orbit $\gamma: S^1\to M_k$ of $H$ living over $b\in \mathbb{R}^k$ is $$\int \gamma^*\theta +\int \gamma^*H=h(b)+\int_{0}^{1} \theta(X_H(\gamma(t))dt.$$ Using $$\theta(X_H)=\omega(V,X_H)=-dH(V)=-dh(v),$$ we obtain precisely that the action is given by the height axis intercept of the tangent space to the graph of $h$ over $b$.\end{proposition}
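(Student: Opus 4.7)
The plan is to observe that the Poisson-commutativity hypothesis on $\rho$ forces the orbit $\gamma$ to sit inside a single fiber of $\rho$, after which both summands of the action become elementary, and the geometric interpretation as a $y$-intercept follows from a single calculation using the defining identity of the Liouville vector field.

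First I would verify that $\gamma$ lies over a single point. Because $H = h\circ\rho$, we have
$$\{H,\rho_i\} = \sum_j (\partial_j h)(\rho)\,\{\rho_j,\rho_i\} = 0$$
by the involutivity assumption on $\rho$. Thus $\rho$ is constant along the flow of $X_H$, and the orbit $\gamma$ is contained in the single fiber $\rho^{-1}(b)$. This immediately gives $\int_0^1 H(\gamma(t))\,dt = h(b)$. For the $\theta$-term, the defining equation $\gamma'(t) = X_H(\gamma(t))$ of a Hamiltonian orbit gives tautologically
$$\int \gamma^*\theta = \int_0^1 \theta(X_H(\gamma(t)))\,dt.$$
Adding the two establishes the first displayed equality of the proposition.

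To get the $y$-intercept interpretation, I would use the Liouville identity $\iota_V\omega = \theta$, which yields $\theta(X_H) = \omega(V,X_H) = -\omega(X_H,V) = -dH(V)$. Since $V$ and $v$ are $\rho$-related, $dH(V) = d(h\circ\rho)(V) = dh(d\rho\cdot V) = dh(v)$. Because $v$ is the Euler field, $v_b = b$, and because $\gamma$ sits over $b$ the integrand along $\gamma$ is the constant $-dh_b(b)$. Consequently
$$\int\gamma^*\theta + \int\gamma^*H = h(b) - dh_b(b),$$
which is exactly the value at $0\in\mathbb{R}^k$ of the affine function $x\mapsto h(b) + dh_b(x-b)$ cutting out the tangent plane to the graph of $h$ at $b$; i.e.\ the intercept of that tangent plane with the vertical (height) axis.

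There is no genuinely hard step here. The one thing requiring care is sign-bookkeeping: one must match the convention $\iota_V\omega = \theta$ (as opposed to $-\theta$) and the order in $\omega(V,X_H) = -\omega(X_H,V)$ with the convention $\iota_{X_H}\omega = dH$ used elsewhere in the paper. Once these are aligned, every displayed equation above is either a pointwise identity or an immediate consequence of $\rho\circ\gamma \equiv b$, so the proof is purely mechanical.
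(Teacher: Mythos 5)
Your proof is correct and, modulo the fact that the paper folds the key computational steps directly into the statement of the proposition rather than giving a separate proof environment, it follows exactly the same route: observe $\rho$ is constant along orbits (so $\int\gamma^* H = h(b)$), write $\int\gamma^*\theta = \int\theta(X_H)$ tautologically, use $\iota_V\omega = \theta$ and $\rho$-relatedness to get $\theta(X_H) = -dh(v)$, and read off the $y$-intercept from $h(b) - dh_b(b)$ using that $v$ is the Euler field. Your explicit verification that Poisson commutativity forces the orbit into a single fiber (via $\{H,\rho_i\}=0$) is a small addition that justifies the hypothesis "$\gamma$ living over $b$" rather than merely assuming it, which is a sensible, if optional, clarification.
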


For the next proposition we need a lemma concerning dissipativity.
\begin{lemma}\label{lmDssipCrit}
    If $M$ is geometrically bounded and $(H,J)$ is a Floer datum on $M$ such that $J$ is geometrically bounded, $H$ is Lipschitz with respect to $J$ and there is a compact set $K$ and a $\delta>0$ such that for all $x\in M\setminus K$ we have $d(\psi(x),x)\geq\delta$ where $\psi$ is the time $1$ flow of $H$, then $(H,J)$ is dissipative. 
\end{lemma}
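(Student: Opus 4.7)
The plan is to verify the definition of dissipativity from \cite{groman}, which amounts to establishing $C^0$--estimates on Floer trajectories with bounded energy. The key geometric input is that the displacement hypothesis $d(\psi(x),x)\geq\delta$ rules out the existence of $1$-periodic orbits of $H$ outside $K$, and moreover quantifies this obstruction uniformly.

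The heart of the argument is the following local energy bound: there exists $c=c(\delta,L,J)>0$ such that for any Floer trajectory $u:\bR\times S^1\to M$ and any $s_0$ for which the loop $\gamma_{s_0}(t):=u(s_0,t)$ lies entirely in $M\setminus K$, one has
\begin{equation*}
\int_{s_0-1/2}^{s_0+1/2}\!\int_0^1 |\partial_s u|^2\,dt\,ds\;\geq\; c.
\end{equation*}
To prove this I would compare $\gamma_{s_0}$ with the genuine Hamiltonian orbit $\eta(t):=\phi_H^t(\gamma_{s_0}(0))$ starting at the same point. Using the Floer equation $\partial_t u-X_H(u)=-J\partial_s u$ together with the Lipschitz bound on $X_H$ (which follows from $H$ being Lipschitz with respect to $g_J$ and $J$ being geometrically bounded), a Gronwall--type estimate gives
\begin{equation*}
d(\gamma_{s_0}(t),\eta(t))\;\leq\; C\int_0^t |\partial_s u(s_0,\tau)|\,d\tau
\end{equation*}
for a constant $C=C(L,J)$ depending on the Lipschitz constant of $X_H$ and the sectional curvature and injectivity radius bounds on $(M,g_J)$. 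Setting $t=1$ and using $\eta(1)=\psi(\gamma_{s_0}(0))$ and $\gamma_{s_0}(1)=\gamma_{s_0}(0)$, the hypothesis gives
\begin{equation*}
\delta\;\leq\; d(\psi(\gamma_{s_0}(0)),\gamma_{s_0}(0))\;\leq\; C\int_0^1|\partial_s u(s_0,t)|\,dt,
\end{equation*}
and Cauchy--Schwarz yields $\int_0^1|\partial_s u(s_0,t)|^2\,dt\geq (\delta/C)^2$. Extending from $s_0$ to a short $s$--interval (using that $|\partial_s u|^2$ varies in a controlled way on scales where $u$ stays bounded away from $K$, which follows from the same Gronwall bound) produces the advertised local energy lower bound.

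The local energy bound then translates into dissipativity by a standard packing argument: any Floer trajectory of total energy $\leq E$ can contain at most $\lfloor E/c\rfloor+1$ disjoint unit intervals of $s$ on which the entire loop $\gamma_s$ lies outside $K$. Together with the Lipschitz bound on $X_H$ (which controls how far a single loop $\gamma_s$ can stretch) and the geometric boundedness of $J$, this converts the energy bound into a $C^0$--bound on $u$ in terms of $E$ and the positions of the asymptotic orbits, which is precisely the content of dissipativity in the sense of \cite{groman}.

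The main obstacle is bookkeeping rather than any new geometric idea: the precise definition of dissipativity in \cite{groman} is formulated in terms of a distance--like control function on $M$ with specific properties, and one must verify that the informal $C^0$--estimate above can be packaged into that framework. This is essentially routine given the uniform sectional curvature and injectivity radius bounds provided by geometric boundedness.
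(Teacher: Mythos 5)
The paper's own proof of this lemma is a one-line citation to Lemma 5.11 and Corollary 6.19 of \cite{groman}, so your attempt to reprove it from scratch is a genuinely different route. The geometric core of your sketch is indeed the right one, and it is the idea underlying the cited results: if the slice $\gamma_{s_0}=u(s_0,\cdot)$ lies in $M\setminus K$, then a Gronwall comparison between $\gamma_{s_0}$ and the Hamiltonian trajectory $\eta(t)=\phi_H^t(\gamma_{s_0}(0))$, together with the loop condition $\gamma_{s_0}(1)=\gamma_{s_0}(0)$ and $d(\psi(x),x)\geq\delta$, forces $\int_0^1|\partial_s u(s_0,t)|\,dt\geq\delta/C$ and hence a local energy cost. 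What the paper buys by citing is that it does not have to re-package this into Groman's precise dissipativity framework; what your sketch buys is an explanation of the mechanism.

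I would push back, though, on characterizing the remaining work as "bookkeeping." Two of the steps you wave at are where the actual technical content sits. First, the extension from a bound at a single $s_0$ to a bound over $[s_0-1/2,s_0+1/2]$ does not "follow from the same Gronwall bound"; Gronwall gives no $s$-regularity of $\partial_s u$, and the correct move is either to assume $\gamma_s\subset M\setminus K$ for the whole interval and integrate the pointwise-in-$s$ estimate $\int_0^1|\partial_s u(s,\cdot)|^2\,dt\geq(\delta/C)^2$, or to set up the packing in terms of $s$-measure rather than unit intervals. Second, the passage from the packing bound to a $C^0$-estimate cannot rest only on "the Lipschitz bound on $X_H$ controls how far $\gamma_s$ can stretch": one has $|\partial_t u|\leq|X_H|+|\partial_s u|$, and $\int_0^1|\partial_s u(s,t)|\,dt$ is controlled only on $L^2$-average over $s$, so one must select good slices by a Markov/Fubini argument, show they meet $K$ via packing, and then propagate in $s$ using the bounded geometry. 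These are precisely the issues that the definitions and lemmas around dissipativity in \cite{groman} are engineered to handle, and they are why the paper delegates the proof to that reference rather than reproving it. Your sketch is correct in outline but underestimates how much it defers.
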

\begin{proof}
    This is the combination of Lemma 5.11 and Corollary 6.19 of \cite{groman}.
\end{proof}

For $\alpha\in (\mathbb{Z}^n)^{\vee}$ and $Q\subset B_0^n$ compact, we define 
$$
F(Q,\alpha):=\{x\in Q|\alpha(x)=\max_{y\in Q}\alpha(y)\}.
$$ 

\begin{proposition}\label{prop-BV-torus-A}
Let $P\subset B_0^n$ be an admissible convex polytope containing the origin. Then,

\begin{enumerate}
\item There exists an acceleration datum for $\pi^{-1}(P)\subset M$ leading to the Floer $1$-ray $\mathcal{C}_{\mathbb{F}}:=CF^*(H_1;\mathbb{F})\to CF^*(H_2;\mathbb{F})\ldots$ such that the bounded action property is satisfied and $tel(\mathcal{C}_{\mathbb{F}})$ has finite boundary depth at all degrees.
\item The map obtained by Theorem \ref{tmSHComparison} and Theorem \ref{thm-BV-torus}  $$\mathbb{F}[(\mathbb{Z}^n)^{\vee}]\otimes \Lambda^*(\mathbb{Z}^n)\to SH^*_{M,\theta}(\pi^{-1}(P),\mathbb{F}) $$ is so that the induced filtration map (via Theorem \ref{SH-exact}) on $\mathbb{F}[(\mathbb{Z}^n)^{\vee}]\otimes \Lambda^*(\mathbb{Z}^n)$ is $$\sum a_{i}z^{\alpha_{i}}\otimes \beta_i\mapsto  \min_{i}-\alpha_i(F(P,\alpha_i))=\min_{i}\min_{p\in P}-\alpha_i(p).$$

\end{enumerate}

\end{proposition}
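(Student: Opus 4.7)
The plan is to construct an explicit dissipative acceleration datum and read off actions from the generalized $y$-intercept formula of Proposition \ref{prop-action}. Choose a sequence of smooth strictly convex functions $h_i:B_0\to\mathbb{R}$ such that (i) $h_i$ converges to $0$ uniformly on compact subsets of $\operatorname{int}(P)$, (ii) $h_i(b)\to\infty$ for $b\notin P$, (iii) $h_i$ is asymptotically linear with slope tending to infinity along a generic direction in $(\mathbb{R}^n)^\vee$. Together with the standard flat $J_0$ on $M$, Lemma \ref{lmDssipCrit} produces dissipative Floer data $(H_i,J_0)$ with $H_i:=h_i\circ\pi$: the flow of $H_i$ translates each torus fiber $\pi^{-1}(b)$ by $dh_i(b)$, and the generic-slope hypothesis forces this translation to have positive displacement outside a compact set. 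By convexity there is a unique point $b_\alpha^{(i)}\in B_0$ with $dh_i(b_\alpha^{(i)})=\alpha$ for each $\alpha\in(\mathbb{Z}^n)^\vee$, giving a Morse--Bott torus of $1$-periodic orbits. A small Morse perturbation pulled back from a product Morse function on $T^n$ breaks each Morse--Bott family into $2^n$ non-degenerate orbits indexed by $H^*(T^n)\cong\Lambda^*(\mathbb{Z}^n)$.

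By Proposition \ref{prop-action}, the action of an orbit of class $\alpha$ over $b_\alpha^{(i)}$ equals $h_i(b_\alpha^{(i)})-\alpha(b_\alpha^{(i)})$, up to a perturbation term that can be made arbitrarily small. The critical point $b_\alpha^{(i)}$ maximizes $\alpha(b)-h_i(b)$, so as $i\to\infty$ it converges to a point of $F(P,\alpha)$ on which $h_i\to 0$; hence the limiting action is $-\alpha(F(P,\alpha))=\min_{p\in P}-\alpha(p)$. Because $0\in P$, this is $\leq 0$, so all actions are uniformly bounded above and the bounded action property holds.

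For finite boundary depth, exploit that the Floer and continuation differentials preserve the $H_1(M;\mathbb{Z})=(\mathbb{Z}^n)^\vee$-grading (since $M$ is exact, no holomorphic cylinder can change the homology class of the orbit). The telescope $tel(\mathcal{C}_{\mathbb{F}})$ thus splits as $\bigoplus_{\alpha\in(\mathbb{Z}^n)^\vee}tel(\mathcal{C}_{\mathbb{F}})_\alpha$, each summand having exactly $2^n$ generators per level whose actions lie in a uniformly bounded window of width $O(\epsilon_i)\to 0$ around $h_i(b_\alpha^{(i)})-\alpha(b_\alpha^{(i)})$. Within each $\alpha$-block the differential is modeled on the Morse complex of $T^n$ and all its continuation maps move actions by a uniformly bounded amount, yielding boundary depth bounded uniformly in $\alpha$ and thus finite boundary depth at every degree. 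This completes part~(1).

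For part (2), Theorem \ref{tmSHComparison} combined with Theorem \ref{thm-BV-torus} identifies $SH^*_{M,\theta}(\pi^{-1}(P);\mathbb{F})$ with $\mathbb{F}[(\mathbb{Z}^n)^\vee]\otimes\Lambda^*(\mathbb{Z}^n)$ as a BV algebra; the $(\mathbb{Z}^n)^\vee$-grading on both sides agrees, so the basis element $z^\alpha\otimes\beta$ corresponds to the class of a Morse--Bott perturbed orbit in the $\alpha$-summand whose action tends to $\min_{p\in P}-\alpha(p)$. This value is therefore an upper bound for the filtration value of $z^\alpha\otimes\beta$; sharpness follows from the fact that, within each $\alpha$-block, the homology is concentrated at precisely that action level (all $2^n$ generators are closed and no higher-action representative exists up to boundaries in the telescope). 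The direct-sum decomposition of the complex over $\alpha$ then forces the filtration of a general sum to be computed by the min construction, giving the stated formula. The main obstacle is verifying that the supremum defining the filtration value of $z^\alpha\otimes\beta$ is actually attained in the limit over the acceleration datum, which requires careful control of the perturbation sizes $\epsilon_i$ and of the action changes induced by the continuation maps in the telescope.
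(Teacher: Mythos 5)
Your overall strategy is the same as the paper's: choose an acceleration datum given by convex functions of the base cooking to the indicator of $P$, observe that the Hamiltonian flow translates torus fibers, read off actions via the Legendre-transform formula of Proposition \ref{prop-action}, and use the $H_1(M;\mathbb{Z})=(\mathbb{Z}^n)^\vee$-splitting of the telescope together with uniformly bounded action windows to get both finite boundary depth and the filtration formula. The differences are in how the Hamiltonian family is engineered, and they are worth spelling out.

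You take $h_i$ to be smooth \emph{strictly convex} functions, asymptotically linear ``with slope tending to infinity along a generic direction.'' The paper instead sets $h_{\lambda,\eta}:=\max(0,\lambda l_{1,\eta},\ldots,\lambda l_{k,\eta})$ for the defining linear inequalities of $P_\eta$ and then mollifies. This piecewise-linear-plus-mollification choice buys two concrete things your version would need to argue more carefully. First, dissipativity: the image $I_\lambda$ of $-dh_{\lambda,\eta,\epsilon,\delta}$ is an explicit compact polytope scaling with $\lambda$, so the paper's ``sufficiently irrational $\lambda$'' condition (that $\partial I_\lambda$ avoids lattice points) yields a uniform displacement bound and hence dissipativity via Lemma \ref{lmDssipCrit}; your ``generic direction'' phrase is pointing at the same mechanism but is vague in dimension $n>1$, since the asymptotic slope of a strictly convex function is direction-dependent and you actually need the \emph{boundary of the recession body} (not a single slope) to avoid $\mathbb{Z}^n$. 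Second, orbit localization: with the mollified PL family, $1$-periodic orbits occur only for $\alpha\in I_\lambda\cap(\mathbb{Z}^n)^\vee$ and are all confined to an $\epsilon$-neighborhood of $\partial P_\eta$, making the convergence of the (discrete) Legendre transform to $\mathcal{L}(h_{\lambda,\eta})$ and hence of actions to $-\alpha(F(P,\alpha))$ essentially a finite-dimensional exercise. Your strict convexity yields the pleasant feature of a unique $b_\alpha^{(i)}$ per class, but controlling the convergence of actions across all $\alpha$ uniformly, and ensuring the perturbation preserves this, is exactly the issue you flag at the end; the paper sidesteps it by explicitly arranging $|\mathcal{A}_{\lambda,\eta,\epsilon,\delta}(\alpha)+\alpha(F(P,\alpha))|<1/i$ before perturbing.

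For the finite boundary depth step, your appeal to ``the differential is modeled on the Morse complex of $T^n$'' and to action shifts of continuation maps is more machinery than needed: once the $H_1$-splitting is in place and the actions in each $\alpha$-block (across all telescope levels) are in a uniform window, boundary depth in each block is bounded by the width of that window, and the min-construction on the direct sum propagates this bound. Similarly for part (2), the paper's argument is that a class of homology grading $\alpha$ is represented arbitrarily deep in the telescope, where actions are within $1/N$ of $-\alpha(F(P,\alpha))$, and the fact that all maps in the telescope are action non-decreasing then pins down the filtration value; your ``homology concentrated at that action level'' is a looser phrasing of the same fact. In short: correct strategy, cosmetically different Hamiltonian family, and you correctly identify the one place where quantitative care is needed.
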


\begin{proof}
Since $P$ is rational and contains the origin, $P$ can be written as the intersection of rational half-spaces: $$\{p\mid\nu_i(p)\geq b_i\}\subset B_0^n,$$where $\nu_i$ are primitive elements of $(\mathbb{Z}^n)^{\vee}$ and $b_i\leq 0$ for $i=1,\ldots k$. Note we allow  $b_i=0$ to include the possibility of degenerate polygons. 

We also define the admissible convex polygons $$P_\eta:=\bigcap_{i=1}^k\{\nu_i(p)\geq b_i-\eta\}\subset B_0^n$$ for $\eta>0$, which contain the origin in their interior. Without loss of generality, we can assume that $F(P_\eta,\nu_i)$ is a codimension $1$ face of $P_\eta$ for all $\eta>0.$

Let us define for $\eta>0$ the functions $$
l_{i,\eta}(x)=1-\frac{\nu_i(x)}{b_i-\eta},
$$ 
for $x\in B_0^n$. Then $P_{\eta}$ is defined by the equations $l_{i,\eta}\leq 0$. For any $\lambda\in\mathbb{R}_{>0},$ we define $$h_{{\lambda},\eta}:= \max{(0, \lambda l_{1,\eta},\ldots, \lambda l_{k,\eta})}$$and also let $\nu_{{\lambda},\eta}^-$ be an arbitrary function $\mathbb{R}^n\to (\mathbb{R}^n)^\vee$ equal to $dh_{{\lambda},\eta}$ whenever the latter makes sense. Note $P_{\eta}$ is defined by $h_{{\lambda},\eta}\leq 0$.


\begin{figure}
\includegraphics[width=0.6\textwidth]{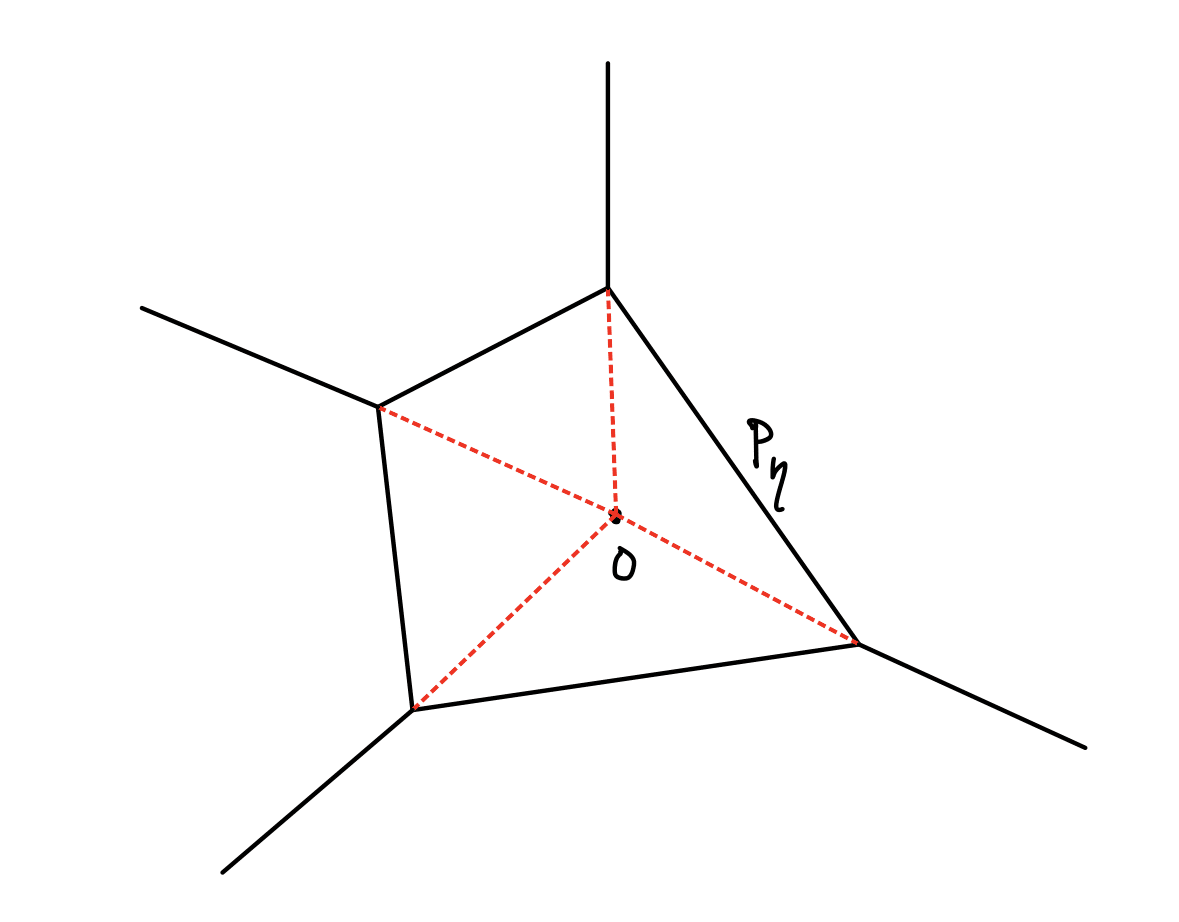}
\caption{An example in two dimensions. The subset $D_\eta$ is the union of the solid line segments that form the boundary of $P_\eta$ and the solid part of the rays emanating from the origin towards the vertices of $P_\eta.$}
\label{fig-deta}
\end{figure}

Let $\rho_{\epsilon}:\mathbb{R}^n\to \mathbb{R}$, $\epsilon\geq 0$ be a choice of mollifiers which approximate the Dirac delta function such that $\rho_{\epsilon}$ is supported inside the $\epsilon$-ball centered at $0$. We also consider a parameter $\delta\geq 0$ and define  $$h_{\lambda,\eta,\epsilon,\delta}(p):= \rho_{\epsilon}\star h_{\lambda,\eta}(p)-\delta=-\delta+\int_{\mathbb{R}^n}\rho_{\epsilon}(p-y)h_{\lambda,\eta}(y)dy.$$ 
For each $\delta\geq 0$, the $0$ sublevel set of $h_{\lambda,\eta,\epsilon,\delta}$ as $\epsilon\to 0$ converges to $\{h_{\lambda,\eta}\leq\delta\}$ which contains $P_{\eta}$.
Note that we have $$(dh_{\lambda,\eta,\epsilon,\delta})_p=\int_{\mathbb{R}^n}\rho_{\epsilon}(p-y)\nu_{\lambda,\eta}^-(y)dy.$$


For each $\lambda$ consider the map $\mathbb{R}^n\to(\mathbb{R}^n)^\vee$ which sends $$p\mapsto (-dh_{\lambda,\eta,\epsilon,\delta})_p.$$ Denote the image by $I_{\lambda}$. Note that every point that is more than $\epsilon$ away from $P_\eta$ is mapped to the boundary of $I_\lambda.$ Indeed, outside the $\epsilon$ neighborhood of $P_{\eta}$, the function $h_{\lambda,\eta,\epsilon,\delta}$ is homogeneous of degree $1$.

Let us denote by $X_{\alpha}$ the preimage of $\alpha\in (\mathbb{Z}^n)^\vee\cap I_{\lambda}$ under the map $$p\mapsto (-dh_{\lambda,\eta,\epsilon,\delta})_p.$$ Then all the $1$-periodic orbits of $\pi^*h_{\lambda,\eta,\epsilon,\delta}$ in homology class $\alpha\in (\mathbb{Z}^n)^\vee\cap I_{\lambda}$ lie inside $\pi^{-1}(X_\alpha)$. By Proposition \ref{prop-action}, the actions $\mathcal{A}_{\lambda,\eta,\epsilon,\delta}(\alpha)$ of these $1$-periodic orbits are given precisely by the value of the Legendre transform $\mathcal{L}(h_{\lambda,\eta,\epsilon,\delta})$ at $\alpha$.

We can also talk about the discrete Legendre transform of $h_{\lambda,\eta}$. Denoting this also by $\mathcal{L}(h_{\lambda,\eta}),$ we have that $$\mathcal{L}(h_{\lambda,\eta,\epsilon,\delta})\to \mathcal{L}(h_{\lambda,\eta})$$pointwise as $\epsilon,\delta\to 0$. This implies by looking at the point $\alpha$ that $$\mathcal{A}_{\lambda,\eta,\epsilon,\delta}(\alpha)\to -\alpha(F(P_\eta,\alpha))$$ as $\epsilon,\delta\to 0$.

Let us call $\lambda\in\mathbb{R}_{>0}$ sufficiently irrational, if $\partial I_{\lambda}$ does not contain any integral points. The set of such $\lambda$ is dense in $\mathbb{R}$. Indeed the family $\partial I_{\lambda}=\lambda \partial I_1$ foliates the complement of a compact set in $(\mathbb{R}^n)^\vee$. If $\lambda$ is sufficiently irrational (which we assume from now on) then by compactness there is a an $\epsilon'>0$, depending on $\lambda$, so the distance of any point on $\partial I_{\lambda}$ is at least $\epsilon'$ away from any integer. This has the consequence that outside an $\epsilon$ neighborhood of the pre-image of $P$ we have a uniform bound  from below on the distance of any point to its time $1$ flow under  $\pi^*h_{\lambda,\eta,\epsilon,\delta}$. Here the distance is measured with respect to the standard $J$ coming from the flat metric on $\mathbb{T}^n$. On the other hand these Hamiltonians are Lipschitz with respect to this $J$. Lemma \ref{lmDssipCrit} implies these Floer data are dissipative. 

We pick a sequence $\lambda_i,\eta_i, \epsilon_i,\delta_i$ such that $\lambda_i\to\infty,$ $\eta_i, \epsilon_i,\delta_i\to 0$, and let $h_i:=h_{\lambda_i,\eta_i, \epsilon_i,\delta_i}$. Then the sequence $h_i$ converges pointwise to $0$ on $P$ and to $\infty$ on the complement of $P$. We may assume the sequence $h_i$ is monotone increasing.   Moreover, we can make sure that $|\mathcal{A}_{\lambda,\eta,\epsilon,\delta}(\alpha)+\alpha(F(P,\alpha))|<1/i$ for all homology classes $\alpha\in (\mathbb{Z}^n)^\vee$.

We perturb $\pi^*h_i$ to a non-degenerate $H_i$ by adding a sufficiently $C^2$ small function supported in a sufficiently small neighborhood of $$\pi^{-1}\left(\bigcup_{\alpha\in (\mathbb{Z}^n)^\vee\cap I_{\lambda}} X_\alpha\right)$$ such that the action of any $1$-periodic orbit in the homology class $\alpha\in (\mathbb{Z}^n)^\vee\cap I_{\lambda}$ still converges to $-\alpha(F(P,\alpha))$. This is standard, for example see \cite[Lemma 4.17 (1)]{borman}. The functions $H_i$ are non-degenerate and dissipative, and moreover, making the perturbation even smaller, the sequence $H_i$ is monotone. We can extend to an acceleration datum by picking for each $i$ an arbitrary non-degenerate monotone dissipative path connecting $H_i$ and $H_{i+1}$.  To prove part (1) we adjust  our Hamiltonians  so that the actions of the $1$-periodic orbits of $H_i$ in homology class $\alpha\in (\mathbb{Z}^n)^\vee$ are always in the interval $(-\alpha(F(P,\alpha))-2,-\alpha(F(P,\alpha))+2)$.



For part (2), first note that due to the splitting of the entire telescope according to the homology class of the $1-$periodic orbits, it suffices to show the property for sums with only one term. Any homogenous class in $SH^*_{M,\theta}(\pi^{-1}(P),\mathbb{F})$ with homology grading $\alpha$ can be represented by elements lying in the subcomplex  $$\left(\bigoplus_{i=N}^\infty CF^*(H_i;\mathbb{F})[q]\right)$$ for arbitrary $N>0$. Combined with the fact that all the maps in the telescope are action non-decreasing, the proof is complete.

\end{proof}

Let us summarize what we have established so far. For any inclusion $Q\subset P$ of admissible convex polytopes we obtain a commutative diagram 
\begin{equation}\label{eqDiagramLocalModel}
\xymatrix{
\cA^*\otimes\Lambda \ar[r] \ar[rd] &  SH^*_{M,\theta}(\pi^{-1}(P);\mathbb{F})\hat{\otimes}\Lambda\ar[d]\ar[r]& SH_M^*(\pi^{-1}(P),\Lambda)\ar[d]\\
& SH^*_{M,\theta}(\pi^{-1}(Q);\mathbb{F})\hat{\otimes}\Lambda\ar[r]&SH_M^*(\pi^{-1}(Q),\Lambda).
}
\end{equation}
The left side of the diagram comes the composition of the maps in Theorem  \ref{thm-BV-torus} and \eqref{eqSHComparison} (see Lemma  \ref{lmComparisonNaturality}) together with functoriality of $\otimes\Lambda$ and completion. The right hand side is the commutative square of the rescaling map \eqref{eqRescaling}. Recall now the sheaf $P\mapsto \cA^*(P)$ discussed in Section \ref{Sec-Yk-analyt}. According to \eqref{eq-BV-comp-B-side}, when $P$ is a convex polytope, $\cA^*(P)$ is the completion of $\cA^*\otimes\Lambda$ with respect to a certain norm. We can now state the outcome of this section.

\begin{theorem}\label{thm-full-BV-comp}
Let $P\subset \bR^n$ be an admissible convex  polytope, then the map $\cA^*\otimes\Lambda\to SH_M^*(\pi^{-1}(P),\Lambda)$ from the composition of the horizontal arrows at the top of diagram \eqref{eqDiagramLocalModel}
extends uniquely to an isomorphism 
\begin{equation}\label{eqAtoSH}
\cA^*(P)\to SH_M^*(\pi^{-1}(P),\Lambda)
\end{equation}
of normed $\Lambda$-BV algebras. These isomorphisms are compatible with the restriction maps.
\end{theorem}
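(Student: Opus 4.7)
The strategy is to promote the top row of diagram~\eqref{eqDiagramLocalModel}, viewed as a BV algebra morphism $\phi\colon\cA^*\otimes\Lambda\to SH^*_M(\pi^{-1}(P);\Lambda)$, to an isometric embedding of a dense subalgebra of the target, and then pass to completions. I first assume that $P$ contains the origin, so that Proposition~\ref{prop-BV-torus-A} applies and furnishes a dissipative acceleration datum whose Floer $1$-ray has the bounded action property together with finite boundary depth in every degree. Proposition~\ref{prpBoundaryDepthComparison} then implies the rescaling map
\[
SH^*_{M,\theta}(\pi^{-1}(P);\bF)\,\widehat{\otimes}\,\Lambda\;\longrightarrow\; SH^*_M(\pi^{-1}(P);\Lambda)
\]
is a filtered isomorphism of normed $\Lambda$-BV algebras. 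Combined with the identification $SH^*_{M,\theta}(\pi^{-1}(P);\bF)\cong\cA^*$ supplied by Theorems~\ref{tmSHComparison} and~\ref{thm-BV-torus}, this realizes $\phi$ as a BV algebra morphism into a complete Hausdorff target.

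The heart of the argument is to verify that $\phi$ is an isometry when $\cA^*\otimes\Lambda$ carries the norm pulled back under the dense inclusion $\cA^*\otimes\Lambda\hookrightarrow\cA^*(P)$ of Proposition~\ref{prop-BV-torus-B}. On a homogeneous generator $z^\alpha\otimes\beta$, the analytic norm equals $val_P(y^{-\alpha})=-\alpha(F(P,\alpha))$ by the explicit formula in Proposition~\ref{prpTropTorus}(2), while the Floer-side filtration of the same element is computed to be exactly $-\alpha(F(P,\alpha))$ in Proposition~\ref{prop-BV-torus-A}(2). Both sides decompose according to the $H_1(M;\bZ)=(\bZ^n)^\vee$-grading by homology classes of $1$-periodic orbits, and the exterior generators from $\Lambda^*(\bZ^n)$ are filtration-neutral, so the $\min$-construction propagates the matching on homogeneous elements to all of $\cA^*\otimes\Lambda$.

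Since $\cA^*(P)$ is by construction the completion of $\cA^*\otimes\Lambda$ with respect to $val_P$ (via~\eqref{eq-BV-comp-B-side} together with Proposition~\ref{prop-BV-torus-B}), the isometric $\phi$ extends uniquely by continuity to a filtered BV algebra morphism $\hat\phi\colon\cA^*(P)\to SH^*_M(\pi^{-1}(P);\Lambda)$; density of the image of $\phi$ forces $\hat\phi$ to be surjective, hence an isomorphism. The assumption that $P$ contains the origin is removed by modifying the construction of Proposition~\ref{prop-BV-torus-A} to center the defining functions $l_{i,\eta}$ at an arbitrary interior point $b\in P$ rather than at $0$; the Legendre-transform computation still returns the filtration value $-\alpha(F(P,\alpha))$, matching $val_P$ on the nose. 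Compatibility of $\hat\phi$ with restriction maps $\cA^*(P)\to\cA^*(Q)$ follows from the functoriality of each building block: Lemma~\ref{lmComparisonNaturality} for the Viterbo comparison, naturality of the rescaling map~\eqref{eqRescaling}, and uniqueness of continuous extensions. The main obstacle is the precise filtration match in the middle paragraph: the rescaling isomorphism by itself is only a filtered iso at a coarse level, and pinning down the Floer filtration to agree with $val_P$ on the nose is exactly what forces the delicate Viterbo-type construction underlying Proposition~\ref{prop-BV-torus-A}.
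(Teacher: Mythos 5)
Your argument follows the same route as the paper: identify $SH_M^*(\pi^{-1}(P),\Lambda)$ with a completion of $\cA^*\otimes\Lambda$ via Propositions~\ref{prpBoundaryDepthComparison}, \ref{prop-BV-torus-A} and Theorems~\ref{tmSHComparison}, \ref{thm-BV-torus}, observe that $\cA^*(P)$ is by definition the same completion (Equation~\eqref{eq-BV-comp-B-side} and Proposition~\ref{prop-BV-torus-B}), handle the base-point translation, and read off naturality from diagram~\eqref{eqDiagramLocalModel}. Your presentation is more explicit about the isometry check on homogeneous generators, but the structure and the ingredients are the same as the paper's proof.
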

\begin{proof}
If $P$ contains the origin, by Proposition  \ref{prpBoundaryDepthComparison}, Theorem \ref{tmSHComparison} and Proposition \ref{prop-BV-torus-A}, $SH_M^*(\pi^{-1}(P),\Lambda)$ is  the completion of the image of $\mathcal{A}^*\otimes \Lambda$ with respect to the filtration map $$\sum z^{\alpha_{i}}\otimes \beta_i\otimes A_i\mapsto  \min_{i}(val(A_i)+\min_{b\in P}-\alpha_i(b)).$$  This is also seen to be true even when $P$ does not contain the origin by choosing a different base point on $B_0^n$. 
$\cA^*(P)$ is isomorphic to the same completion of $\mathcal{A}^*\otimes \Lambda$ by Equation \eqref{eq-BV-comp-B-side} and Proposition \ref{prop-BV-torus-B} proving the first part of the claim. Naturality with respect to restriction maps follows from the diagram \eqref{eqDiagramLocalModel}.

\end{proof}

 Let us note the degree $0$ portion separately. Recall that we have a sheaf on $\mathbb{R}^n$ with the G-topology of admissible (not-necessarily convex) polytopes defined by $$\mathcal{F}_0(P):=SH^0_M(\pi^{-1}(P),\Lambda).$$
 Using Lemma \ref{lmSheafGluing} we conclude.

\begin{corollary}\label{cor-no-ray-degree-0}
Equation \eqref{eqAtoSH} induces an isomorphism of sheaves  of normed algebras on the G-topology of admissible polytopes $\mathcal{O}_0\to \mathcal{F}_0$ on $\mathbb{R}^n$. \end{corollary}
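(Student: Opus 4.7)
The plan is to extract the degree zero part of Theorem \ref{thm-full-BV-comp} and then promote it to a sheaf isomorphism using a Lebesgue number/gluing argument analogous to Lemma \ref{lmSheafGluing}. First, for any admissible convex polytope $P\subset\bR^n$, the identification \eqref{eq-BV-comp-B-side} gives $\cA^0(P)=\mathcal{O}_0(P)$, so the degree zero component of the isomorphism of Theorem \ref{thm-full-BV-comp} yields a canonical isomorphism of normed $\Lambda$-algebras
\[
\Phi_P:\mathcal{O}_0(P)\xrightarrow{\;\simeq\;} SH^0_M(\pi^{-1}(P);\Lambda)=\mathcal{F}_0(P).
\]
The compatibility with restriction maps claimed in Theorem \ref{thm-full-BV-comp} shows that the collection $\{\Phi_P\}$ commutes with restrictions between admissible convex polytopes.

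Next, I would show this suffices to define a sheaf isomorphism on the full $G$-topology. The setting here is simpler than in Lemma \ref{lmSheafGluing}: since $B_0^n$ has no eigenrays, the analogue of Proposition \ref{prpSmall} is trivial, with admissible convex polytopes playing the role of small admissible polygons. Indeed (i) the intersection of two admissible convex polytopes is an admissible convex polytope (in fact the same reasoning as in Proposition \ref{prop-convex-closed-under-intersection} applies in the degenerate, rayless case), and (ii) by definition every admissible polytope is a finite union of admissible convex polytopes. Hence the $G$-topology of admissible polytopes is slightly finer, in the sense of \cite{bgr}, than the $G$-topology whose opens are admissible convex polytopes and whose covers are the finite covers.

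By the same reference (Proposition 1 of Section 9.2.3 of \cite{bgr}) used in the proof of Lemma \ref{lmSheafGluing}, restricting a sheaf to the coarser $G$-topology is an equivalence of categories; since both $\mathcal{O}_0$ and $\mathcal{F}_0$ are sheaves on the finer topology (the former by construction and Tate's acyclicity, the latter by Theorem \ref{thm-mv-recall} together with the non-negative grading verified along the way), the family $\{\Phi_P\}$ on the coarser topology extends uniquely to a morphism of sheaves $\mathcal{O}_0\to\mathcal{F}_0$ on all admissible polytopes, which is an isomorphism because its restriction to convex admissible polytopes is.

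The main technical point, and the only place where a subtlety could arise, is checking that the normed structures on the two sides match under $\Phi_P$: on the $\mathcal{O}_0$ side the norm is given by Proposition \ref{prpTropTorus}(2), that is $\inf_{m\in P}\inf_j(\val(a_j)+j(m))$, while on the $\mathcal{F}_0$ side it is the action filtration coming from the acceleration data of Proposition \ref{prop-BV-torus-A}. These were already identified in part (2) of Proposition \ref{prop-BV-torus-A} via the formula $\sum a_i z^{\alpha_i}\otimes\beta_i\mapsto \min_i(\val(a_i)-\alpha_i(F(P,\alpha_i)))$, which matches the tropicalization norm under the assignment $z^{e_i^\vee}\mapsto y_i^{-1}$. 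Thus $\Phi_P$ is a filtered isomorphism and the extension of sheaves respects the normed algebra structure, completing the argument.
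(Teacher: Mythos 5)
Your proposal is correct and follows essentially the same route as the paper: pick off the degree-zero piece of the normed BV-algebra isomorphism of Theorem \ref{thm-full-BV-comp} on admissible convex polytopes and extend to the full $G$-topology via the ``slightly finer'' gluing argument from Lemma \ref{lmSheafGluing}, where (in the rayless setting) admissible convex polytopes play the role of small admissible polygons. Your extra check that the norms agree is already packaged into the phrase ``normed $\Lambda$-BV algebras'' in Theorem \ref{thm-full-BV-comp}, and in the rayless case the intersection statement is even simpler than Proposition \ref{prop-convex-closed-under-intersection} since the intersection of two convex polytopes in $\bR^n$ is itself a convex polytope rather than merely a finite union; these are cosmetic points and do not affect the argument.
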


\section{Analysis of the local models for the nodal fibers I}\label{s-6}
\subsection{Symplectic local models}\label{ss-6.1}
For an integer $k>0$ the integral affine structure on $B^{reg}_k$ induces a lattice in each cotangent fiber. Let $X(B^{reg}_k)$ be the quotient of $T^*B^{reg}_k$ by these lattices. Let  $\pi_k: X(B^{reg}_k)\to B^{reg}_k$ be the induced Lagrangian torus fibration.  By construction, the integral affine structure on $B^{reg}_k$ induced by the Arnold-Liouville theorem is the one that we started with.

By gluing in an explicit \emph{local model}, one can then extend 
\begin{itemize}
\item the smooth structure on $B^{reg}_k$ to a smooth structure on $B_k:=B^{reg}_k\cup \{0\}$ with its natural topology, 
\item the symplectic manifold $X(B^{reg}_k)$ to a symplectic manifold $M_k$ and,
\item  the fibration $\pi_k:X(B^{reg})\to B^{reg}$ to a nodal Lagrangian torus fibration
$\pi_k:M_k\to B_k$ with $k$ focus-focus singularities. 
\end{itemize}
For details, see Section 7.1 of \cite{locality}.

We now turn to the computation of the sheaf of $\Lambda$-algebras $$\cF_k(P):=SH^0_{M_k}(\pi^{-1}_k(P);\Lambda)$$ over the $G$-topology of $B_k$. This will occupy most of the remainder of the paper. Our strategy will be to first compute the restriction of $\cF_k$ to $B_{k}^{reg}$, i.e., we will prove
\begin{theorem}\label{thm-F_k,reg}
There exists an isomorphism of sheaves $\cF_k$ and $\cO_k$ over $B_k\setminus \{0\}$.
\end{theorem}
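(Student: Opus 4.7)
The strategy is to cover $B_k \setminus \{0\}$ by the two simply-connected integral-affine charts $E^+(B^+)$ and $E^-(B^-)$ (whose union is precisely $B_k \setminus \{0\}$), produce an isomorphism $\cF_k \simeq \cO_k$ separately on each chart by reducing to the linear model over $\bR^2$, and then glue using that both sides are governed by the same integral-affine monodromy on the overlap. Working with the small admissible polygons of types A and C from \S\ref{ss-sheaf-G} (which are each contained in a single chart and do not meet the node), a variant of Lemma \ref{lmSheafGluing} reduces the problem to constructing section-level isomorphisms on such polygons that are compatible with restrictions and with intersections.

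On the B-side, for each $\epsilon \in \{+,-\}$, the embedding $g^\epsilon \colon (\Lambda^\ast)^2 \hookrightarrow Y_k^{an}$ of Proposition \ref{prop-KS-cover} has image containing $p_k^{-1}(E^\epsilon(B^\epsilon))$, and by the commutative diagrams \eqref{eqT+diagram}--\eqref{eqT-diagram} it intertwines the tropicalization map $p_0$ with $p_k$ via the integral-affine identification $f^\epsilon$ of $B^\epsilon$ with $E^\epsilon(B^\epsilon)$. Therefore pullback induces a canonical isomorphism of sheaves $(E^\epsilon)^\ast\bigl(\cO_k|_{E^\epsilon(B^\epsilon)}\bigr) \simeq \cO_0|_{B^\epsilon}$.

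On the A-side, since $E^\epsilon(B^\epsilon)$ is simply-connected and contains no node, Arnold--Liouville action-angle coordinates identify the Lagrangian torus fibration $\pi_k$ restricted to $\pi_k^{-1}(E^\epsilon(B^\epsilon))$ with the standard linear fibration $\pi_0 \colon T^\ast T^2 \to \bR^2$ restricted to $B^\epsilon$. Using this symplectomorphism together with the locality-for-complete-embeddings theorem of \cite{locality} (slightly strengthened in \S\ref{s-7}; its hypotheses must be verified for the embedding at hand), we obtain, for every admissible polygon $P \subset E^\epsilon(B^\epsilon)$, an isomorphism
$$\cF_k(P) \simeq SH^0_{T^\ast T^2}\bigl(\pi_0^{-1}((E^\epsilon)^{-1}(P)); \Lambda\bigr) = \cF_0\bigl((E^\epsilon)^{-1}(P)\bigr)$$
that is natural in $P$. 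Composing with Corollary \ref{cor-no-ray-degree-0} yields a sheaf isomorphism $\Phi^\epsilon \colon \cF_k|_{E^\epsilon(B^\epsilon)} \simeq \cO_k|_{E^\epsilon(B^\epsilon)}$.

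Finally, one compares $\Phi^+$ and $\Phi^-$ on the overlap $E^+(B^+) \cap E^-(B^-)$, which has two connected components, namely the two open half-planes cut out by the eigenray slit. By Proposition \ref{PrpBsideMonodromy}, the B-side transition between the two identifications of $\cO_k$ with $\cO_0$ is exactly the canonical integral-affine monodromy of $B_k^{reg}$, i.e.\ the identity on one half-plane and $A_{k,e}^{-1}$ on the other. On the A-side, the two Arnold--Liouville identifications of $\cF_k$ with $\cF_0$ differ by the same integral-affine monodromy, since that monodromy is built into the definition of the nodal integral-affine manifold $B_k^{reg}$. Hence $\Phi^+$ and $\Phi^-$ agree on the overlap and glue to an isomorphism of sheaves $\cF_k \simeq \cO_k$ on $B_k \setminus \{0\}$. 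The main technical obstacle is the A-side step: one must verify the completeness hypothesis required for the locality theorem to apply to the embedding $\pi_k^{-1}(E^\epsilon(B^\epsilon)) \hookrightarrow T^\ast T^2$, and check naturality of the resulting isomorphism in $P$ so as to descend to a map of sheaves. The gluing step is then essentially formal, dictated by the nodal integral-affine structure.
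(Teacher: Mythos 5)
Your strategy up through the construction of $\Phi^{\pm}$ on each chart $E^{\pm}(B^{\pm})$ matches the paper (Proposition~\ref{prp-local}), and you correctly flag that the locality hypotheses need checking. But the last paragraph contains a fatal gap: the gluing step is \emph{not} ``essentially formal, dictated by the nodal integral-affine structure.'' You assert that the two A-side identifications of $\cF_k$ with $\cF_0$ over the overlap ``differ by the same integral-affine monodromy'' as the B-side, because that monodromy is ``built into the definition'' of $B_k^{reg}$. This is false, and the claim misreads what happens on both sides.

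On the B-side, the \emph{base} transition between the two charts is the integral-affine monodromy (Proposition~\ref{PrpBsideMonodromy}), but the transition between the function algebras is the nontrivial map $(\xi^-,\eta^-)\mapsto(\xi^-(1+\eta^-)^k,\eta^-)$ of Proposition~\ref{prop-KS-cover}, whose extra factor $(1+\eta)^k$ is invisible at the valuation level. On the A-side, the two locality isomorphisms come from genuinely different complete embeddings $M_0\hookrightarrow M_k$ (Lagrangian tails all to the left versus all to the right), and nothing in Arnold--Liouville theory or in Theorem~\ref{thm-local} forces $\iota_{2,*}^{-1}\circ\iota_{1,*}$ to be the ``obvious'' change of coordinates. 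Determining this \emph{wall-crossing map} is the entire content of Sections~\ref{s-7} and~\ref{s-8}: one needs the extra $H_1$-grading (Proposition~\ref{prppi1grading}), BV-algebra/volume-form preservation (Propositions~\ref{prpCYBV}, \ref{prop-WC-BV}), the leading-order-identity estimate from Lemma~\ref{lmWallCrossing} (itself a delicate energy/monotonicity argument), and a monodromy argument around the node (Propositions~\ref{prop-max-action} and~\ref{thm-monodromy}) to pin down the formula $\xi\mapsto\xi(1+\eta)^k$. Only then can one match the A-side and B-side transitions and glue. Your proposal omits all of this and in effect assumes the conclusion. This is where the actual mathematical work of Theorem~\ref{thm-F_k,reg} lies.
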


We then prove a Hartogs property in Section \ref{s-hartogs} which shows that $\cF_k$ is completely determined by its restriction to $B_{k}^{reg}$. Since by Theorem \ref{prop-Hartogs-B} $\cO_k$ also satisfies the Hartogs property the isomorphism of Theorem \ref{thm-F_k,reg} will extend over all of $B_k$.

Let us define the  eigenray $l_\pm$ for $\pi_k:M_k\to B_k$ (including $k=0$) to be the complement of the image of the canonical embeddings $E^{\pm}_k: B^{\pm}\to B_k$ in the notation introduced in the beginning of Section \S\ref{ss-integral-affine-local}.

A \emph{Lagrangian tail} emanating from one of the focus-focus points $p$ is a properly embedded Lagrangian plane in $M_k$ which surjects under $\pi_k$ onto one of the eigenrays with fiber over the node being $p$ and with all the other fibers diffeomorphic to circles. A $\pm$ Lagrangian tail is one that lives above the $\pm$ eigenray.

Our strategy for constructing an isomorphism as in  Theorem  \ref{thm-F_k,reg} is to first construct isomorphisms between the restrictions of $\cF_k$ and $\cO_k$ to each of the sets $B_k\setminus l_{+}$ and $B_k\setminus l_{-}$. Then we will prove, using wall crossing analysis, that these two isomorphism glue to an isomorphism of global sections over $B_k$. For this we will use the two main Theorems of \cite{locality} which we recall below. We first recall a definition:

\begin{definition}
A symplectic embedding $\iota:X\to Y$ of equidimensional symplectic manifolds is called a \emph{complete embedding} if $X$ and $Y$ are both geometrically bounded. 
\end{definition}
We refer to \cite{locality} for a detailed discussion of this notion. It is shown in \cite[Proposition 7.16]{locality} that the symplectic manifolds $M_k$ for all $k\geq 0$ are geometrically of finite type.

The following theorem is our main tool and we refer to the isomorphisms mentioned in its statement as \emph{locality isomorphisms}.

\begin{theorem}[{\cite[Theorem 1.1]{locality}}]\label{thm-local}
Let $\iota:X\to Y$ be a complete embedding of symplectic manifolds. Assume that $Y$ is graded and the grading of $X$ is obtained by restriction.

Then, if $X$ is geometrically of finite type and $SH^i_X(K)$ has homologically finite torsion in degree $i\in \mathbb{Z}$ (c.f. Corollary \ref{cor-fin-bd-tor}), we have an isomorphism $\iota_*:SH^i_{X}(K)\to SH^i_{Y}(\iota(K))$ functorially with respect to $K$.
\end{theorem}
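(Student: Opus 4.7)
The plan is to construct an acceleration datum on $Y$ that contains an acceleration datum for $K$ on $X$ as a controlled subcomplex, and then to use the finite torsion hypothesis to promote chain-level action-smallness into a cohomological isomorphism after completion.

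First, given $K\subset X\subset Y$, I would choose a dissipative acceleration datum $\{(H^X_n, J^X_n)\}$ for $K$ inside $X$. Using the geometric boundedness of $Y$ and the extension techniques from \cite[Section 7]{groman}, extend each $H^X_n$ to a dissipative Hamiltonian $H^Y_n$ on $Y$ that agrees with $H^X_n$ on a large thickening of $K$ in $X$ and grows so quickly outside $X$ that every $1$-periodic orbit of $H^Y_n$ contained in $Y\setminus X$ has action at most $-C_n$, with $C_n\to\infty$. Then $CF^*(H^Y_n)$ splits as a $\Lambda_{\geq 0}$-module as $CF^*(H^X_n)\oplus CF^*_{\mathrm{out},n}$, with the second factor concentrated in actions $\leq -C_n$. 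A no-escape/monotonicity argument for Floer cylinders in $Y$ -- possible thanks to geometric boundedness of $Y$ and the rapid growth of $H^Y_n$ on $Y\setminus X$ -- shows that the $(X,X)$-block of the Floer differential of $H^Y_n$ computes the Floer differential of $H^X_n$ in $X$. This gives a chain map $\iota^n_*: CF^*(H^X_n) \to CF^*(H^Y_n)$ compatible with continuation data; passing to completed telescopes produces the desired map $\iota_*: SH^*_X(K) \to SH^*_Y(\iota(K))$, functorial in $K$.

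To show $\iota_*$ is an isomorphism in degree $i$, I would analyze the mapping cone at each level $n$. Its generators are concentrated in action $\leq -C_n$, so any class in the kernel or cokernel of $\iota_*$ is represented modulo $T^{C_n}$ by contributions coming from $SH^i_X(K)$. The finite torsion hypothesis on $SH^i_X(K)$, via Corollary \ref{cor-fin-bd-tor} and Proposition \ref{prop-tor-eq-bdry}, gives a uniform boundary depth $D$ for the telescope computing $SH^i_X(K)$: every exact cycle in degree $i$ admits a primitive with action loss at most $D$. Letting $n\to\infty$, the arbitrary action-smallness of the cone combined with this uniform boundary depth forces any such class to vanish, yielding injectivity and surjectivity simultaneously.

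The main obstacle is the first step: one must simultaneously ensure dissipativity of $(H^Y_n, J^Y_n)$ in the sense of \cite{groman}, agreement with $H^X_n$ on a region large enough to identify Floer trajectories via monotonicity, confinement of finite-energy Floer cylinders to a neighborhood of $X$, and uniform action suppression of the extraneous orbits in $Y\setminus X$ -- all coherently as $K$ varies and as $n$ increases. Each property is achievable in isolation, but their simultaneous and functorial enforcement is delicate, and this is precisely where the \emph{complete} embedding hypothesis (rather than mere open inclusion) is used essentially. Without finite torsion, infinitely many cycles of action tending to $-\infty$ could accumulate to a nonzero class in the completion, and the argument would break down.
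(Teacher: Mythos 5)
The paper does not prove Theorem~\ref{thm-local}; it is quoted from \cite[Theorem 1.1]{locality}, and Remark~\ref{rmkTruncated} together with the proof of Lemma~\ref{lmWallCrossing} indicate that the argument there works first at the level of \emph{truncated} symplectic cohomology $SH^*_{\,\cdot\,,\lambda}$ and then invokes the finite torsion hypothesis (via \cite[Proposition 1.2]{locality}) to pass to the completed invariant as an inverse limit over $\lambda$.

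Your sketch has a genuine gap at its central step. You assert that the $(X,X)$-block of the Floer differential of $H^Y_n$ equals the Floer differential of $H^X_n$, so that $\iota^n_*$ is a chain map between the full (Novikov-completed) Floer complexes. This is not true. The differential over $\Lambda_{\geq 0}$ counts cylinders of arbitrarily large topological energy, and a cylinder connecting two ``inner'' orbits may leave a neighborhood of $\iota(X)$ and return. Monotonicity of the kind in Lemma~\ref{prpHein} gives a positive lower bound on the energy of such escaping trajectories but does not rule them out; the two differentials therefore agree only modulo $T^\lambda$, for each $\lambda$. Consequently there is no chain map between the untruncated complexes — the entire confinement argument must be run at a fixed truncation level $\lambda$, with a $\lambda$-dependent confinement region (the $V_\lambda$ appearing in the proof of Lemma~\ref{lmWallCrossing}), establishing $SH^*_{X,\lambda}(K)\simeq SH^*_{Y,\lambda}(\iota(K))$ for each $\lambda$, which is exactly the content of \cite[Theorems 3.5, 3.6]{locality}; only afterwards does finite torsion convert this family into an isomorphism of completed invariants. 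Relatedly, your final paragraph conflates the telescope index $n$ with the truncation level $\lambda$, and the claim that outer orbits of action $\leq -C_n$ ``vanish in the completion'' misidentifies how the action filtration interacts with completion: in the $\Lambda_{\geq 0}$-complex those orbits are generators of $T$-adic valuation zero, so completion does not kill them. What must be shown is that their image in the truncated direct limit $SH^*_{Y,\lambda}$ is zero — a direct-limit argument rather than a completion argument.
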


\begin{remark}\label{rmkTruncated}
    This result is deduced from the locality results for \emph{truncated symplectic cohomology} \cite[Theorem 3.5 and 3.6]{locality} using the homologically finite torsion assumption. Truncated symplectic cohomology is reviewed thoroughly in Sections 2.2 and 2.3 of \cite{locality}. See also Proposition 1.2 of \cite{locality}.
\end{remark}

\begin{lemma}\label{lmLocalityHamIso}
Let $\iota:X\to Y$ be a complete embedding and let Let $K\subset X$ be compact with the same assumptions as in Theorem \ref{thm-local}. Let $\phi_t:X\to X$ and  $\psi_t:Y\to Y$ be a Hamiltonian isotopy satisfying $\phi_t(K)=K$ and $\psi_t(\iota(K))=\iota(K)$ for all $t$ then for all $t$ we have an equality $(\psi_t\circ\iota\circ\phi_t)_*=\iota_*$ of morphisms $SH^i_{X}(K)\to SH^i_{Y}(\iota(K))$.
\end{lemma}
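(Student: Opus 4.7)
The plan is to split the map using functoriality of the locality isomorphism and then argue that the extra factors are the identity. Since $\phi_t\colon X\to X$ is itself a complete embedding with $\phi_t(K)=K$, Theorem \ref{thm-local} produces an automorphism $(\phi_t)_*$ of $SH^i_X(K)$; similarly $(\psi_t)_*$ is an automorphism of $SH^i_Y(\iota(K))$, and $\psi_t\circ\iota\circ\phi_t$ is a complete embedding sending $K$ to $\iota(K)$. From the chain-level construction of the locality map in \cite{locality}, which pulls back dissipative acceleration data along the embedding, one reads off the compatibility
\[
(\psi_t\circ\iota\circ\phi_t)_* \;=\; (\psi_t)_*\circ\iota_*\circ(\phi_t)_*.
\]
The lemma thus reduces to showing $(\phi_t)_*=\operatorname{id}$ on $SH^i_X(K)$ and the analogous statement for $(\psi_t)_*$.

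For this triviality statement the plan is to use a parametrized continuation argument. Fix a dissipative acceleration datum $(H_i, J_i)$ for $K$; then $(\phi_s^*H_i, \phi_s^*J_i)$ is again an acceleration datum for $\phi_s^{-1}(K)=K$ for every $s\in[0,t]$, and at the chain level the locality automorphism $(\phi_s)_*$ can be represented as the composition of the tautological pullback isomorphism $CF^*(H_i,J_i)\to CF^*(\phi_s^*H_i,\phi_s^*J_i)$ with the continuation isomorphism back to $CF^*(H_i,J_i)$. By the uniqueness of continuation maps up to homotopy (Proposition~\ref{prpFloerFunctor}), the family of chain maps indexed by $s\in[0,t]$ induces the same map on homology for every $s$. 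At $s=0$ the map is manifestly the identity, so it is the identity at $s=t$ as well. The same argument applies to $\psi_t$ acting on $SH^i_Y(\iota(K))$.

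The main obstacle is technical rather than conceptual: one must ensure that the pulled-back data $(\phi_s^*H_i,\phi_s^*J_i)$ remain uniformly dissipative in $s$ and that the interpolating parametrized family satisfies the $C^0$-estimates needed to make the continuation homotopies well defined. Since $\phi_t$ is not assumed compactly supported, this requires invoking the dissipativity toolkit of \cite{groman} applied to the interpolation, but no new ideas: once uniform dissipativity is in hand, the parametrized continuation argument is standard, and the compatibility step at the beginning of the argument is essentially a repackaging of the construction in \cite{locality}.
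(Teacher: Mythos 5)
Your proposal follows the same two-step strategy as the paper: split the composite locality morphism by functoriality into $(\psi_t)_*\circ\iota_*\circ(\phi_t)_*$, then argue that the two symplectomorphism factors act as the identity. The difference is that the paper disposes of the second step by a direct citation — it observes that the locality isomorphism associated to a Hamiltonian symplectomorphism coincides with the relabelling isomorphism of \cite[Theorem 4.0.1]{VarolgunesThesis}, which in the case $K=K'$ is the identity — whereas you sketch a proof of that invariance from scratch via a parametrized continuation argument. This is a legitimate route and is essentially what that theorem establishes, so the approaches match at the conceptual level.

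One point to tighten: the appeal to Proposition \ref{prpFloerFunctor} to conclude that the family of chain maps $s\mapsto(\text{pullback by }\phi_s)\circ(\text{continuation back})$ is constant on homology is not quite what that proposition gives. It concerns uniqueness of continuation maps between \emph{fixed} acceleration data, while in your family both the middle complex $CF^*(\phi_s^*H_i,\phi_s^*J_i)$ and the interpolating data vary with $s$. One needs a genuinely parametrized homotopy (a continuation map over a $1$-parameter family of Floer data, with the usual argument that at the endpoints it agrees with the two chain maps being compared), together with the uniform dissipativity estimate you flag. That is exactly the content of the isotopy-invariance result the paper cites, so no new idea is missing; but as written the justification is a little quick.
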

\begin{proof}
The locality morphisms are functorial with respect to complete embeddings. The claim now follows from the isotopy invariance property \cite[Theorem 4.0.1]{VarolgunesThesis} (only the $K=K'$ case is needed). We just remark that for a Hamiltonian isotopy $\psi:M\to M$ the locality isomorphism is the same as the relabelling isomorphism of \cite[Theorem 4.0.1]{VarolgunesThesis}. 
\end{proof}

The following Proposition is a particular case of  \cite[Theorem 7.33]{locality}, except for the last clause which is also straightforward from the proof of that statement.

\begin{proposition}\label{prpCompEmb}
Let $p$ be one of the critical points of $\pi_k:M_k\to B_k$, let $L_\pm$ be a Lagrangian tail from $p$ lying over $l_{\pm}$. Then, there are symplectomorphisms $\Phi_\pm: M_{k-1}\to M_k\setminus L_{\pm}$. 

Moreover, let $\epsilon>0$ and let $U_\pm^\epsilon$ be open neighborhoods of $l_\pm$ whose union is $U^\epsilon:=\{-\epsilon<u<\epsilon\}\subset B_k$ for $u$ the primitive integral affine function vanishing on the eigenline. Then $\Phi_\pm$ can be chosen to fit into commutative diagrams 
\begin{align}\xymatrix
   {
     M_{k-1}\setminus \pi_{k-1}^{-1}(U_\pm^\epsilon)\ar[d]_{\pi_{k-1}}\ar[rr]^{\Phi_\pm}& & M_k\ar[d]^{\pi_k}\\
       B_{k-1}\setminus U_\pm^\epsilon\ar[rr]& & B_k
   }
\end{align}
where the lower horizontal embedding is the one that is compatible with the canonical embeddings $E^\pm_i$ with $i=k-1,k$, and the induced map on $B_k\setminus U^\epsilon$ is the identity map on the upper half plane and the shear
$\begin{pmatrix}
        1& 1\\
        0& 1\\
        \end{pmatrix}$ on the lower half plane. 
        
      The maps $\Phi_{\pm}$ are determined up to Hamiltonian automorphisms of $M_{k-1}$ which preserve the fibration $\pi_{k-1}$ over $B_k\setminus U^{\epsilon}_{\pm}$.  For any $0<\epsilon'<\epsilon$ we can modify the maps $\Phi_\pm$ via Hamiltonian isotopies supported in the region $\Phi_\pm^{-1}(U_{\epsilon})$ so that the diagram above commutes upon replacing $\epsilon$ by $\epsilon'$. 

\end{proposition}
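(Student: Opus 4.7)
The existence of $\Phi_\pm$ and the compatibility with the base projections is a direct instance of \cite[Theorem 7.33]{locality}: the eigenray diagrams for $M_k$ and $M_{k-1}$ differ precisely by the deletion of the focus-focus node at $p$ from the single node of $\cR_k$, and that theorem provides a symplectomorphism between $M_k$ minus a chosen Lagrangian tail emanating from the deleted node and the lower-multiplicity model. The shear $\left(\begin{smallmatrix}1&1\\0&1\end{smallmatrix}\right)$ on the lower half-plane encodes the integral affine monodromy around the deleted node, while the identity on the upper half-plane reflects that $L_\pm$ lies over $l_\pm$ and not over its reflection.

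For the uniqueness up to Hamiltonian automorphisms, suppose $\Phi_\pm$ and $\tilde\Phi_\pm$ are two candidates for the same $\epsilon$. The composition $\Psi := \tilde\Phi_\pm^{-1}\circ \Phi_\pm$ is a self-symplectomorphism of $M_{k-1}$ covering the identity on $B_{k-1}\setminus U^\epsilon_\pm$; over that region it is therefore fiberwise translation by a section of $T^*B_{k-1}/(\bZ^2)^\vee$, which lifts to a closed $1$-form $\alpha$ since the region is simply connected. Choose a smooth function $H$ on $B_{k-1}$ extending an antiderivative of $\alpha$. Then $\Psi$ differs from the time-one map of the Hamiltonian flow of $\pi_{k-1}^*H$ by a symplectomorphism supported in $\pi_{k-1}^{-1}(U^\epsilon_\pm)$; inspection of the proof of \cite[Theorem 7.33]{locality} shows that the construction depends on a contractible space of auxiliary data (cutoff functions and local Darboux charts), so this supported difference is realised by a Hamiltonian isotopy connecting the two choices.

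For the last clause, apply \cite[Theorem 7.33]{locality} again, now with $\epsilon$ replaced by $\epsilon'$, to obtain $\tilde\Phi_\pm$ fitting into the tighter diagram. Since $\epsilon' < \epsilon$, the tighter diagram refines the one for $\epsilon$, so by the preceding paragraph $\Phi_\pm$ and $\tilde\Phi_\pm$ differ by a Hamiltonian isotopy of $M_{k-1}$ supported inside $\pi_{k-1}^{-1}(U^\epsilon_\pm)$, which contains $\Phi_\pm^{-1}(\pi_k^{-1}(U_\epsilon))$. Composing $\Phi_\pm$ with this isotopy produces the required modification. The main subtlety I anticipate is making precise the Hamiltonicity of the residual symplectomorphism supported inside $\pi_{k-1}^{-1}(U^\epsilon_\pm)$; this is where a careful look at the construction in \cite{locality} is required, and it succeeds because the space of admissible choices of auxiliary data in that construction is connected.
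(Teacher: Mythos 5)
Your overall framework is aligned with the paper's: the main existence and the fibration-compatibility are quoted from \cite[Theorem 7.33]{locality}, which is exactly what the paper does. However, the paper also asserts that the uniqueness clause (``determined up to Hamiltonian automorphisms\ldots'') is itself a particular case of that same theorem, with only the final $\epsilon'$-modification clause left as a consequence of the proof; it then leaves the refinement about choice of Lagrangian tails to the reader. You instead attempt to \emph{derive} the uniqueness from the existence, which is a genuinely different route.

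Your derivation has a real gap, which you yourself flag. After peeling off a fibered Hamiltonian flow so that the residual symplectomorphism $\Psi'$ is supported in $\pi_{k-1}^{-1}(U^\epsilon_\pm)$, you invoke contractibility of the space of auxiliary data in the construction of \cite[Theorem 7.33]{locality} to conclude $\Psi'$ is Hamiltonian. But contractibility of that data space would at best produce a \emph{symplectic} isotopy from the identity to $\Psi'$; passing from a compactly supported symplectic isotopy to a Hamiltonian one requires a vanishing statement (e.g.\ for the relevant flux, or an explicit tracking of primitives on $\pi_{k-1}^{-1}(U^\epsilon_\pm)$, whose topology is nontrivial near the singular fibers), and you do not supply it. Also note that the two constructions $\Phi_\pm$ and $\tilde\Phi_\pm$ need not arise from a continuous path of auxiliary data through the specific $\Psi'$ you isolated, so even the symplectic isotopy is not automatic from connectedness alone. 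The fiber-translation part of your argument (closed $1$-form on the simply connected region $B_{k-1}\setminus U^\epsilon_\pm$, hence exact, hence a fibered Hamiltonian) is sound; it is the near-singular contribution that needs the more careful treatment the paper defers to \cite{locality}.
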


 We omitted the precise statement about the independence of $\Phi_\pm$ up to Hamiltonian isotopy of $M_k$ on the choice of Lagrangian tails and how the Hamiltonian isotopy interacts with the Lagrangian fibrations. This boils down to constructing careful Hamiltonian isotopies taking one choice of Lagrangian tail to any other, which we leave to the reader.
\\

The symplectic structure on $\pi_k^{-1}(B_k^{reg})\subset M_k$ admits a canonical primitive, which we now describe. There is an \emph{Euler vector field} $V$ on $B_k^{reg}$, which is defined as follows. At every point $b\in  B_k^{reg}$, there is a unique vector $v\in T_bB_k^{reg}$ such that the affine geodesic $\gamma: [0,1]\to B_k^{reg}$ that starts at $b$ with velocity vector $v$ satisfies $\gamma(t)\to$ the node as $t\to 1$. The Euler vector field at $b$ is defined by taking the negative of this vector $-v$. Using the Ehresmann connection on $\pi_k^{-1}(B_k^{reg})\to B_k^{reg}$ induced from the Gauss-Manin connection defined by the period lattice, we can lift the Euler vector field to a vector field $Z$ upstairs, which is easily checked to be a Liouville vector field. Let us call the resulting primitive $\tilde{\theta}_k\in \Omega^1(\pi_k^{-1}(B_k^{reg}))$. 

If $k=0$, we take ${\theta}_0=\tilde{\theta}_0$ on $B_0=B_0^{reg}$. For $k>0$, we first note that the relative deRham cohomology class $[(\omega_{M_k},\tilde{\theta}_k)]\in H_{dR}(M_k,\pi_k^{-1}(B_k^{reg}))$ is zero. This is because, by the relative deRham isomorphism, it suffices to show that this class pairs trivially with the fiber class and the classes of the Lagrangian tails in $H_2(M_k, \pi^{-1}_k(B_k^{reg})).$ This is easy to check. Now taking an arbitrary primitive $(\alpha, f)$ of $(\omega_{M_k},\tilde{\theta}_k)$, we can construct a primitive ${\theta}_k$ on $M_k$, which agrees with $\tilde{\theta}_k$ outside of an arbitrarily small neighborhood of the singular fiber, by patching together $\alpha$ and $\tilde{\theta}_k$.  All the computations of actions $$A(\gamma)=\int \gamma^*\theta +\int \gamma^*H dt, \text{ for }\gamma: S^1\to M_k$$ will be done using $\theta_k$ for which the neighborhood of the singular fiber  where $\theta_k\neq \tilde{\theta}_k$ is chosen to be sufficiently small so as not to meet any of the non-constant periodic orbits under consideration. Note that if $\gamma:S^1\to M_k$ is a $1$-periodic orbit contained inside a regular torus fiber $\pi_k^{-1}(b)$ generated by the integral covector $\alpha \in T_b^*B^{reg}_k$, then \begin{equation}\label{ref-action}
\int \gamma^*\theta_k=-\alpha(V_b).
\end{equation}


For $M=M_k$ and $W$ the preimage under $\pi_k$ of an admissible convex polygon $P\subset B_k$ containing $0$ in its interior, there is a nice way to construct an acceleration datum as in the proof of Proposition \ref{prop-BV-torus-A}. Each edge of $P$ belongs to a rational line which divides $B_k$ into two parts. We consider non-negative PL functions on $B_k$ which are zero on the part that contains $P$ and affine (with slopes chosen sufficiently irrational) on the other. Taking the maximum of all these PL functions, smoothing and perturbing we obtain functions that are dissipative with respect to certain standard almost complex structures. The actions of generators can be computed explicitly in this case using Proposition \ref{prop-action} and branch cuts in the base. 


\subsection{The wall-crossing map}\label{subsec-wall-cross-map}

Consider embedding $M_0$ into $M_k$ by choosing $k$ Lagrangian tails either all to the left ($-$ embedding) or all to the right ($+$ embedding). As in Proposition \ref{prpCompEmb} we can choose the $\pm$ embeddings so that away from $\epsilon$ neighborhoods of the eigenrays they cover integral affine embeddings  
\begin{equation}\label{eqLowerEpm}
E_\pm: B_0\setminus \{(v,u)\mid u=0, \pm v\geq 0\}\to B_k\setminus \psi_k(\{(v,u)\mid u=0, \pm v\geq 0\})
\end{equation}
such that $E_+\circ E_-^{-1}$ is the identity map on the upper half plane and the shear $\begin{pmatrix}
        1& k\\
        0& 1\\
        \end{pmatrix}$ on the lower half plane.

\begin{proposition}\label{prp-local}
The locality isomorphism of Theorem \ref{thm-local} associated with the complete embeddings $E_{\pm}$ induce isomorphisms of sheaves  
\begin{equation}
\cO_k|_{B_k\setminus l_{\pm}}\simeq  \cF_k|_{B_k\setminus l_{\pm}}.
\end{equation}
\end{proposition}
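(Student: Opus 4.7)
The plan is to construct the isomorphism $\cO_k|_{B_k\setminus l_\pm}\simeq \cF_k|_{B_k\setminus l_\pm}$ as a three-step composition. Writing $E_\pm$ also for the integral affine embedding covered by the complete embedding (Equation \eqref{eqLowerEpm}), so that $E_\pm$ identifies a half-plane in $B_0$ with $B_k\setminus l_\pm$, I would associate to every admissible convex polygon $P\subset B_k\setminus l_\pm$ the polytope $Q:=E_\pm^{-1}(P)\subset B_0$ and realize the claimed isomorphism as the composition
\[
\cO_k(P)\;\xleftarrow{\;\sim\;}\;\cO_0(Q)\;\xrightarrow{\;\sim\;}\;\cF_0(Q)\;\xrightarrow{\;\sim\;}\;\cF_k(P).
\]

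For the first arrow I would use Proposition \ref{prop-KS-cover} and Diagrams \eqref{eqT+diagram}, \eqref{eqT-diagram}: these say that the chart $g^\pm:(\Lambda^*)^2\to Y_k^{an}$ covers exactly the preimage under $p_k$ of $B_k\setminus l_\pm$ and intertwines $p_k$ with the tropicalization $p_0$ via the affine map $f^\pm=E_\pm$ of Proposition \ref{prop-B-plus-intertwine}. The induced identification of structure sheaves gives the first arrow, which is visibly natural in $P$. The middle arrow is Corollary \ref{cor-no-ray-degree-0} applied to $Q$. The third arrow is the locality isomorphism of Theorem \ref{thm-local} for the complete embedding $E_\pm$ and the compact set $K:=\pi_0^{-1}(Q)$, whose image is $\pi_k^{-1}(P)$; by construction this arrow is functorial in $K$, hence natural in $P$.

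The hypothesis of Theorem \ref{thm-local} requires that $SH^0_{M_0}(K)$ have homologically finite torsion. This is the one non-routine step, and it is supplied by Proposition \ref{prop-BV-torus-A}(1), which produces an acceleration datum whose Floer one-ray has finite boundary depth in every degree, together with Corollary \ref{cor-fin-bd-tor} identifying finite boundary depth with homologically finite torsion. Having established the composition as a natural isomorphism on the basis of admissible convex polygons in $B_k\setminus l_\pm$, I would then invoke Lemma \ref{lmSheafGluing} restricted to $B_k\setminus l_\pm$ (or equivalently the sheaf property for both $\cO_k$ and $\cF_k$) to promote it uniquely to a morphism of sheaves. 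No new Floer-theoretic input is needed beyond what is already in hand; the main obstacle, the finite torsion hypothesis for the local model, has been dispatched in Section \ref{s-noray}.
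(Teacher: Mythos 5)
Your three-step composition is the right framework and the reduction of the finite-torsion hypothesis to Proposition~\ref{prop-BV-torus-A}(1) via Corollary~\ref{cor-fin-bd-tor} is correct and a welcome explication. But there is a genuine gap in how you handle the complete embedding $E_\pm$. You write ``$E_\pm$ also for the integral affine embedding covered by the complete embedding,'' and conclude that for $K=\pi_0^{-1}(Q)$ with $Q=E_\pm^{-1}(P)$, the image is $\pi_k^{-1}(P)$. This is not true for all $P\subset B_k\setminus l_\pm$: Proposition~\ref{prpCompEmb} and equation~\eqref{eqLowerEpm} only guarantee that the complete embedding intertwines $\pi_0$ and $\pi_k$ \emph{away from an $\epsilon$-neighborhood} $U^\epsilon_\pm$ of the eigenray, for some $\epsilon>0$ fixed by the construction. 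For a polygon $P$ close to $l_\pm$ (i.e.\ meeting $U^\epsilon_\pm$), the image of $\pi_0^{-1}(E_\pm^{-1}(P))$ under the complete embedding need not be $\pi_k^{-1}(P)$, and the third arrow of your composition is not defined as stated.

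To close this gap you must (i) run your argument for each fixed $\epsilon$, obtaining the sheaf isomorphism only over $B_k\setminus U^\epsilon_\pm$, and then (ii) reconcile the isomorphisms as $\epsilon\to 0$. Step (ii) is not automatic: for $\epsilon'<\epsilon$ the corresponding complete embeddings differ by a Hamiltonian isotopy supported near the eigenray, and a priori the locality isomorphisms they induce could differ. This is exactly what Lemma~\ref{lmLocalityHamIso} rules out, and it is the essential additional ingredient in the paper's proof that your proposal omits. (The same lemma also disposes of the dependence on the choice of Lagrangian tails used to define $E_\pm$, which your proposal does not address either.) Once Lemma~\ref{lmLocalityHamIso} is invoked to make the construction consistent across $\epsilon$, the rest of your argument — Proposition~\ref{prop-KS-cover} and the diagrams~\eqref{eqT+diagram},~\eqref{eqT-diagram} for the first arrow, Corollary~\ref{cor-no-ray-degree-0} for the middle arrow, and Lemma~\ref{lmSheafGluing} for the promotion to a sheaf morphism — goes through as you describe.
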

\begin{proof} Let us consider the $+$ case.
Let $P\subset B_k\setminus U^\epsilon_{+}$ be an admissible convex polygon. From Proposition \ref{prpCompEmb} (only the part that concerns the $+$ embedding),  Theorem \ref{thm-local}, Corollary \ref{cor-no-ray-degree-0}, and the diagrams \eqref{eqT+diagram} and \eqref{eqT-diagram}  we get an isomorphism \begin{equation}\label{eqLocalityIso}
\cO_k(P)\simeq  \cF_k(P).
\end{equation} for each $\epsilon$. These isomorphisms are compatible with restriction maps by the same statements. Then, we use Lemma \ref{lmSheafGluing} to extend to all admissible polygons inside $B_k\setminus U^\epsilon_{+}$. To extend from $B_k\setminus U^\epsilon_{+}$ to $B_k\setminus l_{+}$, we use Lemma \ref{lmLocalityHamIso}. Note this statement implies that the isomorphisms are independent of the choice of tails too. The $-$ case is handled exactly in the same way.
\end{proof}

Our proof of Theorem \ref{thm-F_k,reg} proceeds by showing the compatibility of the above maps for admissible polygons $P\subset B_k\setminus l_-\cap B_k\setminus l_+$. In other words, let us denote the monodromy invariant line in $B_k$ by $R$ for all $k>0$. For any admissible polygon $P$ in the complement of $R$ inside $B_k$, we obtain the following diagram
\begin{align}\label{eqWallCrossDiagram}
\xymatrix
   { & \mathcal{F}_k(P) \ar[dl] \ar[dr]& \\
     \mathcal{F}_0(E_+^{-1}(P)) \ar[d] & & \mathcal{F}_0(E_-^{-1}(P))\ar[d]\\
       \mathcal{O}_0(E_+^{-1}(P)) \ar[rr]& & \mathcal{O}_0(E_-^{-1}(P)).\\
   }
\end{align}
The arrows are as follows. The diagonal maps are the locality isomorphisms  of Theorem \ref{thm-local}. The vertical maps are the inverses to \eqref{eqAtoSH} from Theorem \ref{thm-full-BV-comp}. Since all the other arrows in the diagram are filtered isomorphisms we obtain the horizontal isomorphism, which is also filtered. We call the horizontal morphism the \emph{$A$-side wall-crossing isomorphisms} for the purposes of this document. The diagrams \eqref{eqT+diagram} and \eqref{eqT-diagram} induce a diagram
\begin{align}
\xymatrix
   { & \mathcal{O}_k(P) \ar[dl] \ar[dr]& \\
       \mathcal{O}_0(E_+^{-1}(P)) \ar[rr]& & \mathcal{O}_0(E_-^{-1}(P))\\
   }
\end{align}

and we call the corresponding map the \emph{B-side wall crossing map}. Evidently, Theorem \ref{thm-F_k,reg} will follow once we show the A- and B-side wall crossing maps are equal. To show this we first need to prove a general lemma about the leading term of the A-side wall crossing. This will be done in the next section. 


\begin{remark}
It might be possible to use Yu-Shen Lin's results from \cite{lin} (for example Theorem 6.18) along with the unexplored relationship between Family Floer theory and relative symplectic cohomology that was mentioned in Section \ref{sss-ff} to get an enumerative calculation of wall-crossing isomorphisms (as we defined them).
\end{remark}

\section{Locality via complete embeddings and wall-crossing}\label{s-7}
For a complete embedding $\iota:X\to Y$, Theorem \ref{thm-local} introduces for appropriate compact $K\subset X$  a locality isomorphism $\iota_*:SH^*_K(X)\to SH^*_Y(X)$. This isomorphism depends very much on the embedding $\iota$. Given a pair of such embeddings $\iota_1,\iota_2$ which agree on a neighborhood of $K$, we obtain an induced automorphism of $SH^*_K(X)$ by considering the composition $\iota_{2,*}^{-1}\circ\iota_{1,*}$. Let us refer to this map is the \emph{abstract wall crossing map} to distinguish it from the wall crossing map defined for a particular case in the diagram \eqref{eqWallCrossDiagram} and whose source and target involve some additional identifications. 
The aim of this section is to prove Lemma \ref{lmWallCrossing} which shows the abstract wall crossing map is a small perturbation of the identity.

We try to be as self-contained as  possible within the length limits, but some familiarity with Sections 3,4 and 5 of \cite{locality} would be helpful.

Recall from Section \ref{ss-filt-map} that a Novikov ring $\Lambda_{\geq 0}$ module $A$ is naturally equipped with a filtration map that we in this section denote by $$val_A(a):=\sup\{r\in \mathbb{R}_{\geq 0}\mid a\in T^rA\}.$$ We also define $|a|=e^{-val_A(a)}$ and call it the module semi-norm - note that if $A$ has torsion the scalar multiplication is only sub-multiplicative.

 \begin{remark} Given a chain complex $C$ over $\Lambda_{\geq 0}$, it is customary to define a semi-norm on each homology module $H^i(C)$ by taking the infimum over the module semi-norms of its representatives. The norm obtained in this way agrees with the module semi-norms of the homology modules. Also note that the natural inclusion $$T^rH^*(C)\subset \ker{(H^*(C)\to H^*(C/T^rC))}$$ is an equality for all $r$. \end{remark}

An extremely important fact is that a Novikov ring module map does not decrease valuations, or, equivalently, does not increase norms.

\begin{lemma}\label{lmWallCrossing}
Let $X,Y$ be geometrically bounded graded symplectic manifolds of the same dimension.  Let $K\subset X$ be compact and let $V_0\subset X$ be an open neighbourhood of $K$ with the property that there is an admissible function on $X$ with no critical points outside of $V_0$. In particular, $X$ is geometrically of finite type. Suppose further that $K$ has homologically finite torsion in degree $i\in \mathbb{Z}$. 

Let $\iota_1,\iota_2: X\to Y$ be grading compatible symplectic embeddings such that $\iota_1|_{V_0}=\iota_2|_{V_0}$. Denote by $\iota_{i,*}$ the locality isomorphisms of Theorem \ref{thm-local} and let $$WC_X:=\iota_{2,*}^{-1}\circ\iota_{1,*}: SH_X^*(K)\to SH_X^*(K).$$ We refer to it as \emph{the wall crossing map}. Then there is a $\delta>0$ so that for all $x\in SH^0_X(K)$ we have $|WC_X(x)-x|<e^{-\delta}|x|$. Moreover, if the symplectic form of $Y$ is exact, all eigenvalues of  $WC_X\otimes_{\Lambda_{\geq 0}}\Lambda$ are $1$. 
\end{lemma}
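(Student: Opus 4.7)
The plan is to exploit the framework underlying Theorem \ref{thm-local}, namely that the locality isomorphism is constructed by first establishing it for truncated symplectic cohomology modulo $T^R$ and then lifting via the homologically finite torsion hypothesis (see Remark \ref{rmkTruncated}), combined with the input that $\iota_1|_{V_0}=\iota_2|_{V_0}$.

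First, I would construct acceleration data on $X$ using the given admissible function $H_0$ (which has no critical points outside $V_0$) and push it forward to $Y$ under each $\iota_j$, extending arbitrarily to the rest of $Y$ subject to the dissipativity estimates of \cite[Section 7]{groman}. Since $\iota_1=\iota_2$ on $V_0$, the two pushed-forward acceleration data agree on $\iota(V_0)=\iota_1(V_0)=\iota_2(V_0)$. By the standard integrated maximum principle and energy estimates of \cite[Section 3]{locality}, there exists an $R_0>0$ such that every Floer trajectory on $Y$ of topological energy below $R_0$ contributing to the relevant chain complexes is contained in $\iota(V_0)$. Consequently, the chain-level locality comparisons for $\iota_{1,*}$ and $\iota_{2,*}$ coincide generator by generator modulo $T^{R_0}$, giving $\iota_{1,*}\equiv\iota_{2,*}\pmod{T^{R_0}}$.

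Second, the homologically finite torsion hypothesis at degree $i$ yields a bound $\tau<\infty$ on the torsion of $SH_X^i(K)$. By replacing $H_0$ with a steep rescaling, the threshold $R_0$ can be made arbitrarily large, so we may assume $R_0>\tau$. The lifting procedure from truncated to full locality isomorphism in \cite{locality} then upgrades the modular equality to $\iota_{1,*}\equiv\iota_{2,*}\pmod{T^\delta}$ for some $\delta>0$ (say $\delta=R_0-\tau$). Equivalently, $(WC_X-\mathrm{Id})(SH_X^0(K))\subset T^\delta\,SH_X^0(K)$. Since $WC_X$ is $\Lambda_{\geq 0}$-linear and hence non-expanding in the module semi-norm, writing any nonzero $x$ as $x=T^{val_A(x)}y$ with $|y|=1$ yields
\[
WC_X(x)-x=T^{val_A(x)}\bigl(WC_X(y)-y\bigr)\in T^{val_A(x)+\delta}\,SH_X^0(K),
\]
so $|WC_X(x)-x|\le e^{-\delta}|x|$, establishing the first assertion.

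For the eigenvalue statement, exactness of $Y$ forces exactness of $X$ via the primitive $\iota_j^*\theta_Y$, and the $\mathbb{F}$-Floer complexes computing $SH_X^*(K;\mathbb{F})$ and $SH_Y^*(\iota(K);\mathbb{F})$ carry an $\mathbb{R}$-indexed action filtration preserved by continuation maps and by the chain realization of the locality morphism. Via the rescaling isomorphism \eqref{eqRescaling}, the induced filtration on $SH_X^0(K;\Lambda)$ is preserved by $WC_X\otimes\Lambda$, and carrying out step 1 in an action-compatible manner shows that the associated graded of $WC_X\otimes\Lambda$ is the identity. For any $\lambda\neq 1$, the operator $WC_X\otimes\Lambda-\lambda\,\mathrm{Id}$ then has invertible associated graded and is itself invertible, so every eigenvalue of $WC_X\otimes\Lambda$ equals $1$. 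The main obstacle is this last step: one must verify that the truncated comparison of step 1 can be arranged to preserve the action filtration on the $Y$-side Floer complex, which requires monotone acceleration data and a careful extension of the push-forward Hamiltonians outside $\iota(V_0)$ that does not introduce low-action periodic orbits violating the action-monotonicity of the chain-level comparison maps.
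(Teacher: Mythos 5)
Your overall strategy for the first part — truncate, push forward, use an energy bound to confine low-energy trajectories to $\iota(V_0)$, and relate the chain-level realizations of $\iota_{1,*}$ and $\iota_{2,*}$ — is indeed the same idea as the paper's proof, which works at truncation level $\lambda$ and compares the two continuation maps $f_Y$ and $f_X$. However, there are two concrete problems with your execution.

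First, the claim that ``by replacing $H_0$ with a steep rescaling, the threshold $R_0$ can be made arbitrarily large'' is not justified and is not what happens. The relevant energy bound $\delta$ comes from the monotonicity estimate of Lemma \ref{prpHein}, and $\delta$ is a fixed geometric constant depending only on the restriction of the Floer datum $(H,J)$ to the wall region $\overline{V}_0\setminus U$; it does not grow as you steepen the Hamiltonian. The paper sidesteps this entirely: it fixes the Hein constant $\delta$ once and for all and instead invokes Corollary \ref{CorNormCompatibility} to choose the Hamiltonian $H_{0,X}$ so that the representative $[x_0]$ satisfies $|[x_0]|<e^{\delta/2}|[x]|$, noting that this adjustment can be achieved by shifting $H_{0,X}$ by a constant outside $V_0\setminus U$ and therefore does not disturb $\delta$. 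That is the mechanism which makes the estimate close; your proof is missing it. Relatedly, the formula $\delta=R_0-\tau$ is not correct and $\tau$ plays no role here: the homologically finite torsion hypothesis is used only to identify $SH^*_Y(K)$ with $\varprojlim_\lambda SH^*_{Y,\lambda}(K)$ and to transfer norms via Lemma \ref{corTruncatedNorm}; it does not appear in the energy accounting.

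Second, your argument for the eigenvalue statement is genuinely different from the paper's and is, as you yourself flag, incomplete. The paper does not use an associated-graded argument with respect to an action filtration; instead it observes that in the exact case one may take the chain model $tel(\mathcal{C}_{\mathbb{F}})\hat{\otimes}\Lambda$, so that $WC_X$ is defined over the trivially valued field $\mathbb{F}$ (via Propositions \ref{prop-tor-eq-bdry} and \ref{prpBoundaryDepthComparison}). Then $WC_X(x)=(1+c)x$ with $c\in\mathbb{F}$ and $|c|<1$ forces $c=0$, because $\mathbb{F}$ is trivially valued. This avoids the ``main obstacle'' you acknowledge — the need to verify that the truncated comparison preserves the action filtration — since no such filtration-preservation is required once one knows the map is $\mathbb{F}$-linear. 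You should replace your associated-graded argument with this one, which is both cleaner and the route the paper actually takes.
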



Even though we stated the result in terms of $WC_X,$ for the proof it will be psychologically more convenient to consider $$WC=\iota_{2,*}\circ\iota_{1,*}^{-1}: SH_Y(\iota_1(K))\to SH_Y(\iota_2(K)),$$ noting that $\iota_1(K)=\iota_2(K)$, and then prove the same statements for $WC$.

%
%
%
%

The analysis below employs the natural truncation maps $\tau_{\lambda}:SH^*_{M}(K)\to SH^*_{M,\lambda}(K)$. For their definition recall equation \eqref{EqDefSH} defining $SH^*_{M}(K)$ as the pushout over a contractible diagram $\mathcal{G}_K$ of the homologies of certain chain complexes $\widehat{tel}(\cC)$ defined over the Novikov ring $\Lambda_{\geq 0}$. The truncation map $\tau_{\lambda}$ arises by considering the obvious natural transformation arising from the truncation maps  $\widehat{tel}(\cC)\to\widehat{tel}(\cC)/T^{\lambda}\widehat{tel}(\cC)$. See Remark \ref{rmkTruncated} for further references. 

For the next two lemmas, we do not need any extra assumptions on $K\subset M$.

\begin{lemma}\label{corTruncatedNorm}

For  $a\in SH^*_M(K)$ and any $\lambda_0$ such that $\tau_{\lambda_0}(a)\neq 0$ we have $|a|=|\tau_{\lambda_0}(a)|$. 
\end{lemma}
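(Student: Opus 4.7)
The plan is to pass to chain representatives in the completed telescope $\widehat{tel}(\cC)$ and exploit the short exact sequence of chain complexes
\begin{equation*}
0\to T^{\lambda_0}\widehat{tel}(\cC)\to\widehat{tel}(\cC)\to\widehat{tel}(\cC)/T^{\lambda_0}\widehat{tel}(\cC)\to 0
\end{equation*}
together with two basic non-Archimedean observations. The easy direction $|\tau_{\lambda_0}(a)|\le|a|$ follows since any cycle $x$ representing $a$ projects to a cycle $\bar x$ representing $\tau_{\lambda_0}(a)$ with $|\bar x|\le|x|$, and taking the infimum over $x$ yields the inequality.

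For the reverse direction, the hypothesis $\tau_{\lambda_0}(a)\ne 0$ forces $|\tau_{\lambda_0}(a)|>e^{-\lambda_0}$, because at the chain level the kernel of the quotient map is exactly the subset of norm $\le e^{-\lambda_0}$. Given $\epsilon>0$, I would choose a cycle $\bar x$ representing $\tau_{\lambda_0}(a)$ with $|\bar x|\le e^{\epsilon}|\tau_{\lambda_0}(a)|$ and lift it to some $x\in\widehat{tel}(\cC)^i$. Any two lifts differ by an element of $T^{\lambda_0}\widehat{tel}(\cC)$ of norm $\le e^{-\lambda_0}<|\bar x|$, so the non-Archimedean triangle inequality forces every lift to satisfy $|x|=|\bar x|$ exactly. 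Since $\bar x$ is a cycle in the quotient, $dx=T^{\lambda_0}z$ for some $z\in\widehat{tel}(\cC)^{i+1}$; torsion-freeness of $\widehat{tel}(\cC)$ over $\Lambda_{\geq 0}$ forces $z$ to be a cycle, and unwinding the definition of the connecting map identifies $[z]\in SH^{i+1}_M(K)$ with $\delta(\tau_{\lambda_0}(a))$. The key algebraic input is exactness of the long exact sequence at the middle $SH^i_{M,\lambda_0}(K)$ term, which gives $\delta\circ\tau_{\lambda_0}=0$; thus $z=du$ for some $u\in\widehat{tel}(\cC)^i$.

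To finish, $y'=x-T^{\lambda_0}u$ is a genuine cycle, and since $\widehat{tel}(\cC)$ is a $\Lambda_{\geq 0}$-module we automatically have $|u|\le 1$, so the non-Archimedean triangle inequality gives $|y'|=|x|=|\bar x|$. The class $[y']$ differs from $a$ by an element of $\ker\tau_{\lambda_0}=T^{\lambda_0}SH^i_M(K)$, say $T^{\lambda_0}v$; picking any chain-level cycle representative $v'$ of $v$ and setting $y=y'-T^{\lambda_0}v'$ produces a cycle with $[y]=a$ and $|y|=|\bar x|$ (again by non-Archimedean, since $|v'|\le 1$). Letting $\epsilon\to 0$ and taking the infimum over $\bar x$ yields $|a|\le|\tau_{\lambda_0}(a)|$. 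The step that might otherwise cause difficulty is controlling the norms of the corrections $u$ and $v'$, but this is free: both are arbitrary chains in the Novikov ray, hence have norm at most $1$, so after multiplication by $T^{\lambda_0}$ they fall strictly below $|\bar x|$ and the non-Archimedean triangle inequality absorbs them without increasing the norm. This is precisely why no torsion or finite-boundary-depth hypothesis on $K$ is needed.
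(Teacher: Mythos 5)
Your proof is correct, but it follows a genuinely different (and more hands-on) route than the paper's.

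The paper's proof is entirely at the homology level and is very short: it invokes the identity $T^\lambda H^*(\widehat{tel}(\cC)) = \ker(\tau_\lambda)$ from the preceding Remark (valid because the completed telescope is $T$-torsion-free), which rewrites $-\log|a| = \sup\{\lambda : \tau_\lambda(a)=0\}$, and then transfers this supremum to the module valuation of $\tau_{\lambda_0}(a)$ via the functoriality $\tau_\lambda = \tau^{\lambda_0}_\lambda\circ\tau_{\lambda_0}$, using on one side that $\tau_{\lambda_0}(a)\neq 0$ caps the relevant $\lambda$ below $\lambda_0$ and on the other that $H^*(C/T^\lambda C)$ is annihilated by $T^\lambda$. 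Your argument instead works at the chain level: you lift a nearly norm-optimal cycle representative $\bar{x}$ of $\tau_{\lambda_0}(a)$ to $x$, kill the obstruction $dx=T^{\lambda_0}z$ via exactness of the long exact sequence ($\delta\circ\tau_{\lambda_0}=0$), and then use the ultrametric inequality together with $\ker\tau_{\lambda_0}=T^{\lambda_0}SH^*_M(K)$ to correct $x$ into a genuine cycle representing $a$ with norm $|\bar{x}|$, the point being that all corrections are multiplied by $T^{\lambda_0}$ and so sit strictly below $|\bar{x}|>e^{-\lambda_0}$. Both routes ultimately lean on torsion-freeness of the completed telescope (yours for the long exact sequence and for dividing $dx$ by $T^{\lambda_0}$, the paper's for the identity $T^\lambda H^*=\ker\tau_\lambda$), but your chain-level version is more self-contained, effectively reproving the content of that Remark in the course of the argument, whereas the paper's version is shorter once the identity is taken as known. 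You also correctly observe that no finite-torsion hypothesis on $K$ is required; the paper flags this just before the lemma.
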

\begin{proof}
For a chain complex $C$ over $\Lambda$ and $a\in H^*(C)$, we have
\begin{equation}
-\log\left|a\right|=\sup\left\{\lambda :a\in T^\lambda H^*(C)= \ker{(H^*(C)\to H^*(C/T^\lambda C))}\right\}. 
\end{equation}

For $\lambda_0\geq \lambda$ we denote by $\tau^{\lambda_0}_{\lambda}:SH^*_{\lambda_0}\to SH^*_{\lambda}$ the truncation map. The claim now follows by the functoriality $\tau_{\lambda}=\tau^{\lambda_0}_{\lambda}\circ\tau_{\lambda_0}$. 
\end{proof}

\begin{lemma}
Let $\{M_\gamma\}$ be a filtered\footnote{This word here is not related to filtration maps.} direct system of Novikov ring modules (see \cite[Section A.5.3]{Weibel}).
 Then for any $a\in \varinjlim_\gamma M_\gamma$
\begin{equation}
|a|=\inf_{\lambda, b\in M_{\lambda}:\kappa(b)=a}|b|.
\end{equation}
Here we denote by $\kappa:M_{\lambda}\to  \varinjlim_\gamma M_\gamma$ the structural maps. 
\end{lemma}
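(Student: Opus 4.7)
The plan is to prove the equality by establishing both inequalities separately, each following from basic properties of Novikov modules and filtered colimits.

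For the inequality $|a|\leq \inf_{\lambda,b\in M_\lambda:\kappa(b)=a}|b|$, I would simply note that each structural map $\kappa:M_\lambda\to\varinjlim_\gamma M_\gamma$ is a morphism of $\Lambda_{\geq 0}$-modules, and any such morphism is non-expanding with respect to the module semi-norms (equivalently, it does not decrease valuation). Thus for every $b$ with $\kappa(b)=a$ we get $|a|=|\kappa(b)|\leq|b|$, and the infimum inequality follows.

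For the reverse inequality $|a|\geq \inf_b|b|$, the idea is to lift $T$-adic divisibility from the colimit back to a term. Fix $r<\operatorname{val}(a)$, so $a=T^r a'$ for some $a'\in\varinjlim_\gamma M_\gamma$. Because the direct system is filtered, every element of the colimit is represented by an element of some term: choose $\lambda$ and $b'\in M_\lambda$ with $\kappa(b')=a'$. Then $b:=T^r b'\in M_\lambda$ satisfies $\kappa(b)=T^r\kappa(b')=T^r a'=a$ and $\operatorname{val}(b)\geq r$, so $|b|\leq e^{-r}$. Letting $r\nearrow\operatorname{val}(a)$ yields $\inf_b|b|\leq e^{-\operatorname{val}(a)}=|a|$.

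Combining the two inequalities gives the claim. The only subtlety worth flagging is the use of the filtered hypothesis in the second step, to guarantee that a preimage of $a'$ exists in a single term $M_\lambda$ rather than only as an equivalence class in the direct sum quotient; this is a standard fact about filtered colimits of modules and presents no real obstacle.
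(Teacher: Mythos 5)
Your proof is correct and follows essentially the same approach as the paper: the forward inequality is immediate from the fact that module maps do not decrease valuation, and the reverse inequality is obtained by lifting a witness $a'$ of $T$-divisibility of $a$ to a term $M_\lambda$ and multiplying by $T^r$. The only difference is cosmetic notation ($a', b'$ versus the paper's $x, x_0$).
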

\begin{proof}
It is immediate that the RHS is at least as large as the LHS. For the inequality in the other direction, which is also very easy, let $x$ and $y$ be elements in the direct limit such that $T^rx=y$. We know that there exists $x_0$ in some $M_{\gamma_0}$ such that $\kappa(x_0)=x$.  Note also that $\kappa(T^rx_0)=y$. This leads to the proof.
\end{proof}

\begin{corollary}\label{CorNormCompatibility}
Let $a\in SH^*_{M,\lambda_0}(K)$.
 Then 
\begin{equation}
|a|=\inf_{H\in\cH_K, b\in HF^*_{\lambda_0}(H):\kappa(b)=a}|b|.
\end{equation}
Here for any $H\in\cH_K$ we denote by $\kappa:HF^*_{\lambda}(H)\to SH^*_{M,\lambda}(K)$ the structural map. 
\end{corollary}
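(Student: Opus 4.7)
The strategy is to identify $SH^*_{M,\lambda_0}(K)$ with a filtered direct limit $\varinjlim_{H\in\cH_K} HF^*_{\lambda_0}(H)$ in the category of $\Lambda_{\geq 0}$-modules, and then apply the preceding lemma verbatim to the filtered system $\{M_H := HF^*_{\lambda_0}(H)\}_{H\in\cH_K}$.

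First, I would check that truncation by $T^{\lambda_0}$ commutes with Cauchy completion at the chain level: for any $T$-adically filtered chain complex $C$ over $\Lambda_{\geq 0}$ the natural map $\widehat{C}/T^{\lambda_0}\widehat{C}\to C/T^{\lambda_0}C$ is an isomorphism, since any Cauchy sequence stabilises modulo a fixed power of $T$. Applied to the telescope of any acceleration datum $(H_i,J_i)$ for $K$ this yields $\widehat{tel}(\cC)/T^{\lambda_0}\widehat{tel}(\cC)\cong tel(\cC)/T^{\lambda_0}tel(\cC)$, so the truncated completed telescope agrees with the truncated uncompleted one.

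Second, since filtered colimits of modules are exact and hence commute with homology, the homology of the truncated telescope of a single acceleration datum is the ordinary sequential colimit $\varinjlim_i HF^*_{\lambda_0}(H_i)$. Invoking the canonical-up-to-unique-isomorphism description of $SH^*_{M,\lambda_0}(K)$ via the contractible groupoid $\mathcal{G}_K$ of acceleration data (equation \eqref{EqDefSH}), together with the functoriality statements of Proposition \ref{prpFloerFunctor} applied at the truncated level, and using that every $H\in\cH_K$ appears as some $H_i$ inside an acceleration datum, one obtains the desired identification
\begin{equation*}
SH^*_{M,\lambda_0}(K)\;\cong\;\varinjlim_{H\in\cH_K} HF^*_{\lambda_0}(H).
\end{equation*}
Under this identification the map $\kappa$ of the corollary becomes precisely the structural map of a term into the filtered colimit.

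Applying the preceding lemma to this filtered direct system yields the infimum formula exactly as stated. The main obstacle is the diagram chase behind the colimit identification: one has to carefully link the groupoid-theoretic definition of $SH^*_M(K)$ with an honest filtered colimit description over $\cH_K$, ensuring compatibility of the structural maps. Once this is done at the truncated level, where completion plays no role and filtered colimits of modules behave exactly, the argument is formal.
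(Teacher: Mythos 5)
Your proposal takes exactly the route the paper takes: identify $SH^*_{M,\lambda_0}(K)$ with the filtered colimit $\varinjlim_{H\in\cH_K}HF^*_{\lambda_0}(H)$ and then apply the preceding lemma on filtered direct systems. The paper's proof is a one-liner that outsources the colimit identification to \cite[Theorem 6.10]{groman}, whereas you sketch the identification directly (truncation commutes with completion on the telescope, exactness of filtered colimits, and cofinality of the Hamiltonians in an acceleration datum within $\cH_K$); this is a correct unpacking of what that citation supplies.
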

\begin{proof}Follows from the filteredness of $\cH_K$, see \cite[Theorem 6.10]{groman}.

\end{proof}


One final ingredient we shall need is the following monotonicity estimate taken from \cite[Proposition 3.2]{Hein2010}.

\begin{lemma}\label{prpHein}
Let $M$ be a symplectic manifold and consider $\Sigma:=S^1\times \mathbb{R}$ with coordinates $(t,s)$ as usual. Let $s\mapsto (H_s,J_s)$ be a family of Floer data so that $\partial_sH_s,\partial _sJ_s$ is compactly supported on $\bR$. Let $U\subset V$ be precompact open subsets of $M$. Assume that in the region  $\overline{V}\setminus U$ we have that $J_s$ is independent of $s$, that $H_s-H_{s'}\equiv const$, and that $H_s$ is $t$-independent. Moreover, assume that $\partial V$ and $\partial U$ are level sets of $H_s$ for some, hence any, $s\in\bR$. Finally assume there $H_s$ has no $1$-periodic orbits in the region $\overline{V}\setminus U$. Then there is a constant $\delta>0$ depending only on the restriction of $(H_s,J_s)$ to $\overline{V}\setminus U$ so that any solution $u:\bR\times S^1\to M$ to the parametrized Floer equation 
\begin{equation}
\partial_su+J_s(\partial_tu-X_{H_s})=0
\end{equation}
which meets both $\partial U$ and $\partial V$ satisfies
\begin{equation}
E^{geo}(u):=\int_{-\infty}^{\infty}\|\partial_su(s,\cdot)\|_{L^2}^2ds\geq\delta.
\end{equation}

If we additionally assume that the Floer data is monotone, i.e., $\partial_sH_s\geq 0$, then \begin{equation}
E^{top}(u):=\int (u^*\omega+dH\wedge dt)\geq\delta.
\end{equation}
\end{lemma}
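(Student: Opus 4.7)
The plan is to invoke the Gromov trick on the region $\overline V\setminus U$, where the Floer data is $(s,t)$-autonomous, reducing to the classical monotonicity inequality for pseudoholomorphic curves. Define an almost complex structure $\tilde J$ on $\bR_s\times S^1_t\times M$ by $\tilde J|_{TM}=J$ and $\tilde J\,\partial_s=\partial_t+X_H$ (and hence $\tilde J\,\partial_t=-\partial_s-JX_H$); a direct check gives $\tilde J^2=-\mathrm{id}$, and the graph $\tilde u(s,t):=(s,t,u(s,t))$ of any Floer solution is $\tilde J$-holomorphic on the open set $\Omega:=u^{-1}(\overline V\setminus U)$. The closed $2$-form $\tilde\omega:=\omega+C\,ds\wedge dt$ tames $\tilde J$ on $\bR\times S^1\times(\overline V\setminus U)$ for $C$ sufficiently large depending on $\sup_{\overline V\setminus U}|X_H|$.

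The no-$1$-periodic-orbit hypothesis in particular rules out critical points of the autonomous $H$ in $\overline V\setminus U$, so the level-set hypothesis on $\partial U,\partial V$ makes these into disjoint regular hypersurfaces bounding a compact codimension-zero region. Any Floer solution meeting both $\partial U$ and $\partial V$ then produces a $\tilde J$-holomorphic curve $\tilde u$ meeting both $\bR\times S^1\times\partial U$ and $\bR\times S^1\times\partial V$. The classical monotonicity inequality for tame almost complex structures (see e.g.\ Sikorav's chapter in the Audin--Lafontaine volume) now yields a uniform lower bound on the $\tilde\omega$-area of $\tilde u$ by a constant $\delta_0>0$ depending only on $(H,J)|_{\overline V\setminus U}$.

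The final step converts this graph-area bound into a lower bound on $E^{geo}(u)$. A computation using the Floer equation gives $\tilde u^*\tilde\omega=(|\partial_s u|^2-dH(\partial_s u)+C)\,ds\wedge dt$; the term $dH(\partial_s u)$ integrates on $\Omega$ to a boundary term controlled by the oscillation of $H$ on $\overline V\setminus U$, and the horizontal contribution $C\,\mathrm{Area}(\Omega)$ is controlled using the fact that $\partial U,\partial V$ are $H$-level sets invariant under the autonomous Hamiltonian flow, which forces any Floer cylinder crossing between them to accumulate nontrivial geometric energy. Combining these yields $E^{geo}(u)\ge\delta>0$ for some $\delta$ depending only on $(H,J)|_{\overline V\setminus U}$. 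For the monotone case, the standard identity
$$
E^{top}(u)-E^{geo}(u)=\int_{\bR\times S^1}(\partial_sH_s)(u(s,t))\,ds\,dt\ge 0
$$
gives $E^{top}(u)\ge\delta$ as well. The main obstacle is the graph-area-to-geometric-energy conversion in the third step, where the level-set hypothesis on $\partial U,\partial V$, as opposed to the more standard hypothesis of these being arbitrary hypersurfaces, is essentially used.
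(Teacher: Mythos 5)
The paper's ``proof'' of Lemma~\ref{prpHein} is simply a citation of \cite[Proposition~3.2]{Hein2010} with a remark that the extension to $s$-dependent data is immediate because the $s$-dependence is trivial on $\overline V\setminus U$; the underlying argument in the cited reference is a compactness/contradiction argument (a sequence of crossing trajectories with energy tending to $0$ would converge, after translation, to an $s$-independent solution, i.e.\ a $1$-periodic orbit in $\overline V\setminus U$, contradicting the hypothesis). Your proposal tries a genuinely different route, the Gromov graph trick plus classical monotonicity, and this route has a real gap that you flag but do not close.

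The gap is exactly the ``graph-area-to-geometric-energy conversion.'' Monotonicity for the $\tilde J$-holomorphic graph $\tilde u$ gives a lower bound on the $\tilde\omega$-area in a ball, but the graph of \emph{any} Floer cylinder, including an $s$-independent one with $E^{geo}(u)=0$, is a non-constant $\tilde J$-holomorphic curve and picks up $\tilde\omega$-area at a rate at least $C$ per unit domain area via the horizontal term $C\,ds\wedge dt$. Indeed, if $\gamma$ is a $1$-periodic orbit of $H$ then $\tilde u(s,t)=(s,t,\gamma(t))$ is $\tilde J$-holomorphic, satisfies the same ball-crossing hypotheses, and has $E^{geo}=0$. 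So the monotonicity bound $\int_{B_r}\tilde u^*\tilde\omega\ge c\,r^2$ holds vacuously and carries no information about $E^{geo}(u)$ until one subtracts the horizontal contribution $C\cdot\mathrm{Area}(\Omega\cap D)$. Making the taming constant $C$ large enough to dominate $|dH|$ and $|X_H|$ forces $C\cdot\mathrm{Area}(\Omega\cap D)$ to be comparable to (in fact, typically larger than) the monotonicity constant $c\,r^2$, so the subtraction need not leave anything positive. The sentence you offer to close this, that the level-set hypothesis ``forces any Floer cylinder crossing between them to accumulate nontrivial geometric energy,'' is the conclusion of the lemma restated, not an argument. Note also that the hypothesis ``no $1$-periodic orbits in $\overline V\setminus U$'' enters your write-up only to rule out critical points of $H$; but its real role is precisely to exclude the degenerate $s$-independent Floer cylinders that make the graph-area bound toothless, and an argument that does not use it there cannot be right. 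Finally, the boundary term $\int_\Omega\partial_s(H\circ u)\,ds\,dt$ is not ``controlled by the oscillation of $H$'' as a small error: by Stokes it equals $c_U$ and $c_V$ times the winding numbers of $u^{-1}(\partial U)$ and $u^{-1}(\partial V)$ around $S^1$, which are integers that are not a priori bounded. For these reasons I would not accept the proposal as a proof; the compactness argument that the cited reference uses appears unavoidable here.
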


 Strictly speaking the statement \cite[Proposition 3.2]{Hein2010} doesn't mention the $s$-dependent case. The proof however requires virtually no adjustment as we assumed that $s$-dependence is trivial in the region $\overline{U}\setminus V$.

\begin{proof}[Proof of Lemma \ref{lmWallCrossing}]
It suffices to prove the claim for the induced map $$WC:SH^*_{Y,\lambda_0}(K)\to SH^*_{Y,\lambda_0}(K).$$ Indeed, under the assumption on torsion, $$SH^*_{Y}(K)=\varprojlim_{\lambda}SH^*_{Y,\lambda}(K).$$ Moreover, by Lemma \ref{corTruncatedNorm}, the norm on the left hand side of the last equation is the one induced by the norms on the right. 

We recall the construction of the locality isomorphism. Let $\iota$ be either one of $\iota_1$ or $\iota_2$. We denote $\iota(K)$ by $K$ by an abuse of notation. For each $\lambda>0,$ the locality isomorphism $\iota_*:SH^*_{X,\lambda}(K)\to SH^*_{Y,\lambda}(K)$ is constructed as follows. Denote by $\cH_{K,X}$ the set of Floer data $(H,J)$ on $X$ such that $H|_K<0$. We define $\cH_{K,Y}$ as the set of Floer data $(H,J)$ on $Y$ such that $H|_K<0$. Then $SH^*_{X,\lambda}(K)=\varinjlim_{(H,J)\in \cH_{K,X}}HF^*_{\lambda}(H,J)$. We show in \cite[Section 5]{locality} that we can find a cofinal set $\cH_{\iota,K,Y}\subset \cH_{K,Y}$ so that 
\begin{enumerate}
\item the associated $\lambda$-truncated Floer complexes $CF^*_{\lambda}(H,J)$ split as  $CF^*_{\lambda}(H,J)=CF^*_{\lambda,inner}(H,J)\oplus CF^*_{\lambda,outer}(H,J)$, where the first complex is generated by periodic orbits lying in $V_0$,    
\item 
the splitting is functorial at the homology level with respect to continuation maps,
\item the induced map $HF^*_{\lambda,outer}\to SH^*_{Y,\lambda}(K)$ is trivial, 
\item the complex $CF^*_{\lambda,inner}(H,J)$ is local. This means that there is a fixed open neighborhood $V_{\lambda}\subset X$ such that all Floer solutions of energy $\leq \lambda$ connecting the generators in $\iota(V_0)$ are contained in $\iota(V_{\lambda})$ and are unaffected by the values of $(H,J)$ outside of $\iota(V_{\lambda})$.  A similar statement holds for continuation maps. 
\end{enumerate}

The takeaway from this is that at the truncation level $\lambda$ we can concretely realize the locality map at the homology level in the following way. 



Fix the open set $V_{\lambda}\subset X$ independently of whether $\iota=\iota_1$ or $\iota=\iota_2$. Let  $[x]\in SH^*_{X,\lambda}(K)$. Let $H_{0,X}\in\cH_{K,X}$ so that $[x]$ is the image of an element $[x_0]\in HF^*_{\lambda}(H_{0,X})$. For $i=1,2$ let $H_{0,\iota_i}$ be a Hamiltonian on $Y$ so that $H_{0,\iota_i}\circ\iota_i=H_{0,X}|_{V_{\lambda}}$.  Let $H_{1,X}\in\cH_{K,X}$ and $H_{1,\iota_2}\in \cH_{\iota_2,K,Y}$ be Hamiltonians on $X$  and $Y$ respectively so that $H_{1,X}\geq H_{0,X}$, $H_{1,\iota_2}\geq \max\{H_{0,\iota_1}, H_{0,\iota_2}\}$ and so that $H_{1,\iota_2}\circ \iota_2=H_{1,X}|_K$. The existence of such a pair of Hamiltonians is justified by considering that the set $\cH_{\iota_2,K,Y}$ is cofinal. Observe we can pick first the Hamiltonian $H_{1,\iota_2}\in \cH_{\iota_2,K,Y}$ so that $H_{1,\iota_2}\geq \max\{H_{0,\iota_1}, H_{0,\iota_2}\}$ and then the existence of an appropriate $H_{1,X}$ is guaranteed by the properties listed for $\cH_{\iota_2,K,Y}$. Here and for the remainder of the proof we abusively omit $J$ from the notation even though it plays an important role in the locality isomorphism.


We then have two different maps  $HF^*_\lambda(H_{0,\iota_1})\to HF^*_\lambda(H_{1,\iota_2})$. We have the Floer theoretic continuation map $f_Y=\kappa_{01,Y}$ associated with a monotone interpolating datum from $H_{0,\iota_1}$ to $H_{1,\iota_2}$. The other is $f_X$ as defined by the diagram 
\begin{align}\xymatrix
   { HF^*_{\lambda}(H_{0,X})\ar[r]^{\kappa_{01,X}}\ar[d]^{\iota_{1,*}}&HF^*_{\lambda}(H_{1,X}) \ar[d]^{\iota_{2,*}} \\
     HF^*_\lambda(H_{0,\iota_1}) \ar[r]^{f_X} & HF^*_\lambda(H_{1,\iota_2})}.
\end{align}
Denote by $\kappa_0:HF^*_\lambda(H_{0,\iota_1})\to SH^*_{Y,\lambda}(K)$ and $\kappa_1:HF^*_\lambda(H_{1,\iota_2})\to SH^*_{Y,\lambda}(K)$ the structural maps. Let $[y]=\iota_{1,*}([x])\in SH^*_{Y,\lambda}(K)$ and  $[y_0]=\iota_{1,*}([x_0])\in HF^*_\lambda(H_{0,\iota_1})$.  Then $\kappa_1\circ f_Y([y_0])=\kappa_0([y_0])=[y]$, and $\kappa_1\circ f_X([y_0])=WC([y])$. 

Therefore, to prove the first part of the claim, we need to prove the inequality
\begin{equation}\label{eqLocalityContinuationEstimate}
\left|\kappa_1\circ f_Y([y_0])\right|>\left|\kappa_1\circ (f_Y-f_X)([y_0])\right|.
\end{equation}
Note that the expressions on both sides are independent of any of the choices made. Thus it suffices to prove the estimate for carefully chosen Hamiltonians. Using monotonicity, we may assume our Floer data are chosen so that there is a $\delta>0$ such that any local continuation Floer trajectory of energy $<\delta$ and connecting orbits inside $V_0$  is contained inside $V_0$  and a similar claim for the continuation trajectories in $Y$. Indeed, we may pick the Hamiltonians $H_{0,X}\leq H_{1,X}$ so that the following are satisfied
\begin{itemize}
\item $V_0=H^{-1}_{0,X}(-\infty,t)$ for some real $t$. 
\item There is a $t'<t$ such that all periodic orbits of either $H_{0,X}$ or $H_{1,X}$ that are contained in $V_0$ are contained inside $U:=H^{-1}_{0,X}(-\infty,t')$.
\item On the region $\overline{V}_0\setminus U$ we have that $H_{1,X}-H_{0,X}$ is constant. 
\end{itemize}
We are then in the setting of Lemma \ref{prpHein}.

Fixing such a $\delta$ we further assume by Corollary \ref{CorNormCompatibility} that $H_{0,X}$ is chosen so that $|[x_0]|<e^{\delta/2}|[x]|$. Note that $H_{0,X}$ can be adjusted to satisfy this in such a way that outside of $V_0\setminus U$ it is only shifted by a large constant. Thus the the constant $\delta$ guaranteed by Lemma \ref{prpHein} is not changed by this adjustment. The left hand side of \eqref{eqLocalityContinuationEstimate} is then $|[x]|$ while the right hand side is $<|f_Y-f_X|e^{\delta/2}|[x]|$. So to conclude it now suffices to show $|f_Y-f_X|\leq e^{-\delta}$ . For this note that  our Hamiltonians agree with the local ones on $V_0$ since we assumed $\iota_1=\iota_2$ on $V_0$. Therefore, the contribution to $f_Y-f_X$ comes from the trajectories which leave the region $V_0$. By the choice of $\delta$ these all have energy $\geq\delta$. The first part of the claim follows.


It remains to prove the claim concerning eigenvalues. For this note that, after base  change to $\Lambda$, we can take our underlying chain level models for relative $SH$ to be of the form $tel(\mathcal{C}_{\mathbb{F}})\hat{\otimes} \Lambda$ as in the proof of Proposition \ref{prpBoundaryDepthComparison}. The locality maps and therefore the wall crossing maps are then defined over the trivially valued field $\mathbb{F}$.  Relying on Proposition \ref{prop-tor-eq-bdry},\footnote{Note that the finiteness/infiniteness of boundary depth is invariant under completion and under tensoring with $\Lambda$.} the conclusion of Proposition \ref{prpBoundaryDepthComparison} holds. Thus, if $[x]$ is an eigenvector then after scalar multiplication, we may assume that it is of the form $a\otimes 1$ for $a\in SH^*_{M,\theta}(K;\mathbb{F})$. The inequality $|WC_X(x)-x|<|x|$  implies $WC_X(x)=(1+c)x$ for $c\in\mathbb{F}$ of norm $<1$.  Since $\mathbb{F}$ is trivially valued, this is only possible for $c=0$.
\end{proof}

\section{Analysis of the local models for the nodal fibers II}\label{s-8}
Armed with the result of the previous section we turn to compute the wall crossing maps. Let $\eta$ and $\xi$ be the monomials corresponding to $-e_2^{\vee}$ and $e_1^{\vee}$ in $\Lambda[(\mathbb{Z}^2)^{\vee}]=\Lambda[H_1(M_0;\bZ)]\subset \cO_0(Q)$ for any admissible polygon $Q\subset B_0$. Recall that the wall crossing isomorphisms were defined as the horizontal arrow in the diagram \eqref{eqWallCrossDiagram}. We remind the reader that the wall crossing maps are maps
$$\mathcal{O}_0(E_+^{-1}(P)) \to\mathcal{O}_0(E_-^{-1}(P)),$$
and that both $E_+^{-1}(P)$ and $E_-^{-1}(P)$ are polygons in $B_0$. On the other hand for $Q=E_{\pm}^{-1}(P)$ connected, $\cO_0(Q)$ is a certain completion of the Laurent polynomials in the variables  $\eta,\xi$. Thus to compute the wall crossing isomorphisms all we need is to derive formulas for the wall crossing map applied to global functions $\eta,\xi$. 

In deriving a formula for the wall crossing we will  have to deal separately with the case of $P\subset B_k\setminus R$ in the upper and lower half plane. On the other hand, in each half plane, the formulas, as formal Laurent series in $\eta,\xi$, will not depend on the choice of the polygon $P$. To see this observe that wall-crossing isomorphisms are compatible with restriction maps.  Moreover, the restriction maps in each half plane are intertwined with those of $\cO_0$ and we know these to be injective as stated in Proposition \ref{prpUnique}. From now on let us assume that $P\subset B_k\setminus R$ is a convex admissible polygon, which we will vary to prove certain restrictions.



Let us start listing the results that eventually lead to the computation of the wall-crossing isomorphisms. 

\begin{proposition}\label{prppi1grading}
There are formal Laurent series $h_1,h_2,h_3,h_4\in\Lambda[[\eta,\eta^{-1}]]$ such that
the wall-crossing isomorphisms 
$$\mathcal{O}_0(E_+^{-1}(P)) \to \mathcal{O}_0(E_-^{-1}(P))$$
for $P$ lying in the upper (resp. lower) side of $R$ send any monomial $\xi^a\eta^b$, with $a,b$ non-negative integers, to an element with formal expression $(\xi^a\eta^b)\cdot h_1^ah_2^b$ (resp. $(\xi^a\eta^b)\cdot h_3^ah_4^b$).
\end{proposition}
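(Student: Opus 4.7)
The plan is to combine two features of the wall-crossing map: its multiplicativity and the existence of an extra grading it must respect.

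First, by its definition as the horizontal arrow of Diagram \eqref{eqWallCrossDiagram}, the wall-crossing $WC$ is a composition of the locality isomorphisms of Theorem \ref{thm-local} with the (inverses of the) degree-zero parts of the isomorphisms of Theorem \ref{thm-full-BV-comp}. Each of these is an isomorphism of $\Lambda$-algebras, so $WC$ itself is a $\Lambda$-algebra isomorphism. It therefore suffices to prove that $WC(\xi)=\xi\cdot h_1$ and $WC(\eta)=\eta\cdot h_2$ for some $h_1,h_2\in\Lambda[[\eta,\eta^{-1}]]$ (and similarly $h_3,h_4$ in the other half-plane), because multiplicativity then forces
$$WC(\xi^a\eta^b)=WC(\xi)^a\,WC(\eta)^b=\xi^a\eta^b\cdot h_1^a h_2^b.$$

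To isolate this form I will exploit the extra $\mathbb{Z}$-grading on $\cF_k(P)$ arising from $H_1(M_k;\bZ)\cong\bZ$. Hamiltonian Floer chain complexes split according to the free homotopy classes of $1$-periodic orbits, and all Floer-theoretic operations (differentials, continuation maps, products, BV operator) respect this splitting. Since the locality isomorphism of Theorem \ref{thm-local} is built from continuation maps between inner complexes supported on orbits in a neighborhood of $K$ (see Remark \ref{rmkTruncated} and the construction reviewed in the proof of Lemma \ref{lmWallCrossing}), it sends the $\alpha$-summand of $SH^0_{M_0}$ to the $E_{\pm,*}(\alpha)$-summand of $SH^0_{M_k}$. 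Identifying the vanishing cycle in the torus fiber requires only a monodromy computation: the dual action of $A_{k,e_1}$ on the covector lattice $(\bZ^2)^\vee$, given in the basis $(e_1^\vee,e_2^\vee)$ by $(A_{k,e_1}^{-1})^T=\bigl(\begin{smallmatrix}1&0\\k&1\end{smallmatrix}\bigr)$, has fixed sublattice $\bZ\langle e_2^\vee\rangle$. Thus the vanishing cycle class is a multiple of $e_2^\vee$ and $\ker E_{\pm,*}=\bZ\langle e_2^\vee\rangle$ for both embeddings. Combining this with the identification of Theorem \ref{thm-full-BV-comp}, which matches the $\alpha$-graded piece of $\cF_0$ with the $z^\alpha$-graded piece of $\cO_0$, both identifications $\cF_k(P)\cong\cO_0(E_\pm^{-1}(P))$ match the $H_1(M_k;\bZ)$-grading with the $\xi$-degree: using $\xi=z^{e_1^\vee}$ and $\eta=z^{-e_2^\vee}$, the variable $\xi$ has $H_1(M_k)$-grading $\pm 1$ and $\eta$ has grading $0$.

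Since $WC$ preserves the $\xi$-degree, the formal expression of $WC(\xi)$ involves only monomials of the form $\xi\eta^i$ and that of $WC(\eta)$ only monomials of the form $\eta^i$, so each factors as $\xi\cdot h_1$ or $\eta\cdot h_2$ for unique $h_1,h_2\in\Lambda[[\eta,\eta^{-1}]]$, establishing the claim for a fixed $P$. The independence of $h_1,h_2$ from the choice of polygon within a given half-plane, as asserted in the paragraphs preceding the proposition, follows from the compatibility of $WC$ with restriction maps and the injectivity of restriction maps in $\cO_0$ between connected admissible polytopes (Proposition \ref{prpUnique}). Different half-planes produce $(h_1,h_2)$ in the upper case and $(h_3,h_4)$ in the lower case. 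The main subtlety is the duality/monodromy step pinpointing $e_2^\vee$ as the vanishing cycle direction; the remainder is a formal consequence of the functoriality of the $H_1$-grading under complete embeddings and the multiplicativity of $WC$.
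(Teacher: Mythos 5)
Your proof is correct and follows essentially the same route as the paper's: both rest on the observation that the wall-crossing map preserves the extra $H_1(M_k;\mathbb{Z})\cong\mathbb{Z}$-grading coming from homology classes of orbits, that the kernel of $E_{\pm*}$ is the vanishing cycle direction $\mathbb{Z}\langle e_2^\vee\rangle$ corresponding to $\eta^{\pm 1}$, and hence that the grading is recorded by the exponent of $\xi$. You spell out two steps the paper leaves implicit — the reduction via multiplicativity of the algebra isomorphism to the cases $\xi$ and $\eta$, and the explicit transpose computation identifying the monodromy-fixed sublattice — which is a harmless and helpful expansion rather than a different argument.
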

\begin{proof}

For any $K\subset M$ we have a grading of $SH^*_{M}(K)$ by $H_1(M;\bZ)$ which assigns to a periodic orbit its homology class. For the complete embeddings $E_{\pm}$ we get an induced $H_1(M_k;\mathbb{Z})=\bZ$ grading on $SH_{M_0}(E_{\pm}^{-1}(P))$.  This grading is preserved by the wall crossing maps. Indeed, $M_0$ deformation retracts to a torus fiber of $M_0$ on which each of the maps $E_{\pm}$ is the inclusion as a torus fiber of $M_k$. Moreover  the grading is determined by the projection map  $E_{\pm*}:H_1(M_0;\mathbb{Z})\to H_1(M_k;\mathbb{Z})=\mathbb{Z}$. The kernel of $E_{\pm*}$ is generated by the class of the vanishing cycle in the torus fiber of $M_k$ under the above identifications.  By definition of $\eta$ in the beginning of this section the  vanishing cycle corresponds to either $\eta$ or $\eta^{-1}$ under the identification $\Lambda[(\mathbb{Z}^2)^{\vee}]=\Lambda[H_1(M_0;\bZ)]\subset \cO_0(E_+^{-1}(P))$ in \eqref{eqTorusSH}. Thus the wall crossing map lies in the subalgebra generated by $\eta$.
\end{proof}
\begin{proposition}\label{prop-WC-BV}
Both wall-crossing isomorphisms send $\eta$ to $\eta$, i.e., $h_2=h_4=1$.
\end{proposition}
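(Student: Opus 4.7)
The plan is to combine the BV-algebra structure on the wall crossing with the close-to-identity and eigenvalue clauses of Lemma~\ref{lmWallCrossing}.

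First I would note that each wall-crossing map in diagram~\eqref{eqWallCrossDiagram} is an isomorphism of unital BV algebras, since its three constituents — the locality isomorphism of Theorem~\ref{thm-local} and the two identifications coming from Theorem~\ref{thm-full-BV-comp} — are BV-algebra isomorphisms in every degree. Extended to the polyvector field algebras $\cA^*$, Proposition~\ref{prpCYBV} therefore yields a scalar $c\in\Lambda^*$ with $WC(\Omega_0)=c\,\Omega_0$.

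Next I would translate this into a constraint on $h_2$. Since $h_1,h_2\in\Lambda[[\eta,\eta^{-1}]]$, the forms $dh_i=h_i'\,d\eta$ are multiples of $d\eta$, so the wedges $dh_1\wedge dh_2$ and $dh_1\wedge d\eta/\eta$ vanish, leaving
\[
WC\!\left(\tfrac{d\xi}{\xi}\wedge\tfrac{d\eta}{\eta}\right)=\left(1+\tfrac{\eta h_2'}{h_2}\right)\Omega_0.
\]
The resulting identity $\eta h_2'=(c-1)h_2$, upon expanding $h_2=\sum_n b_n\eta^n$, gives $(n-(c-1))b_n=0$ for every $n\in\bZ$. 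Since $h_2$ must be a unit, this forces $c-1=n_0\in\bZ$ and $h_2=b_{n_0}\eta^{n_0}$ with $b_{n_0}\in\Lambda^*$. Note that $h_1$ itself is not constrained by this equation.

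To finish, I would invoke Lemma~\ref{lmWallCrossing} for the abstract wall crossing $WC=E_{-,*}^{-1}\circ E_{+,*}$ on $\cO_0(Q)$, where $P$ lies in the upper half-plane and $Q=E_+^{-1}(P)=E_-^{-1}(P)$. The hypotheses are satisfied: the Lagrangian tails can be chosen so that $\iota_\pm$ coincide on a neighborhood of $\pi_0^{-1}(Q)$; $M_0\cong T^n\times\bR^n$ admits admissible functions with critical points in any prescribed compact region; $K=\pi_0^{-1}(Q)$ has homologically finite torsion by Proposition~\ref{prop-BV-torus-A}; and $M_k$ is exact because its eigenray diagram is centered at the origin. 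The close-to-identity clause then rules out $n_0\ne 0$: otherwise $b_{n_0}\eta^{n_0+1}$ and $\eta$ are distinct monomials, so $\val_Q(WC(\eta)-\eta)\le\val_Q(\eta)$, contradicting $|WC(\eta)-\eta|<e^{-\delta}|\eta|$. With $n_0=0$, $\eta$ becomes an eigenvector with eigenvalue $b_0\in\Lambda^*$, and the eigenvalue clause forces $b_0=1$, proving $h_2=1$. The lower half-plane case ($h_4$) is analogous after composing $WC$ with the symplectomorphism of $M_0$ covering the shear on $B_0$; the dual shear fixes $-e_2^\vee$, so this adjustment leaves $\eta$ unchanged while turning $WC$ into an abstract wall crossing self-map to which the same argument applies. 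The main obstacle I anticipate is this last adjustment: one must verify rigorously that the shear-corrected wall crossing satisfies the hypotheses of Lemma~\ref{lmWallCrossing} and that the close-to-identity bound it yields translates correctly into a bound on $h_4$.
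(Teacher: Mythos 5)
Your proof is correct and follows essentially the same strategy as the paper's: identify the wall crossing as a BV-algebra isomorphism, invoke Proposition~\ref{prpCYBV} to get $WC_*(\Omega_0)=c\,\Omega_0$, and close the argument with Lemma~\ref{lmWallCrossing}. The one real difference is the order of steps. The paper first uses the eigenvalue clause of Lemma~\ref{lmWallCrossing} to set $c=1$, then extracts $d\log h_2=0$ (so $h_2$ is a constant) from the wedge identity, and then uses the eigenvalue clause a second time to force $h_2=1$. You instead read off the ODE $\eta h_2'=(c-1)h_2$ from $WC_*(\Omega_0)=c\,\Omega_0$ without presupposing $c=1$, deduce $c-1\in\bZ$ and $h_2=b\eta^{n_0}$ with $n_0=c-1$, and then use the close-to-identity clause to rule out $n_0\neq 0$ and the eigenvalue clause only once (for $b=1$). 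This is a clean reorganization; it even produces $c=1$ as a byproduct rather than as an input, and it incidentally avoids a small labeling slip in the paper's displayed wedge computation (where $h_1$ and $h_2$ are swapped relative to Proposition~\ref{prppi1grading}). Your treatment of the lower wall crossing via conjugating by the shear $\psi$ is precisely the paper's, and the worry you flag at the end --- that one must track carefully how the shear-corrected map interacts with the hypotheses of Lemma~\ref{lmWallCrossing} --- is justified but is resolved exactly as you anticipate: $\psi_*$ fixes $\eta$ and sends $\xi\mapsto\eta^k\xi$, so the estimate on the $\eta$-multiplier is unaffected.
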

\begin{proof}
We first consider the upper wall crossing map. The wall-crossing isomorphism is in fact defined in all degrees, since the locality isomorphisms are defined in all degrees. Moreover, the wall-crossing isomorphism respects the $BV$ algebra structure by the same property of locality isomorphisms  (cf. \cite[Proposition 5.7]{mandel}). We know that $SH^*_{M_0}(\pi^{-1}(Q),\Lambda)$ is isomorphic as a BV algebra to $\mathcal{A}^*(Q)$ for any convex admissible polygon $Q\subset B_0$ by Theorem \ref{thm-full-BV-comp}. It thus follows from Proposition \ref{prpCYBV} that up to multiplication by a constant $c\in\Lambda^*$ the CY form $\Omega$ is preserved under the wall-crossing isomorphisms. Moreover, by the last clause in Lemma \ref{lmWallCrossing} we have $c=1$. 

We show how this implies the claim. Preservation of the CY form together with Proposition \ref{prppi1grading} give us the equation
\begin{align}
d\log{\eta}\wedge d\log{\xi}&= d\log\left(h_1(\eta,\eta^{-1})\eta\right)\wedge d\log\left(h_2(\eta,\eta^{-1})\xi\right)\notag\\
&= d\log{\eta}\wedge d\log{\xi}+d\log{h_1(\eta,\eta^{-1})}\wedge d\log{\xi}\notag.
\end{align}
From which we deduce $d\log{h_1(\eta,\eta^{-1})}$ and $d\log{\xi}$ are linearly dependent which is only possible if $d\log h_1=0$. That is, $h_1$ has only a constant term. In particular $\eta$ is an eigenvector of the wall crossing map. Thus again by the last clause in Lemma \ref{lmWallCrossing} we get that $h_1= 1$.

We now treat the lower wall crossing map. Our choice in Proposition \ref{prpCompEmb} does not allow us to directly apply Lemma \ref{lmWallCrossing} since we do not have  $\iota_1|_{V_0}=\iota_2|_{V_0}$ for $V_0$ a neighborhood of $\pi_0^{-1}(P)$. Rather we have $\iota_1|_{V_0}=\iota_2\circ \psi|_{V_0}$ for $\psi$ the symplectomorphism of $M_0$ induced by the shear map. To conclude, it suffices to observe that the induced action of the symplectomorphism $\psi$ is $\eta\mapsto\eta$ and $\xi\mapsto\eta^k\xi$.


\end{proof}

\begin{proposition}\label{PrpUpperLowerWC}
The upper wall-crossing isomorphism sends $$\xi\mapsto \xi(1+a_1\eta+a_2\eta^2+\ldots)$$ and the lower wall-crossing isomorphism sends $$\xi\mapsto \xi\eta^k(1+b_1\eta^{-1}+b_2\eta^{-2}+\ldots),$$ for some $a_i,b_i\in \Lambda.$
\end{proposition}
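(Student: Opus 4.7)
The plan is to combine Propositions \ref{prppi1grading} and \ref{prop-WC-BV} with the two estimates of Lemma \ref{lmWallCrossing} (both the norm bound and the eigenvalue statement), then exploit the freedom to shrink and translate the polygon $P$ inside the upper or lower half plane in order to kill the unwanted monomials in $h_1$ and $h_3$. By Proposition \ref{prppi1grading} together with Proposition \ref{prop-WC-BV} we already know the upper wall-crossing map has the form $\xi\mapsto \xi\cdot h_1(\eta)$ and the lower one has the form $\xi\mapsto \xi\cdot h_3(\eta)$, with $h_1,h_3\in\Lambda[[\eta,\eta^{-1}]]$; the task is to locate $h_1\in 1+\eta\Lambda[[\eta]]$ and $h_3\in \eta^k+\eta^{k-1}\Lambda[[\eta^{-1}]]$.

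Next I would apply Lemma \ref{lmWallCrossing} in the two cases. For the upper case, Proposition \ref{prpCompEmb} together with Lemma \ref{lmLocalityHamIso} allow us to modify the complete embeddings $\iota_\pm\colon M_0\to M_k$ by Hamiltonian isotopies supported away from $\pi_0^{-1}(P)$ so that they literally agree on a neighborhood of $\pi_0^{-1}(P)$ (for $P\subset B_k\setminus R$ in the upper half plane), without altering the induced locality isomorphisms. Lemma \ref{lmWallCrossing} then yields $|WC_X(\xi)-\xi|<e^{-\delta}|\xi|$. For the lower case the same two embeddings instead satisfy $\iota_+=\iota_-\circ\psi_k$ on a neighborhood of $\pi_0^{-1}(P)$, with $\psi_k$ the symplectic shear whose induced action on cohomology is $\xi\mapsto \eta^k\xi$ (cf.\ the end of the proof of Proposition \ref{prop-WC-BV}); applying Lemma \ref{lmWallCrossing} to the pair $(\iota_+,\iota_-\circ\psi_k)$ then gives $|WC_X(\xi)-\eta^k\xi|<e^{-\delta}|\eta^k\xi|$.

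Writing $h_1=\sum_{j\in\mathbb{Z}}c_j\eta^j$ and computing norms at a point $P=\{(p_1,p_2)\}$ using Proposition \ref{prop-BV-torus-A} (so that $|\xi\eta^j|/|\xi|=e^{-jp_2}$), the upper inequality rearranges to $\operatorname{val}(c_j)>\delta-jp_2$ for $j\neq 0$. Taking $p_2\to+\infty$ in the upper half plane forces $\operatorname{val}(c_j)=+\infty$ for every $j<0$, hence $c_j=0$; this confines $h_1$ to $\Lambda[[\eta]]$. The identical analysis in the lower half plane, with the reference monomial $\eta^k\xi$, rules out every $\eta^j$ with $j>k$ in $h_3$, so $h_3\in\eta^k\Lambda[[\eta^{-1}]]$. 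It remains to verify that the leading coefficients $c_0$ (upper) and $b_k$ (lower) are exactly $1$ rather than just congruent to $1$ modulo $T^{>0}$. For this I would invoke the eigenvalue part of Lemma \ref{lmWallCrossing}: because $M_k$ is exact (we constructed a primitive $\theta_k$ in \S\ref{ss-6.1}), all $\Lambda$-eigenvalues of $WC_X$ are $1$. The wall-crossing preserves the decreasing filtration $F_n^\xi=\overline{\bigoplus_{j\geq n}\Lambda\cdot\xi\eta^j}$, acting on the rank-one graded quotient $F_0^\xi/F_1^\xi$ as multiplication by $c_0$; this in particular produces an eigenvector with eigenvalue $c_0$ after lifting via a standard triangularization argument for the unipotent part of $WC_X$, forcing $c_0=1$, and symmetrically $b_k=1$ in the lower case.

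The main obstacle is the last step: deducing that the leading coefficients are exactly $1$ rather than only leading-order $1$. The norm estimate alone yields $\operatorname{val}(c_0-1),\operatorname{val}(b_k-1)>0$, but not strict equality, so the argument genuinely relies on the exactness of $M_k$ and on identifying $c_0$, $b_k$ as genuine eigenvalues of $WC_X\otimes\Lambda$ via the filtered triangular structure; the rest of the argument reduces to routine valuation bookkeeping on the formal Laurent series.
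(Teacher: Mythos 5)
Your preliminary reductions (Proposition \ref{prppi1grading} plus Proposition \ref{prop-WC-BV} to get the multiplier form, Lemma \ref{lmLocalityHamIso} and Proposition \ref{prpCompEmb} to arrange $\iota_+|_{V_0}=\iota_-|_{V_0}$ in the upper case and $\iota_+|_{V_0}=\iota_-\circ\psi_k|_{V_0}$ in the lower case, then the norm bound of Lemma \ref{lmWallCrossing} together with translating $P$ deep into the corresponding half-plane to confine $h_1$ to $\Lambda[[\eta]]$ and $h_3$ to $\eta^k\Lambda[[\eta^{-1}]]$) are all fine and track the paper's argument closely. The gap, which you yourself flagged, is genuine and the route you sketch for closing it does not work.

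The problem is that the eigenvalue clause of Lemma \ref{lmWallCrossing} cannot be made to apply to $c_0$ by a ``triangularization'' of $WC_X$. Restricted to the $H_1$-homogeneous piece spanned (and completed) by $\{\xi\eta^j\}$, the wall-crossing map is multiplication by $h_1$. Since $\mathcal{O}_0(Q)$ is an integral domain, the equation $h_1\cdot v=\lambda v$ has a nonzero solution $v$ if and only if $h_1=\lambda$ is a constant. Thus, if any $a_j$ with $j\geq 1$ is nonzero, the operator ``multiplication by $h_1$'' has \emph{no} eigenvectors at all, and $c_0$ is not an eigenvalue of $WC_X\otimes\Lambda$ in any sense the last clause of Lemma \ref{lmWallCrossing} can address. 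The fact that the associated graded of $WC_X$ with respect to the filtration $F^\xi_n$ is scalar multiplication by $c_0$ does not lift to the presence of an actual eigenvector: in an infinite descending filtration, the spectrum of the associated graded need not be contained in the spectrum of the original operator. (Concretely: solving $(h_1-c_0)v=0$ term by term with $c_1\neq 0$ and $v\in\overline{\bigoplus_{j\geq 0}\Lambda\xi\eta^j}$ forces $v=0$.) Only $\eta$ and its powers are honest eigenvectors here, and those are exactly the ones Proposition \ref{prop-WC-BV} exploits.

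What one should extract from Lemma \ref{lmWallCrossing} is not its eigenvalue \emph{statement} but the rigidity input used to \emph{prove} that statement: in the exact case, after the identification of Proposition \ref{prpBoundaryDepthComparison}, the wall-crossing map is the base change to $\Lambda$ of an $\mathbb{F}$-linear operator on $SH^*_{M,\theta}(K;\mathbb{F})$. Pushed through the rescaling identification of Theorem \ref{thm-full-BV-comp}, this says each $c_j$ has the form $d_j T^{A_+(\xi)-A_-(\xi\eta^j)}$ with $d_j\in\mathbb{F}$. For a polygon $P$ contained in the upper half-plane the embeddings $E_\pm$ are Hamiltonian isotopic over a neighborhood of $\pi_0^{-1}(P)$, so $A_+(\xi)=A_-(\xi)$ and hence $c_0=d_0\in\mathbb{F}$; in the lower case the same holds after twisting by $\psi_k$, putting the leading coefficient of $h_3$ in $\mathbb{F}$. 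Since $\mathbb{F}\subset\Lambda$ is trivially valued, the norm estimate $\operatorname{val}(c_0-1)>\delta>0$ now forces $c_0=1$ outright, and likewise for the leading coefficient in the lower case. This is the piece missing from your write-up.
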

\begin{proof}
The wall crossing maps preserve the action filtration. Under the vertical isomorphisms of Diagram \eqref{eqWallCrossDiagram}, the action filtration with respect to the primitive on $M_0$ used in Section \ref{s-noray} corresponds to the valuation on $\mathcal{O}_0(Q)$ with $Q=E_{\pm}^{-1}(P)$ given by \begin{align}val_Q\left( \sum a_j\eta^{n_j}\xi^{m_j}\right):= \inf_{(v,u)\in Q}\inf_j \left(val(a_j)- m_jv+n_ju\right)\end{align} as noted in the proof of Theorem \ref{thm-full-BV-comp}.  

For the upper wall crossing map no negative  power of $\eta$ can appear in $h_2$ because we can take $P$ to be arbitrarily far away from $R$ and obtain a contradiction to the preservation of the valuation. Furthermore, using Lemma \ref{lmWallCrossing}, we obtain that the constant term of $h_2$ has to be $1$.

For the lower wall crossing map the same reasoning applies by first modifying the locality embedding with a symplectomorphism as in the last paragraph of the  proof of Proposition \ref{prop-WC-BV}.
\end{proof}

Finally, we come to the step which relates the upper and lower wall-crossing isomorphisms.

\begin{proposition}\label{thm-monodromy} We have the equality 
\begin{equation}\label{eq-thm-monodromy} 
1+a_1\eta+a_2\eta^2+\ldots=\eta^k(1+b_1\eta^{-1}+b_2\eta^{-2}+\ldots).
\end{equation}
\end{proposition}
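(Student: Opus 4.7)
The plan is to exhibit both wall-crossing formulas as two different formal expansions of a single global section in the $-$ chart, and then to invoke the uniqueness of formal Laurent series representations on connected polytopes in $B_0$.

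First I would invoke Proposition~\ref{prop-max-action} to produce an admissible convex polygon $P\subset B_k$ containing the node in its interior, together with a lift $\tilde{\xi}\in\mathcal{F}_k(P)$ whose image under the $+$ chart locality isomorphism of Proposition~\ref{prp-local}, once restricted to $P\setminus l_+$, is the restriction of the global Laurent polynomial $\xi\in\mathcal{O}_0(B_0)$ to $E_+^{-1}(P\setminus l_+)$.

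Next, apply the $-$ chart locality isomorphism to $\tilde{\xi}|_{P\setminus l_-}$. Since $l_-$ is a single ray emanating from the node, $P\setminus l_-$ is connected, and hence $E_-^{-1}(P\setminus l_-)$ is a connected admissible polytope in $B_0$. The result is therefore an element $f\in\mathcal{O}_0(E_-^{-1}(P\setminus l_-))$ which, by Proposition~\ref{prpTropTorus}(2) and the discussion that follows, is represented by a single, uniquely determined formal Laurent series in $\Lambda[[\xi^{\pm 1},\eta^{\pm 1}]]$ convergent at every point of $E_-^{-1}(P\setminus l_-)$. Injectivity of restriction maps is guaranteed by Proposition~\ref{prpUnique}.

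I would then restrict $f$ successively to the upper and lower halves of $P$. On the upper half the wall-crossing diagram~\eqref{eqWallCrossDiagram} commutes, so the $-$ chart expression of $\tilde{\xi}|_{P\cap\{u>0\}}$ is the image of its $+$ chart expression $\xi$ under the upper wall-crossing; by Proposition~\ref{PrpUpperLowerWC} this equals $\xi(1+a_1\eta+a_2\eta^2+\ldots)$. The identical argument applied on the lower half yields $\xi\eta^k(1+b_1\eta^{-1}+b_2\eta^{-2}+\ldots)$. Both expressions represent the same formal Laurent series $f$, so uniqueness of the representation forces them to be equal in $\Lambda[[\xi^{\pm 1},\eta^{\pm 1}]]$; dividing by $\xi$ yields equation~\eqref{eq-thm-monodromy}.

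The main obstacle is the existence of the lift $\tilde{\xi}$, which is the content of Proposition~\ref{prop-max-action} and will require a genuine Floer-theoretic construction using careful action estimates across the singular fiber. Once $\tilde{\xi}$ is in hand, the remaining argument is essentially bookkeeping: uniqueness of formal Laurent expansions on connected polytopes in $B_0$ combined with the compatibility of the two locality isomorphisms with restriction, as encoded in diagram~\eqref{eqWallCrossDiagram}.
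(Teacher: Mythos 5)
Your proposal is correct and takes essentially the same route as the paper: lift $\xi$ to $A\in\cF_k(S)$ via Proposition \ref{prop-max-action}, read off the $-$chart expressions of $A$ along the upper and lower edges, and equate them because both are restrictions of a single $-$chart expression over the connected right-hand part of $\partial S$. The one imprecision in your version is that $P\setminus l_-$ is not a compact admissible polygon, so the $-$chart locality isomorphism of Proposition \ref{prp-local} does not literally apply to it; the paper resolves this by working with the compact right edge $Y\subset\partial S$, which meets both the upper edge $U$ and the lower edge $L$, and forcing $A_U'$, $A_L'$ to have a common formal Laurent series via injectivity of restriction from $Y$ — which is exactly the bookkeeping you flag at the end.
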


The proof of this proposition relies on a \emph{monodromy argument} which we formulate now. Consider an admissible convex polygon $S\subset B_k$ as shown in Figure \ref{fig-P-old} for definiteness.

Let us call $X$ (resp. $Y$) the union of the upper, lower and left (resp. right) edges  of $S$. See Figure \ref{fig-P-old}. Recall the maps $E_{\pm}$ defined in \eqref{eqLowerEpm}. Note that $X$ (resp. $Y$) lies in the image of $E_+$ (resp. $E_-$). Therefore we can use the locality isomorphisms with respect to the $+$ (resp. $-$) embedding for $X$ (resp. $Y$) as in Diagram \eqref{eqWallCrossDiagram}.
We use this along with Proposition \ref{prop-basic-hartogs} to identify $$\cF_k(X)\simeq\cF_0(E_+^{-1}(X))\simeq \cO_0(E_+^{-1}(X))\simeq  \cO_0(hull(E_+^{-1}(X))),$$ where $hull(E_+^{-1}(X))$ is the convex hull of $E_+^{-1}(X)$ inside $\bR^2$ - an admissible convex polygon. The first two isomorphisms are the maps of the left hand of diagram \eqref{eqWallCrossDiagram} while the last one is from Proposition \ref{prop-basic-hartogs}. Of course the same analysis can be made for $Y$ as well. 

Note that unlike what happened in Diagram \eqref{eqWallCrossDiagram} for admissible polygons contained in the upper or lower half of the eigenline, we have exactly one identification for $X$ and $Y$ depending on which eigenray they intersect.

\begin{figure}
\includegraphics[width=0.6\textwidth, trim= 0 700 0 100]{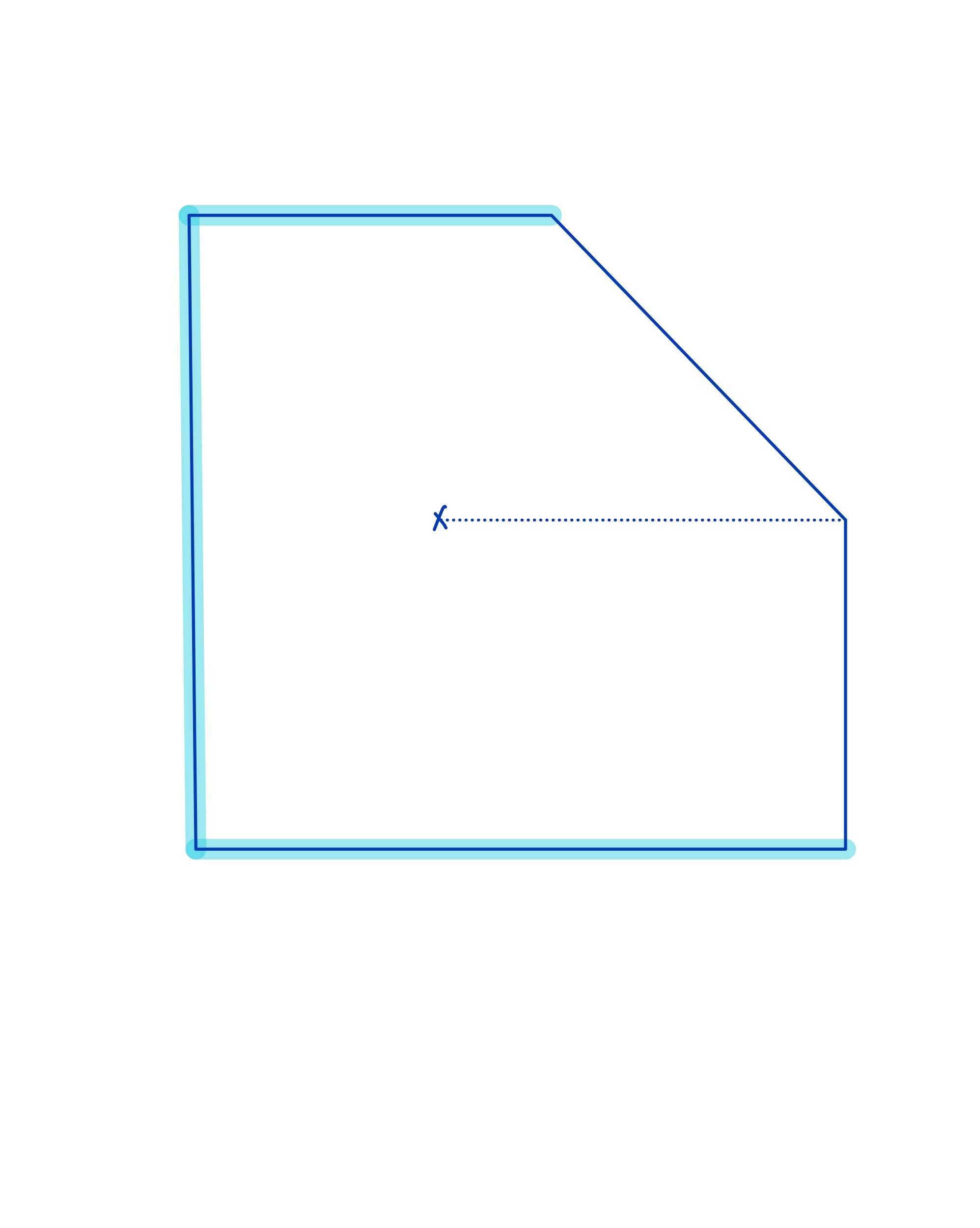}
\caption{The polygon $S$ drawn inside $B_k$ with the dashed ray representing the defining monodromy invariant ray.  The set $X$ is highlighted.}
 
 \label{fig-P-old}
\end{figure}



\begin{proposition}\label{prop-max-action}
The restriction map $$\cF_k(S)\to \cF_k(X)=\cO_0(hull(E_+^{-1}(X)))$$ has in its image the element with formal expression $\xi.$
\end{proposition}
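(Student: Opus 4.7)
The plan is to construct a specific preimage in $\cF_k(S)$ via Floer theory and identify its image under the restriction map with $\xi$ by naturality of the locality isomorphism.

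First I would construct an acceleration datum $(H_i, J_i)$ for $\pi_k^{-1}(S)$ whose underlying Hamiltonians are essentially pullbacks of piecewise-linear functions $h_i : B_k \to \mathbb{R}$, built by the same recipe as in the proof of Proposition \ref{prop-BV-torus-A}: each $h_i$ vanishes on $S$ and grows linearly (with sufficiently irrational slopes) outside $S$, and the slopes are increased to infinity with $i$. Near the singular fiber, the Hamiltonians are honest pullbacks because $h_i \equiv 0$ there, and near infinity dissipativity is arranged by choosing $J_i$ compatible with the fibration outside a large compact set using \cite[Section 7]{groman}. The $1$-periodic orbits with momentum $e_1^\vee$ form, before perturbation, a circle family living above the rightmost face $F(S, e_1^\vee) \subset Y$, which lies away from the node and away from $X$; Morse--Bott perturbation produces one Floer generator of degree $0$, call it $\gamma_i$. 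By Proposition \ref{prop-action} and formula \eqref{ref-action}, the action of $\gamma_i$ is $-e_1^\vee(F(S,e_1^\vee)) + O(1/i)$.

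Next I would show that $\gamma_i$ descends to a nonzero class $\tilde\xi \in \cF_k(S)$. Because $F(S, e_1^\vee) \subset B_k \setminus l_+$ and the orbit $\gamma_i$ lives above this face, its image under the restriction map to any small admissible polygon $N \subset B_k \setminus l_+$ containing $F(S, e_1^\vee)$ can be analyzed through the $+$ complete embedding $E_+$. By Proposition \ref{prp-local} together with the identification of $\cF_0(E_+^{-1}(N))$ with $\cO_0(E_+^{-1}(N))$ from Theorem \ref{thm-full-BV-comp} (via the local chain-level model for $M_0$ used in Proposition \ref{prop-BV-torus-A}), $[\gamma_i]$ restricts to the monomial $z^{e_1^\vee} = \xi$. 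In particular $\tilde\xi \neq 0$ in $\cF_k(S)$.

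Finally, I compute the image of $\tilde\xi$ under the composition $\cF_k(S) \to \cF_k(X) \simeq \cO_0(hull(E_+^{-1}(X)))$. The crucial point is the following naturality: for any two nested admissible polygons $N_1 \subset N_2 \subset B_k \setminus l_+$ we have a commutative square
\begin{equation*}
\xymatrix{
\cF_k(N_2) \ar[r] \ar[d]_{\simeq} & \cF_k(N_1) \ar[d]^{\simeq} \\
\cO_0(E_+^{-1}(N_2)) \ar[r] & \cO_0(E_+^{-1}(N_1))
}
\end{equation*}
where the vertical isomorphisms are the $+$-side identifications and the horizontal maps are the restriction maps (for $\cF_k$ in the top row and the canonical algebraic restriction in the bottom row). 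Combined with the functoriality $\cF_k(S) \to \cF_k(N) \to \cF_k(X)$ through some $N$ containing both $X$ and a neighborhood of $F(S, e_1^\vee)$ in $B_k \setminus l_+$, the image of $\tilde\xi$ becomes the image of $z^{e_1^\vee}$ under algebraic restriction $\cO_0(E_+^{-1}(N)) \to \cO_0(hull(E_+^{-1}(X)))$, and this is manifestly $\xi$.

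The main obstacle will be the naturality diagram above. The $+$ locality identifications are not directly applicable over $S$ (which contains the node), so the above argument really compares the restriction of $\tilde\xi$ to $N$ (where $+$ locality does apply) and then restricts further to $X$. Ensuring that such an intermediate admissible polygon $N \subset B_k \setminus l_+$ exists with $X \subset N$ and $F(S,e_1^\vee) \subset N$ requires using that $X$ and $F(S,e_1^\vee)$ together avoid $l_+$, which holds since $l_+$ passes through the interior of $S$ and only intersects $Y$. The remaining technical point is compatibility of the locality isomorphism with restriction maps between nested subsets of $B_k \setminus l_+$, which follows from the functoriality statement in Theorem \ref{thm-local} once one verifies that the two acceleration data can be taken compatibly via the method of cofinal subsets used in the proof of Proposition \ref{prp-local}.
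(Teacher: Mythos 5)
The fundamental gap is geometric. You place the candidate Floer generator above $F(S,e_1^\vee)$, the rightmost face $Y$ of $S$, and then assert ``$F(S,e_1^\vee)\subset B_k\setminus l_+$.'' This is false: the eigenray $l_+$ exits $S$ precisely through the interior of $Y$, so $F(S,e_1^\vee)=Y$ meets $l_+$. Consequently, the intermediate admissible polygon $N\subset B_k\setminus l_+$ you want, containing $X$ together with a neighborhood of $F(S,e_1^\vee)$, cannot exist, and the $E_+$ locality analysis over $N$ never gets off the ground. The paper instead works with a generator lying above the \emph{left} edge $L\subset X$ (the associated class $a$ is the primitive covector of $L$ pointing out of $S$): this face avoids $l_+$, sits inside $X$, and is therefore amenable to the $E_+$ locality isomorphism, so one reduces to showing the restriction $\cF_k(S)\to\cF_k(L)$ contains $\xi$ rather than introducing an auxiliary $N$.

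Even setting aside the face mix-up, several nontrivial verifications are skipped. Producing a Morse--Bott degree-$0$ generator is not the same as producing a cycle surviving in $\cF_k(S)$: the paper needs the action maximization property (the orbit over $L$ has maximal action among all orbits in its $H_1(M_k;\mathbb{Z})$-class $\tilde a$, which is why the ``convex on the left'' and ``well behaved'' conditions are introduced) both to conclude that the generator is closed and to show it maps to the corresponding generator under each continuation map in the telescope. Finally, identifying the image in $\cF_k(L)$ with precisely $\xi$ (not a nonzero multiple, and with no higher-order correction terms) requires the nodal-sliding localization of Floer trajectories together with the energy estimate of Lemma \ref{prpHein}; the naturality-of-locality diagram you invoke at the end only says the comparison is consistent, not what the image actually is. Your outline captures the rough idea (exhibit the generator and transport it via locality) but omits the quantitative Floer-theoretic estimates that constitute the actual content of the proof, and the one step you do make precise rests on a false geometric claim.
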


We first finish the proof of Proposition \ref{thm-monodromy} using this statement.

\begin{proof}[Proof of Proposition \ref{thm-monodromy}]
Take an element $A\in \cF_k(S)$ which has image $A_X=\xi$ under the identification $\cF_k(X)=\cO_0(hull(E_+^{-1}(X)))$ as in the statement of Proposition \ref{prop-max-action}.

Restrict $A$ to either the upper edge $U$ or the lower edge $L$ of $P$. Using the left side of Diagram \eqref{eqWallCrossDiagram}, we obtain elements $A_U\in \cO_0(E_+^{-1}(U))$ and $A_L\in \cO_0(E_+^{-1}(L))$. The formal expression for both of these elements is $\xi$.

Now use the upper and the lower wall-crossing isomorphisms to get elements in $A_U'\in \cO_0(hull(E_-^{-1}(U)))$ and $A_L'\in\cO_0(hull(E_-^{-1}(L)))$. We a priori know that both of these elements have to be restrictions of $A_Y\in \cO_0(hull(E_-^{-1}(Y)))$ and in particular they have to have the same formal expression. 

Writing this down using Proposition \ref{PrpUpperLowerWC} we obtain $$\xi\cdot (1+a_1\eta+a_2\eta^2+\ldots)=\xi\cdot \eta^k(1+b_1\eta^{-1}+b_2\eta^{-2}+\ldots).$$ It is easy to see that one can cancel $\xi$ from this relation. Indeed, we can restrict to a polygon where $\xi$ is invertible. The claim follows.

\end{proof}

\begin{corollary}\label{cor-WC-final}
The wall-crossing isomorphisms (both!) are given by $$\eta\mapsto \eta, \xi\mapsto \xi(1+\eta)^k.$$
\end{corollary}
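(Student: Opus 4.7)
The plan is to combine the preceding propositions to reduce the corollary to determining a single polynomial, and then pin down its coefficients via an inductive argument on the multiplicity $k$.

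First, I combine Propositions \ref{prppi1grading}, \ref{prop-WC-BV}, \ref{PrpUpperLowerWC}, and \ref{thm-monodromy}. From Proposition \ref{prop-WC-BV}, both wall-crossings send $\eta\mapsto\eta$. From Proposition \ref{PrpUpperLowerWC}, the upper wall-crossing sends $\xi\mapsto\xi(1+a_1\eta+a_2\eta^2+\cdots)$ and the lower sends $\xi\mapsto\xi\eta^k(1+b_1\eta^{-1}+b_2\eta^{-2}+\cdots)$. By Proposition \ref{thm-monodromy}, these two formal expressions agree, which forces both series to terminate: matching coefficients of $\eta^i$ yields $a_i=0$ for $i>k$, $b_j=0$ for $j>k$, and $a_i=b_{k-i}$ for $0\le i\le k$, with $a_k=b_0=1$. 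Hence the common wall-crossing has the form $\xi\mapsto\xi P(\eta)$ where $P(\eta)=1+a_1\eta+\cdots+a_{k-1}\eta^{k-1}+\eta^k$ is a palindromic polynomial of degree exactly $k$ with $P(0)=1$ and leading coefficient $1$.

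Next, to identify $P(\eta)=(1+\eta)^k$, I would proceed by induction on $k$. The base case $k=1$ is immediate: a polynomial of degree $1$ with both endpoints equal to $1$ is forced to be $1+\eta$. For the inductive step, I would leverage the complete embeddings $\Phi_\pm: M_{k-1}\to M_k\setminus L_\pm$ furnished by Proposition \ref{prpCompEmb}, factoring each of the two complete embeddings $E_\pm^{(k)}:M_0\to M_k$ as a composition $M_0\to M_{k-1}\to M_k$, with the first stage using $k-1$ tails on the $\pm$ side and the second stage being $\Phi_\pm$. By the functoriality of the locality isomorphisms under composition of complete embeddings (Theorem \ref{thm-local}), the wall-crossing $WC^{(k)}$ for $M_k$ decomposes into the wall-crossing $WC^{(k-1)}$ for $M_{k-1}$ (equal to $\xi\mapsto\xi(1+\eta)^{k-1}$ by induction) composed with the wall-crossing from flipping the single remaining tail (equal to $\xi\mapsto\xi(1+\eta)$ by the base case, since the local geometry near that tail is multiplicity-$1$).

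The main technical obstacle will be making this two-stage decomposition rigorous. Concretely, one has to choose the Lagrangian tails used by the two stages in a compatible way so that the compositions $\Phi_\pm\circ E_\pm^{(k-1)}$ genuinely realize $E_\pm^{(k)}$ up to Hamiltonian isotopies covered by Lemma \ref{lmLocalityHamIso}, and to verify that the contribution from flipping the single tail of $\Phi_+$ versus $\Phi_-$ is identified with the multiplicity-$1$ wall-crossing via the diagram \eqref{eqWallCrossDiagram} independently of the $k-1$ previously embedded tails. Once this composition identity is established, the induction step yields $P(\eta)=(1+\eta)\cdot(1+\eta)^{k-1}=(1+\eta)^k$, completing the proof.
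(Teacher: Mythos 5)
Your first paragraph is correct and agrees with the paper: Propositions \ref{prppi1grading}, \ref{prop-WC-BV}, \ref{PrpUpperLowerWC}, \ref{thm-monodromy} give that both wall crossings are $\eta\mapsto\eta$, $\xi\mapsto\xi P(\eta)$ for a polynomial $P$ of degree exactly $k$ with $P(0)=1$ and leading coefficient $1$, and for $k=1$ this forces $P=1+\eta$. (The palindromicity you mention does \emph{not} follow from $a_i=b_{k-i}$, but you never use it.) For $k>1$ your route genuinely differs from the paper's, which does not factor $E^{(k)}_\pm$ through $M_{k-1}$: instead it performs a nodal slide resolving the multiplicity-$k$ node into $k$ \emph{spatially separated} multiplicity-one nodes on one eigenline. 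Since the slide changes $\pi_k$ only near the singular fibre, $\pi_k^{-1}(S)$ and the wall crossing are unchanged; the total wall crossing then factors geometrically into $k$ crossings, each supported in a small polygon containing a single multiplicity-one node, where locality reduces the computation directly to the already-established $B_1$ case.

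The gap in your inductive step is exactly the point you flag: the claim that the middle factor $WC^\Phi=(\Phi_-)_*^{-1}\circ(\Phi_+)_*$ (conjugated back to $M_0$) equals $\xi\mapsto\xi(1+\eta)$ ``by the base case.'' This is not the base case: $\Phi_\pm: M_{k-1}\to M_k$ is a different pair of complete embeddings from $E^{(1)}_\pm: M_0\to M_1$, and the base case makes no statement about it. The $H_1$-grading, BV-compatibility, and norm arguments do transfer and give the middle factor the shape $\eta\mapsto\eta$, $\xi\mapsto\xi R(\eta)$ with $R(0)=1$ and $R\in\Lambda[[\eta]]$; but nothing established so far shows $R$ is a polynomial of degree exactly one, or even a polynomial. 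The only place the degree is pinned down in the paper is Proposition \ref{thm-monodromy}, and its proof rests on the Floer-theoretic maximality computation of Proposition \ref{prop-max-action}, carried out for $M_k$ with the embeddings $E_\pm$. Your set-up would require the analogue of that computation for the pair $\Phi_\pm$, which is not available and would essentially amount to re-running Sections \ref{s-7} and \ref{s-8} -- roughly the same effort as proving the corollary outright, defeating the purpose of the induction. Without it, $R$ could a priori be any power series satisfying $R\cdot(1+\eta)^{k-1}=P_k$, and the induction does not close. The nodal-slide argument avoids this problem because each factor's wall crossing literally lives over a polygon containing a single multiplicity-one node, where the local model \emph{is} $B_1$.
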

\begin{proof}
 For $k=1$, we uniquely determine the wall-crossing transformations from equation \eqref{eq-thm-monodromy}. For $k>1$, \eqref{eq-thm-monodromy} does not have a unique solution, but notice that we can resolve the the $k$-fold singularity into $k$ simple nodal singularities lying on one monodromy invariant line by nodal sliding (or moving of worms, see Proposition 7.8 of \cite{locality} and discussion preceeding it).  This can be achieved by modifying $\pi_k$ in a small neighborhood of $\pi_k^{-1}(0)$, and in particular does not modify  $\pi_k^{-1}(S)\subset M_k$ as a set or its relative $SH$. For this resolved singularity the wall crossing around all $k$ singularities is the same as the wall crossing around the unresolved one, and it factors as a $k$-fold iteration of the wall crossing isomorphism for the case $k=1$.
 
\end{proof}
Finally, we can prove Theorem \ref{thm-F_k,reg}.
\begin{proof}[Proof of Theorem \ref{thm-F_k,reg}]
As commented in Section \ref{subsec-wall-cross-map} this amounts to matching the A- and B- side wall crossing. The formula for the B-side wall crossing is given in Proposition \ref{prop-KS-cover}, which matches with Corollary \ref{cor-WC-final}.
\end{proof}

\subsection{Proof of Proposition \ref{prop-max-action}}

Let us now prove the intermediary Proposition \ref{prop-max-action}
\begin{proof}[Proof of Proposition \ref{prop-max-action}]

Let $L$ be the edge of $X$ on the left.  By functoriality with respect to inclusions and our knowledge of the restriction map $\cO_0(hull(E_+^{-1}(X)))\to \cO_0(E_+^{-1}(L))$ it suffices to prove that the restriction map to $L$ contains $\xi$, where we identified $\mathcal{F}_k(L)$ with $\cO_0(E_+^{-1}(L))$ via the locality isomorphism as well. 

A Hamiltonian $H$ is called \emph{$Q$-admissible} for a subset $Q\subset B_k$ if $H<0$ on $\pi_k^{-1}(Q)$.  A dissipative $H$ is called \emph{$\xi$-full} if $\xi$ is in the image of the natural map $HF^0(H;\Lambda)\to SH^0_{M_k}(\pi_k^{-1}(L);\Lambda)=\cF_k(L)$.

It suffices to show that there is an $S$-admissible Hamiltonian $H$ which is $\xi$-full.  Indeed, the map $HF^0(H;\Lambda)\to \cF_k(L)$ factors through the restriction map $\cF_k(S)\to\cF_k(L)$. 

Let $B_+$ be the image of $E_+$ and $M_+:= \pi_k^{-1}(B_+)$. A function $h:B_k\to \bR$ is said to be \emph{convex on the left}  if $dh(e_1)\geq 0$ on the eigenray $B_k\setminus B_+$ and on $B_+$ any co-vector $\nu$ with $\nu(e_1)<0$ is obtained at most once as $dh_b$ for some $b\in B_+$, using the canonical identification of the cotangent spaces of points in $B_+$

For a function $h$ which is convex on the left we show now that if the level sets are sufficiently close to being rectangular in a neighbourhood of $L$ we have the following property.  Equip $M$ with a $S$-compatible primitive (given by Euler primitive outside an open subset whose closure is contained in the interior of $S$). Consider the homology class $a$ in $H_1(M_+,\mathbb{Z})$ given by the primitive covector (called $-e_1^\vee$) of the left edge pointing out of $S$. Let $\tilde{a}$ be its image in $H_1(M,\mathbb{Z})$. Then if $h$ has a $1$-periodic orbit $\gamma$ corresponding to $-e_1^\vee$, this orbit maximizes action among all orbits in the class $\tilde{a}$. To see this note the action of an orbit representing $e_1+ne_2^\vee$ is arbitrarily close to $-\lambda_1-|n|\lambda_2$ where $\lambda_1$ is the distance from the origin to $L$ and $\lambda_2$ is half the length of $L$ by Proposition \ref{prop-action} (using the flat metric in the domain of $E^+$). We will refer to this as the action maximization property in the later parts of the proof.

Call a convex-on-the-left $h$ which has sufficiently large slope to the left of $L$ and with level sets sufficiently close to rectangular for the property of the previous paragraph to hold \emph{well-behaved}. It is straightforward that we can construct acceleration data for $L$ which consist of well behaved Hamiltonians.

Given $F:M_k\times S^1\to \mathbb{R}$ that is an appropriate small time dependent perturbation of $f\circ \pi_k$ for a well behaved $f$, let $\gamma^F$ be the $1$-periodic orbit corresponding to the negative of $e_1^\vee$ in degree $0$. By standard Morse-Bott considerations, the only possible  contributions to the differential of $\gamma^F$ are those connecting it to degree $1$ periodic orbits which don't arise from the perturbation of  $-e_1^\vee$. By the action maximization property we conclude that $\gamma^F$ is a cycle.  

Fix $H, G:M_k\times S^1\to \mathbb{R}$ as the $F$ of the last paragraph so that $H\leq G,$ $H$ is $S$-admissible and $G$ is $L$-admissible. Assume  that a neighborhood of $\gamma^H$ coincides with a neighborhood of $\gamma^G$ and that $H$ and $G$ coincide there. Then a monotone continuation map from the Floer complex of $H$ to that of $G$  maps $\gamma^H$ precisely to $\gamma^G$. Indeed, we have the constant solution, and we can have no other by the action maximization property. 

To conclude, we turn to show that at least for carefully chosen $G$  and almost complex structures, the generator $\gamma^G$ maps to a non-zero scalar multiple of $\xi$. By nodal sliding to the right (similar to the argument in locality for complete embeddings) we can assume that any relevant Floer trajectory with $\gamma^G$ as input with energy $\leq 1$ is contained in a fixed compact neighbourhood $K$ of $\pi^{-1}_k(L)$ inside $M_+$. 

We first claim that up to $O(1)$ we can guarantee that $\gamma^G$ maps to $\pm T^{\epsilon}\xi$ for some $\epsilon<1$. To see this, let $G_{loc}$ be an extension of $G|_K$ to $M_0$ with the property that outside of $K$ it does not have any $1$-periodic orbits in the same homology class as $\gamma^G$ inside $H_1(M_0; \mathbb{Z})$. We know that for some $G':M_0\times S^1\to \mathbb{R}$, $\xi$ is in the image of the map $HF^*(G';\Lambda)\to SH^*_{M_0}(\pi^{-1}(L);\Lambda).$ For $G_{loc}$ larger than $G'$ outside of a compact set (which can also be arranged) we therefore obtain that $\xi$ is in the image of the map $HF^*(G_{loc};\Lambda)\to SH^*_{M_0}(\pi^{-1}(L);\Lambda)$. But the only thing that can map to the span of $\xi$ is the span of the generator $\gamma^G$. Since the topological energy of the Floer solutions is given by difference of actions, which can be made to be smaller than $1$ in all the relevant cases, all the Floer solutions lie inside $K$ and have topological energy smaller than $1$.  This proves that $\gamma^G$ maps to $mT^{\epsilon}\xi$ for some $\epsilon<1$ and non-zero integer $m$. Working over the integers we have $m=\pm1$ for otherwise $\xi$ cannot be in the image, only a multiple of it. 

We now rule out the possibility of an $O(1)$ correction. Such a correction would amount to a continuation trajectory from $\gamma^G$ to some orbit $\delta$ of some well behaved Hamiltonian $G'>G$ so that $\delta$ is in the class $\tilde{a}$ in $M$ but not in $M^+$. By definition of well behavedness, we have the action maximization property. Therefore such a $\delta$ has lower action and such a trajectory is impossible.  

\end{proof}

\begin{remark}
Let us explain what happened a posteriori. The $A$-side wall crossing maps as defined by Diagram \eqref{eqWallCrossDiagram} gives us an isomorphism of the rigid analytic spaces $$(\Lambda^{*})^2\setminus \{val(\eta^-)=0\}\to (\Lambda^{*})^2\setminus \{val(\eta^+)=0\}.$$ This is a morphism between two disconnected spaces and it appears that we have two entirely independent maps. But it turns out that in fact this map extends to an isomorphism $$(\Lambda^{*})^2\setminus \{\eta^-=-1\}\to (\Lambda^{*})^2\setminus \{\eta^+=-1\}.$$ If we knew this fact a priori, then Proposition \ref{thm-monodromy} would follow as both maps (and in particular the pullback of the regular function $\xi^+$) are determined by this one map. In fact our results prove that it is the restriction of the transition map from Proposition \ref{prop-KS-cover}. 

Our monodromy argument is a replacement of this point. Its intuition is the knowledge that the function $\xi^+$ extends to a function $Y_1^{an}$ and hence maybe we should use the mirror of this global function to obtain the desired relationship between how the mirror of $\xi^+$ transforms under the two wall crossing transformations. 

%
\end{remark}

\section{Hartogs's property}\label{s-hartogs}


Our goal in this section is to prove the following statement, which will allow us to extend the isomorphism of Theorem \ref{thm-F_k,reg} to all of $B_k$.

\begin{proposition}\label{prop-Hartogs-A}
Let $P\subset  B_k$ be an admissible convex polygon containing a node in its interior. Then, the restriction map $$\mathcal{F}_k(P)\to \mathcal{F}_k(\partial P)$$is an isomorphism. 
\end{proposition}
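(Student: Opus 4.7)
The plan is to reduce the A-side Hartogs statement to a local computation near the node by exploiting the already-established isomorphism $\mathcal{F}_k|_{B_k \setminus \{n\}} \cong \mathcal{O}_k|_{B_k \setminus \{n\}}$ from Theorem \ref{thm-F_k,reg} together with the B-side Hartogs property (Proposition \ref{prop-Hartogs-B}).

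First I would choose a small admissible polygon $V \subset P$ containing the node $n$ in its interior with $\partial V \subset B_k \setminus \{n\}$, together with small admissible polygons $W_1,\dots,W_m$ none of which contain $n$, so that $\{V,W_1,\dots,W_m\}$ covers $P$ and $\{W_i \cap \partial P\}_{i=1}^m$ covers $\partial P$. Two applications of Mayer--Vietoris (Theorem \ref{thm-mv-recall} combined with the sheaf property) present $\mathcal{F}_k(P)$ and $\mathcal{F}_k(\partial P)$ as equalizers of shapes that differ only in the pair $\mathcal{F}_k(V)$ versus $\mathcal{F}_k(\partial V)$; every other term lives on a polygon inside $B_k \setminus \{n\}$, where Theorem \ref{thm-F_k,reg} identifies $\mathcal{F}_k$ with $\mathcal{O}_k$. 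Comparing with the analogous equalizers for $\mathcal{O}_k(P)$ and $\mathcal{O}_k(\partial P)$, which are isomorphic via Proposition \ref{prop-Hartogs-B}, the problem reduces to showing that $\mathcal{F}_k(V) \to \mathcal{F}_k(\partial V)$ is an isomorphism compatible with $\mathcal{O}_k(V) \cong \mathcal{O}_k(\partial V)$, for all sufficiently small admissible polygons $V$ containing $n$.

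For this local statement I would take $V = P(a)$ with $a > 0$ small (see Section \ref{Sec-Yk-analyt}) and build an acceleration datum for $\pi_k^{-1}(P(a))$ of the shape used in the proof of Proposition \ref{prop-BV-torus-A}: smoothings $H_i$ of a maximum of affine functions vanishing on slight enlargements of $P(a)$, perturbed to be non-degenerate and dissipative. Away from the nodal fiber, generators organize into torus families over integer covectors at regular base points, and Proposition \ref{prop-action} computes their actions via the Euler primitive $\theta_k$. Using the charts $g^\pm$ of Proposition \ref{prop-KS-cover} these generators match, under the identifications of Section \ref{Sec-Yk-analyt}, with the formal Laurent expansions of elements of $\mathcal{O}_k(P(a))$ in the coordinates $x, y, u^{\pm}$. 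Lemma \ref{lmWallCrossing} pins down the leading coefficients, and the wall-crossing analysis of Section \ref{s-8} ensures the matching is compatible with the isomorphism $\mathcal{F}_k \cong \mathcal{O}_k$ on $\partial P(a)$. Combined with the B-side Hartogs isomorphism $\mathcal{O}_k(P(a)) \cong \mathcal{O}_k(\partial P(a))$, this gives the required local isomorphism.

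The main obstacle is this local computation: the $1$-periodic orbits of $H_i$ near the nodal fiber, where $M_k$ is neither Liouville nor a cotangent bundle, so no Viterbo isomorphism in the spirit of Theorem \ref{thm-BV-torus} is directly available. My strategy for handling this is to approximate $P(a)$ from inside by admissible polygons avoiding $n$---for which $\mathcal{F}_k \cong \mathcal{O}_k$ is already known from Theorem \ref{thm-F_k,reg}---and to recover $\mathcal{F}_k(P(a))$ from these by an inverse-limit argument combined with Mayer--Vietoris, mirroring on the A-side the passage from the rigid-analytic Hartogs principle on $B_k \setminus \{n\}$ to that on $B_k$ used in Proposition \ref{prop-Hartogs-B}. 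The key technical point is to show that no extra action-bounded classes in degree $0$ appear in the limit; I expect this to follow from a careful analysis of the Floer differential near the nodal fiber together with the homologically finite torsion properties underpinning the locality results of \cite{locality} recalled in Theorem \ref{thm-local}.
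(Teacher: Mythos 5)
Your proposal correctly identifies the two natural ingredients -- reduce via Mayer--Vietoris to a small convex polygon $V$ containing the node, then establish the isomorphism $\mathcal{F}_k(V)\to\mathcal{F}_k(\partial V)$ locally -- but the reduction step does not actually decrease the difficulty, and the proposed treatment of the local statement has a genuine gap. Since $P(a)$ is itself an admissible convex polygon containing the node, the "local" statement is not easier than the original proposition; indeed you have essentially renamed the problem. Your fallback strategy of recovering $\mathcal{F}_k(P(a))$ by approximating it from inside by polygons avoiding $n$ via "an inverse-limit argument combined with Mayer--Vietoris" is circular: the restriction maps run from $\mathcal{F}_k(P(a))$ to the smaller polygons, and there is no a priori reason the inverse limit of their images recovers $\mathcal{F}_k(P(a))$ -- that is precisely what the Hartogs property asserts and what must be proved. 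Similarly, matching generators with "formal Laurent expansions" after the fact or appealing to Lemma \ref{lmWallCrossing} and the wall-crossing analysis does not bear on whether the restriction map is an isomorphism; those tools compare two different trivializations over $B_k\setminus\{n\}$, not $\mathcal{F}_k(P)$ with $\mathcal{F}_k(\partial P)$.

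The idea you are missing is a Conley--Zehnder index argument, which is what the paper's proof uses. One constructs acceleration data for $\pi_k^{-1}(P)$ and $\pi_k^{-1}(\partial P)$ with two properties. First, in an outer collar (parametrized by a radial coordinate $r_{D_i}$ pulled back from the base), the Hamiltonians $f_i$ for $P$ and $g_i$ for $\partial P$ agree exactly; all generators of degree $0$ and $1$ for $f_i$ live in this collar, and moreover every degree-$0$ generator for $g_i$ also lives there, because in the inner region $g_i$ is arranged so that any integral-slope point has a non-positive-definite Hessian and therefore produces orbits of degree $\geq 1$ (the Morse--Bott index analysis before Proposition \ref{prop-action}), while the focus-focus critical points of $f_i\circ\pi_k$ contribute only in degree $2$. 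This makes the chain map $\mathcal{C}_i\to\mathcal{C}_i'$ an isomorphism in degree $0$ and an injection in degree $1$ at the level of $\Lambda_{\geq 0}/\Lambda_{>0}$-reductions. An abstract lemma (the paper's Lemma \ref{lem-alg-hartogs}, resting on Lemma \ref{lem-deg0-hom} plus Nakayama and the finite-torsion input) then upgrades this to an isomorphism on $H^0$ of the completed telescopes. In short: instead of trying to compute $\mathcal{F}_k(P)$ and compare it to $\mathcal{O}_k(P)$, one arranges the Floer data so that all the degree-$0$ and degree-$1$ information is supported entirely in the region where $P$ and $\partial P$ "look the same," and then degree considerations do the rest. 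This is what lets the argument avoid any Viterbo-type computation near the nodal fiber, which you correctly identified as the obstacle but did not resolve.
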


Recall that the same statement holds if we replace $\mathcal{F}_k$ with $\cO_k$, see Proposition \ref{prop-Hartogs-B}. The rough idea of the proof is that one can choose the acceleration data for $\partial P$ and $P$ so that in a large region where the degree $0$ orbits of the Hamiltonians are contained the acceleration data are exactly the same. We start with an abstract lemma.

\begin{lemma}\label{lem-deg0-hom}
Let $C\to D$ be a chain map of non-negatively graded chain complexes. Assume that $C^0\to D^0$ is an isomorphism and $C^1\to D^1$ is injective. Then, $H^0(C)\to H^0(D)$ is an isomorphism.
\end{lemma}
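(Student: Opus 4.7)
The plan is to unwind the definitions directly; since both complexes are concentrated in non-negative degrees, $H^0(C)=\ker(d_C\colon C^0\to C^1)$ and similarly for $D$, so the induced map on $H^0$ is just the restriction of $f^0$ to these kernels. This reduces the problem to a small diagram chase.

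For injectivity, I would take $x\in C^0$ with $d_C x=0$ and $f^0(x)$ exact in $D$; since $D^{-1}=0$, exactness forces $f^0(x)=0$ in $D^0$, and then the injectivity (in fact bijectivity) of $f^0$ gives $x=0$. For surjectivity, I would start with $y\in\ker(d_D\colon D^0\to D^1)$, use surjectivity of $f^0$ to lift $y$ to some $x\in C^0$, and then verify $d_C x=0$ by applying $f^1$: the chain map property gives $f^1(d_C x)=d_D(f^0 x)=d_D y=0$, and the injectivity of $f^1$ then forces $d_C x=0$. Thus $[x]\in H^0(C)$ maps to $[y]$.

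There is no real obstacle here; the only hypotheses actually used are that $f^0$ is bijective, $f^1$ is injective, and $C,D$ vanish in negative degrees (so that degree-$0$ cycles are the same as degree-$0$ elements killed by the differential, with no exactness coming from a lower degree). I would present the argument as two short paragraphs corresponding to the injectivity and surjectivity steps above.
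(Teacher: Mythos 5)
Your argument is correct and is essentially the paper's own proof: injectivity follows immediately from the vanishing of $D^{-1}$ together with the bijectivity of $f^0$, and surjectivity is exactly the lift-and-check-closedness step using the chain map property and injectivity of $f^1$. No meaningful difference in approach.
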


\begin{proof}
Injectivity is immediate. For surjectivity, let $d\in D^0$ be a closed element. There exists a $c\in C^0$ that maps to it. We know that the differential of $c$ maps to $0.$ By injectivity on degree $1,$ this implies that $c$ is closed as well, finishing the proof.
\end{proof}

\begin{lemma}\label{lem-alg-hartogs}
Let us take a map of $1$-rays over the Novikov ring $\Lambda_{\geq 0}$
\begin{align}
\xymatrix{ 
\mathcal{C}_1\ar[r]^{\iota_1}\ar[d]_{f_1}\ar[dr]_{h_1}& \mathcal{C}_2\ar[d]_{f_2}\ar[r]^{\iota_2}\ar[dr]_{h_2} &\mathcal{C}_3\ar[r]^{\iota_3}\ar[d]_{f_3}\ar[dr]_{h_3}& \ldots \\ \mathcal{C}_{1}'\ar[r]_{\iota_1'} &\mathcal{C}_2'\ar[r]_{\iota_2'}&\mathcal{C}_3'\ar[r]_{\iota_3'}&\ldots}
\end{align}

Assume that each of these chain complexes are non-negatively graded, in degrees $0$ and $1$ they are free of finite rank, $$f_i^0: \mathcal{C}_i^0\otimes \Lambda_{\geq 0}/\Lambda_{> 0}\to \mathcal{C}_i^{'0}\otimes \Lambda_{\geq 0}/\Lambda_{> 0}$$ is an isomorphism, $$f_i^1: \mathcal{C}_i^1\otimes \Lambda_{\geq 0}/\Lambda_{> 0}\to \mathcal{C}_i^{'1}\otimes \Lambda_{\geq 0}/\Lambda_{> 0}$$ is injective  and both $ H^0(\widehat{tel}(\mathcal{C}))$ and $ H^0(\widehat{tel}(\mathcal{C}'))$ have finite torsion.

Then, the induced map $$ H^0(\widehat{tel}(\mathcal{C}))\to H^0(\widehat{tel}(\mathcal{C}'))$$ is an isomorphism.
\end{lemma}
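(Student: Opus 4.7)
My proof would proceed in three main steps: first, upgrade the mod-$\Lambda_{>0}$ hypotheses on $f_i^0, f_i^1$ to statements over $\Lambda_{\geq 0}$; second, apply Lemma \ref{lem-deg0-hom} level-wise to conclude that each $H^0(f_i): H^0(\mathcal{C}_i) \to H^0(\mathcal{C}_i')$ is an isomorphism; third, use the finite torsion hypothesis to pass from these level-wise isomorphisms to an isomorphism of completed telescope $H^0$'s.

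For the first step, since $\mathcal{C}_i^0$ and $\mathcal{C}_i^{'0}$ are free of finite rank, the matrix of $f_i^0$ in a basis has determinant whose reduction in $\mathbb{F} := \Lambda_{\geq 0}/\Lambda_{>0}$ is a unit; hence this determinant is a unit in $\Lambda_{\geq 0}$ and $f_i^0$ is an isomorphism of $\Lambda_{\geq 0}$-modules. For $f_i^1$, freeness gives torsion-freeness of $\mathcal{C}_i^1$ and $\mathcal{C}_i^{'1}$, and combined with mod-$\Lambda_{>0}$ injectivity this implies injectivity over $\Lambda_{\geq 0}$: any nonzero $x$ in the kernel can be written as $T^\lambda x'$ with $x'\notin T\mathcal{C}_i^1$, and then $f_i^1(x') = 0$ by torsion-freeness of $\mathcal{C}_i^{'1}$, contradicting the mod-$\Lambda_{>0}$ hypothesis. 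The second step is then immediate: Lemma \ref{lem-deg0-hom} applies to each chain map $f_i: \mathcal{C}_i \to \mathcal{C}_i'$ given that both sides are non-negatively graded, $f_i^0$ is an isomorphism, and $f_i^1$ is injective.

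For the third step, the uncompleted telescope realizes the filtered homotopy colimit, so $H^0(tel(\mathcal{C})) = \varinjlim_i H^0(\mathcal{C}_i)$ and similarly for $\mathcal{C}'$; the level-wise isomorphisms assemble into an iso $H^0(tel(\mathcal{C})) \to H^0(tel(\mathcal{C}'))$ of $\Lambda_{\geq 0}$-modules. To upgrade this to the completed level I would use the Milnor-type exact sequence
\[
0 \to {\varprojlim\nolimits}^1_\lambda H^{-1}(tel(\mathcal{C})/T^\lambda) \to H^0(\widehat{tel}(\mathcal{C})) \to \varprojlim_\lambda H^0(tel(\mathcal{C})/T^\lambda) \to 0
\]
and its analogue for $\mathcal{C}'$. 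The group $H^{-1}(tel(\mathcal{C})/T^\lambda)$ only involves $\mathcal{C}_i^0/T^\lambda$ (free of finite rank) and the telescope differential, hence is well-controlled; and the finite torsion hypothesis on $H^0(\widehat{tel}(\mathcal{C}))$ and $H^0(\widehat{tel}(\mathcal{C}'))$ provides the Mittag--Leffler property needed to match the central terms of these two Milnor sequences.

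\textbf{Main obstacle.} The hardest step is the third: completion does not commute with $H^0$ in general. The finite torsion hypothesis is the precise input that bounds the $\varprojlim^1$ obstruction, essentially because it controls how far the truncations of $H^0(\widehat{tel})$ can differ from the $H^0$'s of the truncated telescopes. Carefully analyzing the interplay between torsion bounds and Mittag--Leffler on the system of truncated $H^0$'s is the technical heart of the argument, especially because in degree $\geq 2$ the chain complexes $\mathcal{C}_i$ and $\mathcal{C}_i'$ are not assumed to have any particular structure, so one must verify that only the low-degree (free, finite rank) pieces contribute to the relevant comparison terms.
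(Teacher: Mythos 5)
Your first two steps are correct as stated, but they establish the wrong version of the intermediate isomorphism, and this creates a genuine gap in step three. You prove $H^0(\mathcal{C}_i)\to H^0(\mathcal{C}_i')$ is an isomorphism over $\Lambda_{\geq 0}$, and conclude $H^0(tel(\mathcal{C}))\to H^0(tel(\mathcal{C}'))$ is an isomorphism. But the Milnor sequence you write involves the terms $\varprojlim_\lambda H^0(tel(\mathcal{C})/T^\lambda)$, and in order to match these for $\mathcal{C}$ and $\mathcal{C}'$ you would need isomorphisms $H^0(\mathcal{C}_i/T^\lambda\mathcal{C}_i)\to H^0(\mathcal{C}_i'/T^\lambda\mathcal{C}_i')$, which are \emph{not} implied by the $\Lambda_{\geq 0}$-level isomorphisms: taking homology does not commute with reduction mod $T^\lambda$, and there is a universal-coefficients-type error term involving $H^1$ that you have no control over. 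Your appeal to Mittag--Leffler does not address this; the issue is not a $\varprojlim^1$ problem on a single inverse system but rather the absence of any map of inverse systems in the first place.

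The fix is to run your argument at each truncation level instead of over $\Lambda_{\geq 0}$, which is exactly what the paper does. Nakayama gives that $f_i^0$ is an isomorphism mod $T^r$ for every $r$ directly from the mod-$\Lambda_{>0}$ hypothesis and freeness. For $f_i^1$, one must be slightly careful: injectivity over $\Lambda_{\geq 0}$ does \emph{not} in general imply injectivity mod $T^r$ (consider multiplication by $T$). The right statement is that $f_i^1$ \emph{preserves the module norm} — which your own argument essentially shows, since you prove $\val(f_i^1(x'))=0$ whenever $\val(x')=0$ — and norm-preservation on a free module does give injectivity mod $T^r$ for every $r$. One then applies Lemma \ref{lem-deg0-hom} to the truncated chain maps $\mathcal{C}_i/T^r\to\mathcal{C}_i'/T^r$, takes the colimit over $i$ and the limit over $r$, and invokes the identification $H^0(\widehat{tel}(\mathcal{C}))\cong\varprojlim_r\varinjlim_i H^0(\mathcal{C}_i/T^r\mathcal{C}_i)$, which is where the finite torsion hypothesis actually enters (this is the cited Proposition 1.2 of \cite{locality}, not a Mittag--Leffler condition on the $H^0(tel(\mathcal{C})/T^\lambda)$ system).
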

\begin{proof}

We have the commutative diagram: 
\begin{align}
\xymatrix{ 
H^0(\widehat{tel}(\mathcal{C}))\ar[d]\ar[r]& H^0(\widehat{tel}(\mathcal{C}'))\ar[d]\\ \varprojlim_r(\varinjlim_i(H^0(\mathcal{C}_i/T^r\mathcal{C}_i)))\ar[r] &\varprojlim_r(\varinjlim_i(H^0(\mathcal{C}_i'/T^r\mathcal{C}_i')))}
\end{align} and, under the finite torsion assumption,  the vertical maps are isomorphisms by \cite[Proposition 1.2]{locality}.

Therefore, it suffices to show that $$H^0(\mathcal{C}_i/T^r\mathcal{C}_i)\to H^0(\mathcal{C}_i'/T^r\mathcal{C}_i')$$ is an isomorphism for every $r\geq 0$ and $i=1,2,\ldots$. Fix an $i$ and $r$ for the rest of the argument.

Now, by Nakayama's lemma, we see that $$f_i^0: \mathcal{C}_i^0\otimes \Lambda_{\geq 0}/\Lambda_{> r}\to \mathcal{C}_i^{'0}\otimes \Lambda_{\geq 0}/\Lambda_{> r}$$ is an isomorphism.

Note that, moreover, $$f_i^1: \mathcal{C}_i^1\otimes \Lambda_{\geq 0}/\Lambda_{> r}\to \mathcal{C}_i^{'1}\otimes \Lambda_{\geq 0}/\Lambda_{> r}$$ preserves the module norms and hence is injective for all $r\geq 0$. Here we are also using that $\mathcal{C}_i^1$ is free to conclude that the module semi-norms are in fact norms.

Finally, we use Lemma \ref{lem-deg0-hom} to finish the proof.

\end{proof}

\begin{proof}[Proof of Proposition \ref{prop-Hartogs-A}]

We construct acceleration data for $P$ and $\partial P$  and homotopy data for the restriction map so that the hypotheses of the previous lemma hold. 

Let us call a smooth function $f:B_k\to \mathbb{R}$ strongly non-degenerate if \begin{itemize}
\item in a neighborhood of the node $f$ only depends on the monodromy invariant integral affine coordinate $u$ and $0 < \frac{\partial f}{\partial u}(0)<1$,
\item the Hessian \begin{equation}\label{eq-hess} \left(\frac{\partial^2f}{\partial q_i\partial q_j}(b)\right) \end{equation} with respect to an (and hence any) integral affine coordinate system $q_1,q_2$ is non-degenerate for every $b\in B^{reg}_k$ such that $df_b\in T_{\mathbb{Z}}^{*}B$, and,
\item it is dissipative for some geometrically bounded almost complex structure.
\end{itemize}
Note that the first bullet point implies that the only $1$-periodic orbits of the Hamiltonian $f\circ\pi_k$ which lie on the nodal fiber $\pi_k^{-1}(0)$ are the constant orbits at the focus-focus singularities. Moreover, the Morse index of each of these critical points of $f\circ\pi_k$ is $2$. To see this note that in a neighborhood of  the singular point there are complex valued coordinates $z_1,z_2$ such that $u\circ \pi_1$ is the map $(z_1,z_2)\mapsto |z_1|^2-|z_2|^2$.

Let $V$ denote the Euler vector field in $B_k^{reg}$. For any strictly convex domain $D$ containing the node in its interior, we can define a smooth function $r_D: B_k^{reg}\to \mathbb{R}$ which satisfies $V(r_D)=r_D$ and $r_D(\partial D)=1.$ 

We can choose a sequence of strictly convex domains $D_1\supset D_2\supset\ldots$ inside $B_k$ such that 
\begin{itemize}
\item $D_i$ contains $P$ in its interior,
\item $\bigcap D_i =P$, and,
\item there exists an increasing sequence $c_i\to 1$ as $i\to \infty$ such that $c_iD_i\subset int(P)$ and  $c_1D_1\subset c_2D_2\subset\ldots$ .
\end{itemize}
Here by strict convexity we mean that in affine coordinates near each point of the $\partial D$ we have strict convexity in the usual sense.

We proceed to construct acceleration data for the pre-images of $P$ and $\partial P$ respectively so that they have the same periodic orbits in degree $0$, and the degree $1$ periodic orbits for $P$ are included in those for $\partial P$.

For $P$, consider a monotone sequence of strongly non-degenerate smooth functions $f_1<f_2<\ldots$ on $B_k$ such that
\begin{itemize}
\item 
the sequence $f_i$ converges pointwise to $0$ on $P$ and to $\infty$ outside of $P$,
\item there are real functions $h_i$ such that $f_i=h_i\circ r_{D_i}$ outside of $r_{D_i}<0.5,$
\item $h_i$ is a convex function attaining its minimum at $1$,
\item $h_i$ is linear near infinity and $C^2$-small for $r_{D_i}<1$, and,
\item in the interior of $D_i$ the only integral slope of $f_i$ is at a maximum.
\end{itemize}

For $\partial P$, we consider a monotone sequence of strongly non-degenerate smooth functions $g_1<g_2<\ldots$ on $B_k$ such that

\begin{itemize}
\item 
the sequence $g_i$ converges pointwise to $0$ on $\partial P$ and to $\infty$ outside of $\partial P$,
\item there are real functions $p_i$ such that $g_i=p_i\circ r_{D_i}$ outside of $r_{D_i}<0.5$,
\item $p_i\geq h_i$ for every $i\geq 1$, 
\item $p_i=h_i$ on $[c_i,\infty)$ for every $i\geq 1$, and,
\item $g_i$ is so that if $(dg_i)_b\in T_{\mathbb{Z}}^{*}B_k^{reg}$ for $b\in int(D_i)$, then the Hessian from \eqref{eq-hess} is not positive definite.
\end{itemize}

\begin{figure}
\includegraphics[width=0.6\textwidth]{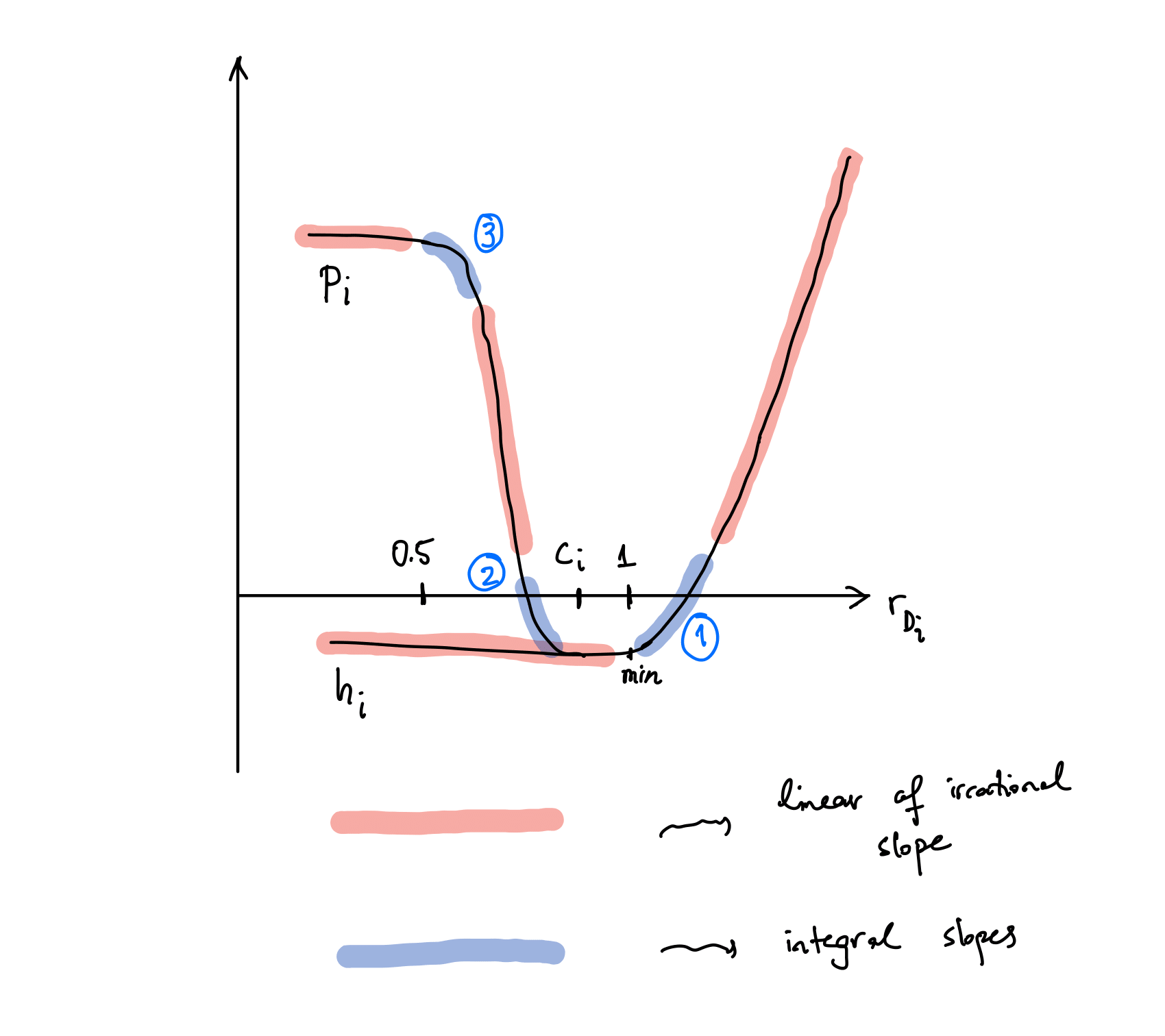}
\caption{The graphs of $h_i$ and $p_i$. Note that to the right of $c_i$ the two graphs agree. }
\label{fig-hartogs}
\end{figure}

The graphs of $h_i$ and $p_i$ are depicted in Figure \ref{fig-hartogs}. To see the last bullet point note that for $(dg_i)_b\in T_{\mathbb{Z}}^{*}B_k^{reg}$ the Hessian from \eqref{eq-hess} is positive definite if and only if the graph of $g_i$ locally lies above the tangent plane to the graph at the point above $b$.

We carefully perturb the Morse-Bott families of orbits with the condition that for all the orbits occurring outside of $c_iD_i$, which is the region where $f_i$ agrees with $g_i$, we use the same perturbations for both $f_i$ and $g_i$. 

To complete the construction of acceleration data for $P$ and $\partial P$, we choose arbitrary monotone interpolations.

To verify conditions of Lemma \ref{lem-alg-hartogs} we claim that there are no generators of negative degree and that all the degree $0$ and $1$ generators of the acceleration datum for $P$  lie in the region where $f_i=g_i$. Moreover, in the region where $g_i>f_i$ all the generators have degrees at least $1.$ See the discussion  in Section \ref{s-noray} right before Proposition \ref{prop-action} regarding indices. The upshot is 
\begin{itemize}
\item all the periodic tori contribute only in degree $\geq 0$,  
\item periodic tori occurring with non-positive definite Hessian contribute only in degrees $\geq1$, 
\item our assumptions regarding strong non-degeneracy similarly imply the critical points occurring in the interior where $f_i\neq g_i$ have index $\geq 2$.
\end{itemize}

The condition that we have an isomorphism in degree $0$ and an injection in degree $1$ now follows. The finite torsion condition follows because rather than using the telescope we can  compute these homologies using the model which completes direct limit at the chain level \cite[Lemma 2.3.7]{varolgunes}. This model is non-negatively graded and it is torsion free by Lemma \ref{lemma-torsion-free} that we prove below. Since zeroth homology simply involves taking a submodule, it does not have any torsion.


\end{proof}

\begin{lemma}\label{lemma-torsion-free}
\begin{enumerate}
\item Let $A_1\to A_2\to\ldots $ be a diagram of $T$-torsion free Novikov ring modules. Then, the direct limit is also $T$-torsion free.
\item If $A$ is a $T$-torsion free Novikov ring module, then its $T$-adic completion is also $T$-torsion free.
\end{enumerate}

\end{lemma}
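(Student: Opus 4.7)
The plan for part (1) is to invoke exactness of filtered colimits. Given $a$ in the colimit with $T^\lambda a = 0$, I will lift $a$ to some $a_i \in A_i$; the relation $T^\lambda a = 0$ then means that $T^\lambda a_i$ becomes zero at some finite stage $A_j$ (with $j \geq i$). Since $A_j$ is torsion free, the image of $a_i$ in $A_j$ must itself vanish, hence $a = 0$ in the colimit.

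For part (2), my plan is to first establish the multiplicativity
\[
\rho(T^\lambda a) = \lambda + \rho(a), \qquad a \in A,\ \lambda \geq 0,
\]
on a torsion free $\Lambda_{\geq 0}$-module $A$. The inequality $\geq$ is immediate from the definition of $\rho$. For the reverse direction, I will argue that if $T^\lambda a = T^s c$ for some $s > \lambda + \rho(a)$, then since $\rho(a) \geq 0$ we have $s > \lambda$, so torsion freeness of $A$ allows us to cancel $T^\lambda$ and conclude $a = T^{s - \lambda} c$, contradicting $\rho(a) < s - \lambda$.

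With multiplicativity in hand, I will represent the completion $\widehat A$ as the quotient of Cauchy sequences in $A$ by null sequences, as reviewed in Section \ref{ss-filt-map}. Any $\hat a \in \widehat A$ with $T^\lambda \hat a = 0$ is represented by a Cauchy sequence $(a_n)$ for which $(T^\lambda a_n)$ is a null sequence, i.e.\ $\rho(T^\lambda a_n) \to \infty$. Multiplicativity converts this into $\rho(a_n) \to \infty$, so $(a_n)$ itself is null and $\hat a = 0$. The only non-formal step is the multiplicativity of $\rho$, and this is precisely where the torsion freeness hypothesis is used; the rest of the argument is then purely categorical.
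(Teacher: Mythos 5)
Your proof of part (1) is the same as the paper's: lift to a finite stage, observe the torsion relation holds at a later finite stage, and invoke torsion-freeness there.

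For part (2) you take a genuinely different route. The paper works with the inverse-limit model $\widehat A=\varprojlim_r A/T^r A$ and argues via the torsion of the truncations: it asserts that torsion-freeness of $A$ forces every element of $A/T^rA$ to have torsion exactly $r$, and then observes that a finite-torsion element of $\widehat A$ would have truncations of torsion $R<r$, a contradiction. You instead use the Cauchy-sequence model and first prove that the filtration map $\rho$ satisfies the exact multiplicativity $\rho(T^\lambda a)=\lambda+\rho(a)$ on a torsion-free module; this reduces the claim to the observation that killing $T^\lambda\hat a$ kills $\hat a$ itself. Your argument is correct as written. It has the modest advantage of not needing any statement about torsion in the quotients $A/T^r A$: note that the paper's assertion ``every element of $A/T^rA$ has torsion exactly $r$'' is not literally true for arbitrary nonzero elements (e.g.\ $T^{1/2}$ already gives torsion $1/2$ in $\Lambda_{\geq 0}/T\Lambda_{\geq 0}$), though the intended argument can be repaired by tracking the element along the compatible system. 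Your multiplicativity lemma isolates the exact place where torsion-freeness enters, and the rest is formal, so this is a clean alternative. One small point worth making explicit in a final write-up: the $T$-adic filtration $\{T^rA\}_r$ and the filtration $\{F_{\geq r}A\}_r$ induced by $\rho$ are mutually cofinal, so the Cauchy-sequence completion you use agrees with the $T$-adic completion named in the statement.
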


\begin{proof}
Let's prove (1) first. Take a torsion element in the direct limit. It must come from some $a\in A_n$. For some $R>0$, $T^Ra$ maps to zero in the direct limit. This means that it maps to zero in some $A_N$ with $n>N$. By the torsion free assumption, this implies that in fact $a$ maps to zero in $A_N$, finishing the proof.

Let's now move to (2). Note first that by the torsion free assumption, for every $r>0$ every element in $A/T^rA$ has torsion exactly $r$. Now, let $a$ be an element of torsion $R$ in the completion. This means that the truncation of $a$ in $A/T^rA$ for $r>R$ has torsion $R$ which is a contradiction.

\end{proof}

We finally prove Theorem \ref{thm-local-comp}.

\begin{proof}[Proof of Theorem \ref{thm-local-comp}]
We want to show that we can extend the isomorphism of Theorem \ref{thm-F_k,reg}. By another application of Lemma \ref{lmSheafGluing}, it suffices to extend it to admissible convex polygons containing the node. We define these isomorphisms by the diagram \begin{align}
\xymatrix{ 
\cF_k(P)\ar[d]\ar[r]& \cO_k(P)\ar[d]\\ \cF_k(\partial P)\ar[r] &\cO_k(\partial P),}
\end{align} where the vertical maps are given by Propositions \ref{prop-Hartogs-A} and \ref{prop-Hartogs-B}, for any such $P\subset B_k.$

We need to check that, for an admissible convex polygon $Q\subset P\subset B_k$, \begin{align}
\xymatrix{ 
\cF_k(P)\ar[d]\ar[r]& \cO_k(P)\ar[d]\\ \cF_k(Q)\ar[r] &\cO_k(Q)}
\end{align} commutes.

First, note that when $Q\subset \partial P$, this is immediate as the restriction maps factor through $\partial P$. Now assume that $Q\subset P$ intersects $\partial P$ and let $q\in Q\cap \partial P$. Then, if $Q$ contains the node, $q\in \partial Q$. Consider the diagram
\begin{align}
\xymatrix{ 
\cF_k(P)\ar[d]\ar[r]& \cF_k(Q)\ar[d]\ar[r] &\cF_k(\{q\})\ar[d]\\ \cO_k(P)\ar[r] &\cO_k(Q)\ar[r] &\cO_k(\{q\}).}
\end{align}
We already know that the outer rectangle and the square on the right commute. Noting that $\cO_k(Q)\to\cO_k(\{q\})$ is injective finishes the proof. This injectivity was proved in Propositions \ref{prpUnique} and \ref{prop-analytic-cont-B}.

Finally, assume that $Q$ is arbitrary. We can find an admissible convex polygon $Q'\subset P$ with non-empty interior, which intersects both $Q$ and $\partial P$. Let $q'\in \partial Q'\cap Q$. Morever, if $Q$ contains the node, we also make sure that $q'\in \partial Q.$ Both squares in the diagram
\begin{align}
\xymatrix{ 
\cF_k(P)\ar[d]\ar[r]& \cF_k(Q')\ar[d]\ar[r] &\cF_k(\{q'\})\ar[d]\\ \cO_k(P)\ar[r] &\cO_k(Q')\ar[r] &\cO_k(\{q'\}),}
\end{align} commute so the composition square also commutes. We finish this time using the injectivity of $\cO_k(Q)\to\cO_k(\{q'\}).$

\end{proof}


\section{Construction of mirrors}\label{s-cons}

\subsection{Symplectic cluster manifolds}
Let $\cR$ be an eigenray diagram. As explained in Section 7.1 of \cite{locality} (to be more specific, see the discussion after Definition 7.4),  one can associate to $\cR$ a symplectic manifold $M_{\cR}$ together with a nodal Lagrangian fibration $\pi_{\cR}:M_{\cR}\to B_{\cR}$. The pre-image of an admissible polygon containing a node with multiplicity $k$ is modeled on $\pi_k:M_k\to B_k$ that we have previously analyzed. There are some choices involved in the construction of $\pi_{\cR}$ near each node of $B_\cR$, that we called \emph{fine data} in \cite{locality}, but by abuse of notation we will shall not distinguish them. 

\begin{remark}
    Let us be a little more clear about these choices and their effect on the construction. The data of the integral affine manifold $B^{reg}_{\cR}$ gives rise to a Lagrangian torus fibration $M^{reg}_{\cR}\to B^{reg}_{\cR}$ with a Lagrangian section (the \emph{zero section}). Extending this to a Lagrangian torus fibration with focus-focus singularities $\pi_{\cR}: M_{\cR}\to B_{\cR}$ involves making a choice near each node of a local model with a specified Lagrangian section and a nodal integral affine embedding of its base into $B_\cR$. A priori there is the possibility that the sheaf $\cF_{\cR}$ and hence the rigid analytic space $\cY_{\cR}$ we construct below depend not just on $\cR$ but also on this fine data. 
    
    Let us show this is not the case. Let us temporarily write $\pi_{\cR}: M_{\cR}\to B_{\cR}$ and $\pi_{\cR}': M_{\cR}'\to B_{\cR}$ for the nodal Lagrangian torus fibrations obtained by different choices of such filling. These give rise to sheaves $\cF_{\cR}, \cF'_{\cR}$ over $B_{\cR}.$ If we fix an arbitrary open neighborhood of the nodes $U\subset B_{\cR}$, there is a symplectomorphism $\phi_U:M_{\cR}\to M'_{\cR}$ which is the identity in an open neighborhood of $B_{\cR}\setminus U,$ but not even fiber preserving over $U$ \cite{symington}. Fixing such $U$ and $\phi_U$ we obtain induced isomorphisms $\phi_*(P):\cF_{\cR}(P)\to\cF'_{\cR}(P)$ for $P$ an admissible open that is either disjoint from $U$ or contains it in its interior by the functoriality of relative symplectic cohomology under symplectomorphisms. These isomorphisms are compatible with restriction maps.
    
    We can find $U_1\supset U_2\supset\ldots $ with intersection $N_\cR$ and symplectomorphisms $\phi_{U_i}$ such that there is a Hamiltonian isotopy from $\phi_{U_i}$ to $\phi_{U_{i+1}}$ that is constant in the complement of $\pi_\cR^{-1}(U_i)$. Note that every admissible open in $B_\cR$ that contains a node contains it as an interior point. For each admissible open $P$ and $i$ sufficiently big, we can define an isomorphism $(\phi_{U_{i+1}})_*(P):\cF_{\cR}(P)\to\cF'_{\cR}(P)$. By the Hamiltonian isotopy invariance property of restriction maps \cite{varolgunes}, these isomorphisms are independent of $i$ and they define the isomorphism of sheaves  $\cF_{\cR}\to\cF'_{\cR}$ that we are after.
    
    The reader might also find it enlightening to think about the Hartogs property of Theorem \ref{prop-Hartogs-A} in this light.

\end{remark}

A \emph{symplectic cluster manifold} is a symplectic manifold which is symplectomorphic to $M_\cR$ for some eigenray diagram $\cR$.  A given symplectic cluster manifold may have multiple eigenray diagram representations as a result of nodal slide and branch move operations on eigenray diagrams. See Section 7.2 of \cite{locality}. 

The following is Proposition 7.17 of \cite{locality}.

\begin{proposition}\label{prpClstGeoBd}
Symplectic cluster manifolds are geometrically of finite type.\end{proposition}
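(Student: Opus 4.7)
The plan is to verify in sequence the two defining properties of geometric finite type: the existence of a compatible almost complex structure $J$ making $M_\cR$ geometrically bounded, and the existence of an admissible function with compact critical set. Both will be extracted from the almost toric structure $\pi_\cR : M_\cR \to B_\cR$ by reducing the problem outside a compact region to the standard picture on $T^*T^2$.

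First I would exploit the fact that the eigenray diagram $\cR$ has finitely many rays and nodes, all contained in some compact $K_0 \subset \mathbb{R}^2$. Fix a slightly larger compact $K \subset B_\cR$ containing the image under $\psi_\cR$ of a neighborhood of $K_0$ and in particular of all singular fibers and of all the ``slit'' regions of $B_\cR$ where the nodal affine structure differs from the standard one on $\mathbb{R}^2$. Then $\pi_\cR^{-1}(B_\cR \setminus K)$ is symplectomorphic to an open subset of the standard $T^*T^2/(\bZ^2)^{\vee}$, and I would equip it with the flat compatible almost complex structure (whose induced metric is flat, has infinite injectivity radius on fibers, and is bounded in base directions). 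Across the compact piece $\pi_\cR^{-1}(K)$, since $\pi_\cR$ is proper, I choose any compatible almost complex structure and glue via a cutoff argument. All curvature and injectivity radius bounds hold automatically: on the flat piece by construction, on the compact piece by compactness, and at the interface by the standard bump function estimate.

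Next I would construct the admissible function by pulling back from the base. Choose a smooth $h : B_\cR \to \mathbb{R}$ that is proper, bounded below, has $\mathrm{crit}(h)$ contained in a compact set, and outside a large compact agrees with $\sqrt{1 + |q|^2}$ in standard integral affine coordinates. Set $H := h \circ \pi_\cR$. Properness of $H$ follows from properness of $\pi_\cR$ and $h$; its critical set $\pi_\cR^{-1}(\mathrm{crit}(h))$ is compact. On regular fibers, $X_H$ acts by translation by $dh \in T^*B_\cR$ under Arnold--Liouville coordinates, so $\|X_H\|_{g_J}$ is bounded by $\|dh\|$, which is uniformly bounded at infinity, while $\|\nabla X_H\|_{g_J}$ is controlled by the Hessian of $h$, which decays at infinity. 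Thus $H$ is admissible in the complement of the compact piece.

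The main obstacle will be verifying the bounds across the focus-focus singularities, where the Arnold--Liouville description fails and $H$ need not be smooth for arbitrary $h$. To handle this I would invoke the explicit construction of the local model $M_k$ from Section \ref{ss-6.1} together with the standing convention, used already in the discussion preceding Proposition \ref{prop-action}, that near each node we take $h$ to depend only on the monodromy-invariant integral affine coordinate $u$. Under this choice $h \circ \pi_k$ extends smoothly to $M_k$ and its Hamiltonian flow can be analyzed directly in the local coordinates $(z_1, z_2)$, giving the desired uniform bounds on $\|X_H\|$ and $\|\nabla X_H\|$. Since $\cR$ has only finitely many nodes and each carries this standardized local model, the verification reduces to a finite collection of local computations. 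This is precisely the content of Proposition 7.17 of \cite{locality}, to which we refer for the detailed estimates.
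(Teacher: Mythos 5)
The paper does not actually argue this proposition: it is stated with the single line ``The following is Proposition 7.17 of \cite{locality}'' immediately preceding it, so the proof is a pure citation. You end your proposal with the same citation, so in that formal sense you land in the same place. The outline you put in front of the citation, however, contains a concrete error that would not survive being fleshed out.

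You assert that the eigenray diagram's ``rays and nodes'' are ``all contained in some compact $K_0 \subset \mathbb{R}^2$,'' and later that $K$ can be chosen to contain ``all the `slit' regions of $B_\cR$ where the nodal affine structure differs from the standard one on $\mathbb{R}^2$,'' so that $\pi_\cR^{-1}(B_\cR \setminus K)$ is ``symplectomorphic to an open subset of the standard $T^*T^2/(\mathbb{Z}^2)^{\vee}$.'' This is false. A ray in the sense of \S\ref{ss-eigenray} is the image of $[0,\infty)\ni t\mapsto x+vt$, and the construction of $B_\cR$ re-glues a neighborhood of each infinite ray by the transvection $A_{k,e}$. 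The slit locus is therefore half-infinite and meets $B_\cR\setminus K$ for every compact $K$. Consequently, loops in $B_\cR\setminus K$ encircling $K$ carry the (generically nontrivial) product of the nodal shear monodromies, and $\pi_\cR^{-1}(B_\cR\setminus K)$ is a nontrivial integral-affine Lagrangian torus fibration over an annular region; it is not an open subset of the standard $T^*T^2$. For the same reason there are no ``standard integral affine coordinates'' near infinity in which to write $\sqrt{1+|q|^2}$ as a single-valued function. So both steps — the geometrically bounded $J$ and the admissible $H=h\circ\pi_\cR$ — cannot be obtained by reducing to the flat cotangent-bundle picture outside a compact set; one must instead patch along the infinite strips around the rays, exploiting that adjacent flat models are related by the fixed transvections $A_{k,e}$, which is the nontrivial content of \cite[Prop.\ 7.17]{locality}. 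Your claim that ``the verification reduces to a finite collection of local computations'' near the nodes likewise misses this global (non-compact) patching. The citation you give is the right one, but the surrounding sketch misrepresents what it proves.
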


If $M$ is a symplectic cluster manifold, then $c_1(M)=0$ and there is a preferred trivialization up to homotopy of $\Lambda^nT_\mathbb{C}M,$ which makes the regular fibers of any associated $\pi_{\mathcal{R}}: M_{\mathcal{R}}\to B_{\mathcal{R}}$ have the Maslov homomorphism $\pi_1(T^2,\star)\to \mathbb{R}$ zero. We use this grading datum in defining relative $SH$ without further mention. From now on we consider $\mathcal{F}(\cdot):=SH^0_{M_\cR}(\pi_\cR^{-1}(\cdot);\Lambda)$ as a sheaf over the $G$-topology of $B_\mathcal{R}$ from Section \ref{ss-eigenray}, and denote it by $\mathcal{F}_{\mathcal{R}}$.

For a small admissible polygon as constructed in Proposition \ref{prpSmall} we say it has multiplicity $k>0$ if it contains a singular value of multiplicity $k$. If it contains no singular value we say it has multiplicity $0$.
\begin{proposition}\label{prpLocalSmall}
Let $P\subset B_{\cR}$ be a small admissible polygon of multiplicity $k$. Then there exists a complete embedding $\iota:M_k\to M_{\cR}$, a convex admissible polygon $P_0\subset B_k$, and a nodal integral affine embedding $f:U\to B_{\cR}$ from an open neighborhood $U$ of $P_0$ so that \begin{itemize}
\item $f(P_0)=P$
\item $\pi_{\cR}^{-1}(P)\subset \iota(M_k)$
\item $\pi_{\cR}\circ\iota|_{\pi_k^{-1}(P_0)}=f\circ \pi_k$.
\end{itemize} 
\end{proposition}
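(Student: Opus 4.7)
The plan is to split into the three types (A, B, C) of small admissible polygons from Proposition~\ref{prpSmall}. Types A and C correspond to $k = 0$ (no node in $P$), while type B corresponds to $k \geq 1$ (a single node of multiplicity $k$ in $P$). In each case we first build $P_0$, $U$, and $f$ from the nodal integral affine structure of $B_\cR$, and then construct $\iota$ by removing Lagrangian tails from $M_\cR$ and iteratively applying Proposition~\ref{prpCompEmb}.

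For the first step, in types A and C the polygon $P$ admits an open neighborhood $U' \subset B_\cR$ which is simply connected and disjoint from the node set $N_\cR$: in type A this is immediate from the smallness condition, while in type C we shrink to a neighborhood inside $S_i$ that stays between the two relevant $\sigma_{ij}$'s. Such a $U'$ is an integral affine manifold (no nodes), hence by developing is integral affine isomorphic to an open $U \subset B_0 = \mathbb{R}^2$; we let $f : U \to B_\cR$ be the inverse developing map and $P_0 = f^{-1}(P)$. In type B, the defining property of the node $p_{ij}$ gives a nodal integral affine isomorphism between a neighborhood of $0 \in B_k$ and a neighborhood of $p_{ij}$. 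Since $P$ is convex, contains $p_{ij}$ in its interior and avoids the separators $\sigma_{ij'}$ for $j' \neq j$, we can enlarge this isomorphism by parallel transport on the (simply connected, once-punctured) region $U' \setminus \{p_{ij}\}$ to produce the embedding $f : U \to B_\cR$ from an open neighborhood $U$ of a convex admissible polygon $P_0 \subset B_k$ containing the origin, with $f(P_0) = P$.

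For the second step, fix a finite collection of Lagrangian tails $\{L_{\beta,r}\}$ in $M_\cR$ consisting of $m_\beta$ tails at each node $p_\beta \neq p_{ij}$ (in type B), or at every node of $\cR$ (in types A and C), where $m_\beta$ is the multiplicity. By Proposition~\ref{prpCompEmb} and induction on the number of tails, the complement $M_\cR \setminus \bigcup L_{\beta,r}$ is symplectomorphic to $M_k$ via a map $\Phi : M_k \to M_\cR$ whose base behavior is controlled on $B_k \setminus \{\pi_k(\text{tails})\}$ by a composition of shears (one for each tail) in each region separated from the origin by an eigenray of $B_k$. The tails can be chosen so that their projections to $B_\cR$ are disjoint from $f(U)$: for nodes not on $l_i$ the strips $S_{i'}$ are disjoint from $S_i \supset P$, and for nodes $p_{ij'}$ on $l_i$ with $j' \neq j$ one orients the tail along the portion of the ray away from the interval between $\sigma_{i,j-1}$ and $\sigma_{i,j+1}$. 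We then let $\iota := \Phi$. Since $M_k$ and $M_\cR$ are geometrically of finite type by Proposition~\ref{prpClstGeoBd}, $\iota$ is a complete embedding, and $\pi_\cR^{-1}(P) \subset \iota(M_k)$ holds because the tail projections avoid $f(U) \supset P$.

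The main obstacle is bookkeeping: the base map induced by $\Phi$ over $P_0$ is determined by a composition of shears, and we need it to coincide with the developing map $f$ constructed independently in the first step. This matches up because, by the description in Proposition~\ref{prpCompEmb}, each application of $\Phi_\pm$ implements precisely the integral affine monodromy associated with crossing an eigenray of $B_k$, which is also the recipe by which the nodal integral affine structure of $B_\cR$ was built from the combinatorial data of $\cR$ in Section~\ref{ss-eigenray}. After possibly composing with Hamiltonian isotopies supported in arbitrarily small neighborhoods of the tails (allowed by the final clause of Proposition~\ref{prpCompEmb}), we obtain the required identity $\pi_\cR \circ \iota|_{\pi_k^{-1}(P_0)} = f \circ \pi_k$.
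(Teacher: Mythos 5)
Your approach is essentially the paper's: reduce $M_\cR$ to the local model $M_k$ by removing Lagrangian tails whose base projections are disjoint from $P$, and take $\iota$ to be the resulting complete embedding. However, there is a genuine gap in the key removal step as written. You invoke ``Proposition~\ref{prpCompEmb} and induction on the number of tails'' to conclude that $M_\cR\setminus\bigcup L_{\beta,r}\simeq M_k$. But Proposition~\ref{prpCompEmb} concerns only the local models: it produces a symplectomorphism $\Phi_\pm:M_{k-1}\to M_k\setminus L_\pm$, with $M_k$ as the ambient manifold. It says nothing about removing a tail from a general cluster manifold $M_\cR$, so your induction cannot even begin. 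What you actually need is the general tail-removal theorem \cite[Theorem 1.8]{locality} (the statement from which Proposition~\ref{prpCompEmb} is extracted as the explicit local case). The paper applies it in two passes: first removing all tails over rays not met by $P$ to reduce to a single-ray diagram $\cR'$, then removing tails over the remaining nodes of $\cR'$ not in $P$ to get down to $M_k$. With the correct citation your one-pass removal of the same tails goes through, and the fibration-intertwining clause of that theorem near $\pi_\cR^{-1}(P)$ gives you the third bullet point directly.

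A secondary remark on structure: you build $(P_0,U,f)$ from the integral affine geometry of $B_\cR$ \emph{independently} of $\iota$, and then must show a posteriori that the base map induced by $\iota$ coincides with your $f$; your verification of this (``each application of $\Phi_\pm$ implements precisely the integral affine monodromy\ldots'') is left at the level of a heuristic and would need real work, since both $f$ and the induced base map are each only canonical up to integral affine automorphism. The proposition only asserts the \emph{existence} of a compatible triple, so the cleaner argument -- and the one implicit in the paper -- is to construct $\iota$ first via the tail-removal theorem, and then read off $f$, $U$, $P_0$ from the region where $\iota$ intertwines $\pi_k$ with $\pi_\cR$. This eliminates the matching step entirely. (Also, a small slip: a small admissible polygon of type~B need not contain the node in its interior -- model polygons in $B_k$ are only required to contain the node, possibly on their boundary -- though this does not affect the rest of your argument.)
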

\begin{proof}
Abusing notation, we shall use the term \emph{ray} to refer to the image $\psi_{\cR}(l)\subset B_{\cR}$ of a ray $l\subset \bR^2$ of the eigenray diagram $\cR$. By definition $P$ meets at most one ray. Moreover, if $P$ meets a ray, that ray has total multiplicity $k'\geq k$. Here the total multiplicity of a ray is the sum of the multiplicities of all the nodes on the ray. We can choose Lagrangian tails lying over each eigenray not met by $P$ for each critical point over each node. Since the rays are pairwise disjoint we can pick these tails to be pairwise disjoint. The complement of these tails is symplectomorphic to $M_{\cR'}$ according to \cite[Theorem 1.8]{locality} where $\cR'$ is the eigenray diagram obtained by omitting all the rays not met by $P$. Moreover, this symplectomorphism can be taken to intertwine the Lagrangian fibrations away from arbitrarily small neighborhoods of the removed rays. In case $\cR'=B_k$ we are done. Otherwise the diagram $\cR'$ has a single ray of total multiplicity $k'$ with multiple nodes. The nodes are ordered along that ray. 
For the critical points of $\pi_{\cR'}$ which is not over the node in $P$ we can pick pairwise disjoint Lagrangian tails whose projection to the base does not meet $P$. 
Removing these tails we again obtain according to \cite[Theorem 1.8]{locality} a symplectic manifold which is symplectomorphic to $M_k$ by a symplectomorphism that intertwines the fibration $\pi_{\cR}$ in a neighborhood of pre-image of $P$ with the fibration $\pi_k$.

\end{proof}

We can now prove the five properties listed in Theorem \ref{thm-four-prop}.

\begin{proof}[Proof of Theorem \ref{thm-four-prop}]
The locality isomorphism of Theorem \ref{thm-local} and Proposition \ref{prpLocalSmall} reduce the result to the same statements for $\mathcal{F}_k$ on $B_k$. Theorem \ref{thm-local-comp} reduces it to  $\mathcal{O}_k$ on $B_k,$ which is covered by Corollary \ref{cy4PropertiesOk}.
\end{proof}

\subsection{Gluing}
For $P$ a small admissible polygon in $B_\cR$, we consider the affinoid domains $$M(P):=M(\mathcal{F}_\cR(P)).$$ For $Q\subset P$ small admissible polygons, we have the restriction map $\mathcal{F}_\cR(P)\to \mathcal{F}_\cR(Q)$. Let us denote by $$\iota_{Q\subset P}: M(Q)\to M(P)$$  the maps of affinoid domains induced by the restriction maps. The underlying maps of sets are injective by \cite[\S3.3, Lemma 10]{bosch}.

We denote the collection of all small admissible polygons in $B_\cR$ by $\{P_i\}_{i\in I}$. Let us define,  for each pair $i,j\in I,$ the affinoid subdomains
$$U_{ij}:=\iota_{P_i\cap P_j\subset P_i} (M(P_i\cap P_j))\subset M(P_i).$$
Then, for all $i,j\in I$, we automatically obtain the isomorphism of affinoid domains
$$\psi_{ij}:= \iota_{P_i\cap P_j\subset P_j}\circ \iota_{P_i\cap P_j\subset P_i}^{-1}: U_{ij}\to U_{ji}.$$

\begin{lemma}
For $i,j,k\in I,$ we have $$
\psi_{ij}(U_{ij}\cap U_{ik})\subset U_{ji}\cap U_{jk}.
$$
\end{lemma}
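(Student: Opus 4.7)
The plan is to reduce the statement to a direct application of the Strong cocycle condition from Theorem \ref{thm-four-prop}, combined with the fact, established in Proposition \ref{prpSmall}, that intersections of small admissible polygons are again small admissible polygons. In particular, each of the polygons $P_i \cap P_j$, $P_i \cap P_k$, $P_j \cap P_k$, and $P_i \cap P_j \cap P_k$ is small admissible, so the corresponding rings $\mathcal{F}_\cR(\cdot)$, their maximal ideal spectra, and the restriction maps between them are all in scope.

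First I would identify $U_{ij}\cap U_{ik}$ explicitly. Applied inside $M(P_i)$ with $Q = P_i\cap P_j$ and $Q' = P_i\cap P_k$, the Strong cocycle condition gives
\[
U_{ij}\cap U_{ik} \;=\; \iota_{P_i\cap P_j\cap P_k\subset P_i}\bigl(M(P_i\cap P_j\cap P_k)\bigr).
\]
Next I would use functoriality of the restriction maps in $\mathcal{F}_\cR$ to factor this map through $M(P_i\cap P_j)$, so that
\[
\iota_{P_i\cap P_j\cap P_k\subset P_i} \;=\; \iota_{P_i\cap P_j\subset P_i}\circ\iota_{P_i\cap P_j\cap P_k\subset P_i\cap P_j},
\]
and similarly with $P_j$ in place of $P_i$. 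Using the definition $\psi_{ij} = \iota_{P_i\cap P_j\subset P_j}\circ\iota_{P_i\cap P_j\subset P_i}^{-1}$, which is well defined because $\iota_{P_i\cap P_j\subset P_i}$ is an isomorphism onto $U_{ij}$ by the Subdomain property, I then get
\[
\psi_{ij}\circ\iota_{P_i\cap P_j\cap P_k\subset P_i} \;=\; \iota_{P_i\cap P_j\cap P_k\subset P_j}.
\]

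Finally, functoriality again factors $\iota_{P_i\cap P_j\cap P_k\subset P_j}$ through both $\iota_{P_i\cap P_j\subset P_j}$ and $\iota_{P_j\cap P_k\subset P_j}$, so its image lies in $U_{ji}\cap U_{jk}$. Combining with the first step yields $\psi_{ij}(U_{ij}\cap U_{ik})\subset U_{ji}\cap U_{jk}$, as desired.

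I do not anticipate a substantive obstacle: the only nontrivial ingredient is the Strong cocycle condition, and the rest is formal manipulation with the restriction maps. The one thing to be slightly careful about is verifying that all intermediate polygons occurring in the chain of restriction maps (in particular the triple intersection $P_i\cap P_j\cap P_k$) are small admissible, so that the corresponding $\mathcal{F}_\cR$ and its maximal spectrum are affinoid and the strong cocycle condition may actually be invoked; this is immediate from the intersection property in Proposition \ref{prpSmall}. The case $P_i\cap P_j\cap P_k = \varnothing$ is trivially handled since both sides of the claimed inclusion are empty.
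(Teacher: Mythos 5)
Your proposal is correct and follows essentially the same route as the paper: identify $U_{ij}\cap U_{ik}$ with the image of $M(P_i\cap P_j\cap P_k)$ via the strong cocycle condition, factor the restriction map through $M(P_i\cap P_j)$ using functoriality, apply the definition of $\psi_{ij}$, and observe that the resulting map into $M(P_j)$ lands in $U_{ji}\cap U_{jk}$. The paper packages the last step as a single commutative diagram while you spell out the two factorizations explicitly, but the argument is the same.
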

\begin{proof}
Note that we have $$U_{ij}\cap U_{ik}=im({M}(P_i\cap P_j\cap P_k))$$ inside ${M}(P_i)$ by the strong cocycle condition, and the map $\iota_{P_i\cap P_j\cap P_k\subset P_i}$ is the same as $$\iota_{P_i\cap P_j\subset P_i}\circ \iota_{P_i\cap P_j\cap P_k\subset P_i\cap P_j}$$ by the presheaf property of restriction maps.

Therefore, we have a canonical commutative diagram \begin{align}
\xymatrix{ 
&&U_{ji}\\ M(P_i\cap P_j\cap P_k)\ar[r]  &M(P_i\cap P_j)\ar[r]\ar[ur]&U_{ij}\ar[u]_{\psi_{ij}},}
\end{align}where the image of $M(P_i\cap P_j\cap P_k)$ in $U_{ij}$ and  $U_{ji}$ is equal to $U_{ij}\cap U_{ik}$ and  $U_{ji}\cap U_{jk},$ respectively. The proof is immediate.
\end{proof}

For $i,j,k\in I,$ let us give the name 
$$
\psi_{ijk}:U_{ij}\cap U_{ik}\to U_{ji}\cap U_{jk}
$$
to the map obtained by restricting $\psi_{ij}$.

\begin{lemma}
The collection of maps $\psi_{ij}$ satisfies $\psi_{ij}\circ\psi_{ji}=id$, $\psi_{ii}=id$ and the cocycle condition $$\psi_{ijk}=\psi_{kji}\circ \psi_{ikj}$$ as maps of rigid analytic spaces.
\end{lemma}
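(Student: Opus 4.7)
The plan is to prove all three identities by a straightforward diagram chase, exploiting the two inputs we already have: the functoriality of the restriction maps in the presheaf $\mathcal{F}_\cR$ (which means all triangles involving maps $\iota_{Q\subset P}$ commute on the nose for triples $Q\subset Q'\subset P$), and the previous lemma which identifies $U_{ij}\cap U_{ik}$ as the image of $M(P_i\cap P_j\cap P_k)$ under $\iota_{P_i\cap P_j\cap P_k\subset P_i}$.

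The identities $\psi_{ii}=id$ and $\psi_{ij}\circ\psi_{ji}=id$ are immediate from the definitions: $\psi_{ii}=\iota_{P_i\subset P_i}\circ\iota_{P_i\subset P_i}^{-1}=id$, while $\psi_{ij}\circ\psi_{ji}=\iota_{P_i\cap P_j\subset P_j}\circ \iota_{P_i\cap P_j\subset P_i}^{-1}\circ \iota_{P_i\cap P_j\subset P_i}\circ \iota_{P_i\cap P_j\subset P_j}^{-1}=id$, with no cancellation subtleties since each $\iota_{Q\subset P}$ is an isomorphism onto its image by the Subdomain property from Theorem \ref{thm-four-prop}.

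For the cocycle condition, the claim reduces to showing $\psi_{ij}=\psi_{kj}\circ\psi_{ik}$ on $U_{ij}\cap U_{ik}$. I would unravel both sides through the common refinement $M(P_i\cap P_j\cap P_k)$. Given $x\in U_{ij}\cap U_{ik}$, write $x=\iota_{P_i\cap P_j\cap P_k\subset P_i}(y)$ for the unique $y\in M(P_i\cap P_j\cap P_k)$ provided by the previous lemma. Functoriality of the restriction maps in $\mathcal{F}_\cR$ gives the equality $\iota_{P_i\cap P_j\subset P_i}\circ \iota_{P_i\cap P_j\cap P_k\subset P_i\cap P_j}=\iota_{P_i\cap P_j\cap P_k\subset P_i}$, and hence $\iota_{P_i\cap P_j\subset P_i}^{-1}(x)=\iota_{P_i\cap P_j\cap P_k\subset P_i\cap P_j}(y)$; applying $\iota_{P_i\cap P_j\subset P_j}$ and using functoriality again yields $\psi_{ij}(x)=\iota_{P_i\cap P_j\cap P_k\subset P_j}(y)$. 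Exactly the same chase for the right side, going successively through $P_i\cap P_k$, $P_k$, $P_k\cap P_j$, and $P_j$, yields $\psi_{kj}\circ\psi_{ik}(x)=\iota_{P_i\cap P_j\cap P_k\subset P_j}(y)$, and the two agree.

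No step is a real obstacle here; this is a routine cocycle verification and the only thing to be careful about is not to conflate affinoid domains with their underlying sets, so one verifies the identities as morphisms of affinoid domains rather than merely as set-theoretic maps. This is automatic because all arrows involved are isomorphisms of affinoid domains onto their images (Subdomain property), and the Strong cocycle condition is precisely what guarantees the source and target of the restricted maps are the claimed intersections.
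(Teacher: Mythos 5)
Your diagram chase through the common refinement $M(P_i\cap P_j\cap P_k)$ is the same one the paper uses, and the bookkeeping for the cocycle identity is correct. Where the two arguments diverge is in promoting the set-theoretic identity to an equality of morphisms of rigid analytic spaces. The paper first establishes the identities as maps of sets, then invokes the Nullstellensatz together with reducedness of the affinoid algebras (from the Affinoidness part of Theorem~\ref{thm-four-prop}): for reduced affinoid algebras, two morphisms of affinoid domains inducing the same map on $\Lambda$-points must coincide. You instead assert the upgrade is \emph{automatic} ``because all arrows involved are isomorphisms of affinoid domains onto their images,'' which as stated is not a reason --- that property by itself does not preclude two distinct morphisms from having identical underlying set maps. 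Your conclusion can nevertheless be made airtight, and arguably more cheaply than the paper's, by rerunning your chase as a chain of equalities of \emph{morphisms} rather than of elements: the functoriality identity $\iota_{P_i\cap P_j\subset P_i}\circ \iota_{P_i\cap P_j\cap P_k\subset P_i\cap P_j}=\iota_{P_i\cap P_j\cap P_k\subset P_i}$ holds in the category of affinoid domains, the inverses $\iota^{-1}_{Q\subset P}$ exist there by the Subdomain property, and the final cancellation of the isomorphism $\iota_{P_i\cap P_j\cap P_k\subset P_i}$ onto $U_{ij}\cap U_{ik}$ is a legitimate morphism-level step. Done this way you never pass through underlying point sets and hence need neither reducedness nor the Nullstellensatz for this lemma. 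As written, however, with a point $x$ being traced and the upgrade merely asserted, there is a small gap which the paper fills with the reducedness argument; you should either reproduce that argument or rephrase the chase as an equation of morphism composites rather than of points.
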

\begin{proof}
First we verify the identities as maps of sets.
The first two are obvious. The cocycle condition follows from the commutativity of the diagram of isomorphisms:\begin{align}
\xymatrix{ 
&U_{ji}\cap U_{jk}\\ M(P_i\cap P_j\cap P_k)\ar[r]  \ar[r]\ar[ur]&U_{ij}\cap U_{ik}\ar[u]_{\psi_{ijk}}.}
\end{align}

To conclude that these identities are in fact identities of rigid analytic spaces, we observe that the $U_{ij}$ are all affinoid domains and that the morphisms $\psi_{ij}$ and $\psi_{ijk}$ are morphisms of affinoid domains by construction. So it remains to verify they are isomorphims, which amounts to the corresponding algebra maps being isomorphisms. Since all involved spaces are in fact reduced affinoid domains, the last claim is a standard consequence of Hilbert's Nullstellensatz \cite[\S3.2, Theorem 4]{bosch}. Namely, given a pair of reduced affinoid algebras $A,B$ and a pair $f,g:A\to B$ of affinoid algebra maps inducing identical maps $M(B)\to M(A)$
we must have $f=g$. Indeed for $x\in A$ and any maximal ideal  $\mathfrak{m}$ in $B$  we have $f(x)=g(x)\in B/\mathfrak{m}$ since they are both the image of $x$ under the map  $A/f^{-1}(\mathfrak{m})=A/g^{-1}(\mathfrak{m})\to B/\mathfrak{m}.$ Thus $f-g$ vanishes at every maximal ideal, so $f-g$ is nilpotent which by reducedness implies $f-g=0$. 

\end{proof}

As in \cite[\S5.3, Proposition 5]{bosch} we now define a rigid analytic space by gluing $M(P_i)$ along $U_{ij}$'s using $\psi_{ij}$'s. As a set 
\begin{equation}
\mathcal{Y}_{\mathcal{R}}=\coprod_{i\in I}M(P_i)/\sim
\end{equation}
where $\sim$ is the relation $x\in U_{ij} \sim \psi_{ij}(x)\in U_{ji}$.  

Note that by construction there are canonical embeddings of each ${M}(P_i)$ into $\mathcal{Y}_\cR$ and we call their images (which are admissible opens) $X_i\subset \mathcal{Y}_\cR.$ If $P_i\subset P_j$, then $X_i\subset X_j$. Moreover $X_i\cap X_j$ is the image of $M(P_i\cap P_j).$

%
%

We now want to construct a map $p_\cR:\mathcal{Y}_\cR\to B_\cR$ using the properties listed in Theorem \ref{thm-four-prop}.

\begin{proposition}\label{propProjInt}Let $y\in \mathcal{Y}_{\mathcal{R}}$. Denote by $I_y\subset I$ the set of indices for which $P_i$ is a small admissible polygon such that $y\in X_i$. Then the set $$\bigcap_{i\in I_y} P_i\subset B_\cR$$ is a singleton $\{b_y\}$.
\end{proposition}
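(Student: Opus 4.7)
The plan is to prove non-emptiness and singleton-ness of $\bigcap_{i \in I_y} P_i$ separately, using only the five properties listed in Theorem \ref{thm-four-prop} together with compactness of the $P_i$ in the Hausdorff space $B_\cR$.

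First I would show that $I_y$ is closed under the intersection operation on small admissible polygons provided by Proposition \ref{prpSmall}: if $i, j \in I_y$, then the small admissible polygon $P_i \cap P_j$ is some $P_k$ with $k \in I_y$. Letting $x_i \in M(P_i)$ and $x_j \in M(P_j)$ denote the unique preimages of $y$ (unique since the structural maps $M(P_i) \to \mathcal{Y}_\cR$ are injective), the definition of $\mathcal{Y}_\cR$ as a quotient, together with transitivity of the equivalence relation (which is exactly what the strong cocycle condition has been shown to guarantee in the paragraph preceding the statement), implies that $x_i$ and $x_j$ lift to a common element $z \in M(P_i \cap P_j)$, whose image in $\mathcal{Y}_\cR$ is $y$. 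Iterating, every finite sub-intersection of $\{P_i\}_{i \in I_y}$ is a non-empty small admissible polygon. This is the finite intersection property, and compactness in the Hausdorff space $B_\cR$ yields $\bigcap_{i \in I_y} P_i \neq \emptyset$.

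Next I would show the intersection contains at most one point. Fix $b \in \bigcap_{i \in I_y} P_i$ and an arbitrary open neighborhood $U$ of $b$ in $B_\cR$. The strategy is to produce some $P_j \in I_y$ with $P_j \subset U$; then $\bigcap \subset P_j \subset U$ for every such $U$ forces $\bigcap = \{b\}$ by the Hausdorff property. Start from any $P_i \in I_y$ with preimage $x_i \in M(P_i)$ of $y$. Cover $P_i$ by finitely many small admissible polygons $Q_1, \ldots, Q_n$ arranged so that every $Q_k$ containing $b$ is contained in $U$. By the independence property some $Q_k$ satisfies $x_i \in im(M(Q_k))$, so that $Q_k \in I_y$; since $b \in \bigcap \subset Q_k$, the arrangement forces $Q_k \subset U$, as desired.

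The main obstacle is the construction of the separating cover in the previous paragraph. It relies on the fact that small admissible polygons form a neighborhood basis at every $b' \in B_\cR$, which one verifies case by case using the three types (A, B, C) in the proof of Proposition \ref{prpSmall}: at regular points off all $\tilde{l}_i$ one uses arbitrarily small type A polygons, at points on $\tilde{l}_i$ not equal to a node one uses type C, and at each node one uses type B model polygons of arbitrarily small size. Granted this, for each $b' \in P_i$ pick a small admissible polygon contained in $U$ if $b' = b$ and otherwise contained in $P_i \setminus \{b\}$, each containing $b'$ in its interior; extract a finite subcover by compactness of $P_i$, and intersect each member with $P_i$ using the intersection property to guarantee the cover lives in $P_i$.
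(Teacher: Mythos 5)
Your proof is correct but routes the argument differently from the paper. The paper constructs a single nested shrinking sequence $P_{i(1)}\supset P_{i(2)}\supset\ldots$ of small polygons with $i(k)\in I_y$ (using the Independence property at each subdivision step and the flat metric via $\psi_\cR$ to control diameters), obtains $\bigcap_k P_{i(k)}=\{b_y\}$ by compactness, and then deduces both halves: ``at most one'' is immediate from $\bigcap_{i\in I_y}P_i\subset\bigcap_k P_{i(k)}$, and non-emptiness is argued by contradiction --- if $b_y\notin P_i$ for some $i\in I_y$ then $P_i\cap P_{i(k)}=\emptyset$ for large $k$, which forces $X_i\cap X_{i(k)}=\emptyset$, contradicting $y\in X_i\cap X_{i(k)}$. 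You instead decouple the two halves cleanly: non-emptiness comes from closure of $I_y$ under intersection (which you derive from the gluing construction, exactly as in the paper's contradiction step) together with the finite intersection property for compact sets, while ``at most one'' comes from a separation argument that manufactures arbitrarily small members of $I_y$ around any $b$ in the intersection, using Independence. The underlying ingredients are the same (Independence, compactness, the fact that $X_i\cap X_j\neq\emptyset$ forces $P_i\cap P_j\neq\emptyset$), but your decomposition is more modular and does not require introducing a metric on $B_\cR$. Both proofs quietly rely on an elementary geometric fact about small admissible polygons --- the paper asserts that a small polygon can be partitioned into small polygons of half the diameter, you assert that small polygons form a neighborhood basis --- and these are of comparable difficulty; your case analysis by polygon type (A/B/C) is a reasonable way to discharge it, though the corner case of a point on $\partial S_i$ deserves a word (it lies off $\tilde{l}_i$, hence is type A).
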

\begin{proof}


We first claim that there is a sequence of small admissible polytopes $$P_{i(1)}\supset P_{i(2)}\supset \ldots $$ such that \begin{itemize}
\item $i(k)\in I_y$ for all $k\geq 1$.
\item $\bigcap_{k\geq 1} P_{i(k)}$ is a singleton.
\end{itemize}

Consider the explicit PL isomorphism $\psi_{\cR}:\mathbb{R}^2\simeq B_\cR$ introduced in \S \ref{ss-eigenray}. Let us use the flat metric on $\mathbb{R}^2$ to equip $B_\cR$ with a metric space structure.

By construction $I_y$ is non-empty. Let $i(1)\in I_y$. Partition $P_{i(1)}=\bigcup_{k=1}^n P_{j(k)}$ into admissible convex polygons (which are automatically small) with half the diameter. Using the independence property we know that $y\in X_{j(k)}$ for some $k$. We let $i(2)=j(k)$ and thus continue inductively. The polygons $P_{i(k)}$ are compact in the standard topology on $B_{\cR}$ so we deduce that the intersection  $\bigcap_{k\geq 1} P_{i(k)}$ is non-empty and of $0$ diameter. That is, a singleton.

It follows, first, that the intersection $\bigcap_{i\in I_y} P_i$ has at most one element. It remains to show non-emptyness. Let $b_y$ be the unique element in $\bigcap_{k\geq 1} P_{i(k)}$.  Let $i\in I_y$ and assume by contradiction that $b_y\not\in P_i$. Then for $k$ large enough, $P_i\cap P_{i(k)}=\emptyset$. But then we cannot have $y\in X_i\cap X_{i(k)}$ by the gluing construction.  This contradiction completes the proof.
\end{proof}


We thus define a canonical map $$p_\cR: \mathcal{Y}_{\mathcal{R}}\to B_{\mathcal{R}}$$ by $y\to b_y$.

\begin{lemma}\label{lmRecoverXi}
For any small admissible polygon $P_i$  we have $p_{\cR}^{-1}(P_i)= X_i$ .
\end{lemma}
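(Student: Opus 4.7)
The plan is to prove the two inclusions $X_i \subset p_\cR^{-1}(P_i)$ and $p_\cR^{-1}(P_i) \subset X_i$ separately, where the first is immediate from the definitions and the second is where all the properties from Theorem \ref{thm-four-prop} come into play.

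For the easy direction, given $y \in X_i$, by construction $i \in I_y$, so by the defining equation $\{b_y\} = \bigcap_{j \in I_y} P_j$ from Proposition \ref{propProjInt} we have $b_y \in P_i$, giving $p_\cR(y) = b_y \in P_i$, hence $y \in p_\cR^{-1}(P_i)$.

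For the hard direction, suppose $y \in p_\cR^{-1}(P_i)$, i.e., $b_y \in P_i$. Since $I_y$ is non-empty by Proposition \ref{propProjInt}, pick any $i_0 \in I_y$, so $y \in X_{i_0}$ and $b_y \in P_{i_0}$; consequently $b_y \in P_i \cap P_{i_0}$, which is itself a small admissible polygon by the intersection property of Proposition \ref{prpSmall}. Assign $P_i \cap P_{i_0}$ an index $k$. The claim that I want to establish is that, viewed as a point of $X_{i_0} \cong M(P_{i_0})$, the element $y$ lies in the subdomain $U_{i_0, k} = im(M(P_i \cap P_{i_0}))$. Once this is shown, the gluing construction identifies $y$ with a point in $X_k$, and since $P_k \subset P_i$ gives $X_k \subset X_i$ (as noted right after the gluing construction), we conclude $y \in X_i$.

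The core step, and the one that uses the finer properties of $\cF_\cR$, is to establish the inclusion $y \in U_{i_0,k}$, and this is where I would invoke the separation property from Theorem \ref{thm-four-prop}. Suppose for contradiction that $y \notin U_{i_0, k}$; then the separation property produces a small admissible polygon $Q' \subset P_{i_0}$ with $Q' \cap (P_i \cap P_{i_0}) = \emptyset$ and $y \in im(M(Q'))$. Labelling $Q'$ by the index $j$, we get $y \in X_j$, hence $j \in I_y$ and so $b_y \in P_j = Q'$. But $b_y$ also lies in $P_i \cap P_{i_0}$, contradicting $Q' \cap (P_i \cap P_{i_0}) = \emptyset$. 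I expect this application of the separation property to be the main conceptual step: its role is precisely to rule out ``ghost'' points of $X_{i_0}$ sitting above points of $P_{i_0}$ that genuinely belong to the overlap $P_i \cap P_{i_0}$ but which the gluing could a priori fail to identify with points of $X_i$. All other ingredients (intersection, independence, functoriality of restriction) are straightforward bookkeeping.
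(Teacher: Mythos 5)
Your proof is correct and follows the same strategy as the paper: establish the easy inclusion directly, and for the hard direction apply the separation property from Theorem \ref{thm-four-prop} to a small polygon $P_{i_0}$ with $y \in X_{i_0}$ and $Q = P_i \cap P_{i_0}$, deriving a contradiction with the definition of $b_y$ if $y$ failed to lie in the image of $M(P_i \cap P_{i_0})$. The only cosmetic difference is naming (your $i_0$ is the paper's $j$); the logic, including the key application of separation and the observation $X_k \subset X_i$ to conclude, matches the paper's argument exactly.
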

\begin{proof}
For $y\in X_i$ we have $b_y\in P_i$ by construction. So $p_{\cR}(X_i)\subset P_i$. Conversely, suppose $b\in P_i$. We want to show that $p_\cR^{-1}(b)\subset X_i$. Let $y\in p_\cR^{-1}(b),$ that is $b_y=b.$

Let $P_j$ be any small admissible polygon such that $y\in X_j$. Let $P_k=P_i\cap P_j$. Note that $b$ is contained in $P_k$. Then we claim that $y\in X_k$. Assuming $y\notin X_{k}$, by the separation property, there is a $P_l\subset P_j$ which does not contain $b$ so that $y\in X_l$. This contradicts the construction of $b=b_y$. Therefore, we have that $y\in X_k\subset X_i$.

%
%

\end{proof}

%
%
\begin{proposition}\label{prppcrlocalform}
If $P\subset B_{\cR}$ is a small admissible polygon of multiplicity $k$ there is a commutative diagram of sets
\begin{align}\xymatrix
   {
     p_{\cR}^{-1}(P)\ar[d]^{p_{\cR}}\ar[r]& Y_k\ar[d]^{p_k}\\
      P \ar[r] & B_k
   }
\end{align}   
where the upper horizontal arrow is an analytic embedding and the lower arrow is a nodal integral affine embedding.

\end{proposition}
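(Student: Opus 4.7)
The plan is to assemble the data of Proposition \ref{prpLocalSmall} with the local computation Theorem \ref{thm-local-comp} and the locality isomorphism Theorem \ref{thm-local}, and then use Lemma \ref{lmRecoverXi} to identify fibers. Given the small admissible polygon $P\subset B_{\cR}$ of multiplicity $k$, Proposition \ref{prpLocalSmall} produces a complete embedding $\iota:M_k\to M_\cR$, an admissible convex polygon $P_0\subset B_k$, and a nodal integral affine embedding $f:U\to B_\cR$ from an open neighborhood of $P_0$ with $f(P_0)=P$ and $\pi_\cR\circ\iota=f\circ\pi_k$ over $P_0$. This $f|_{P_0}$ will serve as the lower horizontal map. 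Moreover, for any small admissible polygon $Q_0\subset P_0$ the pre-image $\pi_\cR^{-1}(f(Q_0))$ is contained in $\iota(M_k)$, so after shrinking we may assume every small admissible $Q\subset P$ has the form $f(Q_0)$ and the hypotheses of Theorem \ref{thm-local} apply at each such $Q$.

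The next step is to produce the top horizontal arrow at the level of sheaves. By Theorem \ref{thm-local} applied to $\iota$ we obtain, functorially in $Q_0\subset P_0$, isomorphisms of $\Lambda$-algebras $\iota_*:\cF_k(Q_0)\xrightarrow{\sim}\cF_\cR(f(Q_0))$. The homologically finite torsion hypothesis is guaranteed by the computation $\cF_k(Q_0)\simeq \cO_k(Q_0)$ of Theorem \ref{thm-local-comp}, which identifies these as reduced affinoid algebras, hence torsion-free. Composing with Theorem \ref{thm-local-comp} gives a compatible family of isomorphisms $\cO_{Y_k^{an}}(p_k^{-1}(Q_0))\xrightarrow{\sim}\cF_\cR(f(Q_0))$. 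Taking $Q_0=P_0$ and passing to $\mathrm{MaxSpec}$, we obtain an isomorphism of affinoid domains $M(\cF_\cR(P))\xrightarrow{\sim} M(\cO_{Y_k^{an}}(p_k^{-1}(P_0)))=p_k^{-1}(P_0)$. By Lemma \ref{lmRecoverXi} the left-hand side is exactly $p_\cR^{-1}(P)=X_P$, while the right-hand side is an affinoid subdomain of $Y_k^{an}$ by the Stein property (Proposition \ref{prop-Stein-k}). Together these give the desired analytic embedding $p_\cR^{-1}(P)\hookrightarrow Y_k^{an}$.

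It remains to verify the commutativity of the square. The plan is to characterize fibers in terms of intersections of open sets indexed by small admissible polygons, on both sides, and observe that the isomorphism of sheaves intertwines these indexings. Concretely, fix $y\in p_\cR^{-1}(P)$ and let $\tilde y\in p_k^{-1}(P_0)$ be its image. For any small admissible $Q_0\subset P_0$ containing $f^{-1}(p_\cR(y))$ in its interior, functoriality of the isomorphisms produced above and Lemma \ref{lmRecoverXi} yield $y\in X_{f(Q_0)}$ iff $\tilde y\in p_k^{-1}(Q_0)$. Applying Proposition \ref{propProjInt} to the point $y$ (and its evident analogue for $p_k$, which is a direct consequence of $p_k$ being Stein and continuous in the sense of Section \ref{ss-eigenray}) one obtains that both $p_\cR(y)$ and $f(p_k(\tilde y))$ equal the unique point in the intersection $\bigcap_{Q_0\ni f^{-1}(p_\cR(y))}f(Q_0)=\{p_\cR(y)\}$, proving $p_\cR(y)=f(p_k(\tilde y))$.

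The main obstacle, which is really more bookkeeping than conceptual, is the careful tracking through of the functorial compatibilities: one needs to know that the locality isomorphism $\iota_*$ is natural with respect to restriction to nested admissible subpolygons (which is part of Theorem \ref{thm-local}), that Theorem \ref{thm-local-comp} is an isomorphism of sheaves (not merely at the level of individual sections), and that Lemma \ref{lmRecoverXi}, whose proof uses the separation axiom of Theorem \ref{thm-four-prop}, has an analogue for $Y_k^{an}$ following immediately from the continuity and Stein properties of $p_k$. Once these pieces are lined up, the commutativity is automatic, and the analytic-embedding statement reduces to the already-established affinoidness of $\cF_\cR(P)$ and the affinoid-subdomain property of $p_k^{-1}(P_0)\subset Y_k^{an}$.
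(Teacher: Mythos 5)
Your proposal follows essentially the same route as the paper: produce the lower arrow from the nodal integral affine embedding $f$ of Proposition \ref{prpLocalSmall}, produce the top arrow from the locality isomorphism (Theorem \ref{thm-local}) composed with Theorem \ref{thm-local-comp} applied on $M(\cF_\cR(P))=p_\cR^{-1}(P)$ (via Lemma \ref{lmRecoverXi}), and then establish commutativity by comparing the intersection characterizations of the fibers of $p_\cR$ and of $p_k$ over descending families of small admissible polygons. The paper phrases the final step by invoking the tautology $p_k(y)=\bigcap_{i:\,y\in p_k^{-1}(P_i)}P_i$ together with the definition of $p_\cR$, whereas you invoke Proposition \ref{propProjInt} for $p_\cR$ and the ``evident analogue'' for $p_k$, which amounts to the same thing.

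One minor caveat worth flagging: you justify the homologically finite torsion hypothesis needed for Theorem \ref{thm-local} by noting that $\cF_k(Q_0)\simeq\cO_k(Q_0)$ is a reduced affinoid $\Lambda$-algebra, ``hence torsion-free.'' But the hypothesis in Theorem \ref{thm-local} concerns the maximal torsion of $SH^i_{M_k}(\pi_k^{-1}(Q_0))$ as a $\Lambda_{\geq 0}$-\emph{module}, which is not determined by the $\Lambda$-algebra (every $\Lambda$-module is trivially torsion-free since $\Lambda$ is a field). The relevant finiteness over $\Lambda_{\geq 0}$ is instead established via the boundary-depth computations of Section \ref{s-noray} (Proposition \ref{prop-BV-torus-A}) together with Corollary \ref{cor-fin-bd-tor} and the locality transfer, and also in the Hartogs argument (Section \ref{s-hartogs}); since the paper's own proof of this proposition likewise defers the torsion verification to those earlier sections, this does not affect the correctness of your argument, only the stated justification.
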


\begin{proof}
By Lemma \ref{lmRecoverXi}  we have $p_{\cR}^{-1}(P_i)= X_i\simeq M(P_i)$. The upper horizontal arrow is the one induced by the locality isomorphism and the lower arrow is the inverse of the map $f$ from Proposition \ref{prpLocalSmall} restricted to $f^{-1}(P)$. It remains to show commutativity. 


For this, note that for $P_i\subset P_j$ admissible polygons, locality and the isomorphism of Theorem \ref{thm-local-comp}  intertwine the inclusion $M(P_i)\to M(P_j)$ with the inclusion $p_k^{-1}(P_i)\to p_k^{-1}(P_j)$. Second, for any $y\in Y_k$ we have tautologically  that $p_k(y)=\cap_{i:y\in p_k^{-1}(P_i)}P_i$  where $i$ runs over all small admissible polygons in $B_k$. The commutativity thus follows by construction of $p_{\cR}$.  
\end{proof}

\begin{remark}
    The horizontal arrows in Theorem \ref{prppcrlocalform} are not claimed to be unique.
\end{remark}

So far we have worked with an index set $I$ indexing all the small admissible polygons. We now consider the possibility of using proper subsets of $I$ to carry out the construction. Let $J\subset I$ be any subset. Then  we can define a rigid analytic space 
$$
\cY_{\cR}(J)= \coprod_{j\in J}M(P_j)/\sim 
$$
where for $j_1,j_2\in J$ we identify points of the image of $M(P_{j_1}\cap P_{j_2})\to M(P_{j_1})$ with the corresponding points of the image of $M(P_{j_1}\cap P_{j_2})\to M(P_{j_2}).$ For each $j\in J$ we let $X_j(J)$ be the image of $M(P_j)$ in $\cY_{\cR}(J)$. Then the $X_j(J)$ form an admissible cover in the strong Grothendieck topology on $\cY_{\cR}(J)$. The intersection $X_{j_1}(J)\cap X_{j_2}(J)$ is the image of the embedding $M(P_{j_1}\cap P_{j_2})\to \cY_{\cR}(J)$.   The rigid analytic space $\cY_{\cR}(J)$ is endowed
with a canonical map of rigid analytic spaces $h_J: \cY_{\cR}(J)\to\cY_{\cR}$. Indeed, for each $j\in J$ we have a map $h_j:X_j(J)=M(P_j)\to \cY_{\cR}$ , and for any $j_1,j_2\in J$ we have $h_{j_1}=h_{j_2}$ on overlaps. Thus the $h_j$ glue to a morphism $h_J$ by \cite[\S5.3, Proposition 6]{bosch}

We now investigate for what subsets $J\subset I$ the canonical map $h_J$ is an isomorphism.  We call $J\subset I$ an \emph{admissible index set} if for any $i\in I$ we have that $P_i$ is contained in a finite union  $\cup_{j\in S}P_j$ where $S\subset J$ is finite.

\begin{proposition}\label{prpSubcover}
    When $J\subset I$ is an admissible index set the map 
     $ h_J: \cY_{\cR}(J)\to\cY_{\cR}$ is an isomorphism.
 \end{proposition}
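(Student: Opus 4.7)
The plan is to show that $h_J$ is an isomorphism by verifying it is a bijection on the underlying sets and that $\{X_j\}_{j \in J}$ forms an admissible cover of $\cY_\cR$ for the strong Grothendieck topology, onto which each $X_j(J)$ is sent isomorphically by construction.

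First I would verify surjectivity of the underlying map of sets. Given $y \in \cY_\cR$, it lies in some $X_i$ for $i \in I$. By admissibility of $J$, there is a finite $S \subset J$ with $P_i \subset \cup_{j \in S} P_j$, which gives $P_i = \cup_{j \in S}(P_i \cap P_j)$. Invoking the independence property of Theorem \ref{thm-four-prop} applied to this finite cover of $P_i$ by small admissible polygons, we obtain that the images of the maps $M(\cF_\cR(P_i \cap P_j)) \to M(P_i)$ for $j \in S$ cover $M(P_i)$. Thus $y$ lies in some $X_i \cap X_j$ for $j \in S \subset J$, proving surjectivity. For injectivity, suppose $y_1 \in X_{j_1}(J)$ and $y_2 \in X_{j_2}(J)$ map to the same point of $\cY_\cR$. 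Then the common image lies in $X_{j_1} \cap X_{j_2}$, which, by the strong cocycle condition, is exactly the image of $M(P_{j_1} \cap P_{j_2})$. Hence $y_1$ and $y_2$ both come from a common point in $M(P_{j_1} \cap P_{j_2})$ under the restriction maps, and consequently are identified in $\cY_\cR(J)$ as well.

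Next I would upgrade the bijection to an isomorphism of rigid analytic spaces. By construction, the restriction of $h_J$ to each $X_j(J) \simeq M(P_j)$ is an isomorphism onto $X_j$, and these restrictions are compatible with the gluings because both sides use the same identifications over the intersections $M(P_{j_1} \cap P_{j_2})$. It therefore suffices to verify that $\{X_j\}_{j \in J}$ is an admissible cover of $\cY_\cR$ in its strong Grothendieck topology; once this is established, the isomorphism follows from \cite[\S5.3, Proposition 6]{bosch} by gluing the local isomorphisms. Surjectivity above shows the $X_j$ cover $\cY_\cR$ set-theoretically. To verify admissibility, I would check the condition on each piece of the already-admissible cover $\{X_i\}_{i \in I}$: given $X_i$, the admissibility of $J$ supplies a finite $S \subset J$ with $P_i \subset \cup_{j \in S}P_j$, and then the independence property yields $X_i = \cup_{j \in S}(X_i \cap X_j)$ as a finite cover by affinoid subdomains of $X_i$. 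Since $\{X_i\}_{i \in I}$ is itself admissible and the refinement by $\{X_j\}_{j \in J}$ has finite affinoid subdomain restrictions to each $X_i$, this forces $\{X_j\}_{j \in J}$ to be admissible as well.

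The routine part is the set-theoretic bijection, which is essentially bookkeeping with the properties in Theorem \ref{thm-four-prop}. The main obstacle is purely sheaf-theoretic: ensuring that the cover $\{X_j\}_{j \in J}$ satisfies the admissibility axiom in the strong Grothendieck topology of $\cY_\cR$, which requires invoking the strong cocycle condition so that the overlaps in $\cY_\cR(J)$ match those in $\cY_\cR$ and invoking independence so that the finite subcovers on each $X_i$ actually cover $X_i$ as affinoid subdomains. Once these combinatorial compatibilities are in place, the gluing formalism delivers the isomorphism immediately.
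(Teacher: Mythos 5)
Your proof is correct, but it takes a somewhat different route from the paper's. The paper constructs the inverse $f_J:\cY_{\cR}\to\cY_{\cR}(J)$ directly: for each $i\in I$ it fixes a finite cover $P_i=\bigcup_k Q_k$ by small admissible convex polygons with each $Q_k\subset P_{j(k)}$ for some $j(k)\in J$ (this is what admissibility of $J$ supplies), glues the maps $M(Q_k)\to M(P_{j(k)})\hookrightarrow\cY_{\cR}(J)$ over the finite affinoid cover $\{M(Q_k)\}$ of $M(P_i)$, and then glues the resulting $f_i$ over the defining admissible cover $\{X_i\}_{i\in I}$. That sidesteps having to show $\{X_j\}_{j\in J}$ is admissible in the strong $G$-topology of $\cY_{\cR}$. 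Your argument instead checks that $h_J$ is a set-theoretic bijection and that $\{X_j\}_{j\in J}$ is an admissible cover, then glues the local inverses $X_j\to X_j(J)$. The admissibility step is the crux, and it works: a cover of $\cY_{\cR}$ is admissible for the strong $G$-topology precisely when its restriction to every affinoid piece of a defining cover admits a finite refinement by affinoid subdomains, which is what the Independence and Subdomain properties of Theorem \ref{thm-four-prop} give you once admissibility of $J$ produces the finite sets $S$. One small imprecision: the identification of $X_{j_1}\cap X_{j_2}$ with the image of $M(P_{j_1}\cap P_{j_2})$ used in your injectivity step is a feature of the gluing construction itself (the strong cocycle condition enters only indirectly by making the gluing relation an equivalence relation), so citing it there is a slight misattribution, though the conclusion stands. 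Both arguments pivot on the same input -- admissibility of $J$ plus the properties in Theorem \ref{thm-four-prop} -- but the paper's version stays entirely within finite affinoid covers of the individual $M(P_i)$, while yours invokes the abstract passage from the weak to the strong $G$-topology.
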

\begin{proof}
    We first construct a map $f_J:\cY_{\cR}\to \cY_{\cR}(J)$. For any $i\in I$  we have a map of rigid analytic spaces $f_i:M(P_i)\to \cY_{\cR}(J)$ defined as follows. Fix a finite cover $P_i=\cup Q_k$ by small admissible convex  so that for each $k$ there is a $j(k)\in J$ with $Q_k\subset P_{j(k)}$. This is possible by admissibility of $J$. Then we have the maps of rigid analytic spaces $M(Q_k)\to P_{j(k)}\subset \cY_{\cR}(J)$ which agree on overlaps, giving rise to a map $M(P_i)\to \cY_{\cR}(J)$. This map is easily seen to be independent of the cover of $P_i$ or the assignment $k\mapsto j(k)$. Moreover, given $i_1,i_2\in I$ the restrictions of $f_{i_1},f_{i_2}$ to $M(P_{i_1}\cap P_{i_2})$ agree. We can thus glue to obtain a map $f_J$ as required. It is straightforward, but tedious, to verify that $f_J$ is indeed inverse to $h_J$. 
\end{proof}

We now investigate when the projection map $p_{\cR}:\cY_{\cR}\to B_{\cR}$ can be reconstructed from a smaller index set $J\subset I$. We say that $J\subset I$ is \emph{sufficiently fine} if for every point of $b\in B_\cR$ there is a descending chain $$P_{j_1}\supset P_{j_2}\supset \ldots $$ with $j_i\in J$ such that $\bigcap_{k\geq 1} P_{j_k}=\{b\}$.

\begin{proposition}\label{prpFine}
    Suppose $J\subset I$ is a sufficiently fine admissible index set. For $y\in \cY_{\cR}(J)$ denote by $J_y\subset J$ the set of indices $j$ for which $y\in X_j(J)$. Then the set $$\bigcap_{j\in J_y} P_j\subset B_\cR$$ is a singleton $\{b_y(J)\}$. Let $p_{\cR,J}:\cY_{\cR}(J)\to B_{\cR}$ be the map $y\mapsto b_y(J)$ then we have a commutative diagram 
    $$
    \xymatrix{\cY_{\cR}(J)\ar[rd]^{p_{\cR,J}}\ar[r]^{h_J}&\cY_{\cR}\ar[d]^{p_{\cR}}\\
    &B_{\cR}}.
    $$
\end{proposition}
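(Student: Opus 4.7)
The key observation is that since $J$ is admissible, Proposition \ref{prpSubcover} tells us $h_J$ is an isomorphism. So commutativity of the diagram will follow automatically once we establish that $\bigcap_{j\in J_y}P_j$ is the singleton $\{p_{\cR}(h_J(y))\}$: we can then simply take this equality as the definition of $p_{\cR,J}(y)=b_y(J)$.

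The first containment $\{p_{\cR}(h_J(y))\}\subset \bigcap_{j\in J_y}P_j$ is immediate from Lemma \ref{lmRecoverXi}. Indeed, for any $j\in J_y$ we have $y\in X_j(J)$, and since $h_J$ restricted to $X_j(J)$ is by construction the canonical embedding onto $X_j\subset \cY_{\cR}$, we get $h_J(y)\in X_j$. Then Lemma \ref{lmRecoverXi} gives $p_{\cR}(h_J(y))\in P_j$.

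For the reverse containment we invoke the sufficient fineness hypothesis on $J$. Set $b:=p_{\cR}(h_J(y))\in B_{\cR}$ and choose a descending chain $P_{j_1}\supset P_{j_2}\supset\ldots$ with $j_k\in J$ and $\bigcap_k P_{j_k}=\{b\}$. For each $k$, since $b\in P_{j_k}$, Lemma \ref{lmRecoverXi} gives $h_J(y)\in p_{\cR}^{-1}(P_{j_k})=X_{j_k}$. As noted above, $h_J$ identifies $X_{j_k}(J)$ with $X_{j_k}$ bijectively, so this forces $y\in X_{j_k}(J)$, i.e.\ $j_k\in J_y$. Therefore
\[
\bigcap_{j\in J_y}P_j\;\subset\;\bigcap_k P_{j_k}\;=\;\{b\}.
\]
Combining the two containments, $\bigcap_{j\in J_y}P_j=\{b\}$ is indeed a singleton, and $b_y(J)=p_{\cR}(h_J(y))$, which is exactly the commutativity of the diagram.

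No step here looks like a serious obstacle; the entire argument is a formal consequence of Lemma \ref{lmRecoverXi} (which pins down the fiber of $p_{\cR}$), the definition of $\cY_{\cR}(J)$ (which makes $h_J|_{X_j(J)}$ a bijection onto $X_j$), and the sufficient fineness hypothesis (which supplies the shrinking chain used to cut the intersection down to a singleton).
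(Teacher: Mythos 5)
Your argument is correct and takes essentially the same route as the paper's, but is written more carefully. The two halves of the singleton claim are established in the same way: containment $\{p_{\cR}(h_J(y))\}\subset\bigcap_{j\in J_y}P_j$ (you go via Lemma \ref{lmRecoverXi}; the paper instead notes $J_y\subset I_{h_J(y)}$ and invokes Proposition \ref{propProjInt}, which is equivalent in substance), and the reverse containment via a shrinking chain supplied by sufficient fineness. The one genuine improvement in your write-up is that you justify the key step that the chain indices $j_k$ actually lie in $J_y$: you deduce $y\in X_{j_k}(J)$ from $h_J(y)\in X_{j_k}$ using the fact that $h_J$ is a bijection (Proposition \ref{prpSubcover}, legitimate since $J$ is assumed admissible) together with $h_J(X_{j_k}(J))=X_{j_k}$. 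The paper's own proof is terser here -- it only notes that the $P_j$ with $j\in J_y$ pairwise intersect and asserts the conclusion "since $J$ is sufficiently fine" -- and leaves precisely this step implicit. Your version also makes the commutativity of the diagram transparent by identifying $b_y(J)=p_{\cR}(h_J(y))$ directly, rather than relegating it to "unwinding of definitions." No gap; if anything, your account is the more complete one.
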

 \begin{proof}
    First observe that if $y\in X_j(J)$ then $h_J(y)\in X_j=h_J(X_j(J))$. So, $J_y\subset I_{h_J(y)}$ and we have an inclusion $$\bigcap_{i\in I_{h_J(y)}} P_i\subset \bigcap_{j\in J_y} P_j.$$ Thus the right hand side is non-empty by Proposition \ref{propProjInt}. If $y\in X_{j_1}(J)\cap X_{j_2(J)}$ it must lie in the image of the natural map $M(P_{j_1}\cap P_{j_2})\to \cY_{\cR}(J)$ and thus $P_i\cap P_j\neq\emptyset$. Since $J$ is sufficiently fine, it follows $\bigcap_{j\in J_y} P_j$ has at most one element. The commutativity of the diagram is just unwinding of definitions. 
 \end{proof}   


    


Recall the notion of small polygons was introduced in \S\ref{ss-sheaf-G} and involved some choices of strips and piecewise affine segments in ${B_\cR}$. As a corollary we can now establish the independence of our construction on these choices.  Let us introduce a superscript  $\cY_{\cR}^S$, $p_{\cR}^S$ to emphasize this choice. Let $S'$ be any other such choice so that the set of $S'$-small admissible  polygons satisfies the axioms listed in Proposition \ref{prpSmall}. 

\begin{corollary}\label{CorSmallIndep}
  There is a canonical isomorphism $\cY^{S}_{\cR}\simeq\cY^{S'}_{\cR}$ intertwining $p^S_{\cR}$ with $p^{S'}_{\cR}$. 
\end{corollary}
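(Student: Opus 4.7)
The strategy is to exhibit a common refinement and then apply the functoriality Propositions \ref{prpSubcover} and \ref{prpFine}. I would let $J$ be the set of admissible convex polygons that are simultaneously $S$-small and $S'$-small; this $J$ is a subset both of the index set of $S$-small polygons (used in constructing $\cY_\cR^S$) and of the index set of $S'$-small polygons (used for $\cY_\cR^{S'}$). By Proposition \ref{prpSmall}(2) applied separately to $S$ and $S'$, the intersection of two elements of $J$ is again in $J$.

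Next I would verify that $J$ is admissible in the sense preceding Proposition \ref{prpSubcover} for both constructions. It is enough to show every point of $B_\cR$ lies in the interior of some element of $J$, since then any compact polygon, in particular any $S$-small or $S'$-small polygon $P$, is contained in the union of finitely many $J$-polygons. The existence of a local $J$-polygon around a point $b$ is a short case analysis. For $b$ not a node, a sufficiently small convex polygon around $b$ avoids every $\tilde{l}_i, \tilde{l}_i', \sigma_{ij}, \sigma'_{ij}$ except possibly the single ray or segment passing through $b$, which forces it to be of the same type (A), (B), or (C) simultaneously in both $S$ and $S'$. For $b$ a node $p_{ij}$, one takes a convex polygon contained in $S_i \cap S_i'$ meeting no other node and no $\sigma_{ij'}, \sigma'_{ij'}$ with $j' \neq j$; this is of type (B) in both senses. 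Shrinking the diameter of these polygons yields a descending chain collapsing to $\{b\}$, so $J$ is also sufficiently fine in the sense of Proposition \ref{prpFine}.

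With these two properties established, Proposition \ref{prpSubcover} gives canonical isomorphisms $\cY^S_\cR \xleftarrow{\sim} \cY_\cR(J) \xrightarrow{\sim} \cY^{S'}_\cR$, and Proposition \ref{prpFine} shows both arrows cover $p_{\cR, J}$ and thus intertwine $p^S_\cR$ with $p^{S'}_\cR$. Composing yields the asserted isomorphism. Canonicality (independence of the choice of $J$) follows from the observation that any two common refinements admit a further common refinement, namely their intersection, which is admissible by the same argument; a diagram chase then shows that the two resulting isomorphisms coincide. The main nontrivial point is the case analysis establishing local existence of $J$-polygons; the interaction between the independently chosen sets of strips and segments is mild enough that working at sufficiently small scales suffices to trivialize it.
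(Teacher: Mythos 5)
Your proof takes essentially the same route as the paper: set $J$ to be a common refinement consisting of polygons that are simultaneously $S$-small and $S'$-small, check that $J$ is admissible and sufficiently fine for both constructions, and then apply Propositions \ref{prpSubcover} and \ref{prpFine} to get the chain $\cY^{S}_{\cR}\xleftarrow{\sim}\cY_{\cR}(J)\xrightarrow{\sim}\cY^{S'}_{\cR}$ intertwining the projections via $p_{\cR,J}$. The paper's version is more terse (it simply invokes the covering axiom to produce such a $J$), but your local case analysis and the remark on canonicality are filling in supporting details of the same argument rather than constituting a different one.
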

\begin{proof}
 Using the covering axiom we can consider a sufficiently fine admissible index set $J\subset I^S$ consisting of $i$  such that $P_i$ is also $S'$-small. $J$ can then be embedded into the index set $I^{S'}$ of all $S'$-admissible polygons as an admissible index set which is sufficiently fine.  We thus get a chain of isomorphisms $\cY^{S'}_{\cR}\to\cY^{S'}_{\cR}(J)=\cY^{S}_{\cR}(J)\to\cY^S_{\cR}$ which intertwine the projection maps to $B_{\cR}$. 
\end{proof}

\begin{proof}[Proof of Theorem \ref{thm-mirror-cons}]
We have already constructed $\cY_{\cR}$ with its map $p_{\cR}:\cY_{\cR}\to B_{\cR}$. That $p_{\cR}$ is a Stein continuous map follows from Lemma \ref{lmRecoverXi} and the properties of the gluing construction of analytic spaces. Let us denote the structure sheaf of $\cY_{\cR}$ by $\cO_{\cR}.$ 

By construction and Lemma \ref{lmRecoverXi}, for a small admissible polygon $P$, $$(p_\cR)_*\cO_{\cR}(P)=\mathcal{F}_\cR(P).$$ Invoking Lemma \ref{lmSheafGluing} we obtain the desired isomorphism of sheaves of algebras $\cF_{\cR}\simeq p_{\cR}^*\cO_{\cR}$.

%

Let us now start proving the numbered assertions. The items (1) and (2) follow immediately from Proposition \ref{prppcrlocalform} and the discussion in Section \ref{Sec-Yk-analyt} for $p_k:Y_k\to B_k$. For item (3), we in addition need Proposition \ref{prop-hol-vol-form} and the fact that $\Omega_0$ is invariant under the action of the integral affine transformations $SL(2,\mathbb{Z})\ltimes \mathbb{R}^2$ on $Y_0^{an}=(\Lambda^*)^2$.

For (4), we note that while a branch move modifies the eigenray presentation, it does not affect the integral affine structure. That is, $B_{\cR_1}$ and $B_{\cR_2}$ are canonically identified with each other as nodal integral affine manifolds. The eigenray presentation does affect the definition of the notion of small polygon which plays an auxiliary role in our construction. To address this, identify $B\simeq B_{\cR_1}\simeq B_{\cR_2}$. Consider the index set $J\subset I$ of polygons in $B_{\cR_1}$ which are also small admissible with respect to $B_{\cR_2}$ under the identification. Then $J$ is a sufficiently fine admissible index set. The claim now follows by Propositions \ref{prpSubcover} and \ref{prpFine}. 

For (5), we note that a nodal slide does modify the integral affine structure. This time  we have a PL isomorphism  $\psi_{\cR_1}\circ\psi_{\cR_2}:B_{\cR_1}\to B_{\cR_2}$, where for an eigenray diagram the map $\psi_{\cR}:\bR^2\to B_{\cR}$ was introduced in \S \ref{ss-eigenray}. Hence we will identify both with $\bR^2$ via $\psi_{\cR_i}$. The sliding is obtained by translating a node (which may be the starting node) along a given ray $l$. Without loss of generality $\cR_2$ is obtained from $\cR_1$ by moving a node of $l\subset\cR_1$ in the direction of the non-compact end of $l$. Since we are not allowing any multiplicity changes, we can find a small admissible polygon $P$ of $B_{\cR_1}$ which covers the entire sliding locus. By possibly adjusting the choices of strips and segments in the definition of small polygon in for $B_{\cR_2}$ (as allowed by Corollary \ref{CorSmallIndep}) we can assume $P$ is also a small admissible polygon for $\cR_2$. Fix a small neighborhood $U$ contained in $P$ and containing the sliding locus. We can find a symplectomorphism $\phi:M_{\cR_1}\to M_{\cR_2}$ which intertwines the torus fibrations $\pi_{\cR_1},\pi_{\cR_2}$ away from the pre-image of $U$ \cite[Proposition 7.8]{locality}. We now consider the set $J\subset I$ of all small admissible polygons for $\cR_1$ which are also small admissible for $\cR_2$ and which either contain $U$ in their interior or are disjoint from $U$. Then $J$ is an admissible index set. For each $j$ in $J$ the symplectomorphism $\phi$ produces an isomorphism of BV algebras $\phi_*:SH^*_{M_{\cR_1}}(\pi^{-1}_{\cR_1}(P_j))\to SH^*_{M_{\cR_2}}(\pi^{-1}_{\cR_2}(P_j))$ which commutes with restriction maps involving polygons $\{P_j\}_{j\in J}$ and their finite intersections. We thus get an induced isomorphism of the corresponding rigid analytic spaces.  


Finally, for (6), it suffices to note that both sides can be defined as an inverse limit over the preimage of an exhaustion of $B_\cR$ by  admissible polygons.

\end{proof}

\begin{remark}Our proof of the first three bullet points of Theorem \ref{thm-four-prop} rely on a computation (not just properties) and is ad-hoc.
At least when all the multiplicities are $1$, we expect to be able to give a much more conceptual proof based on the fact that there is a Lagrangian section $L$ satisfying the local generation property after recasting the construction of $\cY_\cR$ using the relative Lagrangian Floer homology  of $L$. 

The intrinsic meanings of (2) and (3) of Theorem \ref{thm-mirror-cons} are clear. One might consider replacing (1) with the statement that  $p_\cR$ is the projection to the essential skeleton defined using the canonical volume form on $\cY_\cR$. 
\end{remark}

\section{Future work}\label{secFutureWork}

\subsection{Higher dimensions}

In this section we want to briefly explain how our methods would extend to higher dimensions. Let us start with a generalization that is essentially straightforward.

Consider a triple $(\mu, f, Z)$ where \begin{itemize}
    \item $\mu:\mathbb{R}^n\to \mathbb{R}$ is an integral affine function, 
    \item $f\in\mathbb{Z}^n\setminus\{0\}$ satisfies $d\mu(f)=0$, and,
    \item $Z$ is a properly embedded connected codimension $1$ smooth submanifold $Z$ of $$H:=\{x\in\mathbb{R}^n\mid \mu(x)=0\}$$ which is transverse to $f$ everywhere (inside $H$).
\end{itemize}
Define a subset $S(\mu, f, Z)$ of $\mathbb{R}^n$ by considering the closure of the $f$-positive side of $Z$ inside $H$. 

Given $(\mu, f, Z)$, we can define a nodal integral affine manifold with the singular locus $Z$ by regluing a connected neighborhood of $S:=S(\mu, f, Z)$ by the identity on the $\mu(x)> 0$ side  and by the transvection
\begin{equation} v\mapsto v-\mu(v)\cdot f\end{equation} on the other.

We define a shear-cut diagram $\mathcal{R}$ in $\mathbb{R}^n$ as a finite collection of such triples $(\mu_i, f_i, Z_i), i\in I$ such that $S_i:=S(\mu_i, f_i, Z_i)$ are pairwise disjoint. Given a shear cut locus $\mathcal{R}$, we can define a Lagrangian fibration $\pi_\mathcal{R}: M_\mathcal{R}\to B_\mathcal{R}$ with only focus-focus singularities such that the induced nodal integral affine structure is isomorphic to the one induced on $\mathbb{R}^n$ by $\mathcal{R}$ \cite{bernard}.

We have analogues of the nodal slide and branch cut operations. For nodal slides, we can isotope $Z_i$ inside $H_i=\{\mu_i(x)=0\}$ as a properly embedded smooth submanifold keeping it transverse to $f_i$. This does not change $M_\mathcal{R}$ but modifies the fibration $\pi_\mathcal{R}$. Assuming that $H_i$ is disjoint from all the other $S_j$'s, we can apply a branch move by replacing $f_i$ with $-f_i$. This only changes how we represent our Lagrangian fibration using a shear-cut diagram.

Let us identify $B_\mathcal{R}$ with $\mathbb{R}^n$ using the canonical PL homeomorphism and denote the singularity locus $\bigcup_{i\in I}Z_i$ by $Z$. We now start assuming that $Z_i$ are in fact affine. This means that there are neighborhoods of $Z_i$ which are for some $k_i$ isomorphic to a neighborhood of $\mathbb{R}^{n-2}\times \{\text{node}\}$ inside $\mathbb{R}^{n-2}\times B_{k_i}.$ Let us fix such isomorphisms and call this data a framing.

We define an admissible convex polygon in $B_\mathcal{R}$ to be a convex rational polygon if it does not intersect $Z$, and if it does intersect, as a product of admissible convex rational polygons inside $\mathbb{R}^{n-2}$ and  $B_{k_i}$ with respect to a fixed framing.

Using locality for complete embeddings, Kunneth theorem,  e.g.,  \cite[Theorem 10.6]{groman} and some torsion analysis  it should be easy to prove Theorem \ref{thm-four-prop} and then construct a mirror $p_\cR: \cY_\cR\to B_\cR$. 

We can go further and consider more complicated constructions. To illustrate the idea let us focus on dimension $3$. In that case we consider $(\mu,\Delta,Z)$ where $\mu:\bR^3\to\bR^2$ is an integral affine map, $\Delta\subset\bR^2$ is a tropical curve, and $Z\subset \mu^{-1}(\Delta)$ is a graph projecting bijectively onto $\Delta$ under $\mu$. We label the two components of $\mu^{-1}(\Delta)\setminus Z$ by $j\in\{\pm\}$, and denote the closure by $P_j(\mu,\Delta,Z)$. Assume that we are given $\cR$, a collection of $(\mu_i,\Delta_i,Z_i)$ and a choice of signs $j_i$ such that $P_{j_i}(\mu_i,\Delta_i,Z_i)$ are pairwise disjoint. Then, we can form by the same method as above an integral affine manifold with nodal singularities on the edges. At each vertex we get the singular integral affine structure of the \emph{positive vertex}. Accordingly we can form a symplectic manifold $M_{\cR}$ by gluing a neighborhood of the singular fibers of the Gross-fibration corresponding to $\Delta$ \cite{bernard}. See \cite{locality} for a more detailed discussion of the local models.   
We expect the methods of this paper to extend to allow the proof  of Theorem  \ref{thm-four-prop} and moreover to compute the local models. 

\bibliographystyle{plain}
\bibliography{nodalmirror}
\end{document}